\theoremstyle{plain}
\newtheorem{teo}{Theorem}[section]
\newtheorem{lema}[teo]{Lemma}
\newtheorem{prop}[teo]{Proposition}
\newtheorem*{propsn}{Proposition}
\newtheorem{cor}[teo]{Corollary}
\newtheorem*{probsnmain}{Main Problem}
\theoremstyle{definition}
\newtheorem{rem}[teo]{Remark}
\theoremstyle{definition}
\theoremstyle{definition}
\theoremstyle{definition}
\theoremstyle{definition}
\theoremstyle{remark}
\newtheorem{cla}[teo]{Claim}
\theoremstyle{remark}
\newtheorem{ex}[teo]{Example}
\newcommand{\xsypq}{\mathsf{Q}_{p,q}}
\newcommand{\so}{\mathsf{S}^o}
\newcommand{\xsy}{\mathsf{X}_\g}
\newcommand{\xsyg}{\mathsf{X}_\g}
\newcommand{\adg}{\leftidx{^{\star_o}}g}
\newcommand{\fvo}{\mathsf{F}(V)^o}
\newcommand{\fvoc}{(\mathsf{F}(V)^o)^{(2)}}
\newcommand{\fvc}{\mathsf{F}(V)^{(2)}}
\newcommand{\orho}{\pmb{\Omega}_\rho}
\newcommand{\sgo}{\textnormal{sg}_o}
\newcommand{\bo}{\mathscr{B}_o}
\newcommand{\bog}{\mathscr{B}_{o,\g}}
\newcommand{\bd}{\beta}
\newcommand{\gfo}{(\g\times\mathsf{F}(V))^o}
\newcommand{\gro}{\mathbb{G}}
\newcommand{\liep}{\mathfrak{p}}
\newcommand{\liek}{\mathfrak{k}}
\newcommand{\lieq}{\mathfrak{q}}
\newcommand{\lieh}{\mathfrak{h}}
\newcommand{\lieg}{\mathfrak{g}}
\newcommand{\lieb}{\mathfrak{b}}
\newcommand{\liea}{\mathfrak{a}}
\newcommand{\liegto}{\mathfrak{g}^{\tau o}}
\newcommand{\g}{\textnormal{G}}
\newcommand{\h}{\textnormal{H}}
\newcommand{\ko}{\textnormal{K}}
\newcommand{\wbh}{\hat{\textnormal{W}}}
\newcommand{\wb}{\textnormal{W}}
\newcommand{\pmin}{\textnormal{P}_{\liea^+}}
\newcommand{\war}{\textnormal{W}_{\rho,{\liea^+}}}
\newcommand{\n}{\textnormal{N}}
\newcommand{\mb}{\textnormal{M}}
\newcommand{\rr}{\mathbb{R}}
\newcommand{\cc}{\mathbb{C}}
\newcommand{\kk}{\mathbb{K}}
\newcommand{\pp}{\mathsf{P}}
\newcommand{\bb}{\mathbb{B}}
\newcommand{\bc}{\begin{center}}
\newcommand{\ec}{\end{center}}
\newcommand{\too}{\rightarrow}
\newcommand{\cont}{\mathscr{L}_\rho}
\newcommand{\cono}{\mathscr{L}^{p,q}_\rho}
\newcommand{\pst}{\psi_\rho}
\newcommand{\bg}{\partial_\infty\Gamma}
\newcommand{\bgc}{\partial_\infty^{2}\Gamma}
\newcommand{\gh}{\Gamma_{\tn{H}}}
\newcommand{\tn}{\textnormal}
\newcommand{\tc}{\textcolor}
\numberwithin{equation}{section}
\newcounter{tmp}
\begin{document}

\title[Growth of quadratic forms]{Growth of quadratic forms under Anosov subgroups}
\author{León Carvajales}
\thanks{Research partially funded by CSIC Grupo 618 ``Sistemas Din\'amicos'', CSIC MIA (2019), CSIC Iniciación C068 (2018), MathAmsud RGSD MOV\_CO\_2018\_1\_1008354 and Fondo Clemente Estable ANII-FCE-135352.}

\address[\textbf{León Carvajales}]{\newline Centro de Matemática\newline Universidad de la República\newline Iguá 4225, 11400, Montevideo, Uruguay. \newline Sorbonne Universit\'e - Campus Pierre et Marie Curie\newline 
Institut de Math\'ematiques de Jussieu\newline 
4, place Jussieu - Boite Courrier 247, 75252 Paris Cedex 05, France.}
\email{lcarvajales@cmat.edu.uy}

\maketitle

\begin{abstract}

Let $\rho:\Gamma\too\tn{PSL}_d(\kk)$ be a Zariski dense Borel-Anosov representation, for $\kk$ equal to $\rr$ or $\cc$. Let $o$ be a form of signature $(p,d-p)$ on $\kk^d$ (where $0<p<d)$. Let $\so$ be the corresponding geodesic copy of the Riemannian symmetric space of $\tn{PSO}(o)$, inside the Riemannian symmetric space of $\tn{PSL}_d(\kk)$. For certain choices of $o$ and every $t$ large enough, we show exponential bounds for the number of $\gamma\in\Gamma$ for which the distance between $\so$ and $\rho\gamma\cdot\so$ is smaller than $t$. Under an extra assumption, satisfied for instance when the boundary of $\Gamma$ is connected, we show an asymptotic as $t\too\infty$ for the counting function relative to a functional in the interior of the dual limit cone.
\end{abstract}

\tableofcontents
\addtocontents{toc}{\setcounter{tocdepth}{1}}

\section{Introduction}

Let $d=p+q\geq 3$, where $p$ and $q$ are positive integers and $V$ be a $\kk$-vector space of dimension $d$, where $\kk=\rr$ or $\kk=\cc$. Let $\g:=\tn{PSL}(V)$ and $\xsyg$ be the Riemannian symmetric space of $\g$. A \textit{form of signature} $(p,q)$ on $V$ is a quadratic or Hermitian form on $V$ of that signature, depending respectively on whether $\kk=\rr$ or $\kk=\cc$. The space of homothety classes of forms of signature $(p,q)$ on $V$ is denoted by $\xsypq$. The group of projectivized linear isometries of a basepoint $o\in\xsypq$ is denoted by $\h^o$. It is isomorphic to $\tn{PSO}(p,q)$ (resp. $\tn{PSU}(p,q)$) if $\kk=\rr$ (resp. $\kk=\cc$). We let $\so\subset\xsyg$ be the corresponding totally geodesic copy of the Riemannian symmetric space of $\h^o$. In this paper we study the following problem.

\begin{probsnmain}
Let $\Xi$ be a discrete subgroup of $\g$. Describe the set of points $o\in\xsypq$ for which the \tn{counting function}
\begin{equation}\label{eq counting function so}
t\mapsto\#\lbrace g\in\Xi:\hspace{0,3cm} d_{\xsyg}(\so,g\cdot\so)\leq t\rbrace
\end{equation}
\noindent is finite\footnote{Observe that if this is the case then the intersection $\Xi\cap\h^o$ must be finite.} for every positive $t$ and study its asymptotic behaviour as $t\too\infty$. 
\end{probsnmain}

In the previous formulation $d_{\xsyg}(\cdot,\cdot)$ denotes the distance on $\xsyg$ coming from a $\g$-invariant Riemannian structure and, for closed subsets $A$ and $B$ of $\xsyg$, we let

\bc
$d_{\xsyg}(A,B):=\inf\lbrace d_{\xsyg}(a,b):\hspace{0,3cm} a\in A, b\in B\rbrace$.
\ec 

In this paper we contribute to the study of the previous problem when $\Xi$ is the image of a word hyperbolic group $\Gamma$ under a $\Delta$-\textit{Anosov} representation $\rho:\Gamma\too\g$, where $\Delta$ denotes the set of simple roots of some Weyl chamber. Anosov representations were introduced by Labourie \cite{Lab} and further extended by Guichard-Wienhard \cite{GW}. They provide a (stable) class of faithful and discrete representations from word hyperbolic groups into semisimple Lie groups that share many features with holonomies of convex co-compact hyperbolic manifolds. They have been object of intensive research in recent years (see e.g. Kassel \cite{Kas}, Pozzetti \cite{PozBourbaki} or Wienhard \cite{Wie}). Reminders on this notion are given in Subsection \ref{subsec def anosov}, but we recall here that these representations come equipped with a continuous equivariant \textit{limit map}

\bc
$\xi_\rho:\bg\too\mathsf{F}(V)$.
\ec

\noindent Here $\bg$ denotes the Gromov boundary of $\Gamma$ and $\mathsf{F}(V)$ denotes the \textit{full flag manifold} of $\g$, that is, the space of $d$-uples of the form

\bc
$\xi=(\xi^1\subset\dots\subset\xi^{d})$
\ec

\noindent where $\xi^j$ is a $j$-dimensional subspace of $V$ for each $j=1,\dots,d$. A central feature about the limit map $\xi_\rho$ is that it is \textit{transverse}, i.e. for every $x\neq y$ in $\bg$ and every $j=1,\dots,d-1$, the subspace $\xi_\rho^j(x)$ is linearly disjoint from $\xi_\rho^{d-j}(y)$ (see Subsection \ref{subsec def anosov} for further precisions and references).

Let $\rho:\Gamma\too\g$ be a $\Delta$-Anosov representation and define $\orho\subset\xsypq$ to be the set consisting of forms for which the subspaces $\xi_\rho^j(x)$ are non degenerate for every $x\in\bg$ and every $j=1,\dots,d$. In Subsection \ref{subsec orho} we discuss examples of Anosov representations for which the set $\orho$ is non empty. 

Towards the understanding of the Main Problem above we prove the following result.

\begingroup
\setcounter{tmp}{\value{teo}}
\setcounter{teo}{0} 
\renewcommand\theteo{\Alph{teo}}

\begin{cor}[Corollaries {\ref{cor sigmagammainvgamma is loxod and Uo close to Utau} \& \ref{cor counting function and critical exponent}}]\label{cor bound for counting problem introduction}

Assume that $\rho$ is Zariski dense and let $o\in\orho$. Then there exist positive constants $\delta_\rho$, $\mathtt{C}_1$ and $\mathtt{C}_2$ such that for every $t$ large enough one has

\bc
$\mathtt{C}_1e^{\delta_\rho t}\leq  \#\lbrace \gamma\in\Gamma:\hspace{0,3cm} d_{\xsyg}(\so,\rho\gamma\cdot\so)\leq t\rbrace  \leq\mathtt{C}_2e^{\delta_\rho t}$.
\ec

\end{cor}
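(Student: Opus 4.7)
The overall plan is to combine the two referenced corollaries, which together reduce the geometric counting problem to an orbital counting problem for a linear functional on the Cartan subspace. \textbf{Step 1 (Geometric reduction).} I would first identify the Riemannian distance $d_{\xsyg}(\so,\rho\gamma\cdot\so)$ with a linear expression in the Cartan projection of $\rho\gamma$. Using the $\h^o$-Cartan decomposition of $\g$, the distance $d_{\xsyg}(\so,g\cdot\so)$ equals the Riemannian norm of the projection of $g$ into a \emph{relative Cartan subspace} $\liec^+\subset\liep$ transverse to $\lieh^o$. I expect Corollary~\ref{cor sigmagammainvgamma is loxod and Uo close to Utau}, which states that $\sigma_\gamma^{-1}\gamma$ is loxodromic and that a flag naturally associated to $o$ remains close to the attracting/repelling flags of $\rho\gamma$, to yield an identification of the form
\[
d_{\xsyg}(\so,\rho\gamma\cdot\so) = \pso(\mu(\rho\gamma)) + O(1),
\]
where $\mu$ denotes the Cartan projection of $\g$ and $\pso\in\liea^*$ is a linear functional determined by $o$. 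The hypothesis $o\in\orho$ is crucial here: the non-degeneracy of the limit flags $\xi_\rho(x)$ with respect to $o$ is what ensures that the $\h^o$-position of $\rho\gamma$ remains controlled as $\gamma$ escapes to infinity.

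\textbf{Step 2 (Orbital counting).} The previous step implies that, up to a bounded perturbation of $t$, the counting function coincides with
\[
t\mapsto\#\lbrace\gamma\in\Gamma : \pso(\mu(\rho\gamma))\leq t\rbrace.
\]
Since $\rho$ is Zariski dense, the linear functional $\pso$ is positive on the interior of the limit cone $\cono$, so the associated Poincar\'e series $\sum_{\gamma\in\Gamma}e^{-s\pso(\mu(\rho\gamma))}$ has a well-defined critical exponent $\delta_\rho$. I would then invoke Corollary~\ref{cor counting function and critical exponent}, which provides the general orbital counting estimate for Zariski dense $\Delta$-Anosov representations with respect to a linear functional positive on the limit cone, to obtain the exponential bounds
\[
\mathtt{C}_1 e^{\delta_\rho t} \leq \#\lbrace\gamma\in\Gamma : \pso(\mu(\rho\gamma))\leq t\rbrace \leq \mathtt{C}_2 e^{\delta_\rho t}.
\]
Transferring these bounds back along the $O(1)$ comparison of Step 1 (at the cost of adjusting the constants $\mathtt{C}_1,\mathtt{C}_2$ and restricting to $t$ large enough) produces the desired estimate for the original counting function.

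\textbf{Main obstacle.} The principal difficulty lies in Step 1: the Riemannian distance between the two totally geodesic copies of the symmetric space of $\h^o$ has to be identified, up to a uniformly bounded error, with the value of a linear functional on the Cartan projection $\mu(\rho\gamma)$. Concretely, this amounts to comparing the $\h^o\times\h^o$-double coset projection of $\rho\gamma$ with its $\ko\times\ko$-projection, using the limit map of $\rho$ together with the non-degeneracy hypothesis $o\in\orho$ to produce the required asymptotic alignment between the attracting flag of $\rho\gamma$ and the $o$-adapted flag. Once this identification is in hand, the counting reduces to the now-standard orbital counting machinery for Anosov representations in the style of Sambarino.
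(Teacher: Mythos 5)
Your Step 1 contains a genuine error, and it propagates into Step 2. The correct exact identity is $d_{\xsyg}(\so,g\cdot\so)=\Vert b^o(g)\Vert_\lieb$ for $g\in\bog$ (Lemma \ref{lema flat orthogonal to So and gSo in HWBH coordinates and distance}), where $b^o$ is the $(p,q)$-Cartan projection, and the comparison with the ordinary Cartan projection is $\Vert b^o(\rho\gamma)-w_\gamma\cdot a^\tau(\rho\gamma)\Vert_\lieb\leq D$ for a Weyl group element $w_\gamma$ that may depend on which open $\h^o$-orbit contains the Cartan attractor/repellor of $\rho\gamma$ (Proposition \ref{prop bo close to wdotcartan} combined with Corollary \ref{cor sigmagammainvgamma is loxod and Uo close to Utau} and Proposition \ref{prop limit with S and U and Uuno cerca gammamas}). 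By $\wb$-invariance of the norm this yields $d_{\xsyg}(\so,\rho\gamma\cdot\so)=\Vert a^\tau(\rho\gamma)\Vert_\lieb+O(1)$: the distance is the Euclidean \emph{norm} of the Cartan projection up to bounded error, not the value of a linear functional of it. Since $\rho$ is Zariski dense, its limit cone has non-empty interior, so there is no linear functional $\pso\in\liea^*$ with $\Vert a^\tau(\rho\gamma)\Vert_\lieb=\pso(a^\tau(\rho\gamma))+O(1)$ along the orbit; your identification $d_{\xsyg}(\so,\rho\gamma\cdot\so)=\pso(\mu(\rho\gamma))+O(1)$ is therefore false in rank $\geq 2$.

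Consequently the counting input you invoke in Step 2 is the wrong one. A counting theorem for a linear functional $\varphi$ in the interior of the dual limit cone produces the entropy $h_\rho^\varphi$ and, in this paper, is only established under the additional hypothesis that the limit set lies in a single open orbit of $\h^o\curvearrowright\mathsf{F}(V)$ --- a hypothesis not assumed in the present corollary. What is actually needed is Sambarino's counting theorem for the \emph{norm} of the Cartan projection (Theorem \ref{teo counting sambarino}), which gives $\mathtt{C}e^{-\delta_\rho t}\,\#\lbrace\gamma\in\Gamma:\ \Vert a^\tau(\rho\gamma)\Vert_\liea\leq t\rbrace\too 1$ with $\delta_\rho$ the critical exponent; transferring this asymptotic through the $O(1)$ comparison above gives the two-sided exponential bounds after adjusting constants. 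Note finally that the possibly $\gamma$-dependent Weyl element $w_\gamma$ is harmless precisely because the norm is $\wb$-invariant; this is why the corollary requires no assumption on how the limit set meets the open $\h^o$-orbits, in contrast with the directional counting theorem.
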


The constant $\delta_\rho$ in Corollary \ref{cor bound for counting problem introduction} is independent of the choice of the basepoint $o$ and coincides with the \textit{critical exponent}

\bc
$\displaystyle\limsup_{t\too\infty}\dfrac{\log\#\left\lbrace \gamma\in\Gamma: \hspace{0,3cm} d_{\xsyg}(\tau,\rho\gamma\cdot\tau)\leq t\right\rbrace}{t}$
\ec

\noindent of $\rho$ (for any point $\tau\in\xsyg$). Corollary \ref{cor bound for counting problem introduction} is a consequence of a uniform estimate of the distance $d_{\xsyg}(\so,\rho\gamma\cdot\so)$ in terms of the Cartan projection of $\rho\gamma$, and a corresponding counting theorem for this projection due to Sambarino \cite{Sam2}. In \cite{Sam2} Sambarino proves his theorem when $\Gamma$ is the fundamental group of a closed negatively curved manifold. In Appendix \ref{app proof of ditribution} we explain how his proof adapts to our more general setting.

\subsection{Main result}

The main result of this paper (Theorem \ref{TEO CONTEO DIRECCIONAL BO EN QPQ INTRODUCCION} below) provides a precise asymptotic, as $t\too\infty$, for a counting function similar to (\ref{eq counting function so}) but with the distance $d_{\xsyg}(\cdot,\cdot)$ replaced by the choice of a linear functional in the interior of the dual limit cone of $\rho$. We now state this result in a proper way.

A (maximal) \textit{flat} of $\xsyg$ is a totally geodesic copy of $\rr^{d-1}$ inside $\xsyg$. The space of flats identifies naturally with the space of Cartan subspaces $\liea$ of the Lie algebra $\lieg$ of $\g$. Cartan subspaces corresponding to flats orthogonal to $\so$ will be denoted with the symbol $\lieb$.

Let $o$ be a point in $\xsypq$ and define $\bog$ to be the set consisting of elements $g\in\g$ for which there exists a flat of $\xsyg$ orthogonal both to $\so$ and $g\cdot\so$. The Lie theoretic description of $\bog$ goes as follows. Fix a Cartan subspace $\lieb$ whose corresponding flat is orthogonal to $\so$ and let $\tau$ be the intersection point between this flat and $\so$. We think here the point $\tau\in\so$ as a Cartan involution of $\lieg$ that commutes with the derivative $d\sigma^o$ of the involution 

\bc
$\sigma^o:\g\too\g$,
\ec

\noindent whose fixed point set is $\h^o=\tn{PSO}(o)$ (see Subsections \ref{subsec notations} and \ref{subsec so}).

Let $\liegto$ be the subalgebra of $\lieg$ consisting of fixed points of the involution $\tau(d\sigma^o)$ and $\lieb^+$ be a (closed) Weyl chamber of the set of restricted roots $\Sigma(\liegto,\lieb)$ (c.f. Subsection \ref{subsec weyl chambers}). Consider the Weyl group $\wb:=\tn{N}_{\ko^\tau}(\lieb)/\tn{Z}_{\ko^\tau}(\lieb)$, where $\ko^\tau$ is the maximal compact subgroup of $\g$ associated to $\tau$ and $\tn{N}_{\ko^\tau}(\lieb)$ (resp. $\tn{Z}_{\ko^\tau}(\lieb)$) is the normalizer (resp. centralizer) of $\lieb$ in $\ko^\tau$.

\begin{propsn}[Propositions \ref{prop equivalences HWbH} and \ref{prop uniqueness of liebplus}]
One has a decomposition

\bc
$\bog=\h^o\wb\exp(\lieb^+)\h^o$.
\ec

\noindent Furthermore, the $\lieb^+$-coordinate in this decomposition is uniquely determined.
\end{propsn}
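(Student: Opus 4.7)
I prove the two inclusions separately, then address uniqueness.

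\textbf{Easy inclusion $\h^o\wb\exp(\lieb^+)\h^o\subseteq\bog$:} for $g=h_1w\exp(X)h_2$ I propose as candidate common perpendicular flat $F:=h_1\exp(\lieb)\cdot\tau$. At $h_1\tau\in F\cap\so$ the orthogonality pulls back via the isometry $h_1$ to $\lieb\perp\liep^\tau\cap\lieh^o$, which holds because $\lieb\subset\liep^\tau\cap\mathfrak{m}^o$ lies in the opposite $d\sigma^o$-eigenspace and eigenspaces of $d\sigma^o$ are Killing-orthogonal. Using $w\cdot\tau=\tau$ and the identity $w^{-1}\exp(\mathrm{Ad}(w)X)=\exp(X)w^{-1}$, one checks that $p:=h_1\exp(\mathrm{Ad}(w)X)\cdot\tau$ satisfies $g^{-1}p=h_2^{-1}\tau\in\so$, hence $p\in F\cap g\cdot\so$. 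Pulling back the tangent spaces at $p$ by the isometry $h_1\exp(\mathrm{Ad}(w)X)$ reduces the second orthogonality to $\lieb\perp\mathrm{Ad}(w)(\liep^\tau\cap\lieh^o)$ in $\liep^\tau$, which holds because $w\in\tn{N}_{\ko^\tau}(\lieb)$ makes $\mathrm{Ad}(w)$ preserve both $\lieb$ and its Killing-orthogonal complement in $\liep^\tau$, and $\liep^\tau\cap\lieh^o$ already sits in that complement.

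\textbf{Hard inclusion $\bog\subseteq\h^o\wb\exp(\lieb^+)\h^o$:} let $g\in\bog$ with common perpendicular flat $F$ meeting $\so$ at $\tau_1$ and $g\cdot\so$ at $\tau_2$. Transitivity of $\h^o$ on $\so$ provides $h_1,h_2\in\h^o$ with $h_1\tau=\tau_1$ and $h_2\tau=g^{-1}\tau_2$; after replacing $g$ by $h_1^{-1}gh_2$, the normalized flat passes through $\tau$, is orthogonal to $\so$ there, and satisfies $g\tau\in F$ with orthogonality to $g\cdot\so$ at $g\tau$. The tangent $\lieb'$ of $F$ at $\tau$ is a Cartan subspace of $\lieg$ contained in $\liep^\tau\cap\mathfrak{m}^o$, hence maximal abelian in that subspace. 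Harish-Chandra's conjugacy of Cartan subspaces for the reductive symmetric pair with Lie algebra $\liegto$ and maximal compact subalgebra $\liek^\tau\cap\lieh^o$ produces $k_0\in\ko^\tau\cap\h^o$ with $\mathrm{Ad}(k_0)\lieb'=\lieb$; absorbing $k_0$ into the $\h^o$-factor we may assume $F=\exp(\lieb)\cdot\tau$. Then $g\tau=\exp(X')\tau$ for some $X'\in\lieb$ and $g=\exp(X')k'$ with $k'\in\ko^\tau$; the orthogonality at $g\tau$ pulls back to $\mathrm{Ad}(k'^{-1})\lieb\subset\liep^\tau\cap\mathfrak{m}^o$, and a second use of the conjugacy result factorizes $k'=wh$ with $w\in\tn{N}_{\ko^\tau}(\lieb)$ and $h\in\ko^\tau\cap\h^o$. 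Commuting $w$ past $\exp(X')$ via $\exp(X')w=w\exp(\mathrm{Ad}(w^{-1})X')$ produces $g=w\exp(X'')h$ with $X''\in\lieb$, and a final adjustment by a representative of the Weyl group of $(\liegto,\lieb)$ (whose representatives lie in $\ko^\tau\cap\h^o\subset\h^o$, and which acts on $\lieb$ with $\lieb^+$ as fundamental domain) brings $X''$ into $\lieb^+$.

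\textbf{Uniqueness of the $\lieb^+$-coordinate.} The construction in the hard inclusion identifies $X\in\lieb^+$ with the displacement along the common perpendicular flat $F$, read in $\lieb$ after bringing $F$ to $\exp(\lieb)\cdot\tau$. The residual freedom in this normalization is the action on $\lieb$ of $\tn{N}_{\ko^\tau\cap\h^o}(\lieb)$, which coincides with the Weyl group of $(\liegto,\lieb)$; since $\lieb^+$ is a fundamental domain for this Weyl group the displacement has a unique representative in $\lieb^+$. A complementary computation, using $\sigma^o|_{\h^o}=\mathrm{id}$ and $d\sigma^o|_\lieb=-\mathrm{id}$, shows that $g\cdot\sigma^o(g)^{-1}$ is $\h^o$-conjugate to $w\exp(2X)\sigma^o(w)^{-1}$, and its $\g$-Cartan projection equals the $\wb$-chamber representative of $2X$, so the $\wb$-orbit of $X$ in $\lieb$ is also an invariant of $g$; this rules out the possible ambiguity coming from the $\wb$-factor of the decomposition.

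\textbf{Main obstacle.} The crux is the two uses of Harish-Chandra's conjugacy of Cartan subspaces lying inside $\liep^\tau\cap\mathfrak{m}^o$, first to align the common perpendicular flat with $\exp(\lieb)\cdot\tau$ and then to extract a Weyl representative from the residual $\ko^\tau$-factor. The main subtlety is to distinguish the large Weyl group $\wb=\tn{N}_{\ko^\tau}(\lieb)/\tn{Z}_{\ko^\tau}(\lieb)$ coming from $\ko^\tau$, which appears as the middle factor in the decomposition, from the smaller Weyl group of $(\liegto,\lieb)$ coming from $\ko^\tau\cap\h^o$, which enters only through the chamber $\lieb^+$.
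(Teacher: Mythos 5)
Your proof of the decomposition $\bog=\h^o\wb\exp(\lieb^+)\h^o$ takes a genuinely different and essentially correct route. You work from the geometric characterization of $\bog$ (existence of a common perpendicular flat) and invoke conjugacy of maximal abelian subspaces of $\liep^\tau\cap\lieq^o$ under $\ko^\tau\cap\h^o$, twice. The paper instead works directly with the linear-algebraic definition of $\bog$ via $o$-orthogonal bases of lines: transitivity of $\h^o$ on such bases (with matching signs) replaces your uses of Harish--Chandra conjugacy, and the whole argument is routed through the single element $\sigma^o(g^{-1})g$, whose diagonalizability with real eigenvalues gives a third, very useful characterization of $\bog$. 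Your approach is cleaner conceptually but relies on the equivalence of the two descriptions of $\bog$ (Corollary \ref{cor orthogonal flats to so}); the paper's is more elementary and sets up the uniqueness proof. Both are fine for the existence of the decomposition.

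The uniqueness argument, however, has a genuine gap. Your first claim --- that ``the residual freedom in this normalization is the action on $\lieb$ of $\tn{N}_{\ko^\tau\cap\h^o}(\lieb)$'' --- is precisely the content of the uniqueness statement and is asserted rather than proven: the common perpendicular flat between $\so$ and $g\cdot\so$ need not be unique, and nothing in your construction shows that two different flats (or two different normalizations) produce displacement vectors in the same $S_p\times S_q$-orbit. Your ``complementary computation'' does not fill this gap: it shows that the $\wb$-orbit of $X$ is an invariant of $g$ (via the Jordan projection of $g\sigma^o(g)^{-1}$ --- note that it is the Jordan projection, not the Cartan projection, that is conjugation-invariant), but a $\wb$-orbit meets $\lieb^+$ in up to $\binom{d}{p}$ points, since $\lieb^+$ is a union of that many Weyl chambers of $\Sigma(\lieg,\liea)$. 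Concretely, for $d=3$, $(p,q)=(2,1)$, the vectors with eigenvalues $(t,t,-2t)$ on $(\ell_1^+,\ell_2^+,\ell_1^-)$ and $(t,-2t,t)$ on $(\ell_1^+,\ell_2^+,\ell_1^-)$ both lie in $\lieb^+$ and in the same $\wb$-orbit, so neither of your two arguments distinguishes them.

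The missing ingredient is the signature data, which is what the paper uses: if $g=h\hat{w}\exp(X)\tilde{h}$ then $\sigma^o(g^{-1})g=\tilde{h}^{-1}m\exp(2X)\tilde{h}$ with $m\in\mb$, so for each eigenvalue $\mu$ of $\sigma^o(g^{-1})g$ the subspace $V_{\vert\mu\vert}=\ker(\exp(2X)-\vert\mu\vert)$ is carried by $\tilde{h}^{-1}\in\h^o$ to a subspace of $V$ determined by $g$ alone, and in particular the signature of $o$ restricted to $V_{\vert\mu\vert}$ is an invariant of $g$. Knowing, for each $\vert\mu\vert$ in decreasing order, how many lines of $\mathcal{C}^+$ and how many of $\mathcal{C}^-$ receive that eigenvalue, together with the total orders on $\mathcal{C}^\pm$ imposed by $\lieb^+$, pins down $X$ completely; this is exactly what separates the two vectors in the example above. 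Your geometric framework can absorb this, but the signature argument (or an equivalent) must appear explicitly.
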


The previous decomposition of $\bog$ will be called $(p,q)$-\textit{Cartan decomposition}. We then introduce a $(p,q)$-\textit{Cartan projection}

\bc
$b^o:\bog\too\lieb^+$
\ec

\noindent which is characterized by the equality

\bc
$g=hw\exp(b^o(g))h'$
\ec

\noindent for every $g\in\bog$, where $h,h'\in\h^o$ and $w\in\wb$. In Lemma \ref{lema flat orthogonal to So and gSo in HWBH coordinates and distance} we show the equality

\bc
$\Vert b^o(g)\Vert_{\lieb}=d_{\xsyg}(\so,g\cdot \so)$
\ec

\noindent for every $g\in\bog$, where $\Vert\cdot\Vert_\lieb$ is the Euclidean norm on $\lieb$ induced by the $\g$-invariant Riemannian structure of $\xsyg$.

In Corollary \ref{cor sigmagammainvgamma is loxod and Uo close to Utau} we prove that for every $\Delta$-Anosov representation $\rho:\Gamma\too\g$ and every basepoint $o\in\orho$, then apart from possibly finitely many exceptions $\gamma\in\Gamma$ one has $\rho\gamma\in\bog$.

We can now state our main result. Recall that since $\lieb$ is a Cartan subspace of $\lieg$ we can use the notation $\liea=\lieb$. A closed Weyl chamber of the system $\Sigma(\lieg,\lieb)=\Sigma(\lieg,\liea)$ will be denoted by $\liea^+$. It will be said to be $\lieb^+$-\textit{compatible} if the inclusion $\liea^+\subset\lieb^+$ holds. The corresponding \textit{asymptotic cone}, as introduced by Benoist in \cite{Ben4}, will be denoted by $\cont\subset\tn{int}(\liea^+)$. We let $\cont^*$ be the dual cone.

\begin{teo}[Proposition \ref{prop linear counting for varphibo when torsion}]\label{TEO CONTEO DIRECCIONAL BO EN QPQ INTRODUCCION}
Let $\rho:\Gamma\too\g$ be a Zariski dense $\Delta$-Anosov representation and $o$ be a basepoint in $\orho$ such that for each $j=1,\dots,d$ the signature of the form $o$ restricted to $\xi_\rho^j(x)$ is independent on $x\in\bg$. Then there exists a $\lieb^+$-compatible Weyl chamber $\liea^+$ such that for every linear functional $\varphi$ in the interior of $\cont^*$ there exist constants $h_\rho^\varphi>0$ and $\mathtt{m}=\mathtt{m}_{\rho,o,\varphi}>0$ satisfying

\bc
$\mathtt{m}e^{-h_\rho^\varphi t}\# \lbrace\gamma\in\Gamma: \hspace{0,3cm} \rho\gamma\in\bog \tn{ and } \varphi(b^o(\rho\gamma))\leq t\rbrace\too 1$
\ec

\noindent as $t\too\infty$.
\end{teo}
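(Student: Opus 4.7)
The plan is to reduce the counting of $\varphi\circ b^o$ to the counting of $\varphi\circ a$, where $a:\g\too\liea^+$ is the standard Cartan projection, and then appeal to the Sambarino-type asymptotic for the latter (Appendix~\ref{app proof of ditribution}). To this end I would fix a Cartan subspace $\lieb$ whose associated flat passes through a point $\tau\in\so$ and choose a $\lieb^+$-compatible Weyl chamber $\liea^+\subset\lieb^+$ of $\Sigma(\lieg,\lieb)$ such that, for all but finitely many $\gamma$, the standard Cartan projection $a(\rho\gamma)$ already lies in $\liea^+$. The hypothesis that the signature of $o|_{\xi_\rho^j(x)}$ is independent of $x\in\bg$ is essential here: it forces the limit map $\xi_\rho$ to take values in a single $\h^o$-orbit of $o$-non-degenerate flags of fixed signature type, and this in turn singles out the preferred chamber compatible with the Anosov dynamics.

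Second --- and this is the technical core --- I would establish a uniform bounded comparison
\[
\bigl\| b^o(\rho\gamma) - a(\rho\gamma) \bigr\|_\lieb \leq C,
\]
valid for all but finitely many $\gamma\in\Gamma$, with a constant $C = C(\rho,o) > 0$. The strategy uses Corollary~\ref{cor sigmagammainvgamma is loxod and Uo close to Utau}: the attracting configuration associated to $\rho\gamma$ and the involution $\sigma^o$ converges uniformly to the limit flag $\xi_\rho(\gamma^+)$. Combined with the constant-signature assumption, this means the flat realizing $d_{\xsyg}(\so,\rho\gamma\cdot\so)$ is asymptotically parallel to a flat through $\tau$ realizing the standard Cartan decomposition. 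The two projections then differ by a cocycle measuring the discrepancy between these two flats, which is bounded by compactness of the relevant configuration space of pairs of transverse non-degenerate flags.

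Third, with this bound in hand, for any $\varphi\in\tn{int}(\cont^*)$ one has $|\varphi(b^o(\rho\gamma)) - \varphi(a(\rho\gamma))| \leq C\|\varphi\|$, so the counting sets $\{\gamma:\rho\gamma\in\bog,\ \varphi(b^o(\rho\gamma))\leq t\}$ and $\{\gamma:\varphi(a(\rho\gamma))\leq t\}$ agree up to a shift of $t$ by $O(1)$ and finitely many exceptional $\gamma$. The counting theorem of Sambarino (the version proved in Appendix~\ref{app proof of ditribution}) then yields the pure exponential asymptotic $\#\{\gamma:\varphi(a(\rho\gamma))\leq t\}\sim \mathtt{m}'e^{h_\rho^\varphi t}$, and absorbing the bounded shift into the multiplicative constant produces the stated asymptotic with a new constant $\mathtt{m} = \mathtt{m}(\rho,o,\varphi)$.

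The main obstacle is Step~2. The projections $a$ and $b^o$ arise from genuinely different decompositions --- $\ko\exp(\liea^+)\ko$ versus $\h^o\wb\exp(\lieb^+)\h^o$ --- and measure \emph{a priori} different geometric quantities (displacement of a basepoint versus distance between two copies of $\so$). Without the Anosov dynamics (uniform convergence of the attracting configurations) \emph{and} the constant-signature hypothesis (which forces these limits into a single $\h^o$-orbit), even a linear-in-word-length bound could fail to be upgraded to a uniform constant, and the exponents for $b^o$ and $a$ would not match. Overcoming this is precisely where the strength of the hypotheses is used.
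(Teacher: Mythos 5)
Your Step~2 is indeed established in the paper (Corollary \ref{cor if limit set in one orbit then HwexplieaplusH}, building on Proposition \ref{prop bo close to wdotcartan}), and your Step~1 correctly identifies the role of the constant-signature hypothesis. But Step~3 contains a genuine gap that cannot be repaired within your framework: a bound $\vert\varphi(b^o(\rho\gamma))-\varphi(a^\tau(\rho\gamma))\vert\leq C'$ with a $\gamma$-dependent (merely bounded) discrepancy does \emph{not} transport a precise asymptotic $N_a(t)\sim\mathtt{m}'e^{h t}$ to one for $N_b(t)$. It only gives $N_a(t-C')\leq N_b(t)\leq N_a(t+C')$, hence two-sided bounds $\mathtt{m}'e^{-hC'}e^{ht}\lesssim N_b(t)\lesssim \mathtt{m}'e^{hC'}e^{ht}$ with \emph{different} constants; ``absorbing the bounded shift into the multiplicative constant'' is precisely the step that fails, because the shift varies with $\gamma$. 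This is exactly why the paper states Corollary \ref{cor counting function and critical exponent}(2) (two-sided exponential bounds, valid for any $o\in\orho$) separately from Theorem \ref{TEO CONTEO DIRECCIONAL BO EN QPQ INTRODUCCION} (a genuine asymptotic, requiring the extra hypothesis and much finer analysis).

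The missing idea is the exact control of the discrepancy as a function of the endpoint data. The paper uses the identity $b^o(\rho\gamma)=\tfrac12\lambda(\sigma^o(\rho\gamma^{-1})\rho\gamma)$ together with Benoist's cross-ratio estimate to prove (Corollary \ref{cor gromov is cross ratio for rho and estimate with bogamma}) that $\varphi(b^o(\rho\gamma))=\varphi(\lambda(\rho\gamma))-[\gamma_-,\gamma_+]_o^\varphi+o(1)$, where $[\cdot,\cdot]_o^\varphi$ is the $o$-Gromov product, a continuous function of the pair of fixed points. One then invokes not the plain counting theorem for $\varphi(a^\tau(\rho\gamma))$ but the \emph{equidistribution} of pairs $(\gamma_-,\gamma_+)$ with $\varphi(\lambda(\rho\gamma))\leq t$ toward the measure $e^{-h_\rho^\varphi[\cdot,\cdot]_o^\varphi}\overline{\mu}_o^\varphi\otimes\mu_o^\varphi$ (Proposition \ref{prop distribution of periodic orbits for UovarphiGamma}); Roblin's method then integrates the correction term $[\gamma_-,\gamma_+]_o^\varphi$ against this density, and the exponential weight cancels exactly to produce a single constant $\mathtt{m}$. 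Without this step (and without the separate treatment of torsion elements via Proposition \ref{prop linear counting for varphibo when torsion}), your argument proves only the weaker Corollary \ref{cor bound for counting problem introduction}, not the theorem.
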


\endgroup

The constant $h_\rho^\varphi$ in Theorem \ref{TEO CONTEO DIRECCIONAL BO EN QPQ INTRODUCCION} coincides with the $\varphi$-\textit{entropy} of $\rho$, defined by

\bc
$h_\rho^\varphi:=\displaystyle\limsup_{t\too\infty}\dfrac{\log\# \lbrace[\gamma]\in[\Gamma]: \hspace{0,3cm}  \varphi(\lambda(\rho\gamma))\leq t\rbrace}{t}$.
\ec

\noindent Here $\lambda(\cdot)$ denotes the Jordan projection of $\g$ and $[\gamma]$ denotes the conjugacy class of $\gamma\in\Gamma$. In other words, the choice of $\varphi$ induces a H\"older reparametrization $\phi_t^\varphi$ of the geodesic flow of $\rho$, and  $h_\rho^\varphi$ is the topological entropy of $\phi_t^\varphi$. Recall that the \textit{geodesic flow} of $\rho$ was introduced by Bridgeman-Canary-Labourie-Sambarino \cite{BCLS}. On the other hand, the constant $\mathtt{m}=\mathtt{m}_{\rho,o,\varphi}$ is related to the total mass of a specific measure in the Bowen-Margulis measure class of this reparametrization (i.e. the homothety class of measures maximizing the entropy of the flow $\phi_t^\varphi$).

\subsection{Method and outline of the proof}

After defining the $(p,q)$-Cartan projection we begin the study of its asymptotic properties. Given a $\lieb^+$-compatible Weyl chamber $\liea^+$, we let $\Delta\subset\Sigma(\lieg,\liea)$ be the set of simple roots and 

\bc
$\g=\ko^\tau\exp(\liea^+)\ko^\tau$
\ec

\noindent be the associated \textit{Cartan decomposition} of $\g$. We denote by

\bc
$a^\tau:\g\too\liea^+$
\ec

\noindent the corresponding \textit{Cartan projection}. Recall that an element $g\in\g$ is said to have a \textit{gap of index $\Delta$} if $\alpha(a^\tau(g))$ is positive for every $\alpha\in\Delta$. In this case we have well defined full flags $U^\tau(g)$ and $S^\tau(g)$ which are called respectively the \textit{Cartan attractor} and \textit{Cartan repellor} of $g$ (see Subsection \ref{subsec reminders cartan and jordan}).

A full flag $\xi\in\mathsf{F}(V)$ is said to be $o$-\textit{generic} if for every $j=1,\dots,d$ the restriction of the form $o$ to the subspace $\xi^j$ is non degenerate. The space of $o$-generic flags is denoted by $\fvo$ and coincides with the union of open orbits of the action $\h^o\curvearrowright\mathsf{F}(V)$ (see Subsection \ref{subsub open orbits in flags}).

Under the assumption that $g\in\g$ has a ``sufficiently strong" gap of index $\Delta$ and $o$-generic Cartan attractor and repellor, we compute in Subsection \ref{subsec computation weyl chamber and hatw} the $\lieb^+$-compatible Weyl chamber that contains $b^o(g)$. This makes the study of the $(p,q)$-Cartan projection tractable. Indeed for a given $\lieb^+$-compatible Weyl chamber $\liea^+$, in Subsection \ref{subsec linear alg interp of bo and first estimates} we show the inequality
\begin{equation}\label{eq bo and cartan introduction}
\Vert b^o(g)-w_g\cdot a^\tau(g)\Vert_\lieb\leq D
\end{equation}
\noindent for some $D>0$ and an element $w_g$ of the Weyl group that we can precisely describe. Further, we can describe $b^o(g)$ using the Jordan projection $\lambda$ of $\g$:
\begin{equation}\label{eq bo and jordan introduction}
b^o(g)=\frac{1}{2}w_g\cdot\lambda(\sigma^o(g^{-1})g).
\end{equation}
\noindent Note that the estimate (\ref{eq bo and cartan introduction}) and the equality (\ref{eq bo and jordan introduction}) are not canonical: they strongly depend on the choice of a $\lieb^+$-compatible Weyl chamber $\liea^+$. Because of this, we emphasize that we do \textbf{not} fix the $\lieb^+$-compatible Weyl chamber $\liea^+$, only the Weyl chamber $\lieb^+$ is fixed beforehand.

If a Zariski dense $\Delta$-Anosov representation $\rho:\Gamma\too\g$ and a basepoint $o$ in $\orho$ are given, the estimate (\ref{eq bo and cartan introduction}) combined with the work of Sambarino \cite{Sam2} allows us to prove Corollary \ref{cor bound for counting problem introduction}. Furthermore, let $\cono$ be the $(p,q)$-\textit{asymptotic cone} of $\rho$. By definition, it is the subset of $\lieb^+$ consisting on all possible limits of the form

\bc
$\dfrac{b^o(\rho\gamma_n)}{t_n}$
\ec

\noindent where $t_n\too\infty$. Recall that for a given Weyl chamber $\liea^+$ of the system $\Sigma(\lieg,\liea)$, the asymptotic cone of $\rho$ (in the sense of Benoist \cite{Ben4}) is denoted by $\cont$. We show the following.

\begin{propsn}[Proposition \ref{prop limit cono}]
Let $\liea^+$ be a $\lieb^+$-compatible Weyl chamber and $\cont\subset\tn{int}(\liea^+)$ be the associated asymptotic cone. Then there exists a subset $\war$ of the Weyl group $\wb$ for which one has

\bc
$\cono=\displaystyle\bigcup_{w\in\war}w\cdot\cont$.
\ec
\end{propsn}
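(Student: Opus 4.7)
The strategy is to exploit the estimate (\ref{eq bo and cartan introduction}), namely $\Vert b^o(g) - w_g \cdot a^\tau(g)\Vert_\lieb \leq D$, which holds for every $g \in \bog$ with sufficiently strong gap of index $\Delta$ and $o$-generic Cartan attractor and repellor. Here $w_g \in \wb$ is a Weyl group element determined solely by the $\h^o$-orbit positions in $\fvo$ of these two flags, relative to the choice of $\liea^+$. By Corollary \ref{cor sigmagammainvgamma is loxod and Uo close to Utau}, all but finitely many $\gamma \in \Gamma$ have $\rho\gamma$ satisfying these hypotheses, so I would define $\war$ as the set of all $w_{\rho\gamma}$ that arise in this way; since $\wb$ is finite, $\war$ is automatically a finite subset depending only on $\liea^+$ and $\rho$.

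For the inclusion $\cono \subset \bigcup_{w \in \war} w \cdot \cont$, I would start from $v = \lim_n b^o(\rho\gamma_n)/t_n$ with $t_n \to \infty$. Passing to a subsequence, the finite-valued map $n \mapsto w_{\rho\gamma_n}$ can be taken constant, equal to some $w \in \war$. Dividing (\ref{eq bo and cartan introduction}) by $t_n$ shows that $a^\tau(\rho\gamma_n)/t_n \to w^{-1} v$, so $w^{-1} v$ lies in Benoist's cone $\cont$ by definition, and hence $v \in w \cdot \cont$.

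The reverse inclusion is the main obstacle. Given $w \in \war$ and $u \in \cont$, one must exhibit a sequence $\gamma_n \in \Gamma$ with $a^\tau(\rho\gamma_n)/t_n \to u$ \emph{and} $w_{\rho\gamma_n} = w$ for all large $n$; applying (\ref{eq bo and cartan introduction}) once more will then force $b^o(\rho\gamma_n)/t_n \to w \cdot u$. Fixing some $\gamma_0 \in \Gamma$ that realises $w$, the plan is to combine three ingredients. First, the value $w_g$ is locally constant on the open stratum of $\fvo \times \fvo$ parameterising pairs of transverse $o$-generic flags, so it is controlled by open $\h^o$-orbit conditions on $U^\tau(g)$ and $S^\tau(g)$. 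Second, the Anosov property implies that by pre- and post-composing with fixed elements of $\Gamma$ (for instance high powers of $\gamma_0$ or a pair of elements whose attracting/repelling boundary points lie in the same open orbits as those of $\rho\gamma_0$), the Cartan attractor and repellor of $\rho\gamma$ remain in the prescribed open $\h^o$-orbits, guaranteeing $w_{\rho\gamma} = w$. Third, Zariski density of $\rho(\Gamma)$ together with Benoist's density theorem for Jordan projections in $\cont$ allow such words to be chosen so that $a^\tau(\rho\gamma_n)/t_n$ approximates any prescribed direction in $\cont$. The delicate point, which is the hardest part of the argument, is to coordinate these three mechanisms into a single sequence; once this is achieved, both inclusions follow and the stated decomposition is proved.
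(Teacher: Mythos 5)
The forward inclusion $\cono\subset\bigcup_{w}w\cdot\cont$ is fine and is essentially the paper's argument: pass to a subsequence on which the (finitely-valued) Weyl element is constant and divide the estimate (\ref{eq bo and cartan introduction}) by $t_n$. One small correction: by Proposition \ref{prop bo close to wdotcartan} the element $w_g=w_{\iota_{\lieb^+}(\xi_s)\xi_{\liea^+}}$ is determined by the open $\h^o$-orbit of the Cartan \emph{repellor} alone (the attractor only needs to be $o$-generic); tracking both flags, as you do, makes the problem look harder than it is.

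The reverse inclusion is a genuine gap: you name the three ingredients you would like to combine and then state that coordinating them ``is the hardest part of the argument'' without carrying it out, so nothing is actually proved. Moreover, your toolkit is off-target. You do not need Benoist's density theorem for Jordan projections: any $u\in\cont$ already comes equipped with a sequence $a^\tau(\rho\gamma_n)/t_n\too u$ by the very definition of the asymptotic cone. The only issue is to modify that sequence so that the repellor lands in the prescribed open orbit without disturbing the Cartan projections at scale $t_n$. The paper does this with a single fixed element: defining $\war$ as the set of $w$ with $w\cdot\liea^+\subset\lieb^+$ and $\xi_\rho(\bg)\cap\fvo_{\iota_{\lieb^+}(w\cdot\liea^+)}\neq\emptyset$, one extracts a subsequence with $S^\tau(\rho\gamma_n)\too\xi_\rho(y)$, picks one $\gamma^0$ with $(\rho\gamma^0)^{-1}\cdot\xi_\rho(y)$ in the desired orbit and with $U^\tau(\rho\gamma^0)$ transverse to the $S^\tau(\rho\gamma_n)$, and replaces $\gamma_n$ by $\gamma_n\gamma^0$. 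Then $\Vert a^\tau(\rho(\gamma_n\gamma^0))-a^\tau(\rho\gamma_n)\Vert_\lieb$ stays bounded (so the limit direction $u$ is unchanged) while $S^\tau(\rho(\gamma_n\gamma^0))\too(\rho\gamma^0)^{-1}\cdot\xi_\rho(y)$, which forces $w_{\rho(\gamma_n\gamma^0)}=w$ for large $n$. Your implicit definition of $\war$ as ``the values that arise'' also obscures why every $w\cdot\cont$ is filled out entirely rather than partially; the orbit-intersection definition is what makes the steering argument available.
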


The subset $\war$ is in one to one correspondence with the set open orbits of the action of $\h^o\curvearrowright\mathsf{F}(V)$ that intersect the limit set $\xi_\rho(\bg)$. In contrast with Benoist's asymptotic cone \cite{Ben4}, the $(p,q)$-asymptotic cone is not necessarily convex (c.f. Remark \ref{rem war equal to one iff limit set in unique open orbit}).

We then begin the study of finer asymptotic properties of the $(p,q)$-Cartan projection. In the classical setting (i.e. for the Cartan projection $a^\tau(\cdot)$), the key object that appears is the (vector valued) \textit{Busemann cocycle}\footnote{Sometimes also called the \textit{Iwasawa cocycle} of $\g$.} of $\g$, introduced by Quint \cite{Qui2}. In Section \ref{sec busemann cocycles and gromov} we introduce an analogue of the Busemann cocycle, that we call the $o$-\textit{Busemann cocycle}, and that plays the role of the classical Busemann cocycle in our setting. We remark here that its definition depends on the choice of a $\lieb^+$-compatible Weyl chamber.

The assumption over the limit set in Theorem \ref{TEO CONTEO DIRECCIONAL BO EN QPQ INTRODUCCION} is equivalent to the fact that $\xi_\rho(\bg)$ is contained in a single open orbit of the action $\h^o\curvearrowright\mathsf{F}(V)$. As we shall see (c.f. Subsection \ref{subsec compatible weyl and o generic}) this assumption canonically selects a $\lieb^+$-compatible Weyl chamber $\liea^+$ and for this Weyl chamber we have the equality

\bc
$\cono=\cont$.
\ec

\noindent Furthermore, for every $\gamma$ large enough the equality
\bc
$b^o(\rho\gamma)=\frac{1}{2}\lambda(\sigma^o(\rho\gamma^{-1})\rho\gamma)$
\ec
\noindent can be assumed to hold (see Corollary \ref{cor if limit set in one orbit then HwexplieaplusH}) and we have also a well defined vector valued $o$-Busemann cocycle for $\rho$ (Subsection \ref{subsec buseman and gromov for rho}). Benoist's framework \cite{Ben1} allows us to obtain a precise comparison between $\frac{1}{2}\lambda(\sigma^o(\rho\gamma^{-1})\rho\gamma)$ and $\lambda(\rho\gamma)$ in terms of some appropiate vector valued \textit{cross-ratio}, which turns out to be closely related with the $o$-Busemann cocycle of $\rho$ (c.f. Corollaries \ref{cor gromov is cross ratio} and \ref{cor gromov is cross ratio for rho and estimate with bogamma}). Equipped with this precise estimate, if a functional $\varphi$ in the interior of the dual cone $\cont^*$ is given we are in position of applying Sambarino's adaptation \cite{Sam} of Roblin's method \cite{Rob} to obtain Theorem \ref{TEO CONTEO DIRECCIONAL BO EN QPQ INTRODUCCION}.

\subsection{Final remarks}

To finish this introduction we discuss some work related to the present paper.

\subsubsection{\tn{\textbf{Domains of discontinuity}}}
In joint work with F. Stecker, which is still in progress, we prove that $\orho$ is a \textit{domain of discontinuity} for $\rho$, i.e. the action of $\Gamma$ on $\orho$ induced by $\rho$ is properly discontinuous. In fact, our construction provides examples of domains of discontinuity for Anosov representations in a large class of $\g$-homogeneous spaces (for a connected semisimple Lie group $\g$ with no compact factors) that include \textit{symmetric spaces} of $\g$. This construction generalizes that of Kapovich-Leeb-Porti \cite{KLP2} (see Stecker \cite{SteThesis} or C. \cite{CarThesis} for further details). We mention here that in the present paper we do not use the fact that the action of $\Gamma$ on $\orho$ is properly discontinuous.

\subsubsection{\tn{\textbf{The classical counting problem}}}

Classically, the \textit{orbital counting problem} concerns the study of the asymptotic behaviour of the function
\begin{equation}\label{eq counting function classical}
t\mapsto\#\lbrace g\in\Xi:\hspace{0,3cm} d_{\mathsf{X}}(o,g\cdot o)\leq t \rbrace
\end{equation}
\noindent as $t\too\infty$, where $\Xi$ is a discrete group of isometries of a given proper non compact metric space $\mathsf{X}$, and $o$ is a basepoint in $\mathsf{X}$. This problem has been studied in many situations, by authors among who we find notably Gauss, Huber, Patterson and Margulis. Of course, this list is highly incomplete (we refer the reader to Babillot's survey \cite{Ba} for a more complete picture). Let us mention here that the asymptotic behaviour of the function (\ref{eq counting function classical}) has been studied when $\mathsf{X}$ coincides with the Riemannian symmetric space of a semisimple Lie group $\g$ with no compact factors. Indeed, when $\Xi <\g$ is a lattice one finds the work of Duke-Rudnick-Sarnak \cite{DRS} and more generally that of Eskin-McMullen \cite{EMcM}. In the non lattice case one also finds the work of Quint \cite{Qui} and Sambarino \cite{Sam2} (who deal with $\Delta$-Anosov subgroups of $\g$), and the work of Thirion \cite{ThirionTHESIS} (who deals with Ping-Pong subgroups of $\tn{SL}_d(\rr)$). The approach by Sambarino and Thirion is inspired by Roblin's method \cite{Rob}.

\subsubsection{\tn{\textbf{Relation with the work of Parkkonen-Paulin}}}
When $d=2$ the Main Problem of this paper has been studied by Parkkonen-Paulin \cite{ParkPaulin}. The results of \cite{ParkPaulin} are valid also in some situations of variable curvature bounded above by a negative constant, and in some of these situations the authors obtain estimates on the error terms for their counting results (see \cite{ParkPaulin} for precisions). Parkkonen and Paulin's work generalizes (to the variable curvature setting) previous work of Eskin-McMullen \cite{EMcM}, Oh-Shah \cite{OS3,OS2,OS4,OS} and Mohammadi-Oh \cite{MO}, which include counting problems associated to symmetric spaces of $\tn{SO}_0(n,1)$.

\subsubsection{\tn{\textbf{Relation with the work of Edwards-Lee-Oh}}}

In a recent preprint \cite{EdwLeeOh}, Edwards, Lee and Oh treat a counting problem for Zariski dense $\Delta$-Anosov subgroups which is related to ours. They look at a space of the form $\g/\h$ (where $\g$ is a semisimple Lie group and $\h\subset\g$ consists of fixed points of an involution of $\g$), and prove a counting theorem for the \textit{polar projection} (see e.g. \cite[Section 7]{Sch}) of discrete $\Gamma$-orbits\footnote{The authors do not assume that the intersection $\rho(\Gamma)\cap\h$ is finite.} in $\g/\h$. The norm of the polar projection of an element $g\in\g$ can be interpreted as the distance between a basepoint in $\mathsf{S}^\h$ and the submanifold $g\cdot\mathsf{S}^\h$, where $\mathsf{S}^\h\subset\xsyg$ is a totally geodesic copy of the Riemannian symmetric space of $\h$ (see C. \cite[Proposition 1.4.6]{CarThesis}). Here by ``norm'' we mean the one induced by a $\g$-invariant Riemannian structure on $\xsyg$ and part of the results of \cite{EdwLeeOh} include, in some specific situations, counting theorems with respect to this norm (see \cite[Theorem 1.11]{EdwLeeOh} for precisions).

We mention here that counting problems for the polar projection and also the Main Problem of the present paper have been studied as well in C. \cite{CarHpq} for $\g=\tn{PSO}(p,q)$, $\h=\tn{PSO}(p,q-1)$ and $\rho:\Gamma\too\g$ a \textit{projective} Anosov representation.

\subsection*{Plan of the paper}

Sections \ref{sec quad forms of fixed signautre} and \ref{sec weyl chambers and generic flags} are mainly intended to fix terminology and notations. Proposition \ref{prop param of open orbits by lieb compatible flags} is the most important result of those sections, as it will be helpful in the study of the $(p,q)$-Cartan projection. The $(p,q)$-Cartan decomposition is introduced in Section \ref{section pq-Cartan} and in Section \ref{sec pq Cartan for elements with gaps} we begin the study of the associated projection. Notably, the contents of Subsection \ref{subsec linear alg interp of bo and first estimates} will be of central importance for the rest of the paper (we prove the estimate (\ref{eq bo and cartan introduction}) and the equality (\ref{eq bo and jordan introduction})). In Section \ref{sec busemann cocycles and gromov} we introduce the vector valued $o$-Busemann cocycle and study some basic properties. Corollary \ref{cor bound for counting problem introduction} is proved in Subsection \ref{subsec exponential rate} and Theorem \ref{TEO CONTEO DIRECCIONAL BO EN QPQ INTRODUCCION} is proved in Section \ref{sec counting}. In Subsection \ref{subsec def anosov} we recall the definition and main properties of Anosov representations, and in Appendix \ref{app proof of ditribution} we explain how Sambarino's results \cite{Sam,Sam2} still hold in our setting. 

Dependence between sections is as follows:

\begin{center}
\begin{tikzpicture}
\node[left] (A) at (-5.8,-3) {\ref{sec quad forms of fixed signautre}};
\node[left] (B) at (-4.8,-3) {\ref{sec weyl chambers and generic flags}};
\node[left] (C) at (-3.8,-3) {\ref{section pq-Cartan}};
\node[left] (D) at (-2.8,-3) {\ref{sec pq Cartan for elements with gaps}};
\node[left] (E) at (-1.4,-2.5) {\ref{sec busemann cocycles and gromov}};
\node[left] (F) at (-2.7,-4.2) {\ref{subsec def anosov}};
\node[left] (G) at (-1.2,-3.5) {\ref{subsec asymptotic cone}};
\node[left] (H) at (0.6,-3) {\ref{sec counting}};
\node[left] (I) at (-0.2,-4.2) {\ref{app proof of ditribution}};
\node[left] (J) at (-0.7,-5) {\ref{subsec orho} to \ref{subsec exponential rate}};
\draw[->] (A) -- (B);
\draw[->] (B) -- (C);
\draw[->] (C) -- (D);
\draw[->] (D) -- (J);
\draw[->] (D) -- (G);
\draw[->] (E) -- (H);
\draw[->] (G) -- (H);
\draw[->] (I) -- (H);
\draw[->] (F) -- (J);
\draw[->] (F) -- (G);
\draw[->] (F) -- (I);
\end{tikzpicture}
\end{center}

\subsection*{Acknowledgements}

I am deeply grateful to Rafael Potrie and Andr\'es Sambarino for numerous helpful discussions and comments, as well as for their guidance and continued support and encouragement. The author would also like to thank Beatrice Pozzetti, Florian Stecker and Anna Wienhard for interesting discussions, and Hee Oh for helpful comments on a draft version of this paper. I also want to express my gratitude to the referees of this paper for careful reading and valuable comments, and in particular for asking if our results could hold over $\cc$. Finally I thank Pierre-Louis Blayac for pointing out a detail in the Appendix of this paper.

\section{Forms of fixed signature}\label{sec quad forms of fixed signautre}

From now on we fix a real or complex vector space $V$ of dimension $d\geq 3$ and denote by $\g$ the group $\tn{PSL}(V)$ of projectivized elements of $\tn{SL}(V)$. We let $\lieg$ be the Lie algebra of $\g$ and $\kappa:\lieg\times\lieg\too\rr$ be the Killing form of $\lieg$\footnote{When $\kk=\cc$, we compute the Killing form using the trace form associated to the underlying real structure of $\lieg$.}.

\subsection{Notations and terminology}\label{subsec notations}

Consider two non negative integers $p$ and $q$ such that $p+q=d$. A \textit{form of signature} $(p,q)$ on $V$ is a quadratic or Hermitian form of that signature, depending respectively on whether $\kk=\rr$ or $\kk=\cc$. We denote by $\xsypq$ the space of homothety classes of forms of signature $(p,q)$ on $V$, where two forms are said to be \textit{homothetic} if they differ by multiplication by a positive real number. Note that $\g$ acts on $\xsypq$ in a transitive way.

\subsubsection{\tn{\textbf{Structure of symmetric space}}}\label{subsub structure of symm space}

Let $o$ be point in $\xsypq$ and $\langle\cdot,\cdot\rangle_o$ be the form associated to a representative of $o$ . Since $o$ is non degenerate, we can define the $o$-\textit{adjoint operator}

\bc
$\leftidx{^{\star_o}}\cdot: \mathfrak{gl}(V)\too\mathfrak{gl}(V) $
\ec

\noindent where, for $T\in\mathfrak{gl}(V)$,  $\leftidx{^{\star_o}}T$ is the unique linear transformation of $V$ that satisfies the equality

\bc
$\langle T\cdot u,v\rangle_o=\langle u,\leftidx{^{\star_o}}T\cdot v\rangle_o$
\ec

\noindent for all $u$ and $v$ in $V$. Note that the $o$-adjoint $\leftidx{^{\star_o}}T$ does not depend on the choice of the representative $\langle\cdot,\cdot\rangle_o$ of $o$. Moreover, $\leftidx{^{\star_o}}\cdot$ preserves $\tn{SL}(V)$ and descends to a map $\g\too\g$, that we still denote by $\leftidx{^{\star_o}}\cdot$. Define $\sigma^o$ to be the involutive automorphism of $\g$ given by

\bc
$\sigma^o(g):=\adg{^{-1}}$.
\ec

\noindent Then $\xsypq$ identifies with $\g/\h^o$, where $\h^o$ is the subgroup of $\g$ consisting of fixed points of the involution $\sigma^o$. The space $\xsypq$ is then a \textit{symmetric space} of $\g$. 

Let $d\sigma^o$ be the derivative of $\sigma^o$ at the identity element of $\g$. The map

\bc
$X\mapsto \left.\frac{d}{dt}\right\vert_{t=0}\exp(tX)\cdot o$
\ec

\noindent gives a $\g$-equivariant identification between

\bc
$\lieq^o:=\lbrace d\sigma^o=-1\rbrace$
\ec

\noindent and the tangent space to $\xsypq$ at the point $o$. If $\lieh^o$ denotes the subalgebra of $\lieg$ consisting of fixed points of $d\sigma^o$, one has the following decomposition

\bc

$\lieg=\lieh^o\oplus\lieq^o$.

\ec

\noindent This decomposition is orthogonal with respect to the Killing form $\kappa$.

\begin{rem}\label{rem invariant form on exterior powers}
For $j=1,\dots,d$ consider the group $S_j$ of permutations of $\lbrace 1,\dots,j\rbrace$. Let $\varepsilon(\omega)$ denote the sign of $\omega\in S_j$. Let $\Lambda^j$ be the $j^{\tn{th}}$-exterior power representation of $\g$. Then the form on $\Lambda^jV$ defined by

\bc
$\langle v_1\wedge\dots\wedge v_j,  v_1'\wedge\dots\wedge v_j'\rangle_{o_j}:=\dfrac{1}{j!}\displaystyle\sum_{\omega\in S_j}\varepsilon(\omega)\displaystyle\prod_{i=1}^j\langle v_i, v_{\omega(i)}'\rangle_o$
\ec

\noindent is non degenerate and invariant under the action of $\Lambda^j \h^o$. Denote by $o_j$ the ray containing the form associated to $\langle\cdot,\cdot\rangle_{o_j}$ and observe that for every $g\in\g$ one has
\begin{equation}\label{eq sigma on lambda i commutes with sigma}
\sigma^{o_j}(\Lambda^jg)=\Lambda^j\sigma^o(g).
\end{equation}
\end{rem}

\subsubsection{\tn{\textbf{The Riemannian symmetric space}}}

A \textit{Cartan involution} of $\lieg$ is an $\rr$-bilinear involutive automorphism $\tau:\lieg\too\lieg$ for which the form on $\lieg$ given by

\bc
$(X,Y)\mapsto-\kappa(X,\tau\cdot Y)$
\ec

\noindent is positive definite. The Lie group $\g$ acts on the space $\xsyg$ of Cartan involutions of $\lieg$ in a transitive way and the stabilizer $\ko^\tau$ of a point $\tau$ in $\xsyg$ is a maximal compact subgroup of $\g$ (see Knapp \cite[Corollary 6.19 and Theorem 6.31]{Kna}). In particular, the space $\xsyg$ can be endowed with a $\g$-invariant Riemannian metric which is necessarily non positively curved (see Helgason \cite[Theorem 4.2 of Ch. IV]{Hel}). We fix once and for all such a Riemannian metric and call the space $\xsyg$ the \textit{Riemannian symmetric space} of $\g$. We let $d_{\xsy}(\cdot,\cdot)$ denote the ($\g$-invariant) distance on $\xsy$ induced by the Riemannian structure. For a point $\tau\in\xsyg$ we define

\bc
$\liep^{\tau}:=\lbrace \tau=-1\rbrace$ and $\liek^{\tau}:=\lbrace \tau=1\rbrace$.
\ec

\noindent The Riemannian structure on $\xsyg$ induces a Euclidean norm on $\liep^\tau$.

\begin{rem}\label{rem Qd is xsyg and cartan subspace is a basis of lines}

Note that one has natural identifications

\bc
$\mathsf{Q}_{0,d}\cong\mathsf{Q}_{d,0}\cong\xsyg$.
\ec

\noindent Indeed, for $o\in\mathsf{Q}_{d,0}$ we have that $d\sigma^o\in\xsy$. For this reason (and to avoid confusions), from now on the notation $\xsypq$ will always mean that $p$ and $q$ are positive. However, we will identify points in $\xsyg$ with homothety classes of inner products on $V$ whenever convenient.
\end{rem}

\subsubsection{\tn{\textbf{Cartan subspaces and flats}}}

Fix a basepoint $o\in\xsypq$. There exist Cartan involutions $\tau$ of $\lieg$ that commute with $d\sigma^o$ and two of them differ by the action of $\h^o_0$, the connected component of $\h^o$ containing the identity element (see Matsuki \cite[Lemmas 3 and 4]{Mat}).

Take a point $\tau\in\xsyg$ for which $\tau$ and $d\sigma^o$ commute and let $\lieb\subset\liep^{\tau}\cap\lieq^o$ be a (necessarily abelian) maximal subalgebra. The norm on $\lieb$ induced by the Riemannian structure on $\xsyg$ is denoted by $\Vert\cdot\Vert_\lieb$. For all $X\in\lieb$ one has
\begin{equation}\label{eq definition normlieb and distance on xsyg}
d_{\xsyg}(\tau,\exp(X)\cdot\tau)=\Vert X\Vert_\lieb.
\end{equation}

Let us now describe a maximal subalgebra $\lieb\subset\liep^\tau\cap\lieq^o$ in a more concrete way. In the process we fix some terminology that will remain valid for the rest of the paper. 

Fix a representative $\langle\cdot,\cdot\rangle_o$ of $o$. For a subspace $\pi$ of $V$ denote 

\bc
$\pi^{\perp_o}:=\lbrace v\in V:\hspace{0,3cm} \langle v,u\rangle_o=0 \tn{ for all } u\in\pi\rbrace$.
\ec

\noindent A set is said to be $o$-\textit{orthogonal} if its elements are pairwise orthogonal with respect to the form $\langle\cdot,\cdot\rangle_o$. The $o$-\textit{sign} of a vector $v$ in $V$ is defined by

\bc
$\sgo(v):=\left\lbrace\begin{array}{l}
1 \hspace{0,5cm}\tn{ if } \langle v,v\rangle_o>0\\
-1 \hspace{0,2cm}\tn{ if } \langle v,v\rangle_o<0\\
0 \hspace{0,5cm}\tn{ if } \langle v,v\rangle_o=0
\end{array}
\right.$.
\ec 

\noindent This vector is said to be $\langle\cdot,\cdot\rangle_o$-\textit{unitary} if

\bc
$\langle v,v\rangle_o\in\lbrace - 1,1\rbrace$.
\ec

\noindent A basis $\mathcal{B}$ of $V$ is said to be $\langle\cdot,\cdot\rangle_o$-\textit{orthonormal} if it is $o$-orthogonal and its elements are $\langle\cdot,\cdot\rangle_o$-unitary. Finally, the $o$-\textit{sign} of a line $\ell$ in $V$ is defined in an analogous way and a \textit{basis of lines} of $V$ is a set of $d$ lines in $V$ that span $V$.

\begin{ex} \label{ex explicit example Qpq}
Let $\mathcal{B}$ be a $\langle\cdot,\cdot\rangle_o$-orthonormal basis of $V$ and $\langle\cdot,\cdot\rangle$ be the inner product of $V$ for which this basis is orthonormal. The associated point in $\xsyg$ will be denoted by $\tau$ and we emphasize the link between the inner product $\langle\cdot,\cdot\rangle$ and the point $\tau$ by denoting $\langle\cdot,\cdot\rangle_\tau:=\langle\cdot,\cdot\rangle$. It can be seen that $d\sigma^o$ and $\tau$ commute.

Pick $\lieb$ to be the subset of $\lieg$ consisting of elements which are diagonal in the basis $\mathcal{B}$, with real eigenvalues. Since $\mathcal{B}$ is both $\langle\cdot,\cdot\rangle_o$-orthonormal and $\langle\cdot,\cdot\rangle_\tau$-orthonormal, one has

\bc
$\lieb\subset \liep^\tau\cap\lieq^o$
\ec 

\noindent and one can see that $\lieb$ is a maximal subalgebra in $\liep^\tau\cap\lieq^o$.
\end{ex}

From Example \ref{ex explicit example Qpq} we conclude that maximal subalgebras $\lieb\subset\liep^\tau\cap\lieq^o$ are in fact maximal in $\liep^\tau$, that is, they are \textit{Cartan subspaces} of $\lieg$. It is standard to denote Cartan subspaces of $\lieg$ with the symbol $\liea$ instead of $\lieb$. We will use the notation $\lieb$ if we want to emphasize that this is a maximal subalgebra in $\liep^\tau\cap\lieq^o$ and use the notation $\liea:=\lieb$ if we want to emphasize that this subalgebra is maximal in $\liep^\tau$ and apply the classical theory to it.

As outlined in Example \ref{ex explicit example Qpq}, the space of Cartan subspaces of $\lieg$ is in natural bijection with the space of bases of lines of $V$. A Cartan subspace is contained in $\liep^\tau$ (resp. $\lieq^o$) if and only if the corresponding basis of lines is $\tau$-orthogonal (resp. $o$-orthogonal).

A \textit{flat} in $\xsyg$ is a maximal dimensional totally geodesic submanifold of $\xsy$ on which sectional curvature vanishes. The space of flats in $\xsyg$ (through the basepoint $\tau$) is in one to one correspondence with the space of Cartan subspaces of $\lieg$ (contained in $\liep^\tau$). Concretely, if $\liea$ is a Cartan subspace of $\lieg$ (contained in $\liep^\tau$) and $\mathcal{C}$ is the associated ($\tau$-orthogonal) basis of lines of $V$, then

\bc
$\lbrace \tau'\in\xsy:\hspace{0,3cm} \mathcal{C} \tn{ is } \tau'\tn{-orthogonal} \rbrace$
\ec

\noindent is a flat in $\xsyg$. It coincides with $\exp(\liea)\cdot\tau$.

\subsection{The submanifold $\so$}\label{subsec so}

Let

\bc
$\so:=\lbrace \tau\in\xsyg:\hspace{0,3cm} \tau (d\sigma^o)=(d\sigma^o) \tau\rbrace$.
\ec

\noindent Concretely, an homothety class of inner products $\tau$ belongs to $\so$ if and only if there exist representatives $\langle\cdot,\cdot\rangle_o$ of $o$ and $\langle\cdot,\cdot\rangle_\tau$ of $\tau$ and a basis $\mathcal{B}$ of $V$ which is both $\langle\cdot,\cdot\rangle_o$-orthonormal and $\langle\cdot,\cdot\rangle_\tau$-orthonormal.

The Riemannian symmetric space $\mathsf{X}_{\h^o}$ of $\h^o$ can be identified with the space of $q$-dimensional subspaces of $V$ on which the form $o$ is negative definite. We then find an isometry 

\bc
$\so\too\mathsf{X}_{\h^o}$
\ec

\noindent that sends each $\tau\in\so$ to the subspace of $V$ spanned by the vectors of $\mathcal{B}$ which are negative for the form $o$, where $\mathcal{B}$ is a basis of $V$ as in the above paragraph. It follows also that $\so$ is $\h^o$-invariant and $\so=\h^o_0\cdot \tau$ for any $\tau\in\so$. In particular, the tangent space to $\so$ at $\tau$ identifies with

\bc
$\liep^\tau\cap\lieh^o$
\ec

\noindent and $\so$ is totally geodesic (see e.g. Helgason \cite[Theorem 7.2 of Ch. IV]{Hel}). We deduce the following.

\begin{cor}\label{cor orthogonal flats to so}

Let $\tau$ be a point in $\so$ and $\liea\subset \liep^\tau$ be a maximal subalgebra. Then the following are equivalent:

\begin{enumerate}
\item The flat $\exp(\liea)\cdot\tau$ is orthogonal to $\so$ at $\tau$.
\item The inclusion $\liea\subset\liep^\tau\cap\lieq^o$ holds.
\item The basis of lines $\mathcal{C}$ associated to $\liea$ is $o$-orthogonal (and $\tau$-orthogonal).
\end{enumerate}
\end{cor}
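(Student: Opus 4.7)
The plan is to show the equivalences by combining the Cartan-subspace$\leftrightarrow$basis-of-lines dictionary fixed after Example \ref{ex explicit example Qpq} with the orthogonal decomposition $\lieg=\lieh^o\oplus\lieq^o$ and the identification of the tangent space of $\so$ at $\tau$ with $\liep^\tau\cap\lieh^o$.

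First I would establish the equivalence (2)$\Leftrightarrow$(3), which is the most direct. A Cartan subspace $\liea\subset\lieg$ corresponds to a basis of lines $\mathcal{C}$ of $V$: the elements of $\liea$ are precisely those operators in $\lieg$ which are diagonal in any frame adapted to $\mathcal{C}$ with real eigenvalues. As recorded in the paragraph following Example \ref{ex explicit example Qpq}, $\liea$ lies in $\liep^\tau$ iff $\mathcal{C}$ is $\tau$-orthogonal, and lies in $\lieq^o$ iff $\mathcal{C}$ is $o$-orthogonal. Hence (2) and (3) are just reformulations of each other.

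Next I would prove (1)$\Leftrightarrow$(2). The flat $\exp(\liea)\cdot\tau$ is totally geodesic and its tangent space at $\tau$ is $\liea\subset\liep^\tau$, while the tangent space of $\so$ at $\tau$ is $\liep^\tau\cap\lieh^o$ (as recalled above the statement). Because $\tau$ and $d\sigma^o$ commute, the involution $d\sigma^o$ preserves $\liep^\tau$, so one gets the $\kappa$-orthogonal decomposition
\[
\liep^\tau=(\liep^\tau\cap\lieh^o)\oplus(\liep^\tau\cap\lieq^o),
\]
inherited from $\lieg=\lieh^o\oplus\lieq^o$ (which is $\kappa$-orthogonal). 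Since the Riemannian metric on $\xsyg$ restricts on $\liep^\tau$ to (a positive multiple of) $-\kappa(\cdot,\tau\cdot)=-\kappa(\cdot,-\cdot)=\kappa|_{\liep^\tau}$ up to sign, $\kappa$-orthogonality on $\liep^\tau$ coincides with metric orthogonality. Therefore $\liea$ is metric-orthogonal to $\liep^\tau\cap\lieh^o$ iff $\liea\subset\liep^\tau\cap\lieq^o$, which is precisely the equivalence (1)$\Leftrightarrow$(2).

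The only subtlety worth checking carefully is the compatibility of the two involutions $\tau$ and $d\sigma^o$ on $\liep^\tau$ needed for the splitting; but this follows immediately from $\tau\in\so$, that is from $\tau(d\sigma^o)=(d\sigma^o)\tau$, so there is no real obstacle.
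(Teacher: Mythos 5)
Your proposal is correct and follows exactly the deduction the paper intends (the paper offers no explicit proof, merely "We deduce the following" from the preceding facts): the Cartan-subspace/basis-of-lines dictionary gives (2)$\Leftrightarrow$(3), and the $\kappa$-orthogonal splitting $\liep^\tau=(\liep^\tau\cap\lieh^o)\oplus(\liep^\tau\cap\lieq^o)$ together with the identification of $T_\tau\so$ with $\liep^\tau\cap\lieh^o$ gives (1)$\Leftrightarrow$(2). The only cosmetic slip is the phrase ``up to sign'': on $\liep^\tau$ one has $-\kappa(X,\tau\cdot Y)=\kappa(X,Y)$ exactly, which only strengthens your point.
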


\subsection{Generic flags}

For $j=1,\dots,d$ we denote by $\mathsf{Gr}_j(V)$ the \textit{Grassmannian} of $j$-dimensional subspaces of $V$. Denote by $\mathsf{F}(V)$ the space of \textit{complete} (or \textit{full}) \textit{flags} of $V$, that is,

\bc
$\mathsf{F}(V):=\lbrace \xi=(\xi^1\subset\dots\subset\xi^d): \hspace{0,3cm} \xi^j\in\mathsf{Gr}_j(V) \tn{ for every }j=1,\dots,d
\rbrace$.
\ec

\noindent Two complete flags $\xi_1$ and $\xi_2$ are said to be \textit{transverse} if for every $j=1,\dots,d$ the subspace $\xi_1^j$ is linearly disjoint from $\xi_2^{d-j}$. Equivalently, recall that for every $j=1,\dots,d$ there are equivariant maps

\bc
$\Lambda^j:\mathsf{F}(V)\too\pp(\Lambda^jV)$ and $\Lambda_*^j:\mathsf{F}(V)\too\pp(\Lambda^jV^*)$
\ec

\noindent into the projective spaces of $\Lambda^jV$ and $\Lambda^jV^*$ respectively. Then $\xi_1$ is transverse to $\xi_2$ if and only if for every $j=1,\dots,d-1$ the line $\Lambda^j(\xi_1)$ is linearly disjoint from the hyperplane\footnote{As usual, for a finite dimensional vector space $W$ one identifies $\pp(W^*)$ with the Grassmannian of codimension-one subspaces of $W$ by the map $\vartheta\mapsto \ker\vartheta$.} $\Lambda^j_*(\xi_2)$. The space of ordered pairs of transverse full flags of $V$ is denoted by $\fvc$.

\subsubsection{\tn{\textbf{Genericity of flags with respect to a basepoint}}}

In this subsection we introduce the notion of $o$-\textit{generic} flag and discuss some characterizations. This notion is introduced by means of an involution

\bc
$\cdot^{\perp_o}:\mathsf{F}(V)\too\mathsf{F}(V)$
\ec

\noindent which is defined in the following way. Given a full flag $\xi=(\xi^1,\dots,\xi^d)$ in $\mathsf{F}(V)$ we denote by $\xi^{\perp_o}$ the complete flag of $V$ defined by the equalities

\bc
$(\xi^{\perp_o})^j:=({\xi^{d-j}})^{\perp_o}$
\ec

\noindent for $j=1,\dots,d$.

The following lemma is direct.

\begin{lema}\label{lema action of sigma on flags}

For every $g$ in $\g$ and every $\xi$ in $\mathsf{F}(V)$ one has

\bc
$\sigma^{o}(g)\cdot(\xi^{\perp_o})=(g\cdot\xi)^{\perp_o}=g\cdot(\xi^{\perp_{g^{-1}\cdot o}})$.
\ec

\noindent In particular, the following holds:

\bc
$(g\cdot\xi)^{\perp_{g\cdot o}}=g\cdot(\xi^{\perp_{o}})$.
\ec
\end{lema}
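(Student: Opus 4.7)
The plan is to verify each equality at the level of a single subspace $W\subset V$ by unfolding definitions, and then apply the result graded-piece by graded-piece to a full flag $\xi$, using $(\xi^{\perp_o})^j=(\xi^{d-j})^{\perp_o}$. Two identities will do all the work:
\begin{enumerate}
\item[(i)] The defining property of $\leftidx{^{\star_o}}\cdot$, namely $\langle g\cdot u,v\rangle_o=\langle u,\leftidx{^{\star_o}}g\cdot v\rangle_o$, which since $\sigma^o(g)=(\leftidx{^{\star_o}}g)^{-1}$ rewrites as $\langle u,v\rangle_o=\langle g\cdot u,\sigma^o(g)\cdot v\rangle_o$.
\item[(ii)] The defining relation for the $\g$-action on $\xsypq$, which after choosing a representative $\langle\cdot,\cdot\rangle_o$ of $o$ is encoded by the formula $\langle u,v\rangle_{g\cdot o}=\langle g^{-1}\cdot u,g^{-1}\cdot v\rangle_o$; equivalently $g\in\h^o$ is exactly the stabilizer of $o$ under this rule.
\end{enumerate}

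For the first equality I would prove $(g\cdot W)^{\perp_o}=\sigma^o(g)\cdot(W^{\perp_o})$ for any subspace $W\subset V$. A vector $v$ lies in the left-hand side iff $\langle g\cdot w,v\rangle_o=0$ for every $w\in W$; by (i) this is equivalent to $\langle w,\leftidx{^{\star_o}}g\cdot v\rangle_o=0$ for every $w\in W$, i.e., $\leftidx{^{\star_o}}g\cdot v\in W^{\perp_o}$, i.e., $v\in (\leftidx{^{\star_o}}g)^{-1}\cdot W^{\perp_o}=\sigma^o(g)\cdot W^{\perp_o}$. Taking $W=\xi^{d-j}$ for each $j$ yields $\sigma^o(g)\cdot(\xi^{\perp_o})=(g\cdot\xi)^{\perp_o}$.

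For the second equality I would similarly prove $g\cdot(W^{\perp_{g^{-1}\cdot o}})=(g\cdot W)^{\perp_o}$. Writing an element of the left-hand side as $g\cdot w$, it belongs there iff $\langle w,u\rangle_{g^{-1}\cdot o}=0$ for every $u\in W$; by (ii) this equals $\langle g\cdot w,g\cdot u\rangle_o=0$, which says precisely that $g\cdot w\in (g\cdot W)^{\perp_o}$. Applying this graded-piece by graded-piece gives the second equality. The \emph{in particular} assertion is then obtained from the second equality by replacing $o$ with $g\cdot o$, since $g^{-1}\cdot(g\cdot o)=o$.

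The only subtle point — and the main thing to pin down carefully — is to make sure the two normalization conventions, namely the formula $\sigma^o(g)=(\leftidx{^{\star_o}}g)^{-1}$ defining the Cartan involution at $o$ and the rule $\langle u,v\rangle_{g\cdot o}=\langle g^{-1}\cdot u,g^{-1}\cdot v\rangle_o$ for the $\g$-action on forms, are compatible with $\h^o$ being the stabilizer of $o$; once these are fixed (as they are in Subsection~\ref{subsub structure of symm space}), the argument is a short chain of substitutions.
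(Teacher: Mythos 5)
Your proof is correct, and the paper itself gives no argument here (it labels the lemma ``direct''), so your subspace-by-subspace unfolding of the adjoint identity and of the action on forms is exactly the intended verification. The compatibility check you flag at the end — that $\sigma^o(g)=(\leftidx{^{\star_o}}g)^{-1}$ and $\langle u,v\rangle_{g\cdot o}=\langle g^{-1}\cdot u,g^{-1}\cdot v\rangle_o$ both make $\h^o$ the stabilizer of $o$ — is indeed the only point requiring care, and it holds with the conventions of Subsection~\ref{subsub structure of symm space}.
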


A full flag $\xi\in\mathsf{F}(V)$ is said to be $o$-\textit{generic} if it is transverse to $\xi^{\perp_o}$. We have the following useful equivalences, which hold by definitions.

\begin{lema}\label{lema equivalences for o generic flags}

Let $\xi=(\xi^1,\dots,\xi^d)$ be an element of $\mathsf{F}(V)$. Then the following are equivalent:

\begin{enumerate}

\item The flag $\xi$ is $o$-generic.
\item For every $j=1,\dots,d$, the restriction of the form $o$ to $\xi^j$ is non degenerate.
\item There exists a unique $o$-orthogonal ordered basis of lines 

\bc
$\lbrace\ell_1^o(\xi),\dots,\ell^o_d(\xi)\rbrace$
\ec

\noindent of $V$ such that the equality

\bc
$\xi^j=\ell_1^o(\xi)\oplus\dots\oplus\ell_j^o(\xi)$
\ec

\noindent holds for every $j=1,\dots,d$.
\item For every $j=1,\dots,d$, the line $\Lambda^j\xi$ is transverse to the hyperplane $(\Lambda^j\xi)^{\perp_{o_j}}$.

\end{enumerate}

\end{lema}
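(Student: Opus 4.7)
The plan is to take condition (2) as a hub and prove the three equivalences $(1)\Leftrightarrow(2)$, $(2)\Leftrightarrow(3)$, and $(2)\Leftrightarrow(4)$ separately, each reducing to elementary linear algebra. For $(1)\Leftrightarrow(2)$ I would unpack the definitions of $\xi^{\perp_o}$ and of transversality to obtain that $\xi$ is $o$-generic precisely when $\xi^j\cap(\xi^j)^{\perp_o}=0$ for every $j=1,\dots,d$; this intersection is the radical of the restriction of $o$ to $\xi^j$, whose vanishing is exactly the non-degeneracy of $o|_{\xi^j}$.

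For $(2)\Rightarrow(3)$ I would construct the basis of lines inductively. Setting $\ell_1^o(\xi):=\xi^1$ and, assuming $\ell_1^o(\xi),\dots,\ell_{j-1}^o(\xi)$ already defined with $\xi^{j-1}=\ell_1^o(\xi)\oplus\dots\oplus\ell_{j-1}^o(\xi)$, I would define $\ell_j^o(\xi)$ as the $o|_{\xi^j}$-orthogonal complement of $\xi^{j-1}$ inside $\xi^j$. Non-degeneracy of both $o|_{\xi^{j-1}}$ and $o|_{\xi^j}$ forces this complement to be one-dimensional and to meet $\xi^{j-1}$ trivially, so $\ell_j^o(\xi)$ is a well-defined line and $\xi^j=\xi^{j-1}\oplus\ell_j^o(\xi)$. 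Uniqueness of the whole ordered basis would then follow step by step from the uniqueness of each $\ell_j^o(\xi)$, since any candidate line must sit in $\xi^j$ and be $o$-orthogonal to $\ell_1^o(\xi)\oplus\dots\oplus\ell_{j-1}^o(\xi)=\xi^{j-1}$. Conversely, for $(3)\Rightarrow(2)$, the key observation is that an $o$-orthogonal basis of lines in $V$ cannot contain an $o$-isotropic line: such a line would be $o$-orthogonal to itself and, by hypothesis, to every other basis line, hence to all of $V$, contradicting the global non-degeneracy of $o$. Consequently the Gram matrix of $\ell_1^o(\xi),\dots,\ell_j^o(\xi)$ is diagonal with non-zero entries, forcing $o|_{\xi^j}$ to be non-degenerate.

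Finally, for $(2)\Leftrightarrow(4)$, I would combine the formula of Remark \ref{rem invariant form on exterior powers} with the Leibniz expansion of the determinant to get, for any basis $v_1,\dots,v_j$ of $\xi^j$,
\[
\langle v_1\wedge\dots\wedge v_j,\,v_1\wedge\dots\wedge v_j\rangle_{o_j}=\tfrac{1}{j!}\det\bigl(\langle v_i,v_k\rangle_o\bigr)_{i,k=1}^{j}.
\]
Thus the line $\Lambda^j\xi$ fails to be transverse to the hyperplane $(\Lambda^j\xi)^{\perp_{o_j}}$ -- equivalently, $\Lambda^j\xi$ is $o_j$-isotropic -- exactly when this Gram matrix is singular, i.e.\ exactly when $o|_{\xi^j}$ is degenerate. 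I do not anticipate any serious obstacle here; the only point that deserves a bit of care is the bookkeeping of the inductive uniqueness in $(2)\Rightarrow(3)$.
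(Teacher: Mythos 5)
Your proposal is correct and is exactly the routine verification the paper has in mind: the paper offers no written proof (it introduces the lemma with ``the following useful equivalences, which hold by definitions''), and your unpacking of transversality as vanishing of the radical, the inductive Gram--Schmidt-type construction of the lines with its uniqueness, and the identification of $\langle\Lambda^j\xi,\Lambda^j\xi\rangle_{o_j}$ with $\tfrac{1}{j!}$ times the Gram determinant are all the standard steps. No gaps.
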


The following remark is useful.

\begin{rem}\label{rem ordered basis induced by o generic and double perp and xi o generic iff gxi go generic}

Fix $j=1,\dots,d$ and let $\xi$ be an $o$-generic flag. Then the following equality holds

\bc
$\Lambda^j_* (\xi^{\perp_o})=(\Lambda^j\xi)^{\perp_{o_j}}$.
\ec

\noindent In particular, if $\xi'\in\mathsf{F}(V)$ is a flag transverse to $\xi$ then the hyperplane $(\Lambda^j(\xi^{\perp_o}))^{\perp_{o_j}}$ is transverse to the line $\Lambda^j\xi'$.
\end{rem}

\subsubsection{\tn{\textbf{Open orbits of point-stabilizers}}}\label{subsub open orbits in flags}

Denote by $\fvo$ the subset of $\mathsf{F}(V)$ consisting of $o$-generic flags. One can see that $\fvo$ coincides with the union of open orbits of the action of $\h^o$ on $\mathsf{F}(V)$.

The proof of the following proposition is direct.

\begin{prop}\label{prop equivalences of two generic flags in same orbit}
Let $\xi$ and $\xi'$ be two $o$-generic flags. Then the following are equivalent:

\begin{enumerate}
\item The flags $\xi$ and $\xi'$ belong to the same orbit of the action $\h^o\curvearrowright\mathsf{F}(V)$.
\item For every $j=1,\dots,d$, the signature of $o$ restricted to $\xi^j$ coincides with the signature of $o$ restricted to $\xi'^j$.
\item For every $j=1,\dots,d$, one has $\sgo(\ell^o_j(\xi))=\sgo(\ell_j^o(\xi'))$.
\item For every $j=1,\dots,d$, one has

\bc
$\tn{sg}_{o_j}(\Lambda^j\xi)=\tn{sg}_{o_j}(\Lambda^j\xi')$.
\ec
\end{enumerate}
\end{prop}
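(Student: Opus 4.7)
The plan is to prove the chain $(1)\Rightarrow(2)\Rightarrow(3)\Rightarrow(1)$ and then show $(3)\Leftrightarrow(4)$ using the explicit formula for $\langle\cdot,\cdot\rangle_{o_j}$ given in Remark \ref{rem invariant form on exterior powers}.

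The implication $(1)\Rightarrow(2)$ is immediate from the fact that any $h\in\h^o$ preserves $\langle\cdot,\cdot\rangle_o$ up to a positive scalar, hence the signature of the restriction to any subspace is preserved when the subspace is moved by $h$. For $(2)\Rightarrow(3)$ I would use the fact that, by item (3) of Lemma \ref{lema equivalences for o generic flags}, the decomposition $\xi^j=\ell_1^o(\xi)\oplus\cdots\oplus\ell_j^o(\xi)$ is $o$-orthogonal; hence the signature of $o|_{\xi^j}$ equals $\bigl(\#\{i\le j:\sgo(\ell_i^o(\xi))=1\},\ \#\{i\le j:\sgo(\ell_i^o(\xi))=-1\}\bigr)$. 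Consequently $\sgo(\ell_j^o(\xi))$ is recovered as the difference between the signatures of $o|_{\xi^j}$ and $o|_{\xi^{j-1}}$, so equality of signatures at every level forces equality of signs line by line.

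The core step is $(3)\Rightarrow(1)$. For each $i$ I would pick $\langle\cdot,\cdot\rangle_o$-unitary generators $v_i\in\ell_i^o(\xi)$ and $v_i'\in\ell_i^o(\xi')$; by (3) these have matching $o$-signs, so the linear map $T\in\tn{GL}(V)$ defined by $T(v_i)=v_i'$ satisfies $\langle Tu,Tw\rangle_o=\langle u,w\rangle_o$ for all $u,w\in V$, i.e.\ $T$ lies in the linear isometry group of $\langle\cdot,\cdot\rangle_o$. Moreover $T$ clearly carries $\xi^j$ to $\xi'^j$ for every $j$, so its class in $\g$ sends $\xi$ to $\xi'$. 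The only subtle point is descending to $\tn{PSL}(V)$: the determinant of $T$ is a unit ($\pm 1$ if $\kk=\rr$, a unit complex number if $\kk=\cc$), and I would fix this by replacing some $v_i'$ with $\lambda v_i'$ for a suitable unit scalar $\lambda$ (this preserves the $o$-unitary condition and the line $\ell_i^o(\xi')$) and then rescaling globally by a $d$-th root of the determinant so as to land in $\tn{SL}(V)$; the resulting class belongs to $\h^o$ and realizes the desired equivalence.

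Finally, $(3)\Leftrightarrow(4)$ follows from a direct computation: with $v_i$ an $o$-unitary generator of $\ell_i^o(\xi)$, the $o$-orthogonality forces all off-diagonal terms in the sum defining $\langle v_1\wedge\cdots\wedge v_j,v_1\wedge\cdots\wedge v_j\rangle_{o_j}$ to vanish, leaving
\[
\langle v_1\wedge\cdots\wedge v_j,v_1\wedge\cdots\wedge v_j\rangle_{o_j}=\tfrac{1}{j!}\prod_{i=1}^{j}\langle v_i,v_i\rangle_o,
\]
so $\tn{sg}_{o_j}(\Lambda^j\xi)=\prod_{i=1}^{j}\sgo(\ell_i^o(\xi))$. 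Equality of these products for all $j$ is equivalent, by taking successive ratios, to the line-by-line equality of signs in (3). I expect the only mildly delicate point in the whole argument to be the determinant adjustment in the construction of the isometry relating $\xi$ and $\xi'$, particularly in the real even-dimensional case where $\det T=-1$ has to be remedied by flipping the sign of a single unitary vector before projectivizing.
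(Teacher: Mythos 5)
Your argument is correct and is exactly the ``direct'' verification the paper has in mind -- the paper omits the proof entirely, asserting only that it is direct. The chain $(1)\Rightarrow(2)\Rightarrow(3)\Rightarrow(1)$, the Gram-determinant computation showing $\tn{sg}_{o_j}(\Lambda^j\xi)=\prod_{i\leq j}\sgo(\ell_i^o(\xi))$ for $(3)\Leftrightarrow(4)$, and the determinant adjustment needed to land in $\tn{SL}(V)$ before projectivizing are all sound.
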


\section{Weyl chambers and generic flags}\label{sec weyl chambers and generic flags}

Fix a point $\tau\in\so$ and a maximal subalgebra $\lieb\subset\liep^\tau\cap\lieq^o$. Let $\mathcal{C}$ be the $o$-orthogonal and $\tau$-orthogonal basis of lines of $V$ determined by this choice.

\subsection{Weyl group}

Recall that $\lieb$ is a maximal subalgebra in $\liep^\tau$ and that we use the notation $\liea:=\lieb$ whenever we want to emphazise this. Let $\wbh:=\tn{N}_{\ko^\tau}(\liea)$ (resp. $\mb:=\tn{Z}_{\ko^\tau}(\liea)$) be the normalizer (resp. centralizer) of $\liea$ in $\ko^\tau$ and let

\bc
$\wb:=\wbh/\mb$
\ec

\noindent be the \textit{Weyl group} of the pair $(\lieg,\liea)$. If $\hat{w}$ belongs to $\wbh$, we denote by $w$ its class in $\wb$. Conversely, for a given $w\in\wb$ we denote by $\hat{w}\in\wbh$ any representative of $w$.

Fix a representative $\langle\cdot,\cdot\rangle_o$ of $o$. Since the involutions $d\sigma^o$ and $\tau$ commute we can find a representative $\langle\cdot,\cdot\rangle_\tau$ of $\tau$ for which the following holds: for each line $\ell\in\mathcal{C}$ and each vector $v\in\ell$ one has

\bc
$\vert\langle v, v\rangle_{o}\vert=\langle v, v\rangle_{\tau}$.
\ec

\noindent From this observation the following technical lemma can be easily deduced.

\begin{lema}\label{lema if preserve mathcalC equivalences belong to H and to K}

Let $\hat{w}$ be an element of $\g$ that preserves the set $\mathcal{C}$. Then the following holds:

\begin{enumerate}
\item If $\hat{w}$ belongs to $\h^o$, then it belongs to $\ko^\tau$.
\item Suppose that $\hat{w}$ belongs to $\ko^\tau$. Then for every line $\ell\in\mathcal{C}$ and every vector $v\in\ell$ one has

\bc
$\vert\langle \hat{w}\cdot v, \hat{w}\cdot v\rangle_{o}\vert=\vert\langle v, v\rangle_{o}\vert$.
\ec

\noindent In particular, if $\hat{w}$ preserves the $o$-sign of each line of $\mathcal{C}$ then $\hat{w}$ belongs to $\h^o$.
\end{enumerate}
\end{lema}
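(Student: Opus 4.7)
The strategy is to exploit the calibrated representatives $\langle\cdot,\cdot\rangle_o$ and $\langle\cdot,\cdot\rangle_\tau$ provided just above the lemma, which satisfy the key relation $|\langle v,v\rangle_o|=\langle v,v\rangle_\tau$ for every $v$ lying on some line $\ell\in\mathcal C$. Since $\hat w$ permutes $\mathcal C$, whenever $v\in\ell\in\mathcal C$ the image $\hat w\cdot v$ again lies on a line of $\mathcal C$, so the same calibration identity applies to $\hat w\cdot v$. This is the engine that converts information about one of the two forms into information about the other.

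For part (1), assume $\hat w\in\h^o$. Choosing the representative of $o$ stabilized by $\hat w$, we have $\langle\hat w\cdot v,\hat w\cdot v\rangle_o=\langle v,v\rangle_o$, so taking absolute values and using the calibration on both sides (legitimate because $\hat w\cdot v$ again sits on a line of $\mathcal{C}$),
\[
\langle\hat w\cdot v,\hat w\cdot v\rangle_\tau=|\langle\hat w\cdot v,\hat w\cdot v\rangle_o|=|\langle v,v\rangle_o|=\langle v,v\rangle_\tau
\]
for any $v\in\ell\in\mathcal C$. Because $\hat w$ permutes the lines of $\mathcal C$ and $\mathcal C$ is $\tau$-orthogonal, the images $\hat w\cdot\ell$ form a $\tau$-orthogonal system too. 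Thus $\hat w$ maps a $\langle\cdot,\cdot\rangle_\tau$-orthonormal basis (obtained by choosing unit vectors on each line of $\mathcal C$) to another $\langle\cdot,\cdot\rangle_\tau$-orthonormal basis, and a standard polarization argument promotes this to an isometry of the whole inner product $\langle\cdot,\cdot\rangle_\tau$; hence $\hat w\in\ko^\tau$.

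For part (2), assume $\hat w\in\ko^\tau$, so $\langle\hat w\cdot v,\hat w\cdot v\rangle_\tau=\langle v,v\rangle_\tau$. Running the same chain of equalities in reverse gives
\[
|\langle\hat w\cdot v,\hat w\cdot v\rangle_o|=\langle\hat w\cdot v,\hat w\cdot v\rangle_\tau=\langle v,v\rangle_\tau=|\langle v,v\rangle_o|,
\]
which is the asserted identity. For the final assertion, if in addition $\hat w$ preserves the $o$-sign of each line of $\mathcal C$, then the absolute values can be dropped and $\langle\hat w\cdot v,\hat w\cdot v\rangle_o=\langle v,v\rangle_o$ on every line of $\mathcal C$. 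Combined with the fact that $\hat w$ permutes the lines of the $o$-orthogonal system $\mathcal C$, the cross terms $\langle\hat w\cdot v_i,\hat w\cdot v_j\rangle_o$ with $i\neq j$ also vanish, matching the cross terms of the original basis. Polarization then yields $\langle\hat w\cdot u,\hat w\cdot w\rangle_o=\langle u,w\rangle_o$ for all $u,w\in V$, so $\hat w\in\h^o$.

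The only subtlety, and the place where care is needed, is keeping track of representatives when passing between $\tn{SL}(V)$ and $\g=\tn{PSL}(V)$ (and, in the Hermitian case, between a scalar and its modulus); this is handled exactly by the calibration $|\langle v,v\rangle_o|=\langle v,v\rangle_\tau$ fixed on the lines of $\mathcal C$, which is a statement on pairs of representatives and is invariant under the scalar ambiguity that would otherwise obstruct the argument. Once these choices are made, the proof reduces to the two symmetric computations above.
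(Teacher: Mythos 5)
Your proof is correct and follows exactly the route the paper intends: the lemma is stated there as an easy consequence of the calibrated representatives satisfying $\vert\langle v,v\rangle_o\vert=\langle v,v\rangle_\tau$ on the lines of $\mathcal{C}$, and your two symmetric computations (together with the observation that $\hat{w}\cdot v$ again lies on a line of $\mathcal{C}$, so the calibration applies on both sides) are precisely that deduction. The bookkeeping of Gram matrices and of representatives in $\tn{SL}(V)$ versus $\g$ is handled appropriately, so nothing is missing.
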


By Lemma \ref{lema if preserve mathcalC equivalences belong to H and to K} we have that $\mb$ coincides with the centralizer of $\lieb$ in $\h^o$ and in particular it is contained in $\h^o$.

\subsection{Restricted roots and Weyl chambers}\label{subsec weyl chambers}

Let $\tilde{\lieg}$ be a subalgebra of $\lieg$ invariant under the (adjoint) action of $\lieb$. A non zero functional $\alpha\in\lieb^*$ is called a \textit{restricted root of $\lieb$ in $\tilde{\lieg}$} if the subspace

\bc
$\tilde{\lieg}_\alpha:=\lbrace Y\in\tilde{\lieg}:\hspace{0,3cm}[X,Y]=\alpha(X)Y \tn{ for all } X\in\lieb\rbrace$
\ec

\noindent is non zero. In that case, $\tilde{\lieg}_\alpha$ is called the associated \textit{root space}. The set of restricted roots of $\lieb$ in $\tilde{\lieg}$ is denoted by $\Sigma(\tilde{\lieg},\lieb)$. A (closed) \textit{Weyl chamber} of this set is the closure of a connected component of

\bc
$\lieb\setminus\displaystyle\bigcup_{\alpha\in\Sigma(\tilde{\lieg},\lieb)}\tn{ker}(\alpha)$.
\ec

\noindent Let $\Sigma^+(\tilde{\lieg},\lieb)$ be the corresponding \textit{positive system}, that is, the set of restricted roots in $\Sigma(\tilde{\lieg},\lieb)$ which are non negative on this Weyl chamber. In this paper we look at Weyl chambers of two different sets of rectricted roots.

\subsubsection{\tn{\textbf{The case $\Sigma(\lieg,\liea)$}}}\label{subsub weyl chambers lieg,liea}

In this case we think $\lieb$ as a maximal subalgebra in $\liep^\tau$ and therefore we denote $\lieb=\liea$. Weyl chambers of the system $\Sigma(\lieg,\liea)$ will be denoted with the symbol $\liea^+$.

Given a line $\ell\in\mathcal{C}$, let $\varepsilon_\ell\in\liea^*$ be the functional that assigns to each element $X\in\liea$ its eigenvalue in the line $\ell$. For different $\ell$ and $\ell'$ in $\mathcal{C}$ let

\bc
$\alpha_{\ell\ell'}(X):=\varepsilon_\ell(X)-\varepsilon_{\ell'}(X)$.
\ec

\noindent Then one has the equality

\bc
$\Sigma(\lieg,\liea)=\lbrace \alpha_{\ell\ell'}:\hspace{0,3cm} \ell\neq\ell' \tn{ in }\mathcal{C}\rbrace$
\ec

\noindent and the choice of a closed Weyl chamber corresponds to the choice of a total order

\bc
$\mathcal{C}=\lbrace \ell_1,\dots,\ell_d\rbrace$
\ec

\noindent on $\mathcal{C}$. If this choice is given we set

\bc
$\varepsilon_j:=\varepsilon_{\ell_j}$ and $\alpha_{ji}:=\varepsilon_j-\varepsilon_i$
\ec
\noindent for each $j$ different from $i$ in $\lbrace 1 ,\dots,d\rbrace$, and the corresponding positive system is

\bc
$\Sigma^+(\lieg,\liea):=\left\lbrace\alpha_{\ell_j^+\ell_i^+}:\hspace{0,3cm} 1\leq j<i\leq d\right\rbrace$.
\ec

\subsubsection{\tn{\textbf{The case $\Sigma(\liegto,\lieb)$}}}
Let 

\bc
$\liegto:=(\liep^{\tau}\cap\lieq^o)\oplus (\liek^{\tau}\cap\lieh^o)$
\ec

\noindent be the ($\lieb$-invariant) Lie algebra consisting of fixed points of the involution $\tau (d\sigma^o)$. As suggested in Schlichtkrull \cite[p. 117]{Sch}, Weyl chambers of the set $\Sigma(\liegto,\lieb)$ will be denoted with the symbol $\lieb^+$.

We have the equality

\bc
$\Sigma(\lieg^{\tau o},\lieb)=\lbrace \alpha_{\ell\ell'}\in\Sigma(\lieg,\liea):\hspace{0,3cm} \ell\neq\ell' \tn{ in } \mathcal{C} \tn{ and } \sgo(\ell)=\sgo(\ell') \rbrace$.
\ec

\noindent Indeed, one has $\Sigma(\lieg^{\tau o},\lieb)\subset\Sigma(\lieg,\liea)$ and therefore a non zero linear functional $\alpha\in\lieb^*$ belongs to $\Sigma(\liegto,\lieb)$ if and only if it belongs to $\Sigma(\lieg,\liea)$ and

\bc
$\liegto\cap\lieg_\alpha\neq\lbrace 0\rbrace$.
\ec

\noindent Now for $\alpha=\alpha_{\ell\ell'}\in\Sigma(\lieg,\liea)$ the associated root space $\lieg_\alpha$ intersects the subalgebra $\liegto$ if and only if $\sgo(\ell)$ coincides with $\sgo(\ell')$ and the claim follows. 

Note then that $\Sigma(\liegto,\lieb)$ is not a root system, because it does not generate $\lieb^*$. However, it is in one to one correspondence with

\bc
$\Sigma(\mathfrak{sl}_p,\liea_{\mathfrak{sl_p}})\sqcup\Sigma(\mathfrak{sl}_q,\liea_{\mathfrak{sl_q}})$.
\ec

An explicit way of prescribing a Weyl chamber $\lieb^+$ is the following. Write $\mathcal{C}=\mathcal{C}^+\sqcup\mathcal{C}^-$ where $\mathcal{C}^+$ (resp. $\mathcal{C}^-$) is the subset of $\mathcal{C}$ consisting of lines which are positive (resp. negative) for the form $o$. Fix total orders

\bc
$\mathcal{C}^+=\lbrace \ell_1^+,\dots,\ell_p^+\rbrace$ and $\mathcal{C}^-=\lbrace \ell_1^-,\dots,\ell_q^-\rbrace$
\ec

\noindent on $\mathcal{C}^+$ and $\mathcal{C}^-$ respectively. Then a positive system in $\Sigma(\liegto,\lieb)$ is given by

\bc
$\Sigma^+(\lieg^{\tau o},\lieb):=\left\lbrace\alpha_{\ell_j^+\ell_i^+}:\hspace{0,3cm} 1\leq j<i\leq p\right\rbrace\sqcup\left\lbrace\alpha_{\ell_j^-\ell_i^-}:\hspace{0,3cm} 1\leq j<i\leq q\right\rbrace$.
\ec

\noindent Conversely, the choice of a Weyl chamber $\lieb^+$ induces total orders on $\mathcal{C}^+$ and $\mathcal{C}^-$.

Note that a Weyl chamber $\liea^+$ is contained in $\lieb^+$ if and only if the total order on $\mathcal{C}$ induced by $\liea^+$ restricts to the total orders on $\mathcal{C}^+$ and $\mathcal{C}^-$ determined by $\lieb^+$. In particular, each $\lieb^+$ contains $\left(\begin{smallmatrix}
d\\
p
\end{smallmatrix}\right)=\frac{d!}{p!(d-p)!}$ Weyl chambers of $\Sigma(\lieg,\liea)$  (c.f. Figure \ref{fig weyl chamber}).

\begin{figure}[h!]
\bc
\scalebox{0.5}{%
\begin{overpic}[scale=1, width=1\textwidth, tics=5]{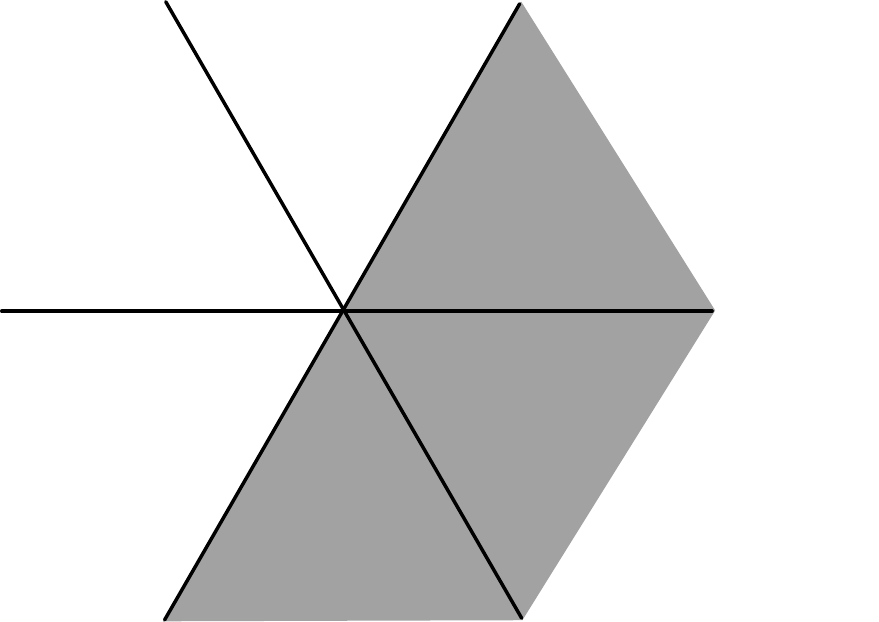}

\put (55,45) { \begin{Huge}
$\liea_1^+$
\end{Huge}}

\put (55,20) { \begin{Huge}
$\liea_2^+$
\end{Huge}}

\put (32,10) { \begin{Huge}
$\liea_3^+$
\end{Huge}}
  \end{overpic}}
 
\hspace{0,3cm}

\begin{changemargin}{3cm}{3cm}    
    \caption{Weyl chambers for $\mathsf{Q}_{2,1}$. In light grey, a Weyl chamber $\lieb^+$ of the set $\Sigma(\liegto,\lieb)$. This Weyl chamber is a union of three Weyl chambers $\liea^+_1$, $\liea^+_2$ and $\liea^+_3$ of the system $\Sigma(\lieg,\liea)$.}
  \label{fig weyl chamber}
\end{changemargin}

  \ec
\end{figure}

\begin{rem}
Even though it will not be used in the future, we mention here that Weyl chambers $\lieb^+$ of the set $\Sigma(\liegto,\lieb)$ admit the following geometric interpretation: a vector $X$ belongs to the interior $\tn{int}(\lieb^+)$ of $\lieb^+$ if and only if the flat $\exp(\lieb)\cdot\tau$ is the unique flat of $\xsyg$ that contains the geodesic $\exp(\rr X)\cdot\tau$ and that is orthogonal to $\so$ at $\tau$.
\end{rem}

\subsection{Opposition involution of $\lieb^+$}\label{subsec oppostion liebplus}

For the rest of the section we fix a closed Weyl chamber $\lieb^+$ of the set $\Sigma(\liegto,\lieb)$. Let $w_{\lieb^+}\in\wb$ be the unique element that preserves $\mathcal{C}^{+}$ and $\mathcal{C}^-$ and acts on these sets by reversing the total order induced by $\lieb^+$. By Lemma \ref{lema if preserve mathcalC equivalences belong to H and to K} we have that $\hat{w}_{\lieb^+}$ belongs to $\h^o$ and, by definition, it satisfies

\bc
$w_{\lieb^+}\cdot(-\lieb^+)=\lieb^+$.
\ec

\noindent The \textit{opposition involution} of $\lieb^+$ is defined by

\bc
$\iota_{\lieb^+}:\lieb\too\lieb: \hspace{0,3cm} \iota_{\lieb^+}(X):=-w_{\lieb^+}\cdot X$.
\ec

\noindent Note that $\iota_{\lieb^+}$ preserves $\lieb^+$.

\subsection{Compatible Weyl chambers and generic flags}\label{subsec compatible weyl and o generic}

A $\lieb^+$-\textit{compatible Weyl chamber} is a Weyl chamber of the system $\Sigma(\lieg,\liea)$ that is contained in $\lieb^+$.

\begin{lema}\label{lema every standard weyl chamber taken into a liebplus compatile by wcaph}
Let $\liea^+\subset\lieb$ be any Weyl chamber of the system $\Sigma(\lieg,\liea)$. Then there exists an element $\hat{w}\in\wbh\cap\h^o$ such that 

\bc
$w\cdot\liea^+\subset\lieb^+$.
\ec

\noindent Furthermore, if $\hat{w}'\in\wbh\cap\h^o$ satisfies $w'\cdot\liea^+\subset\lieb^+$ then $\hat{w}$ and $\hat{w}'$ define the same element in $\wb$.
\end{lema}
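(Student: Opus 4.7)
The plan is to translate the statement into combinatorics on the ordered basis of lines $\mathcal{C}$, using the explicit description of Weyl chambers given in Subsection \ref{subsec weyl chambers}.

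First I would identify $\wb$ with the group of permutations of $\mathcal{C}$. Indeed, every element of $\wbh$ preserves $\mathcal{C}$ (since the lines of $\mathcal{C}$ are the joint eigenlines of the Cartan subspace $\lieb$, and these are permuted by $\wbh = \tn{N}_{\ko^\tau}(\lieb)$), and two elements of $\wbh$ induce the same permutation of $\mathcal{C}$ if and only if they differ by an element of $\mb$, which fixes each line of $\mathcal{C}$. Thus $\wb$ embeds faithfully into the symmetric group on $\mathcal{C}$; by a dimension/count argument (or by constructing explicit permutation matrices), this embedding is onto. Next, by Lemma \ref{lema if preserve mathcalC equivalences belong to H and to K}, an element $\hat{w} \in \wbh$ belongs to $\h^o$ if and only if it preserves the $o$-sign of every line of $\mathcal{C}$; in terms of permutations this says that $\wbh \cap \h^o$ corresponds, modulo $\mb$, to the subgroup of permutations of $\mathcal{C}$ that stabilize the decomposition $\mathcal{C} = \mathcal{C}^+ \sqcup \mathcal{C}^-$, i.e.\ to $S_{\mathcal{C}^+} \times S_{\mathcal{C}^-}$.

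Under these identifications, the Weyl chamber $\liea^+$ corresponds to a total order
\[
\mathcal{C} \;=\; \{\ell_1,\dots,\ell_d\},
\]
while the fixed Weyl chamber $\lieb^+$ corresponds to total orders
\[
\mathcal{C}^+ = \{\ell_1^+,\dots,\ell_p^+\}, \qquad \mathcal{C}^- = \{\ell_1^-,\dots,\ell_q^-\},
\]
and the condition $w\cdot\liea^+ \subset \lieb^+$ means precisely that the total order on $\mathcal{C}$ associated to $w\cdot\liea^+$ (namely $w(\ell_1),\dots,w(\ell_d)$) restricts to the prescribed $\lieb^+$-orders on $\mathcal{C}^+$ and $\mathcal{C}^-$. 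Extracting the subsequence of positive lines in the $\liea^+$-order as $\ell_{i_1},\dots,\ell_{i_p}$ (all lying in $\mathcal{C}^+$), this condition becomes $w(\ell_{i_k}) = \ell_k^+$ for every $k=1,\dots,p$, and similarly on $\mathcal{C}^-$.

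For existence, I would simply define the permutation $w \in S_{\mathcal{C}^+} \times S_{\mathcal{C}^-}$ by these two equalities and lift it to $\wbh \cap \h^o$; concretely, choosing a $\langle\cdot,\cdot\rangle_o$-orthonormal basis $\mathcal{B}$ subordinate to $\mathcal{C}$, the permutation of $\mathcal{B}$ induced by $w$ preserves the $o$-sign of basis vectors and, up to a possible sign adjustment in a single basis vector (which remains in $\ko^\tau \cap \h^o$), defines an element of $\tn{SL}(V)$ whose image in $\g$ lies in $\wbh \cap \h^o$ and realizes $w$. Uniqueness is then immediate from the characterization above: the conditions $w(\ell_{i_k}) = \ell_k^+$ and its $\mathcal{C}^-$-analogue determine $w$ as a permutation of $\mathcal{C}$, hence determine the class of $\hat{w}$ in $\wb$ uniquely. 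There is no serious obstacle in this argument; the only point requiring a little care is the lifting of an arbitrary sign-preserving permutation of $\mathcal{C}$ to an element of $\wbh \cap \h^o$, but this is straightforward using the orthonormal basis $\mathcal{B}$.
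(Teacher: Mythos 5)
Your proposal is correct and takes essentially the same route as the paper: both arguments reduce the statement to combinatorics of total orders on $\mathcal{C}=\mathcal{C}^+\sqcup\mathcal{C}^-$, use Lemma \ref{lema if preserve mathcalC equivalences belong to H and to K} to characterize membership in $\wbh\cap\h^o$ by preservation of $o$-signs, and then observe that sign-preservation plus the order-restriction condition determines the permutation (hence the class in $\wb$) uniquely. The paper phrases the construction inductively where you phrase it as an explicit permutation formula $w(\ell_{i_k})=\ell_k^{\pm}$, but this is only a presentational difference.
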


\begin{proof}
Let $\lbrace\ell_1,\dots,\ell_d\rbrace$ be the total order on $\mathcal{C}$ induced by the choice of $\liea^+$ and
\begin{equation}\label{eq total orders Cplus and Cminus}
\mathcal{C}^+=\lbrace\ell_1^+,\dots,\ell_p^+\rbrace\tn{ and }\mathcal{C}^-=\lbrace\ell_1^-,\dots,\ell_q^-\rbrace
\end{equation}

\noindent be the total orders on $\mathcal{C}^+$ and $\mathcal{C}^-$ determined by the choice of $\lieb^+$. 

We define the element $\hat{w}$ inductively. If $\sgo(\ell_1)=1$ we define $\hat{w}\cdot\ell_1$ to be $\ell_1^+$, and if $\sgo(\ell_1)=-1$ we define $\hat{w}\cdot\ell_1$ to be $\ell_1^-$. Say that $\sgo(\ell_1)=1$ (the other case being analogous). Now define $\hat{w}\cdot\ell_2$ to be $\ell_2^+$ if $\sgo(\ell_2)=1$, or to be $\ell_1^-$ if $\sgo(\ell_2)=-1$. Procede inductively to obtain an element $\hat{w}\in\wbh$ such that the restrictions of the total order

\bc
$\lbrace\hat{w}\cdot\ell_1,\dots,\hat{w}\cdot\ell_d\rbrace$
\ec

\noindent to both $\mathcal{C}^+$ and $\mathcal{C}^-$ coincide with the total orders of equation (\ref{eq total orders Cplus and Cminus}). That is, the Weyl chamber $w\cdot\liea^+$ is $\lieb^+$-compatible. Furthermore, by construction we have $\sgo(\hat{w}\cdot\ell_j)=\sgo(\ell_j)$ for every $j=1,\dots,d$. Lemma \ref{lema if preserve mathcalC equivalences belong to H and to K} implies $\hat{w}\in\h^o$.

On the other hand, suppose that $\hat{w}'\in\wbh\cap\h^o$ satisfies $w'\cdot\liea^+\subset\lieb^+$. Say $\sgo(\ell_1)=1$ (the other case being analogous). Then since $\hat{w}'\in\h^o$ we have $\sgo(\hat{w}'\cdot\ell_1)=1$. Furthermore, since $w'\cdot\liea^+\subset\lieb^+$ the total order

\bc
$\lbrace\hat{w}'\cdot\ell_1,\dots,\hat{w}'\cdot\ell_d\rbrace$
\ec

\noindent on $\mathcal{C}$ must restrict to the total order of $\mathcal{C}^+$ given by equation (\ref{eq total orders Cplus and Cminus}). We see then that $\hat{w}'\cdot\ell_1$ must be equal to $\ell_1^+=\hat{w}\cdot\ell_1$. Proceeding inductively we conclude that $\hat{w}\cdot\ell_j=\hat{w}'\cdot\ell_j$ for all $j=1,\dots,d$, and this completes the proof.
\end{proof}

Recall that there exists a $\wb$-equivariant identification between the set of Weyl chambers of the system $\Sigma(\lieg,\liea)$ and the set of ($o$-generic) flags determined by the lines of $\mathcal{C}$. If $\liea^+\subset\lieb$ is such a Weyl chamber the corresponding flag is denoted by $\xi_{\liea^+}$. Conversely, the Weyl chamber of the system $\Sigma(\lieg,\liea)$ determined by a flag $\xi$ spanned by $\mathcal{C}$ will be denoted by $\liea^+_\xi$. A flag $\xi\in\mathsf{F}(V)$ is said to be $\lieb^+$-\textit{compatible} if it is spanned by the elements of $\mathcal{C}$ and the Weyl chamber $\liea^+_{\xi}$ is $\lieb^+$-compatible.

The following is a concrete instance of a result due to Matsuki \cite[Section 3]{Mat} and Rossmann \cite[Theorem 13 and Corollaries 15 to 17]{Ross}.

\begin{prop}\label{prop param of open orbits by lieb compatible flags}
The map

\bc
$\xi\mapsto \fvo_\xi:= \h^o\cdot \xi$
\ec

\noindent defines a one to one correspondence between the set of $\lieb^+$-compatible flags and the set consisting on open orbits of the action $\h^o\curvearrowright\mathsf{F}(V)$. 
\end{prop}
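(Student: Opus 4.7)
The plan is to set up a direct bijection between $\lieb^+$-compatible flags and open $\h^o$-orbits in $\mathsf{F}(V)$ by parametrising both sides with the sign sequence $(\sgo(\ell^o_j(\cdot)))_{j=1}^d$ supplied by Proposition \ref{prop equivalences of two generic flags in same orbit}, and then producing an explicit inverse to $\xi\mapsto\h^o\cdot\xi$.

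First I would check that the map is well defined. If $\xi$ is $\lieb^+$-compatible, then $\xi$ is spanned by the $o$-orthogonal basis $\mathcal{C}$, so the restriction of $o$ to each subspace $\xi^j$ is a sum of non-degenerate one-dimensional forms and is therefore non-degenerate. By Lemma \ref{lema equivalences for o generic flags} the flag $\xi$ lies in $\fvo$; since $\fvo$ is the union of open $\h^o$-orbits on $\mathsf{F}(V)$, the set $\h^o\cdot\xi$ is indeed one of them.

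Next I would compare cardinalities. Write the orders on $\mathcal{C}^\pm$ induced by $\lieb^+$ as $\ell_1^+ < \dots < \ell_p^+$ and $\ell_1^- < \dots < \ell_q^-$. A $\lieb^+$-compatible flag is, by definition, determined by a total order on $\mathcal{C}$ restricting to these two fixed orders, i.e.\ an interleaving of them, so there are exactly $\binom{d}{p}$ of them. On the other hand, Proposition \ref{prop equivalences of two generic flags in same orbit}(3) parametrises the open $\h^o$-orbits by sign sequences $(s_1,\dots,s_d)\in\lbrace\pm 1\rbrace^d$; the constraint that $o$ has signature $(p,q)$ forces exactly $p$ entries to be $+1$, giving again $\binom{d}{p}$ orbits. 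Hence surjectivity and injectivity are equivalent.

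The heart of the argument is then to construct the inverse explicitly. Given a sign sequence $(s_1,\dots,s_d)$ with $p$ plus-signs, I would define a total order $\lbrace\ell_1,\dots,\ell_d\rbrace$ on $\mathcal{C}$ by reading the index $j$ from $1$ to $d$ and taking $\ell_j$ to be the next unused element of $\mathcal{C}^{s_j}$ in the $\lieb^+$-order, that is, $\ell_j := \ell_{k}^{s_j}$ where $k-1$ is the number of indices $i<j$ with $s_i=s_j$. By construction this order restricts to the prescribed orders on $\mathcal{C}^\pm$, so it defines a unique $\lieb^+$-compatible flag $\xi(s_1,\dots,s_d)$ satisfying $\sgo(\ell^o_j(\xi)) = s_j$. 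Item (3) of Proposition \ref{prop equivalences of two generic flags in same orbit} then identifies $\h^o\cdot\xi(s_1,\dots,s_d)$ as precisely the open orbit with sign sequence $(s_1,\dots,s_d)$, yielding both surjectivity (every sign sequence is realised) and injectivity (two $\lieb^+$-compatible flags in the same orbit share a sign sequence and are hence equal by the uniqueness of the construction).

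The main obstacle I anticipate is bookkeeping rather than conceptual: one must keep straight that the $\lieb^+$-order on $\mathcal{C}^\pm$ really is the one described in Subsection \ref{subsec weyl chambers}, and that Proposition \ref{prop equivalences of two generic flags in same orbit} applies to $\lieb^+$-compatible flags because of the opening observation that such flags are automatically $o$-generic. With these two points in place the rest of the argument is a bijective matching of $\binom{d}{p}$-element sets.
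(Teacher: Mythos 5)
Your argument is correct, but it is packaged differently from the paper's. The paper proves injectivity by passing through the Weyl group: from equal sign sequences (Proposition \ref{prop equivalences of two generic flags in same orbit}) it produces, via Lemma \ref{lema if preserve mathcalC equivalences belong to H and to K}, an element $\hat{w}\in\wbh\cap\h^o$ carrying one ordered basis of lines to the other, and then invokes the uniqueness statement of Lemma \ref{lema every standard weyl chamber taken into a liebplus compatile by wcaph} to force $w=1$; surjectivity is obtained by first moving an arbitrary $o$-generic flag onto $\mathcal{C}$ with an element of $\h^o$ (Lemma \ref{lema equivalences for o generic flags}) and then applying the existence part of Lemma \ref{lema every standard weyl chamber taken into a liebplus compatile by wcaph}. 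You instead parametrise both sides by sign sequences and build the inverse map by the explicit interleaving of the $\lieb^+$-orders on $\mathcal{C}^\pm$; this interleaving is in substance the same induction that proves Lemma \ref{lema every standard weyl chamber taken into a liebplus compatile by wcaph}, so you have effectively inlined that lemma rather than cited it. Your route is more self-contained and has the small bonus of exhibiting the count $\binom{d}{p}$ of open orbits directly (note the cardinality remark in your second paragraph is logically redundant once the explicit inverse is in hand, and by itself would not suffice, since one still needs your construction to know that every sign sequence is realised); the paper's route keeps the Weyl-group element explicit, which is what is actually reused later, e.g.\ in Proposition \ref{prop w and standard weyl chamber for element with gap in sing val}. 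Two points you flag as bookkeeping are indeed the only delicate ones and you handle both: a $\lieb^+$-compatible flag is $o$-generic because $\mathcal{C}$ is $o$-orthogonal with non-isotropic lines, and for such a flag the canonical lines $\ell^o_j(\xi)$ of Lemma \ref{lema equivalences for o generic flags} coincide with the lines of $\mathcal{C}$ in the order determined by $\xi$, so Proposition \ref{prop equivalences of two generic flags in same orbit} applies as you use it.
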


For a $\lieb^+$-compatible Weyl chamber $\liea^+$ we use the notation

\bc
$\fvo_{\liea^+}:=\h^o\cdot\xi_{\liea^+}$.
\ec

\begin{proof}[Proof of Proposition \ref{prop param of open orbits by lieb compatible flags}]

Let $\xi$ and $\xi'$ be two $\lieb^+$-compatible flags in the same $\h^o$-orbit. By Proposition \ref{prop equivalences of two generic flags in same orbit} we have $\sgo(\ell_j^o(\xi))=\sgo(\ell_j^o(\xi'))$ for every $j=1,\dots,d$ and, as unordered sets, one has
\bc
$\lbrace\ell_1^o(\xi),\dots,\ell_d^o(\xi)\rbrace=\mathcal{C}=\lbrace\ell_1^o(\xi'),\dots,\ell_d^o(\xi')\rbrace$.
\ec

\noindent By Lemma \ref{lema if preserve mathcalC equivalences belong to H and to K} there exists an element $\hat{w}\in\wbh\cap\h^o$ such that $\hat{w}\cdot\ell_j^o(\xi)=\ell_j^o(\xi')$ for all $j=1,\dots,d$. That is, $w\cdot\liea^+_\xi=\liea^+_{\xi'}$ and Lemma \ref{lema every standard weyl chamber taken into a liebplus compatile by wcaph} implies $w=1$. Hence $\xi=\xi'$.

For surjectivity, let $\xi'$ be any $o$-generic flag. By Lemma \ref{lema equivalences for o generic flags} we can find an element $h\in\h^o$ such that $h\cdot\ell_j^o(\xi')\in\mathcal{C}$ for every $j=1,\dots,d$. That is, $h\cdot\xi'$ determines a Weyl chamber of the system $\Sigma(\lieg,\liea)$. By Lemma \ref{lema every standard weyl chamber taken into a liebplus compatile by wcaph} the proof is complete.

\end{proof}

\section{$(p,q)$-Cartan decomposition}\label{section pq-Cartan}

The choice of a point $\tau\in\xsyg$, a Cartan subspace $\liea\subset\liep^\tau$ and a Weyl chamber $\liea^+$ of $\Sigma(\lieg,\liea)$ induces a \textit{Cartan decomposition} $\g=\ko^\tau\exp(\liea^+)\ko^\tau$ of $\g$ and a \textit{Cartan projection}

\bc
$a^\tau:\g\too\liea^+$
\ec

\noindent (see e.g. Knapp \cite[Chapter VI]{Kna}). Geometrically, for all $g\in\g$ we have

\bc
$d_{\xsyg}(\tau,g\cdot\tau)=\Vert a^\tau(g)\Vert$,
\ec

\noindent where $\Vert\cdot\Vert$ is the norm on $\liep^\tau$ induced by the $\g$-invariant Riemannian structure on $\xsyg$. In this section we describe a way to generalize this picture to our context.

Fix a basepoint $o\in\xsypq$ and let $\bo$ be the set consisting of $o$-orthogonal bases of lines of $V$. Define

\bc
$\bog:=\lbrace g\in\g:\hspace{0,3cm} \exists\hspace{0,1cm}\mathcal{C}\in\bo\tn{ such that }g\cdot\mathcal{C}\in\bo\rbrace$,
\ec

\noindent that is, $\bog$ is the set of elements in $\g$ that take some $o$-orthogonal basis of lines of $V$ into an $o$-orthogonal basis of lines of $V$. We remark here that $\bog$ is different from $\g$ and that it is not open, but has non empty interior (see Remark \ref{rem bog not open but non empty interior} below).

Fix a point $\tau$ in $\so$, a maximal subalgebra $\lieb\subset\liep^\tau\cap\lieq^o$ and a Weyl chamber $\lieb^+\subset\lieb$ of the set $\Sigma(\liegto,\lieb)$. Let $\mathcal{C}\in\bo$ be the element determined by $\lieb$.

\begin{rem}\label{rem sigmawinvw belongs to M}

For every $\hat{w}\in\wbh$ the element $m:=\sigma^o(\hat{w}^{-1})\hat{w}$ belongs to $\mb$. Indeed, as $\tau$ and $d\sigma^o$ commute we have that $m$ belongs to $\ko^\tau$. Furthermore, since $d\sigma^o(X)=-X$ for all $X\in\lieb$ we have

\bc
$\sigma^o(\hat{w})\exp(X)\sigma^o(\hat{w}^{-1})=\sigma^o(\hat{w}\exp(X)^{-1}\hat{w}^{-1})=\hat{w}\exp(X)\hat{w}^{-1}$
\ec

\noindent and this proves the claim.

Note moreover that when $\kk=\cc$, the eigenvalues of $m$ are real (hence equal to $\pm 1$).
\end{rem}

The analogue of the Cartan decomposition is the following.

\begin{prop}\label{prop equivalences HWbH}

Let $g$ be an element of $\g$. Then the following are equivalent:

\begin{enumerate}
\item The element $g$ belongs to $\bog$.
\item The element $g$ belongs to $\h^o\wbh\exp(\lieb^+)\h^o$.
\item The element $\sigma^o(g^{-1})g$ is diagonalizable with real eigenvalues\footnote{Formally, elements of $\g$ are not linear transformations of $V$ but rather projective classes of linear transformations. Nevertheless, we can say that $g\in\g$ is \textit{diagonalizable} if it preserves the elements of some basis of lines of $V$. We say that this basis of lines \textit{diagonalizes} the projective class $g$.}.
\end{enumerate}

\noindent In this case, let $\tilde{\mathcal{C}}$ be an element of $\bo$. Then $g\cdot\tilde{\mathcal{C}}$ belongs to $\bo$ if and only if $\tilde{\mathcal{C}}$ diagonalizes $\sigma^o(g^{-1})g$.

\end{prop}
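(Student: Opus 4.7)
My plan is to prove the last characterization together with $(1)\Leftrightarrow(3)$, and then deduce $(1)\Leftrightarrow(2)$ from it. The initial observation is that $\sigma^o(g^{-1})=\leftidx{^{\star_o}}g$, so $\sigma^o(g^{-1})g=\leftidx{^{\star_o}}g\cdot g$ is $o$-self-adjoint. Given any $\tilde{\mathcal{C}}\in\bo$ with generators $v_1,\dots,v_d$ of its lines, the identity
\begin{equation*}
\langle\leftidx{^{\star_o}}g\cdot g\cdot v_i,\,v_j\rangle_o=\langle gv_i,\,gv_j\rangle_o
\end{equation*}
shows that $g\tilde{\mathcal{C}}\in\bo$ if and only if $\leftidx{^{\star_o}}g\cdot g\cdot v_i$ is $o$-orthogonal to $v_j$ for every $j\neq i$, which by non-degeneracy of $o$ on each line of $\tilde{\mathcal{C}}$ is equivalent to $\tilde{\mathcal{C}}$ diagonalizing $\leftidx{^{\star_o}}g\cdot g$; the corresponding eigenvalues $\langle gv_i,gv_i\rangle_o/\langle v_i,v_i\rangle_o$ are real in both the quadratic and the Hermitian settings. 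This simultaneously yields the last assertion and $(1)\Rightarrow(3)$. For $(3)\Rightarrow(1)$ I would invoke the spectral theory of $o$-self-adjoint operators: if $\leftidx{^{\star_o}}g\cdot g$ is diagonalizable over $\rr$, its distinct real eigenspaces are pairwise $o$-orthogonal and the restriction of $o$ to each eigenspace is non-degenerate, so concatenating $o$-orthogonal bases of lines of the eigenspaces produces a $\tilde{\mathcal{C}}\in\bo$ diagonalizing $\leftidx{^{\star_o}}g\cdot g$, whence $g\in\bog$ by the characterization already proved.

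The implication $(2)\Rightarrow(1)$ is direct: if $g=h_1\hat{w}\exp(X)h_2$, then $h_2^{-1}\cdot\mathcal{C}\in\bo$ is mapped by $g$ to $h_1\cdot\mathcal{C}\in\bo$, since $\exp(X)$ preserves each line of $\mathcal{C}$, $\hat{w}\in\wbh$ permutes these lines, and $h_1\in\h^o$. For $(1)\Rightarrow(2)$ the starting input is Witt's theorem (in its Hermitian version when $\kk=\cc$): since $\mathcal{C}$, $\tilde{\mathcal{C}}$, and $g\tilde{\mathcal{C}}$ all split as $p$ $o$-positive plus $q$ $o$-negative lines, I can find $h_1,h_2\in\h^o$ with $g':=h_1 g h_2$ stabilizing $\mathcal{C}$ as an unordered set. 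Hence $g'=M\hat{w}$ with $\hat{w}\in\wbh$ encoding the induced permutation and $M$ diagonal in $\mathcal{C}$. Decomposing $M=\exp(X)\cdot K$ with $X\in\lieb$ accounting for the moduli and $K$ for the signs in the real case (resp.\ the phases in the Hermitian case), Lemma \ref{lema if preserve mathcalC equivalences belong to H and to K} gives $K\in\mb\subset\wbh\cap\h^o$, so $\hat{w}_1:=K\hat{w}$ still lies in $\wbh$. Commuting $\exp(X)$ past $\hat{w}_1$ yields $g'=\hat{w}_1\exp(X')$ with $X':=\tn{Ad}(\hat{w}_1^{-1})X\in\lieb$, and Lemma \ref{lema every standard weyl chamber taken into a liebplus compatile by wcaph} produces $\hat{w}_0\in\wbh\cap\h^o$ with $\hat{w}_0\cdot X'\in\lieb^+$; the rearrangement
\begin{equation*}
g=(h_1^{-1})\,(\hat{w}_1\hat{w}_0^{-1})\,\exp(\hat{w}_0\cdot X')\,(\hat{w}_0 h_2^{-1})
\end{equation*}
then exhibits $g\in\h^o\,\wbh\,\exp(\lieb^+)\,\h^o$.

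The main obstacle is this final rearrangement. The Weyl chamber $\lieb^+$ lives in $\Sigma(\liegto,\lieb)$ rather than in $\Sigma(\lieg,\liea)$, so only the subgroup of $\wb$ that is represented inside $\h^o$ is available to translate $X'$ into $\lieb^+$; Lemma \ref{lema every standard weyl chamber taken into a liebplus compatile by wcaph} is exactly what guarantees that this subgroup acts transitively on $\lieb^+$-chambers. A subsidiary technicality, present only when $\kk=\cc$, is that the Hermitian phases must be absorbed into the Weyl representative, which is legitimate because they centralize $\lieb$ and hence lie in $\mb$.
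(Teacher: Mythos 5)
Your argument is correct and follows essentially the same route as the paper: the implication $(1)\Rightarrow(2)$ is handled identically (push $g$ into the stabilizer of $\mathcal{C}$ by elements of $\h^o$, split off the permutation and the unimodular diagonal part into $\wbh$ and $\mb$, then conjugate the $\lieb$-component into $\lieb^+$ by an element of $\wbh\cap\h^o$ via Lemma \ref{lema every standard weyl chamber taken into a liebplus compatile by wcaph}), and your adjoint identity $\langle\sigma^o(g^{-1})g\cdot v_i,v_j\rangle_o=\langle g\cdot v_i,g\cdot v_j\rangle_o$ is exactly the computation the paper uses for $(3)\Rightarrow(1)$ and for the final claim. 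The only differences are organizational---you close the equivalences through $(1)\Leftrightarrow(3)$ rather than $(2)\Rightarrow(3)$, and you spell out the spectral step (that an $o$-self-adjoint operator, diagonalizable with real eigenvalues, diagonalizes in an $o$-orthogonal basis of lines) which the paper leaves as ``not hard to show''.
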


A decomposition $g=h\hat{w}\exp(X)\tilde{h}$ of an element $g\in\bog$, where $h,\tilde{h}\in\h^o$, $\hat{w}\in\wbh$ and $X\in\lieb^+$ will be called a $(p,q)$-\textit{Cartan decomposition} of $g$.

\begin{proof}[Proof of Proposition \ref{prop equivalences HWbH}]

Suppose first that $g$ belongs to $\bog$ and let $\tilde{\mathcal{C}}\in\bo$ be such that $g\cdot \tilde{\mathcal{C}}\in\bo$. Write $\tilde{\mathcal{C}}=\tilde{\mathcal{C}}^{+}\sqcup\tilde{\mathcal{C}}^{-}$ the decomposition of $\tilde{\mathcal{C}}$ into positive and negative lines for the form $o$. There exists $h\in\h^o$ such that $h^{-1}g\cdot\tilde{\mathcal{C}}=\mathcal{C}$. Further, we can take $\hat{w}$ in $\wbh$ such that $\hat{w}^{-1}h^{-1}g\cdot\tilde{\mathcal{C}}^{\pm}=\mathcal{C}^{\pm}$. Now let $\tilde{h}\in\h^o$ be such that $\tilde{h}^{-1}\cdot\mathcal{C}=\tilde{\mathcal{C}}$. We can assume that $\hat{w}^{-1}h^{-1}g\tilde{h}^{-1}$ fixes each line of $\mathcal{C}$ and therefore one has

\bc
$\hat{w}^{-1}h^{-1}g\tilde{h}^{-1}=m\exp(X)$
\ec

\noindent for some $X\in\lieb$ and $m\in \mb$. Since $\mb$ is contained in $\wbh$ and $X$ is conjugate to an element in $\lieb^+$ by an element in $\wbh\cap\h^o$, we conclude that $g$ belongs to  $\h^o\wbh\exp(\lieb^+)\h^o$.

Now suppose that $g$ admits a $(p,q)$-Cartan decomposition $g=h\hat{w}\exp(X)\tilde{h}$. Then
\begin{equation}\label{eq sigmaginvg in HWbH coordinates}
\sigma^o(g^{-1})g=\tilde{h}^{-1}\exp(X)\sigma^o(\hat{w}^{-1})\hat{w}\exp(X)\tilde{h}.
\end{equation}
\noindent By Remark \ref{rem sigmawinvw belongs to M} we conclude that $\sigma^o(g^{-1})g$ is diagonalizable with real eigenvalues.

Finally, suppose that $\sigma^o(g^{-1})g$ is diagonalizable with real eigenvalues. It is not hard to show that in this case $\sigma^o(g^{-1})g$ must be diagonalizable in an $o$-orthogonal basis of lines $\tilde{\mathcal{C}}$ of $V$. Given $\ell\neq\ell'$ in $\tilde{\mathcal{C}}$ we have
\bc
$\langle  g\cdot\ell ,g\cdot\ell' \rangle_o=\langle  \sigma^o(g^{-1})g\cdot\ell , \ell' \rangle_o=\langle \ell , \ell' \rangle_o=0$,
\ec
\noindent where the above equalities hold up to scalar multiples. Hence $g\cdot\tilde{\mathcal{C}}\in\bo$.

\end{proof}

\begin{rem}\label{rem bog not open but non empty interior}
Take an element $k\in\exp(\liek\cap\lieq)$. Then $\sigma^o(k^{-1})k=k^2$ which, in general, is not diagonalizable with real eigenvalues. This shows that $\bog$ is strictly contained in $\g$ and that it is not open: one can approximate $1\in\bog$ by a sequence $\lbrace k_n\rbrace_n\subset\exp(\liek\cap\lieq)$. However, we will see that $\bog$ has non empty interior (c.f. Remark \ref{rem bog non empty interior} below).

\end{rem}

\subsection{$(p,q)$-Cartan projection}

Define the $(p,q)$-\textit{Cartan projection}

\bc
$b^o:\bog\too\lieb^+$,
\ec

\noindent where $g=h\hat{w}\exp(b^o(g))\tilde{h}$ is a $(p,q)$-Cartan decomposition of $g\in\bog$. We now prove that this map is well defined.

\begin{prop}\label{prop uniqueness of liebplus}
Let $g$ be an element of $\bog$ and write 

\bc
$h_1\hat{w}_1\exp(X_1)\tilde{h}_1=g=h_2\hat{w}_2\exp(X_2)\tilde{h}_2$
\ec

\noindent two $(p,q)$-Cartan decompositions of $g$. Then $X_1=X_2$.
\end{prop}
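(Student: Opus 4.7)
The plan is to extract, from each decomposition, a canonical hyperbolic factor by computing $\sigma^o(g^{-1})g$, and then to apply the classical fact that $\h^o$-conjugate elements of $\lieb$ lie in a single orbit of the little Weyl group of the symmetric pair $(\g,\h^o)$.

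Starting from a $(p,q)$-Cartan decomposition $g=h\hat{w}\exp(X)\tilde{h}$ and setting $m:=\sigma^o(\hat{w}^{-1})\hat{w}$, Remark \ref{rem sigmawinvw belongs to M} together with Lemma \ref{lema if preserve mathcalC equivalences belong to H and to K} gives $m\in\mb\subset\h^o\cap\ko^\tau$. In particular $m$ centralizes $\lieb$, so it commutes with $\exp(X)$, and equation (\ref{eq sigmaginvg in HWbH coordinates}) simplifies to
\[
\sigma^o(g^{-1})g \;=\; \tilde{h}^{-1}\, m\, \exp(2X)\, \tilde{h}.
\]
Applying $\sigma^o$ to both sides, and using that $\sigma^o$ fixes $\tilde{h}$ and $m$ (both in $\h^o$) while inverting $\exp(2X)$, the tautological identity $\sigma^o(\sigma^o(g^{-1})g)=(\sigma^o(g^{-1})g)^{-1}$ forces $m^{2}=1$. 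Thus $m\in\ko^\tau$ is elliptic, $\exp(2X)\in\exp(\liep^\tau)$ is hyperbolic, the two commute, and $m\exp(2X)$ is the multiplicative real Jordan decomposition of this element.

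Applying this to both decompositions of $g$ yields
\[
\tilde{h}_1^{-1}\, m_1\,\exp(2X_1)\,\tilde{h}_1 \;=\; \tilde{h}_2^{-1}\, m_2\,\exp(2X_2)\,\tilde{h}_2,
\]
equivalently $m_1\exp(2X_1)=h\,\bigl(m_2\exp(2X_2)\bigr)\,h^{-1}$ with $h:=\tilde{h}_1\tilde{h}_2^{-1}\in\h^o$. Uniqueness of the real Jordan decomposition forces the hyperbolic parts to agree: $\exp(2X_1)=h\exp(2X_2)h^{-1}$. Taking logarithms (well defined on diagonalizable elements of $\g$ with positive real eigenvalues) converts this into $X_1=\tn{Ad}(h)\,X_2$, with $h\in\h^o$ and both $X_1,X_2\in\lieb^+$.

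It remains to show that this $\h^o$-conjugacy forces $X_1=X_2$. Here I would invoke the classical polar decomposition for the semisimple symmetric pair $(\g,\h^o)$ developed in Matsuki, Rossmann and Schlichtkrull: two elements of the Cartan subspace $\lieb\subset\liep^\tau\cap\lieq^o$ are $\h^o$-conjugate in the adjoint representation if and only if they lie in the same orbit of the little Weyl group
\[
\wb_\h \;:=\; \tn{N}_{\h^o\cap\ko^\tau}(\lieb)\big/\tn{Z}_{\h^o\cap\ko^\tau}(\lieb),
\]
which coincides with the Weyl group of the root system $\Sigma(\liegto,\lieb)$ and therefore acts simply transitively on its Weyl chambers $\lieb^+$. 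Lemma \ref{lema if preserve mathcalC equivalences belong to H and to K} identifies $\wb_\h$ with the subgroup of $\wb$ consisting of permutations of $\mathcal{C}$ preserving the sign-partition $\mathcal{C}=\mathcal{C}^+\sqcup\mathcal{C}^-$; among these, the only one stabilizing the orderings on $\mathcal{C}^+$ and $\mathcal{C}^-$ induced by $\lieb^+$ is trivial. This yields $X_1=X_2$.

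The crux of the argument is this last step: reducing an $\h^o$-conjugacy between elements of $\lieb$ to a $\wb_\h$-conjugacy. This is the polar decomposition theorem for semisimple symmetric pairs; all other ingredients are direct computations from the definitions and the uniqueness of real Jordan decomposition in $\g$.
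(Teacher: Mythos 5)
Your argument is correct in outline but follows a genuinely different route from the paper's. The paper works directly with $\sigma^o(g^{-1})g$: its eigenvalues and the $o$-signatures of its eigenspaces are intrinsic to $g$, the eigenspaces of $\exp(2X_i)$ are sums of lines of $\mathcal{C}$, and the ordering condition defining $\lieb^+$ then pins down $X_i$ by induction on $\vert\mu\vert$. You instead isolate $\exp(2X_i)$ canonically as the hyperbolic Jordan factor (the computation $m^2=1$ is correct but not even needed, since $m\in\mb\subset\ko^\tau$ is automatically elliptic), reduce to ``$X_1=\tn{Ad}(h)X_2$ with $h\in\h^o$ and $X_1,X_2\in\lieb^+$ implies $X_1=X_2$'', and outsource that to symmetric-space structure theory. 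What your route buys is conceptual clarity about \emph{why} the $\lieb^+$-coordinate is canonical; what it costs is that the outsourced step is precisely where all the content of the proposition lives.

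Two caveats on that step. First, for the non-Riemannian pair $(\g,\h^o)$ the statement ``$\h^o$-conjugate elements of $\lieb$ are $\wb_\h$-conjugate'' is not the off-the-shelf Riemannian $K$-conjugacy theorem: the conjugating element $h$ need not lie in the compact group $\h^o\cap\ko^\tau$, and the general proof has to contend with the fact that Cartan subspaces of $\lieq^o$ fall into several $\h^o$-conjugacy classes (one must use that $\lieb$ and $\tn{Ad}(h)\lieb$ are both \emph{maximally split} Cartan subspaces of the centralizer pair of $X_1$, and that those form a single class). The fact is true, but if you want to avoid this machinery there is a two-line direct verification which is exactly the paper's argument: $h$ preserves the form $o$, so for each eigenvalue the $o$-signature of the eigenspace of $X_2$ equals that of the corresponding eigenspace of $X_1$; since these eigenspaces are spanned by lines of $\mathcal{C}$, the multisets of eigenvalues of $X_1$ and $X_2$ along $\mathcal{C}^+$ and along $\mathcal{C}^-$ coincide, and the decreasing-order condition defining $\lieb^+$ forces $X_1=X_2$. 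Second, your concluding sentence (``the only $w$ stabilizing the orderings is trivial'') only handles regular $X_2$; for $X_i$ on a wall of $\lieb^+$ you should instead invoke that the closed chamber is a strict fundamental domain for the reflection group of $\Sigma(\liegto,\lieb)$, which gives $X_1=X_2$ directly (possibly with $w\neq 1$).
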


\begin{proof}

Let $\mu$ be an eigenvalue of $\sigma^o(g^{-1})g$. By Remark \ref{rem sigmawinvw belongs to M} and equation (\ref{eq sigmaginvg in HWbH coordinates}) we have that $\vert\mu\vert$ is an eigenvalue of $\exp(2X_i)$ for $i=1,2$. Moreover if $V^i_{\vert\mu\vert}$ is the corresponding eigenspace, the signature of $o$ restricted to $V_{\vert\mu\vert}^i$ is independent on $i=1,2$. We will show by induction that $V_{\vert\mu\vert}^i$ itself does not depend on $i=1,2$.

Recall that the choice of $\lieb^+$ induces total orders on $\mathcal{C}^+$ and $\mathcal{C}^-$. An element $X\in\lieb$ belongs to $\lieb^+$ if and only if its eigenvalues on the lines of $\mathcal{C}^+$ (resp. $\mathcal{C}^-$) are ordered decreasingly, according to the total order of $\mathcal{C}^+$ (resp. $\mathcal{C}^-$).

Take an eigenvalue $\mu$ of $\sigma^o(g^{-1})g$ such that $\vert\mu\vert$ is maximal and let $(p_{\vert\mu\vert},q_{\vert\mu\vert})$ be the signature of $o$ restricted to $V^i_{\vert\mu\vert}$ (for $i=1,2$). It follows that $V_{\vert\mu\vert}^i$ is the sum of the first $p_{\vert\mu\vert}$ lines of $\mathcal{C}^+$ and the first $q_{\vert\mu\vert}$ lines of $\mathcal{C}^-$. Since this description is independent on $i=1,2$, we conclude that $V_{\vert\mu\vert}^1=V_{\vert\mu\vert}^2$. Proceeding inductively over all eigenvalues of $\sigma^o(g^{-1})g$, the result follows.

\end{proof}

\subsection{Geometric interpretation}

\begin{lema}\label{lema flat orthogonal to So and gSo in HWBH coordinates and distance}
Let $g=h\hat{w}\exp(b^o(g))\tilde{h}$ be a $(p,q)$-Cartan decomposition of $g\in\bog$. Then

\bc
$h\hat{w}\exp(\lieb)\cdot\tau=h\exp(\lieb)\cdot\tau$
\ec

\noindent is a flat of $\xsy$ orthogonal to $\so$ at $h\cdot\tau$ and to $g\cdot \so$ at $h\hat{w}\exp(b^o(g))\cdot\tau$. In particular,

\bc
$d_{\xsy}(\so,g\cdot \so)=\Vert b^o(g)\Vert_\lieb$.
\ec

\end{lema}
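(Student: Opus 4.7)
My plan is to prove the three assertions (equality of the two descriptions of the flat, orthogonality at both endpoints, and the distance formula) by exploiting the decomposition of $g$ together with the identifications already recorded for $\mathsf{S}^o$ and for flats through $\tau$.

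First I would verify $h\hat{w}\exp(\lieb)\cdot\tau=h\exp(\lieb)\cdot\tau$. By definition $\hat{w}\in\wbh=\tn{N}_{\ko^\tau}(\lieb)$, so $\hat{w}$ fixes $\tau$ and conjugation by $\hat{w}$ preserves $\exp(\lieb)$; hence $\hat{w}\exp(\lieb)\cdot\tau=\exp(\lieb)\hat{w}\cdot\tau=\exp(\lieb)\cdot\tau$. This set is then a flat of $\mathsf{X}_\g$ because $\lieb$ is a Cartan subspace contained in $\liep^\tau$.

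Next I would obtain the two orthogonality statements from the same source: by Corollary \ref{cor orthogonal flats to so} the inclusion $\lieb\subset\liep^\tau\cap\lieq^o$ is equivalent to $\exp(\lieb)\cdot\tau$ being orthogonal to $\mathsf{S}^o$ at $\tau$. Since $h\in\h^o$ acts as an isometry of $\mathsf{X}_\g$ preserving $\mathsf{S}^o$, applying $h$ yields orthogonality of $h\exp(\lieb)\cdot\tau$ to $\mathsf{S}^o$ at $h\cdot\tau$. For the other endpoint I would pull back by the isometry $(h\hat{w}\exp(b^o(g)))^{-1}$: the flat becomes $\exp(-b^o(g))\exp(\lieb)\cdot\tau=\exp(\lieb)\cdot\tau$ (using commutativity of $\lieb$), the endpoint $h\hat{w}\exp(b^o(g))\cdot\tau$ becomes $\tau$, and the submanifold $g\cdot\mathsf{S}^o=h\hat{w}\exp(b^o(g))\tilde{h}\cdot\mathsf{S}^o$ becomes $\tilde{h}\cdot\mathsf{S}^o=\mathsf{S}^o$, again because $\tilde{h}\in\h^o$. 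The desired orthogonality is thus reduced to the already established orthogonality of $\exp(\lieb)\cdot\tau$ to $\mathsf{S}^o$ at $\tau$.

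For the distance identity, the upper bound is immediate: pulling back by $(h\hat{w})^{-1}$ (which fixes $\tau$) sends $h\cdot\tau$ to $\tau$ and $h\hat{w}\exp(b^o(g))\cdot\tau$ to $\exp(b^o(g))\cdot\tau$, and equation \eqref{eq definition normlieb and distance on xsyg} gives that their distance is exactly $\Vert b^o(g)\Vert_\lieb$, so $d_{\mathsf{X}_\g}(\mathsf{S}^o,g\cdot\mathsf{S}^o)\leq \Vert b^o(g)\Vert_\lieb$. The lower bound is where the main subtlety lies: one must argue that having a common perpendicular realizes the distance between two totally geodesic submanifolds of $\mathsf{X}_\g$. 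I would appeal to the CAT(0) structure of $\mathsf{X}_\g$: the squared distance is a smooth convex function on $\mathsf{S}^o\times g\cdot\mathsf{S}^o$, and the orthogonality established above implies that $(h\cdot\tau,\,h\hat{w}\exp(b^o(g))\cdot\tau)$ is a critical point of this function (by the first variation formula), hence a global minimum by convexity.

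The step I expect to require the most care is the last one, since one needs the two totally geodesic submanifolds to be closed and convex (which they are, being orbits of closed subgroups acting by isometries) so that the standard CAT(0) criticality-implies-minimality argument applies. The remaining verifications are mechanical manipulations with the decomposition $g=h\hat{w}\exp(b^o(g))\tilde{h}$ and the basic properties of $\hat{w}$, $h$, $\tilde{h}$, and the Cartan subspace $\lieb$.
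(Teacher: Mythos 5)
Your proof is correct and follows essentially the same route as the paper, which simply observes that $\exp(\lieb)\cdot\tau$ is orthogonal to $\so$ at $\tau$ and to $\exp(b^o(g))\cdot\so$ at $\exp(b^o(g))\cdot\tau$, and that $\wbh$ fixes $\tau$ and preserves the flat. The only difference is that you make explicit the CAT(0) convexity argument showing that the common perpendicular realizes the distance, a standard fact that the paper leaves implicit; that step is carried out correctly.
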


\begin{proof}
Indeed, this follows from the fact that $\exp(\lieb)\cdot\tau$ is orthogonal to $\so$ (resp. $\exp(b^o(g))\cdot \so$) at $\tau$ (resp. $\exp(b^o(g))\cdot\tau$) and the fact that $\wbh$ fixes $\tau$ and preserves $\exp(\lieb)\cdot\tau$ (see Figure \ref{fig geom interpretation bo}).
\end{proof}

\begin{figure}[h!]
\bc
\scalebox{0.7}{%
\begin{overpic}[scale=1, width=1\textwidth, tics=5]{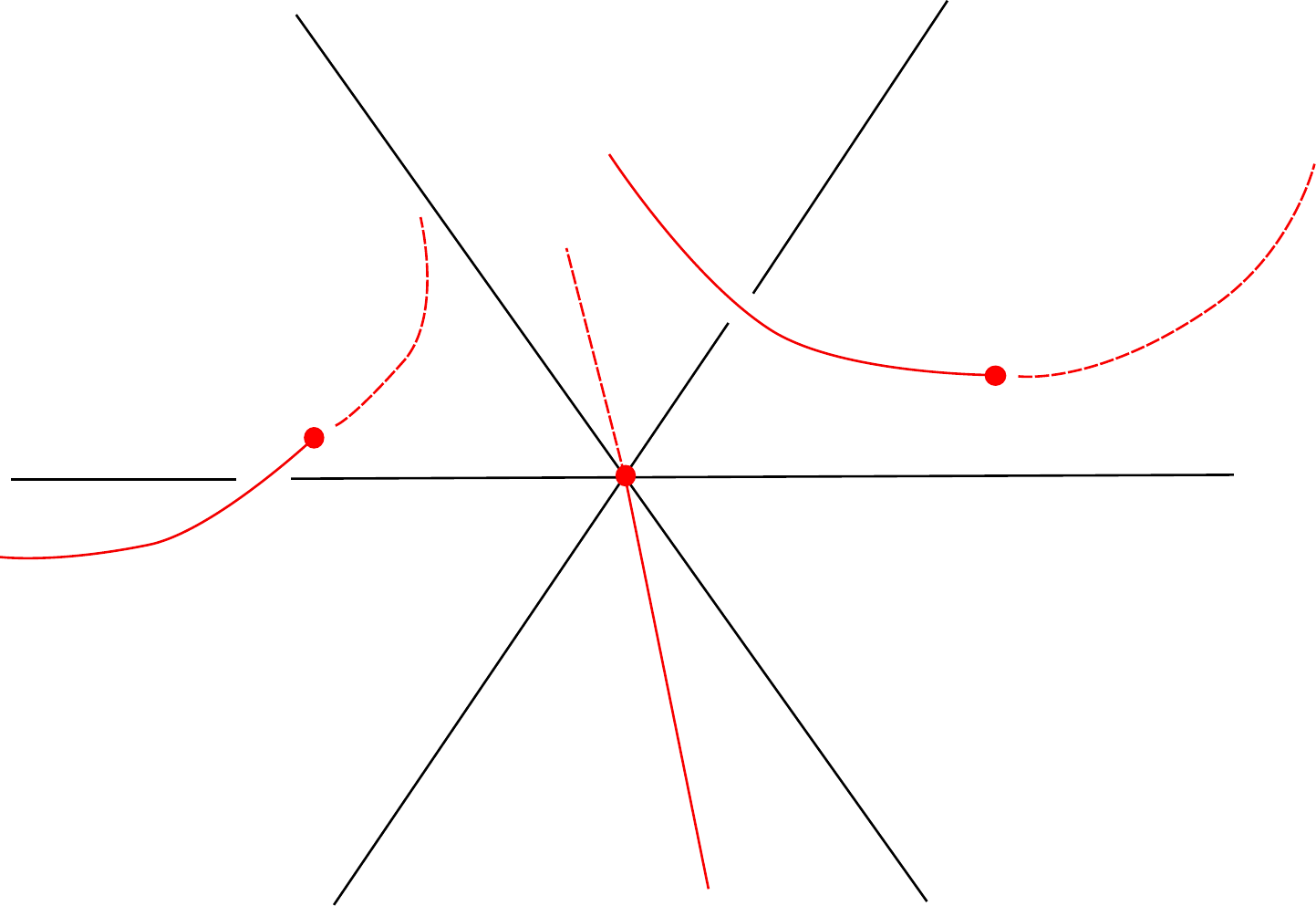}

\put (50,30) {\tc{red}{\Large$\tau$}}

\put (73,37) {\tc{red}{\Large$b\cdot\tau$}}

\put (44,58) {\tc{red}{\Large$b\cdot\so$}}

\put (4,35) {\tc{red}{\Large$g\cdot\tau=\hat{w}b\cdot\tau$}}

\put (-8,25) {\tc{red}{\Large$g\cdot\so$}}

\put (52,-2) {\tc{red}{\Large$\so$}}

\put (65,18) {\Large$\exp(\lieb)\cdot\tau$}
  \end{overpic}}
 
\hspace{0,3cm}

\begin{changemargin}{3cm}{3cm}    
    \caption{The proof of Lemma \ref{lema flat orthogonal to So and gSo in HWBH coordinates and distance} for $g=\hat{w}b\tilde{h}$ (and $b=\exp(b^o(g))$). The red curves represent the submanifolds $\so$, $b\cdot\so$ and $g\cdot\so$, that intersect orthogonally the flat $\exp(\lieb)\cdot\tau$.}
  \label{fig geom interpretation bo}
\end{changemargin}

  \ec
\end{figure}

\section{$(p,q)$-Cartan decomposition for elements with gaps}\label{sec pq Cartan for elements with gaps}

We now begin a more precise study of the $(p,q)$-Cartan decomposition for elements that have ``strong enough dynamics'' on $\mathsf{F}(V)$ and $o$-generic attractor and repellor. Subsection \ref{subsec reminders cartan and jordan} is intended to fix notations and terminology. Subsection \ref{subsec pq attractors} is preparatory to the contents of Subsection \ref{subsec computation weyl chamber and hatw}, where we compute the $\lieb^+$-compatible Weyl chamber that contains $b^o(g)$ for $g$ as above. The contents of Subsection \ref{subsec linear alg interp of bo and first estimates} will be crucial for our understanding of the new projection: we interpret the vector $b^o(g)$ using the Jordan projection of $\sigma^o(g^{-1})g$ and estimate $b^o(g)$ in terms of the Cartan projection $a^\tau(g)$.

\subsection{Reminders on Cartan and Jordan projections}\label{subsec reminders cartan and jordan}

Given a Weyl chamber $\liea^+\subset\lieb=\liea$ of the system $\Sigma(\lieg,\liea)$, let $\Delta\subset\Sigma^+(\lieg,\liea)$ be the set of simple roots. For each $j=1,\dots,d$ we set $a_j^\tau:=\varepsilon_j\circ a^\tau$ (recall the notation introduced in Subsection \ref{subsub weyl chambers lieg,liea}).

An element $g\in\g$ is said to have a \textit{gap of index $\Delta$} if for every $\alpha\in\Delta$ one has $\alpha(a^\tau(g))>0$. In this case, the \textit{Cartan attractor} of $g$ is the full flag

\bc
$U^\tau(g):=k\cdot\xi_{\liea^+}$,
\ec

\noindent where $\xi_{\liea^+}\in\mathsf{F}(V)$ is the flag determined by $\liea^+$ and $g=k\exp(a^\tau(g))l$ is a Cartan decomposition of $g$.  Since $g$ has a gap of index $\Delta$, the Cartan attractor does not depend on the particular choice of the Cartan decomposition of $g$ (c.f. \cite[Chapter VII]{Kna}). Note that $g^{-1}$ also has a gap of index $\Delta$ and we denote

\bc
$S^\tau(g):=U^\tau(g^{-1})$.
\ec

\noindent This flag is called the \textit{Cartan repellor} of $g$.

The choice of $\tau$ induces a continuous distance in each exterior power of $V$ and in $\mathsf{F}(V)$. By abuse of notations, $d(\cdot,\cdot)$ will denote any of these distances. Let $j=1,\dots,d-1$ and $\varepsilon$ be a positive number. For a line $\ell\in\pp(\Lambda^jV)$ and a hyperplane $\vartheta\in\pp(\Lambda^jV^*)$ we let

\bc
$b_\varepsilon(\ell):=\lbrace \ell'\in\pp(\Lambda^jV):\hspace{0,3cm} d(\ell,\ell')\leq\varepsilon\rbrace$
\ec

\noindent and

\bc
$B_\varepsilon(\vartheta):=\lbrace \ell'\in\pp(\Lambda^jV):\hspace{0,3cm} d(\ell',\vartheta)\geq\varepsilon\rbrace$,
\ec

\noindent where $d(\ell',\vartheta)$ denotes the minimal distance between $\ell'$ and lines contained in $\vartheta$.

The following remark is classical (see e.g. Horn-Johnson \cite[Section 7.3 of Chapter 7]{HJ}).

\begin{rem} \label{rem complemento de Sdmenosuno va en Uuno}
Fix a positive $\varepsilon$. There exists $L>0$ such that for every $j=1,\dots,d-1$ and every $g$ in $\g$ with

\bc
$\displaystyle\min_{\alpha\in\Delta}\alpha(a^\tau(g))>L$
\ec

\noindent one has

\bc
$\Lambda^jg\cdot B_\varepsilon(\Lambda^j_*S^\tau(g))\subset b_\varepsilon(\Lambda^jU^\tau(g))$.
\ec
\end{rem}

Recall that the \textit{Jordan projection} $\lambda:\g\too\liea^+$ can be defined by the formula
\bc
$\lambda(g):=\displaystyle\lim_{n\too\infty}\dfrac{a^\tau(g^n)}{n}$
\ec
\noindent for every $g$ in $\g$. For each $j=1,\dots,d$ we set $\lambda_j:=\varepsilon_j\circ \lambda$.

An element $g$ in $\g$ is said to be \textit{proximal} (on $\pp(V)$) if $\lambda_1(g)>\lambda_2(g)$ and is said to be \textit{loxodromic} if $\Lambda^jg$ is proximal for every $j=1,\dots,d-1$. If $g$ is loxodromic, we let $g_+\in\mathsf{F}(V)$ (resp. $g_-\in\mathsf{F}(V)$) denote the unique attracting (resp. repelling) fixed point of $g$ acting on $\mathsf{F}(V)$. For every $j=1,\dots,d-1$, the attracting (resp. repelling) fixed line (resp. hyperplane) of $\Lambda^jg$ acting on $\pp(\Lambda^jV$) is $\Lambda^j(g_+)$ (resp. $\Lambda^j_*(g_-)$).

In order to study products of loxodromic elements we will need the following quantified version of proximality. Given real numbers $0<\varepsilon\leq r$, we say that a loxodromic element $g$ is $(r,\varepsilon)$-\textit{loxodromic} if for every $j=1,\dots,d-1$ one has 

\bc
$d(\Lambda^j(g_+),\Lambda^j_*(g_-))\geq 2r$
\ec

\noindent and

\bc
$g\cdot B_\varepsilon(\Lambda^j_*(g_-))\subset b_\varepsilon(\Lambda^j(g_+))$.
\ec

Since $\sigma^o$ preserves $\ko^\tau$ and acts as $-\tn{id}$ on $\lieb=\liea$, Lemma \ref{lema action of sigma on flags} implies the following.

\begin{cor}\label{cor sigmaginver has gaps/hyperbolic and attractrep}

For every $g\in\g$ one has

\bc
$a^\tau(\sigma^o(g^{-1}))=a^\tau(g)$ and $\lambda(\sigma^o(g^{-1}))=\lambda(g)$.
\ec

\noindent Moreover, if $g$ has a gap of index $\Delta$ (resp. if $g$ is loxodromic) then

\bc
$U^\tau(\sigma^o(g^{-1}))=S^\tau(g)^{\perp_o}$ (resp. $\sigma^{o}(g^{-1})_+=(g_-)^{\perp_o}$).
\ec
\end{cor}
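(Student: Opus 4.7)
The plan is to exploit the fact that $\sigma^o$ preserves $\ko^\tau$ and that $d\sigma^o$ restricts to $-\tn{id}$ on $\liea=\lieb$ (since $\lieb\subset\liep^\tau\cap\lieq^o$), and then read off the Cartan decomposition of $\sigma^o(g^{-1})$ from the one for $g$. Concretely, write $g=k\exp(a^\tau(g))\ell$ with $k,\ell\in\ko^\tau$, so that
\[
\sigma^o(g^{-1})=\sigma^o(\ell^{-1})\,\exp\!\bigl(-d\sigma^o(-a^\tau(g))\bigr)\,\sigma^o(k^{-1})=\sigma^o(\ell^{-1})\,\exp(a^\tau(g))\,\sigma^o(k^{-1}).
\]
Since $\sigma^o(k^{-1}),\sigma^o(\ell^{-1})\in\ko^\tau$ (this uses $\tau(d\sigma^o)=(d\sigma^o)\tau$, so that $d\sigma^o$ preserves $\liek^\tau$, and a short centralizer argument lifts it to the group), the right-hand side is a genuine Cartan decomposition, yielding $a^\tau(\sigma^o(g^{-1}))=a^\tau(g)$. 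The Jordan projection identity then follows from $(\sigma^o(g^{-1}))^n=\sigma^o(g^{-n})$ and the limit formula $\lambda(g)=\lim_n a^\tau(g^n)/n$.

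For the Cartan attractor, I would exploit the same decomposition. Since $g$ has a gap of index $\Delta$, so does $\sigma^o(g^{-1})$, and from the decomposition displayed above,
\[
U^\tau(\sigma^o(g^{-1}))=\sigma^o(\ell^{-1})\cdot \xi_{\liea^+}.
\]
On the other hand, writing $g^{-1}=\ell^{-1}\hat{w}_0\exp(\iota(a^\tau(g)))\hat{w}_0^{-1}k^{-1}$ where $\hat{w}_0$ is the longest element of the Weyl group, one gets $S^\tau(g)=U^\tau(g^{-1})=\ell^{-1}\hat{w}_0\cdot\xi_{\liea^+}$. The key observation is that, because the basis of lines $\mathcal{C}$ associated to $\liea^+$ is $o$-orthogonal, one has the identity $\hat{w}_0\cdot\xi_{\liea^+}=\xi_{\liea^+}^{\perp_o}$: indeed, the $j$-th subspace of $\hat{w}_0\cdot\xi_{\liea^+}$ is the span of the last $j$ lines of $\mathcal{C}$, which coincides with the $o$-orthogonal complement of the span of the first $d-j$ lines. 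Hence $S^\tau(g)=\ell^{-1}\cdot\xi_{\liea^+}^{\perp_o}$, and applying Lemma \ref{lema action of sigma on flags} yields
\[
S^\tau(g)^{\perp_o}=(\ell^{-1}\cdot\xi_{\liea^+}^{\perp_o})^{\perp_o}=\sigma^o(\ell^{-1})\cdot\xi_{\liea^+}=U^\tau(\sigma^o(g^{-1})).
\]

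For the loxodromic case I would argue even more directly from Lemma \ref{lema action of sigma on flags}: since $g^{-1}$ fixes $g_-$, applying the identity $\sigma^o(h)\cdot\xi^{\perp_o}=(h\cdot\xi)^{\perp_o}$ with $h=g^{-1}$ and $\xi=g_-$ shows that $(g_-)^{\perp_o}$ is fixed by $\sigma^o(g^{-1})$. To promote this to being the \emph{attracting} fixed point, take any $\xi$ transverse to $g_+$, so that $g^{-n}\cdot\xi\to g_-$ in $\mathsf{F}(V)$; then the continuous involution $\cdot^{\perp_o}$ gives $\sigma^o(g^{-1})^n\cdot\xi^{\perp_o}=(g^{-n}\cdot\xi)^{\perp_o}\to(g_-)^{\perp_o}$, and uniqueness of the attractor for a loxodromic element forces $\sigma^o(g^{-1})_+=(g_-)^{\perp_o}$. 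The only mildly subtle points are the $\hat{w}_0$-identification $\hat{w}_0\cdot\xi_{\liea^+}=\xi_{\liea^+}^{\perp_o}$ and the invariance of $\ko^\tau$ under $\sigma^o$; everything else is formal manipulation with Lemma \ref{lema action of sigma on flags}.
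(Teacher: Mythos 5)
Your argument is correct and is exactly the paper's: the corollary is stated there as an immediate consequence of the facts that $\sigma^o$ preserves $\ko^\tau$ and acts as $-\mathrm{id}$ on $\liea=\lieb$, together with Lemma \ref{lema action of sigma on flags}, and you have simply written out those details (including the genuinely needed identification $\hat{w}_0\cdot\xi_{\liea^+}=\xi_{\liea^+}^{\perp_o}$, which holds because $\mathcal{C}$ is $o$-orthogonal with non-isotropic lines). The only blemish is a sign typo in your intermediate display: one has $\sigma^o(\exp(-a^\tau(g)))=\exp\bigl(d\sigma^o(-a^\tau(g))\bigr)=\exp(a^\tau(g))$, with no extra leading minus sign inside the exponential; your final expression is the correct one.
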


\subsection{$(p,q)$-Cartan attractors}\label{subsec pq attractors}

We now focus on elements $g\in\g$ such that $\sigma^o(g^{-1})g$ is loxodromic.

\begin{rem}\label{rem bog non empty interior}
It is not hard to see that if $\sigma^o(g^{-1})g$ is loxodromic then its eigenvalues must be real numbers. In particular, for all such $g$ we have $g\in\bog$. Moreover, since being loxodromic is an open condition in $\g$ we conclude that $\bog$ has non empty interior.

\end{rem}

By Proposition \ref{prop equivalences HWbH} and Remark \ref{rem sigmawinvw belongs to M}, the element $\sigma^o(g^{-1})g$ is loxodromic if and only if there exists a $\lieb^+$-compatible Weyl chamber $\liea^+$ for which one has

\bc
$g\in\h^o\wbh\exp(\tn{int}(\liea^+))\h^o$.
\ec

\noindent In this case we set

\bc
$U^o(g):=(g\sigma^o(g^{-1}))_+\in\mathsf{F}(V)$.
\ec

\noindent By similar reasons the element $\sigma^o(g)g^{-1}$ is loxodromic as well and we denote

\bc
$S^o(g):=U^o(g^{-1})$.
\ec

\noindent Note that

\bc
$S^o(g)=(\sigma^o(g^{-1})g)_-$.
\ec

\begin{rem}\label{rem Uog and Sog in HWbH coordinates}

Suppose that $g=h\hat{w}\exp(X)\tilde{h}$ is a $(p,q)$-Cartan decomposition of $g$ such that $X\in\tn{int}(\liea^+)$ for some Weyl chamber $\liea^+\subset\lieb^+$. Then 

\bc
$U^o(g)=h\hat{w}\cdot\xi_{\liea^+}$ and $S^o(g)=\tilde{h}^{-1}\cdot\xi_{-\liea^+}=\tilde{h}^{-1}\cdot(\xi_{\liea^+}^{\perp_o})$.
\ec

\noindent In particular, both $U^o(g)$ and $S^o(g)$ belong to the set $\fvo$ of $o$-generic flags of $V$.
\end{rem}

\begin{lema}\label{lema Uo cerca de Utau y lo mismo con S}
Let $C\subset\fvo$ be a compact set. Then there exist $0<\varepsilon_0\leq r_0$ such that for every $0<\varepsilon\leq r$ with $\varepsilon\leq\varepsilon_0$ and $r\leq r_0$ there exists a positive $L$ with the following property: fix a $\lieb^+$-compatible Weyl chamber $\liea^+$. For every $g\in\g$ for which

\bc
$\displaystyle\min_{\alpha\in\Sigma^+(\lieg,\liea)}\alpha(a^\tau(g))>L$
\ec

\noindent holds, and such that $U^\tau(g)\in C$ and $S^\tau(g)\in C$, then $\sigma^o(g^{-1})g$ is $(2r,2\varepsilon)$-loxodromic. Furthermore one has

\bc
$d(U^o(g),U^\tau(g))\leq\varepsilon$ and $d(S^o(g),S^\tau(g))\leq\varepsilon$.
\ec

\end{lema}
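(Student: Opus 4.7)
The plan is to apply Remark~\ref{rem complemento de Sdmenosuno va en Uuno} twice, once to $g$ and once to $\sigma^o(g^{-1})$, and then compose. The key input is that compactness of $C$ inside the open set $\fvo$ supplies uniform quantitative transversality between each $\xi\in C$ and its $o$-orthogonal flag $\xi^{\perp_o}$.

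First I would use continuity of the involution $\cdot^{\perp_o}$ together with Lemma~\ref{lema equivalences for o generic flags} to produce a constant $c>0$ with
\begin{equation*}
d(\Lambda^j\xi,\Lambda^j_*(\xi^{\perp_o}))\geq 8c
\end{equation*}
for every $\xi\in C$ and every $j=1,\dots,d-1$. I would set $\varepsilon_0=r_0=c$. Given $0<\varepsilon\leq r\leq r_0$, Remark~\ref{rem complemento de Sdmenosuno va en Uuno} provides $L>0$ such that every $g$ with $\min_{\alpha\in\Sigma^+(\lieg,\liea)}\alpha(a^\tau(g))>L$ satisfies $\Lambda^jg\cdot B_\varepsilon(\Lambda^j_*S^\tau(g))\subset b_\varepsilon(\Lambda^jU^\tau(g))$. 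Corollary~\ref{cor sigmaginver has gaps/hyperbolic and attractrep} grants the same gap hypothesis for $\sigma^o(g^{-1})$, with Cartan attractor $S^\tau(g)^{\perp_o}$ and repellor $U^\tau(g)^{\perp_o}$, so the same $L$ yields $\Lambda^j\sigma^o(g^{-1})\cdot B_\varepsilon(\Lambda^j_*(U^\tau(g)^{\perp_o}))\subset b_\varepsilon(\Lambda^j(S^\tau(g)^{\perp_o}))$.

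Then I would chain these inclusions using the $8c$-transversality: for $\varepsilon\leq c$ one has $b_\varepsilon(\Lambda^j U^\tau(g))\subset B_\varepsilon(\Lambda^j_*(U^\tau(g)^{\perp_o}))$ and $b_\varepsilon(\Lambda^j(S^\tau(g)^{\perp_o}))\subset B_\varepsilon(\Lambda^j_*S^\tau(g))$. Composing shows that $\Lambda^j(g\sigma^o(g^{-1}))$ maps $B_\varepsilon(\Lambda^j_*(U^\tau(g)^{\perp_o}))$ strictly inside itself, with image contained in $b_\varepsilon(\Lambda^jU^\tau(g))$; Brouwer's fixed point theorem combined with contraction produces a unique attracting fixed line in $b_\varepsilon(\Lambda^jU^\tau(g))$, which must be $\Lambda^jU^o(g)$. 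Composing in the opposite order establishes loxodromicity of $\sigma^o(g^{-1})g$ and places its attracting (resp. repelling) fixed flag within $\varepsilon$ of $S^\tau(g)^{\perp_o}$ (resp. of $S^\tau(g)$). Running the same argument on $g^{-1}$, which by Corollary~\ref{cor sigmaginver has gaps/hyperbolic and attractrep} satisfies the same hypotheses (with $U^\tau$ and $S^\tau$ swapped), identifies the repelling fixed flag of $\sigma^o(g^{-1})g$ with $S^o(g)$ and yields $d(S^o(g),S^\tau(g))\leq\varepsilon$.

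Finally, to obtain the $(2r,2\varepsilon)$-loxodromic statement for $\sigma^o(g^{-1})g$, I would combine the two estimates above: its attracting and repelling flags lie within $\varepsilon$ of $S^\tau(g)^{\perp_o}$ and $S^\tau(g)$ respectively, so their mutual distance is at least $8c-2\varepsilon\geq 6c\geq 4r$; and the inclusions $B_{2\varepsilon}(\Lambda^j_*S^o(g))\subset B_\varepsilon(\Lambda^j_*S^\tau(g))$ together with $b_\varepsilon(\Lambda^j(S^\tau(g)^{\perp_o}))\subset b_{2\varepsilon}(\Lambda^j((\sigma^o(g^{-1})g)_+))$ upgrade the contraction from $B_\varepsilon\to b_\varepsilon$ to $B_{2\varepsilon}\to b_{2\varepsilon}$. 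The main obstacle is purely bookkeeping: fixing the constants $c$, $r_0$, $\varepsilon_0$ and $L$ in the correct order so that a single threshold simultaneously controls the two instances of Remark~\ref{rem complemento de Sdmenosuno va en Uuno}, the chain of transversality estimates, and the enlargement from $\varepsilon$ to $2\varepsilon$ and $r$ to $2r$.
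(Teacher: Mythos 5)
Your proposal is correct and follows essentially the same ping-pong argument as the paper: uniform transversality of $\Lambda^j\xi$ and $\Lambda^j_*(\xi^{\perp_o})$ from compactness of $C\subset\fvo$, Remark \ref{rem complemento de Sdmenosuno va en Uuno} applied to both $g$ and $\sigma^o(g^{-1})$ (via Corollary \ref{cor sigmaginver has gaps/hyperbolic and attractrep}), and chaining the two contractions. The only difference is that the paper delegates the final step --- passing from the inclusion $\Lambda^j(\sigma^o(g^{-1})g)\cdot B_\varepsilon\subset b_\varepsilon$ plus the $6r$-separation to the $(2r,2\varepsilon)$-loxodromicity and the location of the fixed flags --- to Benoist's Lemme 1.2, whereas you reconstruct it by hand with Brouwer and explicit constant-chasing; citing that lemma would spare you the delicate bookkeeping you flag at the end.
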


\begin{proof}

We apply a ``ping-pong'' argument (see Figure \ref{fig pingpong} below). Namely, because of Lemma \ref{lema equivalences for o generic flags} there exists a positive $r_0$ for which for every $j=1,\dots,d-1$ and every $\xi\in C$ one has

\bc
$d(\Lambda^j\xi,(\Lambda^j\xi)^{\perp_{o_j}})\geq 6r_0$ and $d(\Lambda^j(\xi^{\perp_o}),(\Lambda^j(\xi^{\perp_o}))^{\perp_{o_j}})\geq 6r_0$.
\ec

\noindent By Remark \ref{rem ordered basis induced by o generic and double perp and xi o generic iff gxi go generic} this implies
\begin{equation}\label{eq in sigmaging loxod uno}
d(\Lambda^j\xi,\Lambda^j_*(\xi^{\perp_o}))\geq 6r_0 \tn{ and } d(\Lambda^j(\xi^{\perp_o}),\Lambda^j_*\xi)\geq 6r_0
\end{equation}
\noindent for all $\xi\in C$. We now find $\varepsilon_0\leq r_0$ for which for every $j=1,\dots,d-1$ and every $\xi \in C$ one has
\begin{equation}\label{eq in sigmaging loxod dos}
b_{\varepsilon_0}(\Lambda^j\xi)\subset B_{\varepsilon_0}(\Lambda^j_*(\xi^{\perp_o})) \tn{ and } b_{\varepsilon_0}(\Lambda^j(\xi^{\perp_o}))\subset B_{\varepsilon_0}(\Lambda^j_*\xi).
\end{equation}

Fix $0<\varepsilon\leq r$ with $\varepsilon\leq\varepsilon_0$ and $r\leq r_0$. By Remark \ref{rem complemento de Sdmenosuno va en Uuno} we can find a positive $L$ for which for every $g$ such that

\bc
$\displaystyle\min_{\alpha\in\Sigma^+(\lieg,\liea)}\alpha(a^\tau(g))>L$
\ec

\noindent one has
\begin{equation}\label{eq in sigmaging loxod tres}
\Lambda^jg\cdot B_\varepsilon\left(\Lambda^j_*S^\tau(g)\right)\subset b_\varepsilon(\Lambda^jU^\tau(g)).
\end{equation}
\noindent for every $j=1,\dots,d-1$. Further, by Corollary \ref{cor sigmaginver has gaps/hyperbolic and attractrep} we have as well
\begin{equation}\label{eq in sigmaging loxod cuatro}
\Lambda^j\sigma^o(g^{-1})\cdot B_\varepsilon\left(\Lambda^j_*S^\tau(\sigma^o(g^{-1}))\right)\subset b_\varepsilon(\Lambda^jU^\tau(\sigma^o(g^{-1}))).
\end{equation}

\noindent Now suppose moreover that $U^\tau(g)\in C$ and $S^\tau(g)\in C$. By equation (\ref{eq in sigmaging loxod uno}) and Corollary \ref{cor sigmaginver has gaps/hyperbolic and attractrep} we have
\begin{equation}\label{eq in sigmaging loxod cinco}
d\left(\Lambda^j U^\tau(\sigma^o(g^{-1})),\Lambda^j_*S^\tau( g)\right)\geq 6r.
\end{equation}

\noindent Now by equation (\ref{eq in sigmaging loxod tres}) we have

\bc
$\Lambda^j(\sigma^o(g^{-1})g)\cdot B_\varepsilon(\Lambda^j_*S^\tau(g))\subset\Lambda^j\sigma^o(g^{-1})\cdot b_\varepsilon(\Lambda^jU^\tau(g))  $
\ec

\noindent and by Corollary \ref{cor sigmaginver has gaps/hyperbolic and attractrep} and equation (\ref{eq in sigmaging loxod dos}) this is contained in

\bc
$\Lambda^j\sigma^o(g^{-1})\cdot B_\varepsilon(\Lambda^j_*S^\tau(\sigma^o(g^{-1})))$.
\ec

\noindent By equation (\ref{eq in sigmaging loxod cuatro}) we have therefore

\bc
$\Lambda^j(\sigma^o(g^{-1})g)\cdot B_\varepsilon(\Lambda^j_*S^\tau(g))\subset  b_\varepsilon(\Lambda^jU^\tau(\sigma^o(g^{-1})))$.
\ec

\noindent By Benoist \cite[Lemme 1.2]{Ben1}, this inclusion together with equation (\ref{eq in sigmaging loxod cinco}) gives that $\sigma^o(g^{-1})g$ is $(2r,2\varepsilon)$-loxodromic and 

\bc
$d((\sigma^o(g^{-1})g)_-,S^\tau(g))\leq \varepsilon$.
\ec

Working with $g\sigma^o(g^{-1})$ (instead of $\sigma^o(g^{-1})g$) we can also assume

\bc
$d(U^o(g),U^\tau(g))\leq\varepsilon$.
\ec

\end{proof}

\begin{figure}[h!]
\bc
\scalebox{0.8}{%
\begin{overpic}[scale=1, width=1\textwidth, tics=5]{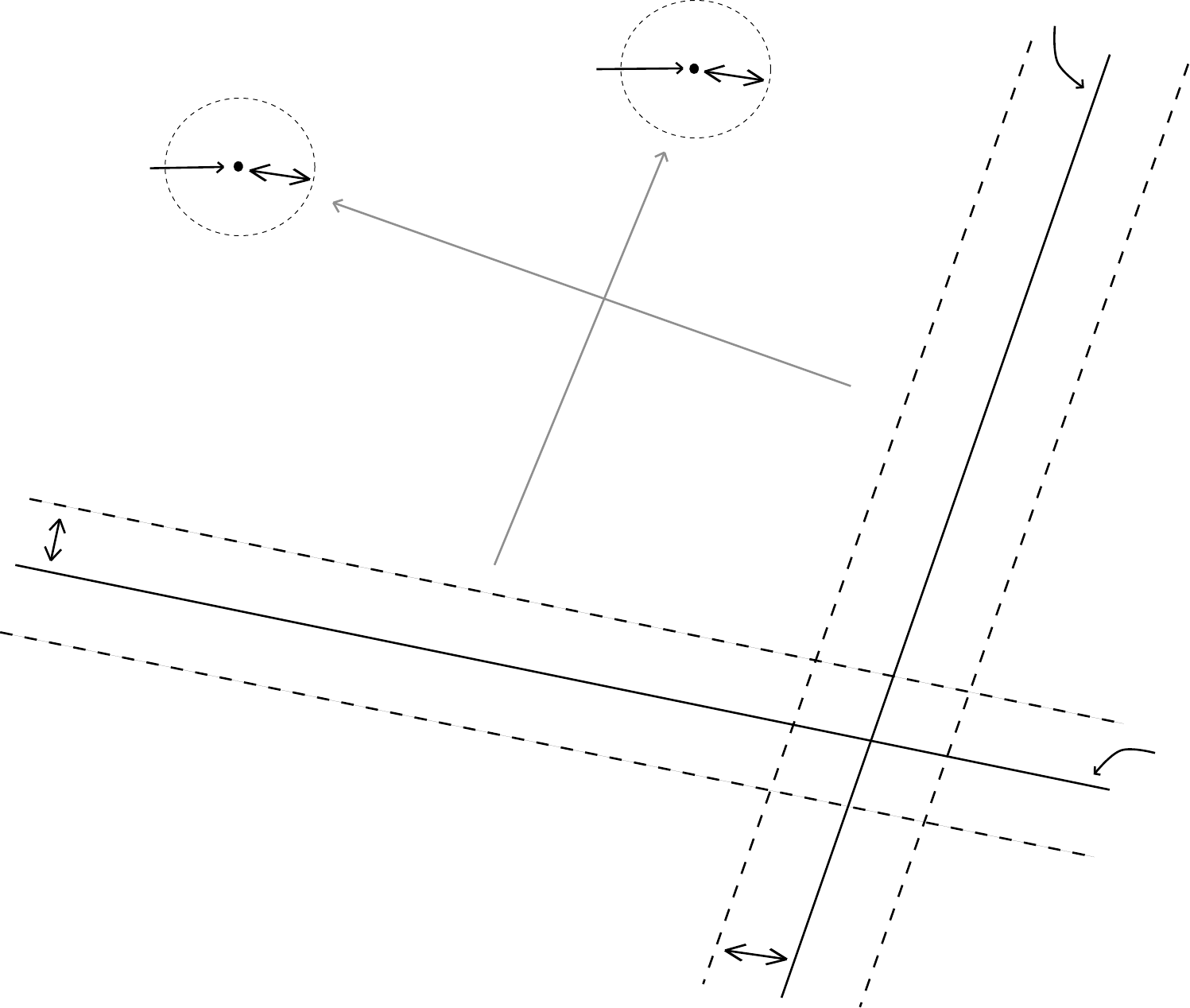}

\put (56,50) {\tc{gray}{\Large$\Lambda^j\sigma^o(g^{-1})$}}

\put (63,6) {\Large$\varepsilon$}

\put (2,39) {\Large$\varepsilon$}

\put (23,71) {\Large$\varepsilon$}

\put (-9,70) {\Large$\Lambda^jU^\tau(\sigma^o(g^{-1}))$}

\put (38,78) {\Large$\Lambda^jU^\tau(g)$}

\put (84,83) {\Large$\Lambda^j_*S^\tau(\sigma^o(g^{-1}))$}

\put (98,21) {\Large$\Lambda^j_*S^\tau(g)$}

\put (61,76) {\Large$\varepsilon$}

\put (37,40) {\Large\tc{gray}{$\Lambda^jg$}}
  \end{overpic}}
 
\hspace{0,3cm}

\begin{changemargin}{3cm}{3cm}    
    \caption{The proof of Lemma \ref{lema Uo cerca de Utau y lo mismo con S}.}
  \label{fig pingpong}
\end{changemargin}

  \ec
\end{figure}

\subsection{Computation of the Weyl chamber and the $\wbh$-coordinate}\label{subsec computation weyl chamber and hatw}

Recall that $\mathcal{C}$ is the basis of lines determined by $\lieb$. Given two flags $\xi$ and $\xi'$ spanned by $\mathcal{C}$, we denote by $w_{\xi\xi'}$ the unique element of $\wb$ for which

\bc
$w_{\xi\xi'}\cdot\xi'=\xi$.
\ec

\noindent We also denote by $\iota_{\lieb^+}(\xi)$ the flag determined by the $\lieb^+$-compatible Weyl chamber $\iota_{\lieb^+}(\liea^+_\xi)$.

Recall that $\fvo_\xi$ denotes the (open) $\h^o$-orbit of $\xi$.

\begin{prop}\label{prop w and standard weyl chamber for element with gap in sing val}
Let $\xi_s$ and $\xi_u$ be two $\lieb^+$-compatible flags. Fix two compact sets $C_s\subset\fvo_{\xi_s}$ and $C_u\subset\fvo_{\xi_u}$. Then there exists a positive $L$ with the following property: let $\liea^+$ be a $\lieb^+$-compatible Weyl chamber. For every $g\in\g$ for which

\bc
$\displaystyle\min_{\alpha\in\Sigma^+(\lieg,\liea)}\alpha(a^\tau(g))>L$
\ec

\noindent holds, and such that $S^\tau(g)\in C_s$ and $U^\tau(g)\in C_u$, one has

\bc
$g\in\h^ow_{\xi_u\iota_{\lieb^+}(\xi_s)}\exp(\tn{int}(\iota_{\lieb^+}(\liea^+_{\xi_s})))\h^o$.
\ec

\end{prop}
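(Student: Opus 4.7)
The plan is to produce a $(p,q)$-Cartan decomposition $g=h_0\hat{w}_0\exp(X)\tilde{h}_0$ with $X$ in the interior of a $\lieb^+$-compatible Weyl chamber $\liea_0^+$, and then to identify both $\liea_0^+$ and the class of $\hat{w}_0$ in $\wb$ by invoking the uniqueness in Proposition \ref{prop param of open orbits by lieb compatible flags} twice: once via $S^o(g)$ (to obtain $\liea_0^+$) and once via $U^o(g)$ (to obtain $w_0$).

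First, since $C_s\subset\fvo_{\xi_s}$ and $C_u\subset\fvo_{\xi_u}$ are compact subsets of open $\h^o$-orbits, I would pick $0<\varepsilon\leq r$ small enough so that the $\varepsilon$-neighborhoods of $C_s$ and $C_u$ remain inside $\fvo_{\xi_s}$ and $\fvo_{\xi_u}$, and then apply Lemma \ref{lema Uo cerca de Utau y lo mismo con S} for this $(r,\varepsilon)$ to get $L>0$ with the following property: for every $g$ as in the hypotheses, $\sigma^o(g^{-1})g$ is loxodromic (hence $g\in\bog$ by Remark \ref{rem bog non empty interior}) and $U^o(g)\in\fvo_{\xi_u}$, $S^o(g)\in\fvo_{\xi_s}$. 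Loxodromicity of $\sigma^o(g^{-1})g$ means its eigenvalues have pairwise distinct absolute values; combined with the identity
\begin{equation*}
\sigma^o(g^{-1})g=\tilde{h}_0^{-1}m\exp(2X)\tilde{h}_0,\qquad m=\sigma^o(\hat{w}_0^{-1})\hat{w}_0\in\mb,
\end{equation*}
from the proof of Proposition \ref{prop equivalences HWbH} (with $m$ diagonal in $\mathcal{C}$ with $\pm 1$ eigenvalues by Remark \ref{rem sigmawinvw belongs to M}), this forces the coordinates $\varepsilon_\ell(X)$ to be pairwise distinct. Hence $X=b^o(g)\in\tn{int}(\liea_0^+)$ for a unique Weyl chamber $\liea_0^+$ of $\Sigma(\lieg,\liea)$, which must be $\lieb^+$-compatible because $X\in\lieb^+$.

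To pin down $\liea_0^+$, Remark \ref{rem Uog and Sog in HWbH coordinates} gives $S^o(g)=\tilde{h}_0^{-1}\cdot(\xi_{\liea_0^+}^{\perp_o})\in\fvo_{\xi_s}$, so $\xi_{\liea_0^+}^{\perp_o}\in\fvo_{\xi_s}$. Since $\hat{w}_{\lieb^+}\in\h^o$, the flag $\hat{w}_{\lieb^+}\cdot\xi_{\liea_0^+}^{\perp_o}$ still sits in $\fvo_{\xi_s}$, and its associated Weyl chamber is $w_{\lieb^+}\cdot(-\liea_0^+)=\iota_{\lieb^+}(\liea_0^+)$, which is $\lieb^+$-compatible by Subsection \ref{subsec oppostion liebplus}. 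Proposition \ref{prop param of open orbits by lieb compatible flags} forces $\iota_{\lieb^+}(\liea_0^+)=\liea^+_{\xi_s}$, and using $\iota_{\lieb^+}^2=\tn{id}$ (because $w_{\lieb^+}^2=1$) I conclude $\liea_0^+=\iota_{\lieb^+}(\liea^+_{\xi_s})$. Symmetrically, from $U^o(g)=h_0\hat{w}_0\cdot\xi_{\liea_0^+}\in\fvo_{\xi_u}$ I deduce $\hat{w}_0\cdot\xi_{\liea_0^+}\in\fvo_{\xi_u}$; Lemma \ref{lema every standard weyl chamber taken into a liebplus compatile by wcaph} supplies $\hat{w}'\in\wbh\cap\h^o$ making $\hat{w}'\hat{w}_0\cdot\xi_{\liea_0^+}$ a $\lieb^+$-compatible flag in $\fvo_{\xi_u}$, so Proposition \ref{prop param of open orbits by lieb compatible flags} forces $\hat{w}'\hat{w}_0\cdot\xi_{\liea_0^+}=\xi_u$, i.e.\ $w'w_0=w_{\xi_u\iota_{\lieb^+}(\xi_s)}$ in $\wb$. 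Rewriting
\begin{equation*}
g=(h_0\hat{w}'^{-1})(\hat{w}'\hat{w}_0)\exp(X)\tilde{h}_0,
\end{equation*}
with $h_0\hat{w}'^{-1}\in\h^o$ and $X\in\tn{int}(\iota_{\lieb^+}(\liea^+_{\xi_s}))$, places $g$ in the prescribed double coset.

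The main obstacle is conceptual rather than computational: Proposition \ref{prop equivalences HWbH} on its own only records $X\in\lieb^+$ uniquely, so the crux is to show that the two external hypotheses on $U^\tau(g)$ and $S^\tau(g)$ transfer (through Lemma \ref{lema Uo cerca de Utau y lo mismo con S}) to information about $U^o(g)$ and $S^o(g)$ that \emph{independently} rigidifies the $\lieb^+$-compatible Weyl chamber $\liea_0^+$ and the Weyl-group class $w_0$ via the one-to-one correspondence of Proposition \ref{prop param of open orbits by lieb compatible flags}.
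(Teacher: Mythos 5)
Your proposal is correct and follows essentially the same route as the paper's proof: Lemma \ref{lema Uo cerca de Utau y lo mismo con S} places $U^o(g)$ and $S^o(g)$ in the open orbits $\fvo_{\xi_u}$ and $\fvo_{\xi_s}$, and the bijection of Proposition \ref{prop param of open orbits by lieb compatible flags} (together with $w_{\lieb^+}\in\h^o$ and a Weyl element in $\wbh\cap\h^o$) then pins down both the chamber $\iota_{\lieb^+}(\liea^+_{\xi_s})$ and the class $w_{\xi_u\iota_{\lieb^+}(\xi_s)}$. The only cosmetic difference is that you identify the chamber through $S^o(g)$ and apply $\iota_{\lieb^+}$, while the paper works directly with $b^o(g)_+=b^o(g)_-^{\perp_o}$; these are the same computation.
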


\begin{proof}
The proof is illustrated in Figure \ref{fig compweylchamber} below. 

By Lemma \ref{lema Uo cerca de Utau y lo mismo con S} we can take a positive $L$ such that for every $g$ as in the statement one has

\bc
$U^o(g)\in\fvo_{\xi_u}$ and $S^o(g)\in\fvo_{\xi_s}$.
\ec

\noindent If $h\hat{w}\exp(b^o(g))\tilde{h}$ is a $(p,q)$-Cartan decomposition of $g$ we then have

\bc
$U^o(g)=h\hat{w}\cdot b^o(g)_+\in\h^o\cdot\xi_u$ and $S^o(g)=\tilde{h}^{-1}\cdot b^o(g)_-\in\h^o\cdot\xi_s$.
\ec

\noindent In particular, 

\bc
$\hat{w}\cdot b^o(g)_+\in\h^o\cdot\xi_u$ and $ b^o(g)_-\in\h^o\cdot\xi_s$.
\ec

\noindent We conclude that

\bc
$b^o(g)_+=b^o(g)_-^{\perp_o}\in\h^o\cdot(\xi_s)^{\perp_o}=\h^o\cdot\iota_{\lieb^+}(\xi_s)$,
\ec

\noindent because $w_{\lieb^+}$ belongs to $\h^o$ (c.f. Subsection \ref{subsec oppostion liebplus}). Since both $b^o(g)_+$ and $\iota_{\lieb^+}(\xi_s)$ are $\lieb^+$-compatible, Proposition \ref{prop param of open orbits by lieb compatible flags} implies $b^o(g)_+=\iota_{\lieb^+}(\xi_s)$ and therefore $b^o(g)$ belongs to $\tn{int}( \iota_{\lieb^+}(\liea^+_{\xi_s}))$. Further, we have

\bc
$\hat{w}\cdot \iota_{\lieb^+}(\xi_s)\in\h^o\cdot\xi_u$
\ec

\noindent and Lemma \ref{lema if preserve mathcalC equivalences belong to H and to K} implies the existence of an element $\hat{w}'\in\wbh\cap\h^o$ such that

\bc
$\hat{w}'\hat{w}\cdot \iota_{\lieb^+}(\xi_s)=\xi_u$.
\ec

\noindent Then $w'w=w_{\xi_u\iota_{\lieb^+}(\xi_s)}$ and the proof is complete.

\end{proof}

\begin{figure}[h!]
\bc
\scalebox{0.8}{%
\begin{overpic}[scale=1, width=1\textwidth, tics=5]{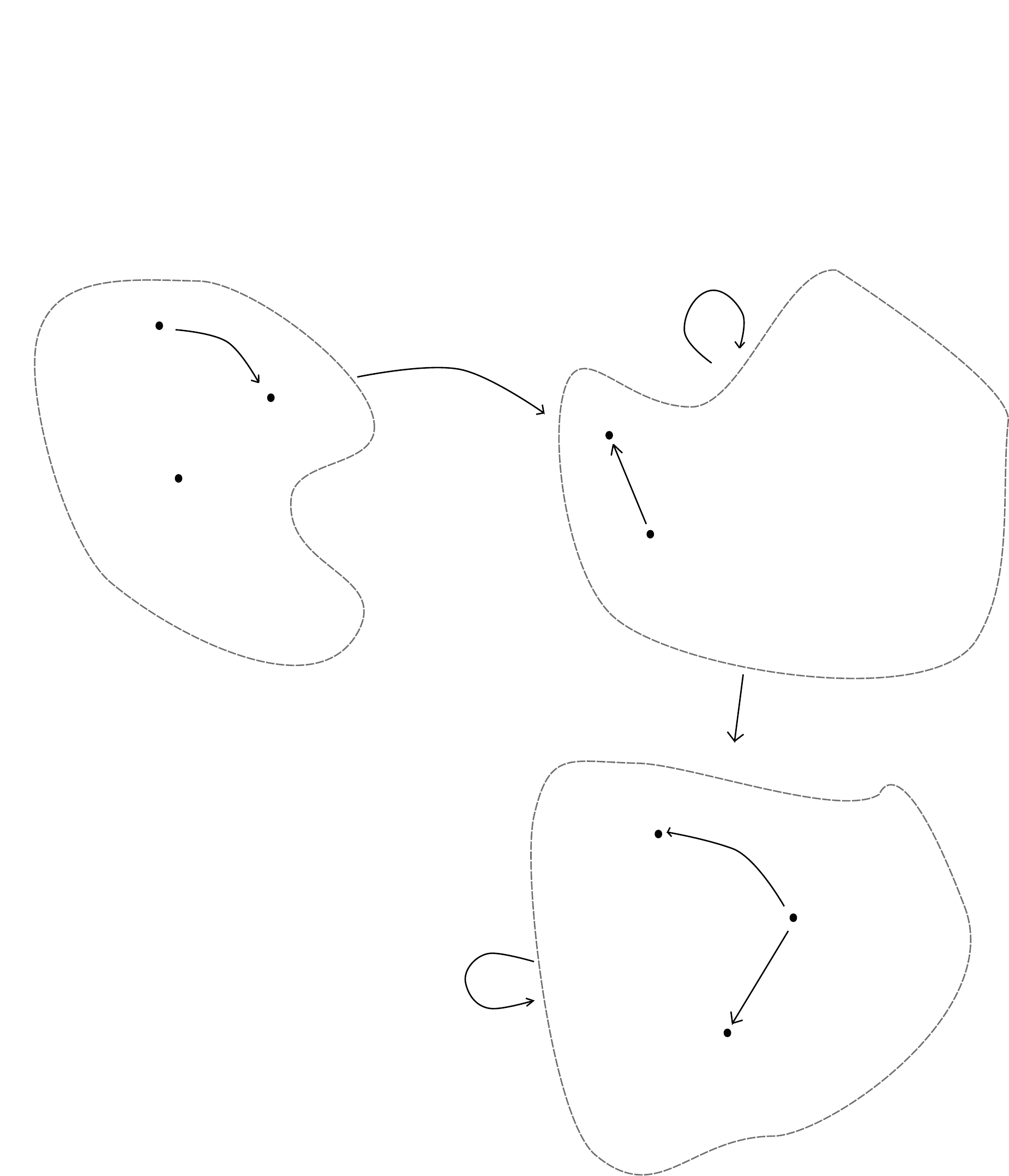}

\put (52,48) {\tc{gray}{\Large$\h^o\cdot(\xi_s^{\perp_o})=\h^o\cdot\iota_{\lieb^+}(\xi_s)$}}

\put (20,46) {\tc{gray}{\Large$\h^o\cdot\xi_s$}}

\put (51,2) {\tc{gray}{\Large$\h^o\cdot\xi_u$}}

\put (36,16) {\Large$\hat{w}'$}

\put (65,15) {\Large$\hat{w}'$}

\put (60,9) {\Large$\xi_u$}

\put (49,30) {\Large$U^o(g)$}

\put (63,28) {\Large$h$}

\put (68,21) {\Large$\hat{w}\cdot b^o(g)_+$}

\put (64,39) {\Large$\hat{w}$}

\put (56,54) {\Large$\xi_s^{\perp_o}$}

\put (53,62) {\Large$b^o(g)_-^{\perp_o}=b^o(g)_+=\iota_{\lieb^+}(\xi_s)$}

\put (49,58) {\Large$\hat{w}_{\lieb^+}$}

\put (59,77) {\Large$\hat{w}_{\lieb^+}$}

\put (37,69) {\Large$\cdot^{\perp_o}$}

\put (22,63) {\Large$b^o(g)_-$}

\put (12,59) {\Large$\xi_s$}

\put (18,72) {\Large$\tilde{h}$}

\put (6,72) {\Large$S^o(g)$}
  \end{overpic}}
 
\hspace{0,3cm}

\begin{changemargin}{3cm}{3cm}    
    \caption{The proof of Proposition \ref{prop w and standard weyl chamber for element with gap in sing val}.}
  \label{fig compweylchamber}
\end{changemargin}

  \ec
\end{figure}

\begin{cor}\label{cor cartan decomposition for Sg and Ug in single open orbit}

Let $\xi$ be a $\lieb^+$-compatible flag and $C\subset\fvo_\xi$ be a compact set. Then there exists a positive $L$ with the following property: let $\liea^+$ be a $\lieb^+$-compatible Weyl chamber. For every $g\in\g$ for which

\bc
$\displaystyle\min_{\alpha\in\Sigma^+(\lieg,\liea)}\alpha(a^\tau(g))>L$
\ec

\noindent holds, and such that $S^\tau(g)\in C$ and $U^\tau(g)\in C$, one has

\bc
$g\in\h^ow_{\xi\iota_{\lieb^+}(\xi)}\exp(\tn{int}(\iota_{\lieb^+}(\liea^+_{\xi})))\h^o$.
\ec

\end{cor}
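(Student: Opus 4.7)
The plan is to observe that this corollary is exactly the specialization of Proposition \ref{prop w and standard weyl chamber for element with gap in sing val} to the case where the attractor and repellor are constrained to lie in the same open $\h^o$-orbit of $\fvo$. So the proof will consist in applying that proposition with the particular choices $\xi_s = \xi_u = \xi$ and $C_s = C_u = C$, and then identifying the resulting data with what appears in the statement.

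Concretely, I would first invoke Proposition \ref{prop w and standard weyl chamber for element with gap in sing val} with $\xi_s := \xi$, $\xi_u := \xi$, $C_s := C$ and $C_u := C$. This produces a constant $L > 0$ such that for every $\lieb^+$-compatible Weyl chamber $\liea^+$ and every $g \in \g$ satisfying
\[
\min_{\alpha \in \Sigma^+(\lieg,\liea)} \alpha(a^\tau(g)) > L,
\qquad S^\tau(g) \in C_s = C,
\qquad U^\tau(g) \in C_u = C,
\]
one has
\[
g \in \h^o\, w_{\xi_u \iota_{\lieb^+}(\xi_s)} \exp\bigl(\tn{int}(\iota_{\lieb^+}(\liea^+_{\xi_s}))\bigr)\, \h^o.
\]

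The second step is to simplify the right-hand side. Since $\xi_u = \xi_s = \xi$, the Weyl group element $w_{\xi_u \iota_{\lieb^+}(\xi_s)}$ becomes $w_{\xi \iota_{\lieb^+}(\xi)}$ and the Weyl chamber $\iota_{\lieb^+}(\liea^+_{\xi_s})$ becomes $\iota_{\lieb^+}(\liea^+_\xi)$. Substituting these yields precisely the inclusion asserted in the corollary, and the same constant $L$ works. There is no real obstacle here; the only thing to keep in mind is that Proposition \ref{prop w and standard weyl chamber for element with gap in sing val} allows the choice of a single $L$ uniform in the $\lieb^+$-compatible Weyl chamber $\liea^+$, which is exactly what is required by the present statement. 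Hence the proof reduces to a direct citation of the more general Proposition \ref{prop w and standard weyl chamber for element with gap in sing val}.
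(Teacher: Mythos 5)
Your proof is correct and matches the paper's intent exactly: the paper states this corollary without proof, as the immediate specialization of Proposition \ref{prop w and standard weyl chamber for element with gap in sing val} to $\xi_s=\xi_u=\xi$ and $C_s=C_u=C$. Your observation that the constant $L$ from the proposition is already uniform over $\lieb^+$-compatible Weyl chambers is precisely the point that makes the substitution legitimate.
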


\subsection{Linear algebraic interpretation and first estimates}\label{subsec linear alg interp of bo and first estimates}

Fix a $\lieb^+$-compatible Weyl chamber $\liea^+$. Remark \ref{rem sigmawinvw belongs to M} and equation (\ref{eq sigmaginvg in HWbH coordinates}) imply the following: for every element $g$ of $\bog$ there exists an element $w_g\in\wb$ such that 

\bc
$b^o(g)=\frac{1}{2}w_g\cdot\lambda(\sigma^o(g^{-1})g)$.
\ec 

\noindent In particular, one has:

\bc
$\Vert b^o(g)\Vert_\lieb=\frac{1}{2}\Vert\lambda(\sigma^o(g^{-1})g)\Vert_\lieb$.
\ec

\noindent Whenever $g$ has a (sufficiently large) gap of index $\Delta$ and $o$-generic Cartan attractor and repellor we have a more precise result.

\begin{prop}\label{prop jordan versus generalized cartan}

Let $\xi_s$ be a $\lieb^+$-compatible flag and fix compact sets $C_s\subset\fvo_{\xi_s}$ and $C\subset\fvo$. Then there exists a positive $L$ with the following property: for every $g\in\g$ for which

\bc
$\displaystyle\min_{\alpha\in\Sigma^+(\lieg,\liea)}\alpha(a^\tau(g))>L$
\ec

\noindent holds, and such that $S^\tau(g)\in C_s$ and $U^\tau(g)\in C$, one has

\bc
$b^o(g)=\frac{1}{2} w_{\iota_{\lieb^+}(\xi_s)\xi_{\liea^+}}\cdot\lambda(\sigma^o(g^{-1})g)$.
\ec

\noindent In particular, if $\iota_{\lieb^+}({\xi_s})=\xi_{\liea^+}$ we have

\bc
$b^o(g)=\frac{1}{2}\lambda(\sigma^o(g^{-1})g)$.
\ec

\end{prop}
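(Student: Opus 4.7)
My plan is to combine Proposition \ref{prop w and standard weyl chamber for element with gap in sing val} with a direct computation of $\sigma^o(g^{-1})g$ produced by the $(p,q)$-Cartan decomposition. Since $C\subset\fvo$ is compact and $\fvo$ decomposes as the disjoint union of the open $\h^o$-orbits $\fvo_{\xi_u}$, the set $C$ meets only finitely many of them. I would decompose $C=\bigsqcup_i (C\cap\fvo_{\xi_{u,i}})$, apply Proposition \ref{prop w and standard weyl chamber for element with gap in sing val} to each pair $(C_s,\,C\cap\fvo_{\xi_{u,i}})$, and take the largest of the resulting constants to obtain a single $L>0$. For every $g$ satisfying the hypotheses there is then some index $i$ for which
\[
g\in\h^o\, w_{\xi_{u,i}\,\iota_{\lieb^+}(\xi_s)}\,\exp\!\bigl(\tn{int}(\iota_{\lieb^+}(\liea^+_{\xi_s}))\bigr)\,\h^o,
\]
so any $(p,q)$-Cartan decomposition $g=h\hat{w}\exp(b^o(g))\tilde h$ produced by this inclusion has its vector $b^o(g)$ lying in the interior of the $\lieb^+$-compatible Weyl chamber $\iota_{\lieb^+}(\liea^+_{\xi_s})$.

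The next step is to identify the Weyl element relating $b^o(g)$ and $\lambda(\sigma^o(g^{-1})g)$. By Remark \ref{rem sigmawinvw belongs to M}, $m:=\sigma^o(\hat w^{-1})\hat w$ lies in $\mb$, and since $\mb$ centralizes $\lieb$ the element $m$ commutes with $\exp(b^o(g))$. Substituting this into equation (\ref{eq sigmaginvg in HWbH coordinates}) gives
\[
\sigma^o(g^{-1})g=\tilde h^{-1}\,m\,\exp\!\bigl(2\,b^o(g)\bigr)\,\tilde h,
\]
exhibiting $\sigma^o(g^{-1})g$ as conjugate in $\g$ to the commuting product $m\,\exp(2\,b^o(g))$. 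This product is already in real Jordan form: $m\in\mb\subset\ko^\tau$ is elliptic, $\exp(2\,b^o(g))$ is hyperbolic with positive real eigenvalues, the unipotent part is trivial, and the three factors pairwise commute. By the defining property of the Jordan projection, $\lambda(\sigma^o(g^{-1})g)$ is therefore the unique $\wb$-translate of $2\,b^o(g)$ lying in $\liea^+$.

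To conclude, $\wb$ permutes the Weyl chambers of $\Sigma(\lieg,\liea)$ simply transitively, so there is a unique $w\in\wb$ mapping $\liea^+$ onto $\iota_{\lieb^+}(\liea^+_{\xi_s})$, equivalently mapping $\xi_{\liea^+}$ to $\iota_{\lieb^+}(\xi_s)$; by definition this element is $w_{\iota_{\lieb^+}(\xi_s)\,\xi_{\liea^+}}$. It sends $\lambda(\sigma^o(g^{-1})g)\in\liea^+$ to $2\,b^o(g)\in\tn{int}(\iota_{\lieb^+}(\liea^+_{\xi_s}))$, and dividing by two yields the claimed identity. The ``in particular'' assertion is immediate, since the hypothesis $\iota_{\lieb^+}(\xi_s)=\xi_{\liea^+}$ forces $w_{\iota_{\lieb^+}(\xi_s)\,\xi_{\liea^+}}=1$. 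The main subtlety I anticipate is the careful verification that $m\,\exp(2\,b^o(g))$ is genuinely in Jordan form so that $\lambda$ can be read off directly; this rests on the simultaneous diagonalizability of $m$ and $\exp(2\,b^o(g))$ in the basis of lines associated to $\lieb$, together with the respective eigenvalue types ($\pm 1$ for $m$, positive reals for $\exp(2\,b^o(g))$).
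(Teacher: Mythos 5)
Your proposal is correct and follows essentially the same route as the paper: apply Proposition \ref{prop w and standard weyl chamber for element with gap in sing val} orbit by orbit to $C$, conclude that $b^o(g)$ lies in $\tn{int}(\iota_{\lieb^+}(\liea^+_{\xi_s}))$, and then read off $\lambda(\sigma^o(g^{-1})g)=2\lambda(\exp(b^o(g)))$ from equation (\ref{eq sigmaginvg in HWbH coordinates}) and Remark \ref{rem sigmawinvw belongs to M}, identifying the Weyl element as the one taking $\liea^+$ to $\iota_{\lieb^+}(\liea^+_{\xi_s})$. The paper's proof is terser (it invokes the identity $b^o(g)=\frac{1}{2}w_g\cdot\lambda(\sigma^o(g^{-1})g)$ already established at the start of Subsection \ref{subsec linear alg interp of bo and first estimates}), but the content is the same.
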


\begin{proof}
Apply Proposition \ref{prop w and standard weyl chamber for element with gap in sing val} to each intersection of $C$ with each open orbit of the action $\h^o\curvearrowright\mathsf{F}(V)$. For every $g$ as in the statement we know that there exists some $X\in\tn{int}(\liea^+)$ such that

\bc
$b^o(g)=w_{\iota_{\lieb^+}(\xi_s)\xi_{\liea^+}}\cdot X$.
\ec

\noindent Hence

\bc
$\frac{1}{2}\lambda(\sigma^o(g^{-1})g)  = \lambda(\exp(b^o(g)))=X=(w_{\iota_{\lieb^+}(\xi_s)\xi_{\liea^+}})^{-1}\cdot b^o(g)$. 
\ec

\end{proof}

\begin{prop}\label{prop bo close to wdotcartan}

Let $\xi_s$ be a $\lieb^+$-compatible flag and fix compact sets $C_s\subset\fvo_{\xi_s}$ and $C\subset\fvo$. Then there exist positive numbers $L$ and $D$ with the following property: for every $g\in\g$ for which

\bc
$\displaystyle\min_{\alpha\in\Sigma^+(\lieg,\liea)}\alpha(a^\tau(g))>L$
\ec

\noindent holds, and such that $S^\tau(g)\in C_s$ and $U^\tau(g)\in C$, one has

\bc
$\Vert b^o(g)-  w_{\iota_{\lieb^+}(\xi_s)\xi_{\liea^+}}\cdot a^\tau(g)\Vert_\lieb\leq D$.
\ec

\end{prop}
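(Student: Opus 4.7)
The plan is to combine Proposition \ref{prop jordan versus generalized cartan} with two classical quantitative estimates of Benoist \cite{Ben1}: one comparing the Jordan and Cartan projections of a loxodromic element, and one comparing the Cartan projection of a product to the sum of Cartan projections of its factors. Set $w := w_{\iota_{\lieb^+}(\xi_s)\xi_{\liea^+}}$. For $L$ large enough Proposition \ref{prop jordan versus generalized cartan} applies and gives $b^o(g) = \frac{1}{2} w \cdot \lambda(\sigma^o(g^{-1})g)$. Since $\wb$ acts on $(\lieb,\Vert\cdot\Vert_\lieb)$ by isometries, the desired inequality reduces to
\begin{equation*}
\left\Vert \tfrac{1}{2}\lambda(\sigma^o(g^{-1})g) - a^\tau(g)\right\Vert_\lieb \leq D,
\end{equation*}
which I would prove by the triangle inequality through the intermediate vector $\tfrac{1}{2} a^\tau(\sigma^o(g^{-1})g)$.

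For the Jordan-versus-Cartan piece, enlarging $L$ if necessary, Lemma \ref{lema Uo cerca de Utau y lo mismo con S} applied to the compact set $C\cup C_s\subset\fvo$ ensures that $\sigma^o(g^{-1})g$ is $(2r,2\varepsilon)$-loxodromic for some $r,\varepsilon>0$ depending only on $C$ and $C_s$. Benoist's comparison between Jordan and Cartan projections of quantitatively loxodromic elements then yields a constant $C_1$ with
\begin{equation*}
\Vert \lambda(\sigma^o(g^{-1})g) - a^\tau(\sigma^o(g^{-1})g)\Vert_\lieb \leq C_1.
\end{equation*}
For the remaining piece, Corollary \ref{cor sigmaginver has gaps/hyperbolic and attractrep} gives $a^\tau(\sigma^o(g^{-1}))=a^\tau(g)$ and $S^\tau(\sigma^o(g^{-1}))=U^\tau(g)^{\perp_o}$; since $U^\tau(g)\in C\subset\fvo$, Lemma \ref{lema equivalences for o generic flags} provides a uniform lower bound on the distance between $\Lambda^j U^\tau(g)$ and $\Lambda^j_*(U^\tau(g)^{\perp_o})$ for each $j$. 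This uniform transversality between the Cartan repellor of $\sigma^o(g^{-1})$ and the Cartan attractor of $g$ lets one apply Benoist's product estimate for the Cartan projection, yielding a constant $C_2$ with
\begin{equation*}
\Vert a^\tau(\sigma^o(g^{-1})g) - 2 a^\tau(g) \Vert_\lieb \leq C_2,
\end{equation*}
and the proof concludes with $D=(C_1+C_2)/2$.

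The main technical point will be invoking the two Benoist-type estimates in the precise vector-valued form needed here, rather than in their more commonly cited norm form. A minor point to verify is that the compact set $C$ is not assumed to lie in a single open $\h^o$-orbit, but this is harmless: $\fvo$ is a union of finitely many open orbits and one may take the maximum of the constants arising from the intersection of $C$ with each of them.
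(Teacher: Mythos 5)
Your proposal is correct and follows essentially the same route as the paper: the paper likewise combines the product estimate for the Cartan projection of $\sigma^o(g^{-1})g$ (via \cite[Lemma A.7]{BPS}, using the uniform transversality $d(\Lambda^jU^\tau(g),\Lambda^j_*S^\tau(\sigma^o(g^{-1})))\geq r_0$ coming from $o$-genericity and compactness), Benoist's comparison \cite[Lemme 1.3]{Ben1} between Jordan and Cartan projections of the $(2r,2\varepsilon)$-loxodromic element $\sigma^o(g^{-1})g$, Proposition \ref{prop jordan versus generalized cartan}, and $\wb$-invariance of $\Vert\cdot\Vert_\lieb$. Your closing remarks about the vector-valued form of the estimates and about intersecting $C$ with each open $\h^o$-orbit match exactly how the paper handles these points.
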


\begin{proof}

As we saw in the proof of Lemma \ref{lema Uo cerca de Utau y lo mismo con S}, there exists a positive constant $r_0$ such that for every $j=1,\dots,d-1$ and every $g$ as in the statement one has

\bc
$d\left(\Lambda^jU^\tau(g),\Lambda^j_*S^\tau(\sigma^o(g^{-1}))\right)\geq r_0$.
\ec

\noindent It is not hard to show (see e.g. \cite[Lemma A.7]{BPS}) that this implies the existence of a constant $D$ such that, for all $j=1,\dots,d-1$,

\bc
$\vert a^\tau_j(\sigma^o(g^{-1}) g)-a^\tau_j(\sigma^o(g^{-1}))-a^\tau_j(g)\vert\leq D$.
\ec

\noindent By Corollary \ref{cor sigmaginver has gaps/hyperbolic and attractrep} we have

\bc
$\left\vert \frac{1}{2}a^\tau_j(\sigma^o(g^{-1})g)-a^\tau_j( g)\right\vert\leq D/2$
\ec

\noindent and we conclude that, up to changing $D$ by a larger constant if necessary, one has

\bc
$\left\Vert \frac{1}{2}a^\tau(\sigma^o(g^{-1})g)- a^\tau(g)\right\Vert_\lieb\leq D$.
\ec

Fix $r$, $\varepsilon$ and $L$ as in Lemma \ref{lema Uo cerca de Utau y lo mismo con S} and Proposition \ref{prop jordan versus generalized cartan} (for each intersection of $C$ with $\fvo$). For every $g$ as in the statement we know that $\sigma^o(g^{-1})g$ is $(2r,2\varepsilon)$-loxodromic and therefore we can enlarge $D$ if necessary in order to have

\bc
$\left\Vert \frac{1}{2}a^\tau(\sigma^o(g^{-1})g)- \frac{1}{2}\lambda(\sigma^o(g^{-1})g)\right\Vert_\lieb\leq D$
\ec

\noindent (c.f. Benoist \cite[Lemme 1.3]{Ben1}). To finish apply Proposition \ref{prop jordan versus generalized cartan} and the fact that the norm $\Vert\cdot\Vert_\lieb$ is $\wb$-invariant.

\end{proof}

\section{Busemann cocycles}\label{sec busemann cocycles and gromov}

Keep the notations from the previous subsection. In particular, we emphasize here that we have fixed a $\lieb^+$-compatible Weyl chamber $\liea^+$. The goal of this section is to introduce an analogue of the \textit{Busemann cocycle} of $\g$ adapted to our setting, and a corresponding \textit{Gromov product}. As we shall see in Section \ref{sec counting}, these will be key objects in the study of precise asymptotic properties of the $(p,q)$-Cartan projection.

\subsection{$\tau$-Busemann cocycle}
For future reference we begin by recalling the definition of the Busemann cocycle of $\g$. 

Let $\n$ (resp. $\pmin$) be the unipotent radical (resp. minimal parabolic) associated to $\liea^+$. Quint \cite{Qui2} introduces the\footnote{Sometimes also called the \textit{Iwasawa cocycle} of $\g$.} \textit{Busemann cocycle} of $\g$

\bc
$\beta^\tau:\g\times\mathsf{F}(V)\too\lieb$
\ec

\noindent which is defined by means of the Iwasawa decomposition of $\g$. Indeed, for $g\in\g$ and $\xi\in\mathsf{F}(V)$ the Busemann cocycle is characterized by the equality

\bc
$gk=l\exp(\beta^\tau(g,\xi))n$
\ec

\noindent where $k,l\in\ko^\tau$, $n\in\n$ and $k\cdot\xi_{\liea^+}=\xi$. Note that for every $g_1$ and $g_2$ in $\g$, and every $\xi\in\mathsf{F}(V)$ one has

\bc
$\beta^\tau(g_1g_2,\xi)=\beta^\tau(g_1,g_2\cdot\xi)+\beta^\tau(g_2,\xi)$.
\ec

\noindent In this paper we call this cocycle the $\tau$-\textit{Busemann cocycle} of $\g$.

\subsection{$o$-Busemann cocycle}\label{subsec o busemann}

For $j=1,\dots,d-1$ we denote by $\chi_j\in\lieb^*$ the highest weight of the exterior power representation $\Lambda^j$. Let 

\bc
$\gfo:=\lbrace (g,\xi)\in\g\times\fvo:\hspace{0,3cm} g\cdot\xi\in\fvo\rbrace$.
\ec

\noindent Define the $o$-\textit{Busemann cocycle} of $\g$

\bc
$\beta^o:\gfo\too\lieb$
\ec

\noindent by the equations for $j=1,\dots,d-1$:

\bc
$\chi_j(\beta^o(g,\xi)):=\dfrac{1}{2}\log\left\vert\dfrac{ \langle \Lambda^j g\cdot v,\Lambda^j g\cdot v\rangle_{o_j} }{ \langle v, v\rangle_{o_j} }\right\vert$,
\ec

\noindent where $\langle\cdot,\cdot\rangle_{o_j}$ is any form representing $o_j$ and $v$ is any non zero vector in the line $\Lambda^j\xi$. Thanks to Lemma \ref{lema equivalences for o generic flags} the map $\beta^o$ is well defined.

The proof of the following is straightforward.

\begin{lema}\label{lema o busemann cocycle is a cocycle}
Let $g_1$ and $g_2$ be two elements of $\g$, $\xi$ be a flag in $V$ and suppose that $(g_1g_2,\xi)$ and $(g_2,\xi)$ belong to $\gfo$. Then

\bc
$\beta^o(g_1g_2,\xi)=\beta^o(g_1,g_2\cdot\xi)+\beta^o(g_2,\xi)$.
\ec
\end{lema}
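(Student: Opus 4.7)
The plan is to verify the cocycle identity componentwise, exploiting the fact that the fundamental weights $\chi_1,\dots,\chi_{d-1}$ of the exterior power representations form a basis of $\lieb^*$ (being the highest weights of the representations $\Lambda^1,\dots,\Lambda^{d-1}$ of $\mathfrak{sl}(V)$). So it suffices to check that
\[
\chi_j(\beta^o(g_1g_2,\xi)) \;=\; \chi_j(\beta^o(g_1,g_2\cdot\xi)) + \chi_j(\beta^o(g_2,\xi))
\]
for every $j=1,\dots,d-1$.

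First I would fix a non-zero vector $v$ in the line $\Lambda^j\xi$ and observe that, since $\xi$ and $g_2\cdot\xi$ are both $o$-generic (by hypothesis $(g_2,\xi)\in\gfo$), both $v$ and $\Lambda^jg_2\cdot v$ are vectors of non-zero $o_j$-norm and $\Lambda^jg_2\cdot v$ spans the line $\Lambda^j(g_2\cdot\xi)$. Similarly, since $(g_1g_2,\xi)\in\gfo$, the flag $g_1g_2\cdot\xi$ is $o$-generic, so $\Lambda^j(g_1g_2)\cdot v = \Lambda^jg_1\cdot(\Lambda^jg_2\cdot v)$ has non-zero $o_j$-norm as well. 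Hence all the quantities in the defining formula of $\beta^o$ make sense in each of the three terms.

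The core step is then a direct telescoping computation: using the representation identity $\Lambda^j(g_1g_2) = \Lambda^jg_1 \cdot \Lambda^jg_2$ and evaluating $\beta^o$ against the vector $v$ for the pair $(g_2,\xi)$ and against $\Lambda^jg_2\cdot v$ for the pair $(g_1,g_2\cdot\xi)$, one obtains
\[
\chi_j(\beta^o(g_1,g_2\cdot\xi)) + \chi_j(\beta^o(g_2,\xi)) = \tfrac{1}{2}\log\left|\frac{\langle \Lambda^jg_1\Lambda^jg_2\cdot v,\Lambda^jg_1\Lambda^jg_2\cdot v\rangle_{o_j}}{\langle \Lambda^jg_2\cdot v,\Lambda^jg_2\cdot v\rangle_{o_j}}\cdot\frac{\langle \Lambda^jg_2\cdot v,\Lambda^jg_2\cdot v\rangle_{o_j}}{\langle v,v\rangle_{o_j}}\right|,
\]
and the middle factors cancel, leaving exactly $\chi_j(\beta^o(g_1g_2,\xi))$.

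There is no real obstacle here; the only thing worth being careful about is the well-definedness of the individual terms, which is guaranteed by the hypothesis $(g_1g_2,\xi),(g_2,\xi)\in\gfo$ (which forces $(g_1,g_2\cdot\xi)\in\gfo$ as well). Since the identity holds after pairing with each $\chi_j$ and the $\chi_j$ form a basis of $\lieb^*$, the equality holds in $\lieb$.
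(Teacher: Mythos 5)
Your proof is correct and is exactly the ``straightforward'' verification the paper has in mind (the paper omits the proof entirely, stating only that it is direct): a telescoping computation on each $\chi_j$, using that the $\chi_j$ span $\lieb^*$ and that the hypotheses guarantee $(g_1,g_2\cdot\xi)\in\gfo$ so all three terms are defined.
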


\begin{rem}\label{rem o Busemann cohomologous to tau Busemann}

The $o$-Busemann cocycle generalizes the $\tau$-Busemann cocycle of $\g$, in the sense that whenever $pq=0$ and $o=\tau$ one has 

\bc
$\gfo=\g\times\mathsf{F}(V)$
\ec

\noindent and $\beta^o$ coincides with $\beta^\tau$ (c.f. Quint \cite[Lemma 6.4]{Qui2}). Since we are assuming $pq\neq 0$, the set $\gfo$ does not coincide with $\g\times\mathsf{F}(V)$, but we still have a relation between $\beta^o$ and $\beta^\tau$: there exists a smooth function 

\bc
$V_{o\tau}:\fvo\too\lieb$
\ec

\noindent for which one has

\bc
$V_{o\tau}(g\cdot\xi)-V_{o\tau}(\xi)=\beta^o(g,\xi)-\beta^\tau(g,\xi)$
\ec
\noindent for every pair $(g,\xi)\in\gfo$. Indeed, it suffices to take $V_{o\tau}$ defined by the formulas

\bc
$\chi_j(V_{o\tau}(\xi)):=\dfrac{1}{2}\log\left\vert\dfrac{ \langle  v, v\rangle_{o_j} }{ \langle v, v\rangle_{\tau_j} }\right\vert$
\ec

\noindent for $j=1,\dots,d-1$ and $0\neq v\in\Lambda^j\xi$, and for the inner product $\tau_j$ on $\Lambda^j V$ induced by $\tau$.
\end{rem}

\subsubsection{\tn{\textbf{$(p,q)$-Iwasawa decomposition}}}

The Lie theoretic description of the $o$-Busemann cocycle is as follows. A $(p,q)$-\textit{Iwasawa decomposition} of an element $g$ in $\g$ is a decomposition of the form

\bc
$g=h\hat{w}\exp(X)n$
\ec

\noindent where $h\in\h^o$, $\hat{w}\in\wbh$, $X\in\lieb$ and $n\in\n$. Note that if this decomposition holds, then the element $X$ is uniquely determined: for every $j=1,\dots,d-1$ one has

\bc
$\chi_j(X)=\dfrac{1}{2}\log\left\vert\dfrac{ \langle \Lambda^j g\cdot v,\Lambda^j g\cdot v\rangle_{o_j} }{ \langle v, v\rangle_{o_j} }\right\vert$,
\ec

\noindent where $\langle\cdot,\cdot\rangle_{o_j}$ is any representative of $o_j$ and $v$ is any non zero vector in the line $\Lambda^j\xi_{\liea^+}$. Indeed, this follows from the fact that $\Lambda^j\h^o$ preserves the form $o_j$ and the fact that one has the equality
\begin{equation}\label{eq exterior power of w actinc in mathcal C}
\vert\langle\Lambda^j\hat{w}\cdot v,\Lambda^j\hat{w}\cdot v\rangle_{o_j}\vert=\vert\langle  v,  v\rangle_{o_j}\vert
\end{equation}
\noindent for every $j=1,\dots,d-1$ (c.f. Lemma \ref{lema if preserve mathcalC equivalences belong to H and to K}).

On the other hand, if an element $g\in\g$ admits a $(p,q)$-Iwasawa decomposition then one has

\bc
$(g,\xi_{\liea^+})\in\gfo$.
\ec

\noindent Conversely, we have the following.

\begin{lema}\label{lema iwasawa and unqiqueness lieb}
Suppose that the pair $(g,\xi_{\liea^+})$ belongs to $\gfo$. Consider an element $\hat{w}\in\wbh$ for which the Weyl chamber $\hat{w}\cdot\liea^+$ is $\lieb^+$-compatible and such that $g\cdot\xi_{\liea^+}\in\fvo_{\hat{w}\cdot\liea^+}$.  Then $g$ admits a $(p,q)$-Iwasawa decomposition of the form

\bc
$g=h\hat{w}\exp(X)n$.
\ec
\end{lema}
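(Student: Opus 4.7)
\medskip

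\noindent\textbf{Proof plan.} The strategy is to reduce the existence of the $(p,q)$-Iwasawa decomposition to the classical Langlands decomposition of the minimal parabolic $\pmin$, by first matching the image $g\cdot \xi_{\liea^+}$ to $\hat{w}\cdot\xi_{\liea^+}$ up to an element of $\h^o$.

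First, by the definition of the orbit parametrization, $\fvo_{\hat{w}\cdot\liea^+}=\h^o\cdot\xi_{\hat{w}\cdot\liea^+}=\h^o\cdot(\hat{w}\cdot\xi_{\liea^+})$. Since $g\cdot\xi_{\liea^+}\in\fvo_{\hat{w}\cdot\liea^+}$ by hypothesis, I can choose $h\in\h^o$ with
\[
g\cdot\xi_{\liea^+}=h\hat{w}\cdot\xi_{\liea^+}.
\]
Equivalently, the element $p:=\hat{w}^{-1}h^{-1}g$ fixes $\xi_{\liea^+}$, so $p\in\pmin$ (the stabilizer of $\xi_{\liea^+}$).

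Next, using the Langlands decomposition $\pmin=\mb\exp(\liea)\n$ of the minimal parabolic, write
\[
p=m\exp(X)n,\qquad m\in\mb,\; X\in\liea=\lieb,\; n\in\n.
\]
This yields $g=h\hat{w}\,m\exp(X)n$, which is not yet of the required shape because of the factor $m$ sitting between $\hat{w}$ and $\exp(X)$. To fix this, I move $m$ across $\hat{w}$: since $\hat{w}$ normalizes $\lieb$ and $m\in\mb$ centralizes $\lieb$, the element $m':=\hat{w}m\hat{w}^{-1}$ again centralizes $\lieb$ and lies in $\ko^\tau$, hence $m'\in\mb$. By Lemma~\ref{lema if preserve mathcalC equivalences belong to H and to K} one has $\mb\subset\h^o$, so $m'\in\h^o$ and thus $h':=hm'\in\h^o$. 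Therefore
\[
g=h'\hat{w}\exp(X)n,
\]
which is the desired $(p,q)$-Iwasawa decomposition with the prescribed representative $\hat{w}\in\wbh$.

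The only real point requiring care is the last maneuver of conjugating $m$ through $\hat{w}$ — ensuring that the resulting factor stays in $\h^o$ so as not to spoil the $h$-piece of the decomposition; this is handled by the centralizer description of $\mb$ together with the inclusion $\mb\subset\h^o$ already established. The uniqueness of $X$ is not part of this lemma's statement (it was noted in the preceding paragraph of the text), so no further argument is needed.
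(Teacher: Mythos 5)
Your proof is correct and follows essentially the same route as the paper's: pick $h\in\h^o$ with $g\cdot\xi_{\liea^+}=h\hat{w}\cdot\xi_{\liea^+}$, apply the Langlands decomposition $\pmin=\mb\exp(\lieb)\n$ to $\hat{w}^{-1}h^{-1}g$, and absorb the $\mb$-factor into the $\h^o$-part using $\hat{w}m=m'\hat{w}$ with $m'\in\mb\subset\h^o$. Your justification of that last conjugation step is exactly the implicit content of the paper's one-line remark.
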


\begin{proof}
There exists $\tilde{h}\in \h^o$ such that $g\cdot\xi_{\liea^+}=\tilde{h}\hat{w}\cdot\xi_{\liea^+}$ and therefore $\hat{w}^{-1}\tilde{h}^{-1}g$ belongs to $ \pmin$. Since $\pmin=\mb\exp(\lieb)\n$ we conclude that

\bc
$\hat{w}^{-1}\tilde{h}^{-1}g=m\exp(X)n$
\ec

\noindent for some $m\in\mb$, $X\in\lieb$ and $n\in\n$. Now $\hat{w}m=m'\hat{w}$ for some $m'\in\mb\subset\h^o$ and the lemma follows.

\end{proof}

\begin{cor}\label{cor betao is the generalized iwasawa}
Let $(g,\xi)$ be an element in $\gfo$ and take $h'\in\h^o$ and $\hat{w}'\in\wbh$ such that $h'\hat{w}'\cdot\xi_{\liea^+}=\xi$. Then $gh'\hat{w}'$ admits a $(p,q)$-Iwasawa decomposition of the form

\bc
$gh'\hat{w}'=h\hat{w}\exp(\beta^o(g,\xi))n$.
\ec
\end{cor}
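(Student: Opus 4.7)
The plan is to apply Lemma \ref{lema iwasawa and unqiqueness lieb} to $gh'\hat{w}'$ in order to produce some $(p,q)$-Iwasawa decomposition, and then to identify the $\lieb$-component of the decomposition with $\beta^o(g,\xi)$ by a direct computation on each fundamental weight $\chi_j$.

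For the existence of the decomposition I would first verify that $(gh'\hat{w}',\xi_{\liea^+})\in\gfo$. This is immediate from the hypothesis: by construction $h'\hat{w}'\cdot\xi_{\liea^+}=\xi\in\fvo$, and $(gh'\hat{w}')\cdot\xi_{\liea^+}=g\cdot\xi\in\fvo$ since $(g,\xi)\in\gfo$. Lemma \ref{lema iwasawa and unqiqueness lieb} then yields an element $\hat{w}\in\wbh$ (with $\hat{w}\cdot\liea^+$ a $\lieb^+$-compatible Weyl chamber containing the orbit of $g\cdot\xi$) and an $h\in\h^o$, $X\in\lieb$, $n\in\n$ such that $gh'\hat{w}'=h\hat{w}\exp(X)n$.

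It remains to check $X=\beta^o(g,\xi)$. Recall from Subsection \ref{subsec o busemann} that the $\lieb$-component of a $(p,q)$-Iwasawa decomposition is characterized, for every $j=1,\dots,d-1$, by
\[
\chi_j(X)=\tfrac{1}{2}\log\left\vert\frac{\langle \Lambda^j(gh'\hat{w}')\cdot v,\Lambda^j(gh'\hat{w}')\cdot v\rangle_{o_j}}{\langle v,v\rangle_{o_j}}\right\vert,
\]
for any $0\neq v\in\Lambda^j\xi_{\liea^+}$. I would pick such a $v$ and set $v':=\Lambda^j(h'\hat{w}')\cdot v\in\Lambda^j\xi$, and then compute $\chi_j(\beta^o(g,\xi))$ using the definition and the vector $v'$. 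The key observations to simplify the denominator are: (i) $h'\in\h^o$ implies $\Lambda^jh'$ preserves $\langle\cdot,\cdot\rangle_{o_j}$ up to a positive scalar (the representative of $o_j$), and (ii) equation (\ref{eq exterior power of w actinc in mathcal C}) gives $|\langle \Lambda^j\hat{w}'\cdot v,\Lambda^j\hat{w}'\cdot v\rangle_{o_j}|=|\langle v,v\rangle_{o_j}|$. Together these yield $|\langle v',v'\rangle_{o_j}|=|\langle v,v\rangle_{o_j}|$. The numerator matches tautologically since $\Lambda^jg\cdot v'=\Lambda^j(gh'\hat{w}')\cdot v$. Hence $\chi_j(\beta^o(g,\xi))=\chi_j(X)$ for every $j=1,\dots,d-1$. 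Since the fundamental weights $\chi_1,\dots,\chi_{d-1}$ span $\lieb^*$, this forces $\beta^o(g,\xi)=X$.

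The only point that needs care is the denominator computation, because $\wbh$ does not preserve the form $o_j$ pointwise, only its absolute value on lines spanned by vectors in $\Lambda^j$ of the canonical basis $\mathcal{C}$. This is precisely the content of equation (\ref{eq exterior power of w actinc in mathcal C}), which exploits that $v\in\Lambda^j\xi_{\liea^+}$ and that $\xi_{\liea^+}$ is spanned by lines of $\mathcal{C}$; everything else is routine.
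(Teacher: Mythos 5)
Your proposal is correct and follows essentially the same route as the paper: apply Lemma \ref{lema iwasawa and unqiqueness lieb} to $gh'\hat{w}'$ (after noting $(gh'\hat{w}',\xi_{\liea^+})\in\gfo$) to get some decomposition $h\hat{w}\exp(X)n$, then identify $X=\beta^o(g,\xi)$ via the $\chi_j$-characterization, the $\Lambda^j\h^o$-invariance of $o_j$, and equation (\ref{eq exterior power of w actinc in mathcal C}). The paper's proof is just a terser version of the same computation you spell out.
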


\begin{proof}
Since $gh'\hat{w}'\cdot\xi_{\liea^+}$ is $o$-generic, there exists $\hat{w}\in\wbh$ such that $\hat{w}\cdot\xi_{\liea^+}$ is $\lieb^+$-compatible and $gh'\hat{w}'\cdot\xi_{\liea^+}\in\fvo_{\hat{w}\cdot\liea^+}$. By Lemma \ref{lema iwasawa and unqiqueness lieb} we find a $(p,q)$-Iwasawa decomposition of $gh'\hat{w}'$ of the form

\bc
$gh'\hat{w}'=h\hat{w}\exp(X)n$.
\ec

Now the element $X$ in this decomposition is characterized by the equalities

\bc
$\chi_j(X)=\dfrac{1}{2}\log\left\vert\dfrac{ \langle \Lambda^j (gh'\hat{w}')\cdot v,\Lambda^j (gh'\hat{w}')\cdot v\rangle_{o_j} }{ \langle v, v\rangle_{o_j} }\right\vert$,
\ec

\noindent where $v$ is any non zero vector in the line $\Lambda^j\xi_{\liea^+}$. Since $\Lambda^j\h^o$ preserves the form $o_j$, equality (\ref{eq exterior power of w actinc in mathcal C}) finishes the proof.

\end{proof}

\subsubsection{\tn{\textbf{Dual cocycle}}}

Let $\iota_{\liea^+}:\lieb\too\lieb$ be the \textit{opposition involution} associated to the choice of $\liea^+$:

\bc
$\iota_{\liea^+}:X\mapsto -w_{\liea^+}\cdot X$,
\ec

\noindent where $w_{\liea^+}\in\wb$ is the unique element that takes the Weyl chamber $-\liea^+$ to $\liea^+$.

\begin{rem}
Recall that the choice of $\liea^+$ determines a total order $\mathcal{C}=\lbrace\ell_1,\dots,\ell_d\rbrace$ in the basis of lines associated to $\lieb$. Even though we will not use it in the future, we mention that it is possible to show that the equality $\iota_{\lieb^+}=\iota_{\liea^+}$ holds if and only if

\bc
$\sgo(\ell_j)=\sgo(\ell_{d-j+1})$
\ec

\noindent holds for every $j=1,\dots,d$. In particular, such a choice of $\liea^+$ is not always possible.
\end{rem}

In higher rank, cocycles come usually in pairs (c.f. \cite{Sam,Sam2}). The following corollary gives an explicit description of the cocycle ``dual" to $\beta^o$.

\begin{cor}\label{cor iwasawa for sigma g}

Let $(g,\xi)$ be an element in $\gfo$. Then $(\sigma^o(g),\xi^{\perp_o})$ belongs to $\gfo$ and one has

\bc
$\beta^o(\sigma^o(g),\xi^{\perp_o})=\iota_{\liea^+}\circ \beta^o(g,\xi)$.
\ec

\end{cor}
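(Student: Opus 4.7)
First I would verify that $(\sigma^o(g), \xi^{\perp_o})$ actually lies in $\gfo$. The flag $\xi^{\perp_o}$ is $o$-generic because the involution $\cdot^{\perp_o}$ sends $\fvo$ to itself (by the equivalence in Lemma \ref{lema equivalences for o generic flags}, since $(\xi^{\perp_o})^{\perp_o}=\xi$). By Lemma \ref{lema action of sigma on flags} we have $\sigma^o(g)\cdot\xi^{\perp_o}=(g\cdot\xi)^{\perp_o}$, and $g\cdot\xi\in\fvo$ by hypothesis, so $\sigma^o(g)\cdot\xi^{\perp_o}\in\fvo$ as well. Hence $(\sigma^o(g),\xi^{\perp_o})\in\gfo$.

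Next I would establish the formula in the base case $\xi=\xi_{\liea^+}$, by manipulating a $(p,q)$-Iwasawa decomposition. Lemma \ref{lema iwasawa and unqiqueness lieb} gives $g=h\hat w\exp(X)n$ with $X=\beta^o(g,\xi_{\liea^+})$. Applying $\sigma^o$ and using $\sigma^o|_{\h^o}=\mathrm{id}$, $d\sigma^o|_\lieb=-\mathrm{id}$, Remark \ref{rem sigmawinvw belongs to M} ($\sigma^o(\hat w)=\hat w m^{-1}$ for some $m\in\mb$), and the fact that $d\sigma^o$ permutes $\lieg_\alpha\leftrightarrow\lieg_{-\alpha}$ (so $\sigma^o(n)\in\bar{\textnormal{N}}$), I would then right-multiply by a representative $\hat w_0$ of the longest Weyl element $w_{\liea^+}$. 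Two standard facts make this useful: $\hat w_0^{-1}\bar{\textnormal N}\hat w_0=\n$, and $\hat w_0\cdot\xi_{\liea^+}=\xi_{\liea^+}^{\perp_o}$. Conjugating $\exp(-X)$ past $\hat w_0$ produces $\exp(-w_{\liea^+}\cdot X)=\exp(\iota_{\liea^+}(X))$, and absorbing the leftover $\mb$-factors (which centralize $\lieb$ and normalize $\n$) into the $\wbh$- and $\n$-components, I arrive at a genuine $(p,q)$-Iwasawa decomposition
\[
\sigma^o(g)\hat w_0=h'(\hat w\hat w_0 m')\exp(\iota_{\liea^+}(X))n'
\]
with $h'\in\h^o$, $\hat w\hat w_0 m'\in\wbh$, $n'\in\n$. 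Since $\hat w_0\cdot\xi_{\liea^+}=\xi_{\liea^+}^{\perp_o}$, Corollary \ref{cor betao is the generalized iwasawa} (applied with $h''=1,\hat w''=\hat w_0$) together with the uniqueness of the $\lieb$-component in a $(p,q)$-Iwasawa decomposition forces $\beta^o(\sigma^o(g),\xi_{\liea^+}^{\perp_o})=\iota_{\liea^+}(\beta^o(g,\xi_{\liea^+}))$.

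Finally I would extend to an arbitrary $o$-generic $\xi$ by the cocycle identity. Pick $h'\in\h^o$ and $\hat w'\in\wbh$ with $h'\hat w'\cdot\xi_{\liea^+}=\xi$; then Lemma \ref{lema action of sigma on flags} gives $\xi^{\perp_o}=h'\sigma^o(\hat w')\cdot\xi_{\liea^+}^{\perp_o}$. Using the cocycle property (Lemma \ref{lema o busemann cocycle is a cocycle}) on both sides, one writes
\[
\beta^o(\sigma^o(g),\xi^{\perp_o})=\beta^o(\sigma^o(gh'\hat w'),\xi_{\liea^+}^{\perp_o})-\beta^o(\sigma^o(h'\hat w'),\xi_{\liea^+}^{\perp_o}),
\]
and similarly for $\beta^o(g,\xi)$. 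Applying the base case to each term and invoking $\rr$-linearity of $\iota_{\liea^+}$ produces the desired equality. The main obstacle is the bookkeeping in the base case: carefully tracking the $\mb$ ambiguities produced by $\sigma^o(\hat w)\in\wbh\cdot\mb$ and by the conjugation $m''\hat w_0$, while ensuring the final expression is a bona fide $(p,q)$-Iwasawa factorization so that the uniqueness of its $\lieb$-component can be invoked.
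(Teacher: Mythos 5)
Your proposal is correct and follows essentially the same route as the paper: the key step in both is to apply $\sigma^o$ to a $(p,q)$-Iwasawa decomposition, use $\sigma^o(\hat w)\in\hat w\mb$ and $\sigma^o(\n)=\bar\n$, and push $\exp(-\beta^o(g,\xi))$ past a representative of $w_{\liea^+}$ (which takes $\xi_{\liea^+}$ to $\xi_{\liea^+}^{\perp_o}$) to produce $\exp(\iota_{\liea^+}(\beta^o(g,\xi)))$. The only difference is organizational — you isolate the base flag $\xi_{\liea^+}$ and extend by the cocycle identity, while the paper absorbs the general $\xi$ from the outset via Corollary \ref{cor betao is the generalized iwasawa} — but the underlying identities are identical.
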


\begin{proof}
Lemma \ref{lema action of sigma on flags} implies that $(\sigma^o(g),\xi^{\perp_o})$ belongs to $\gfo$. Further, let $h'\in\h^o$ and $\hat{w}'\in\wbh$ be two elements such that $h'\hat{w}'\cdot\xi_{\liea^+}=\xi$. By Corollary \ref{cor betao is the generalized iwasawa} we can write

\bc
$gh'\hat{w}'=h\hat{w}\exp(\beta^o(g,\xi))n$
\ec

\noindent and therefore

\bc
$\sigma^o(g)=h\sigma^o(\hat{w})\exp(-\beta^o(g,\xi))\sigma^o(n)\sigma^o(\hat{w}')^{-1}(h')^{-1}$.
\ec

\noindent Now by Lemma \ref{lema action of sigma on flags} we have

\bc
$\sigma^o(\hat{w}')^{-1}(h')^{-1}\cdot(\xi^{\perp_o})=\xi_{\liea^+}^{\perp_o}=w_{\liea^+}\cdot \xi_{\liea^+}$
\ec

\noindent and since $\sigma^o(n)w_{\liea^+}=w_{\liea^+}n'$ for some $n'\in\n$, the result follows.

\end{proof}

\subsubsection{\tn{\textbf{Geometric interpretation}}}\label{subsub geom int o busemann}

We now discuss a geometric interpretation for the $o$-Busemann cocycle. This is not formally needed for the reminder of the paper, and the reader not interested in this discussion may go directly to Subsection \ref{subsection gromov product in Qpq}.

Define a map 

\bc
$\Pi^o:\fvo\too \so$
\ec

\noindent in the following way: given $\xi\in\fvo$, consider the $o$-orthogonal basis of lines of $V$

\bc
$\lbrace\ell_1^o(\xi),\dots,\ell_d^o(\xi)\rbrace$
\ec

\noindent given by Proposition \ref{lema equivalences for o generic flags}. Define $\Pi^o(\xi)$ to be the unique element of $\so$ for which this basis of lines is orthogonal. Geometrically, the projection $\Pi^o(\xi)$ is the intersection between $\so$ and the unique flat of $\xsyg$ which is orthogonal to $\so$ and that contains a Weyl chamber ``asymptotic" to $\xi$. Note that for every $(g,\xi)\in\gfo$ one has
\begin{equation}\label{eq equivariance for Pio}
\Pi^{g^{-1}\cdot o}(\xi)=g^{-1}\cdot\Pi^o(g\cdot\xi).
\end{equation}

On the other hand, the \textit{Busemann function} (in $\xsyg$) is the map

\bc
$\bd:\xsy\times\xsy\times\mathsf{F}(V)\too\lieb$
\ec

\noindent given by

\bc
$(\tau_1,\tau_2,\xi)\mapsto \bd_\xi(\tau_1,\tau_2):=\beta^\tau(g_1^{-1},\xi)-\beta^\tau(g_2^{-1},\xi)$,
\ec

\noindent where $g_i\cdot\tau=\tau_i$ for $i=1,2$. A geometric interpretation of the $o$-Busemann cocycle is given by the following proposition (c.f. Figure \ref{fig geom interpretation obusemann} below).

\begin{prop}\label{prop betao in xsyg}
For every $(g,\xi)\in\gfo$ one has
\bc
$\beta^o(g,\xi)=\beta_\xi(\Pi^{g^{-1}\cdot o}(\xi),\Pi^o(\xi))$.
\ec
\end{prop}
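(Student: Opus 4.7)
The strategy is to reduce the identity to showing that an auxiliary function $f : \fvo \to \lieb$ is constant, and to verify constancy via a combination of a direct check at $\lieb^+$-compatible flags and an $\h^o$-invariance argument.

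First, using (\ref{eq equivariance for Pio}) to write $\Pi^{g^{-1}\cdot o}(\xi) = g^{-1}\cdot\Pi^o(g\xi)$, and splitting $\bd_\xi(\tau_1,\tau_2) = \bd_\xi(\tau_1,\tau) + \bd_\xi(\tau,\tau_2)$, the right-hand side of the proposition becomes
$$\bd_\xi\bigl(g^{-1}\cdot\Pi^o(g\xi),\tau\bigr) + \Phi^o(\xi), \qquad \Phi^o(\xi) := \bd_\xi(\tau,\Pi^o(\xi)).$$
A direct computation from the definition of $\bd$ using the cocycle property of $\beta^\tau$ yields the $\g$-equivariance $\bd_{g\xi}(g\tau_1,g\tau_2)=\bd_\xi(\tau_1,\tau_2)$ and, taking $\tau_1=\tau$ and $\tau_2=g\tau$, the identity $\bd_{g\xi}(\tau,g\tau)=\beta^\tau(g,\xi)$. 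These transform the right-hand side into
$$R(g,\xi) := \beta^\tau(g,\xi) + \Phi^o(\xi) - \Phi^o(g\xi).$$
Combined with Remark~\ref{rem o Busemann cohomologous to tau Busemann}, this gives
$$\beta^o(g,\xi) - R(g,\xi) = f(g\xi) - f(\xi), \qquad f := V_{o\tau} + \Phi^o,$$
so the proposition reduces to the claim that $f$ is constant on $\fvo$.

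I would verify constancy in two steps. First, $f$ vanishes at every $\lieb^+$-compatible flag $\xi_0$: for such a flag the basis $\{\ell_j^o(\xi_0)\}$ coincides up to order with the basis $\mathcal{C}$ determined by $\lieb$, so $\Pi^o(\xi_0)=\tau$ and $\Phi^o(\xi_0)=0$; moreover, choosing representatives of $o$ and $\tau$ so that $|\langle v,v\rangle_o|=\langle v,v\rangle_\tau$ for all $v\in\mathcal{C}$, the determinantal formula of Remark~\ref{rem invariant form on exterior powers} forces $V_{o\tau}(\xi_0)=0$ as well. Second, $f$ is $\h^o$-invariant: for $h\in\h^o$ one has $\beta^o(h,\cdot)\equiv 0$ (because $\Lambda^jh$ preserves $o_j$), so Remark~\ref{rem o Busemann cohomologous to tau Busemann} gives $V_{o\tau}(h\xi)-V_{o\tau}(\xi)=-\beta^\tau(h,\xi)$; on the other hand, since $h\cdot o=o$, (\ref{eq equivariance for Pio}) gives $\Pi^o(h\xi)=h\cdot\Pi^o(\xi)$, and then the $\g$-equivariance of $\bd$ combined with $\bd_{h\xi}(\tau,h\tau)=\beta^\tau(h,\xi)$ yields $\Phi^o(h\xi)-\Phi^o(\xi)=\beta^\tau(h,\xi)$. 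Summing these two contributions gives $f(h\xi)=f(\xi)$. Since by Proposition~\ref{prop param of open orbits by lieb compatible flags} every open $\h^o$-orbit in $\mathsf{F}(V)$ meets the set of $\lieb^+$-compatible flags, it follows that $f\equiv 0$ on $\fvo$, and hence $\beta^o=R$.

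The main obstacle is the bookkeeping in the cocycle manipulations --- proving the $\g$-equivariance of $\bd$ and isolating the term $\beta^\tau(g,\xi)$ inside $\bd_{g\xi}(\tau,g\tau)$; once these identities are in hand, the verifications at $\lieb^+$-compatible flags and under the $\h^o$-action are routine applications of the defining formulas for $V_{o\tau}$, $\Phi^o$ and $\beta^o$.
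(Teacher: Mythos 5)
Your proof is correct, but it takes a genuinely different route from the paper's. The paper proves the identity by a three-line direct computation: it invokes the $(p,q)$-Iwasawa decomposition of Corollary \ref{cor betao is the generalized iwasawa}, writing $gh'\hat{w}'=h\hat{w}\exp(\beta^o(g,\xi))n$ with $h'\hat{w}'\cdot\xi_{\liea^+}=\xi$, reads off from the equivariance (\ref{eq equivariance for Pio}) that $\Pi^o(\xi)=h'\hat{w}'\cdot\tau$ and $\Pi^{g^{-1}\cdot o}(\xi)=g^{-1}h\hat{w}\cdot\tau$, and then evaluates the Busemann function directly as $\beta^\tau(\exp(\beta^o(g,\xi))n,\xi_{\liea^+})=\beta^o(g,\xi)$. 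You instead bypass the Iwasawa decomposition entirely: you observe that both sides differ by the coboundary of $f=V_{o\tau}+\Phi^o$ and reduce the proposition to the constancy of $f$ on $\fvo$, which you establish by evaluating at $\lieb^+$-compatible flags and invoking $\h^o$-invariance together with Proposition \ref{prop param of open orbits by lieb compatible flags}. Your cocycle manipulations ($\g$-equivariance of $\bd$, the identity $\bd_{g\xi}(\tau,g\tau)=\beta^\tau(g,\xi)$, and the two invariance computations) all check out; note only that even if your normalization of $o_j$ versus $\tau_j$ were off by a $\xi$-independent constant, $f$ would still be constant on $\fvo$, which is all the argument needs. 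The paper's approach buys brevity by leaning on the structure theory already in place; yours buys a more conceptual explanation — the identity holds because two cocycles with the same coboundary class must agree once they agree on a set meeting every open $\h^o$-orbit — at the cost of more bookkeeping.
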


\begin{proof}
Let $h'\in\h^o$ and $\hat{w}'\in\wbh$ be such that $h'\hat{w}'\cdot\xi_{\liea^+}=\xi$ and write

\bc
$gh'\hat{w}'=h\hat{w}\exp(\beta^o(g,\xi))n$.
\ec

\noindent Equivariance property (\ref{eq equivariance for Pio}) gives the following:

\bc
$\Pi^{g^{-1}\cdot o}(\xi)=g^{-1}h\hat{w}\cdot \tau$ and $\Pi^o(\xi)=h'\hat{w}'\cdot\tau$.
\ec

\noindent We then have

\bc
$\beta_\xi(\Pi^{g^{-1}\cdot o}(\xi),\Pi^{ o}(\xi))=\beta^\tau(\exp(\beta^o(g,\xi))n,\xi_{\liea^+})=\beta^o(g,\xi)$.
\ec
\end{proof}

\begin{figure}[h!]
\bc
\scalebox{0.5}{%
\begin{overpic}[scale=1, width=1\textwidth, tics=5]{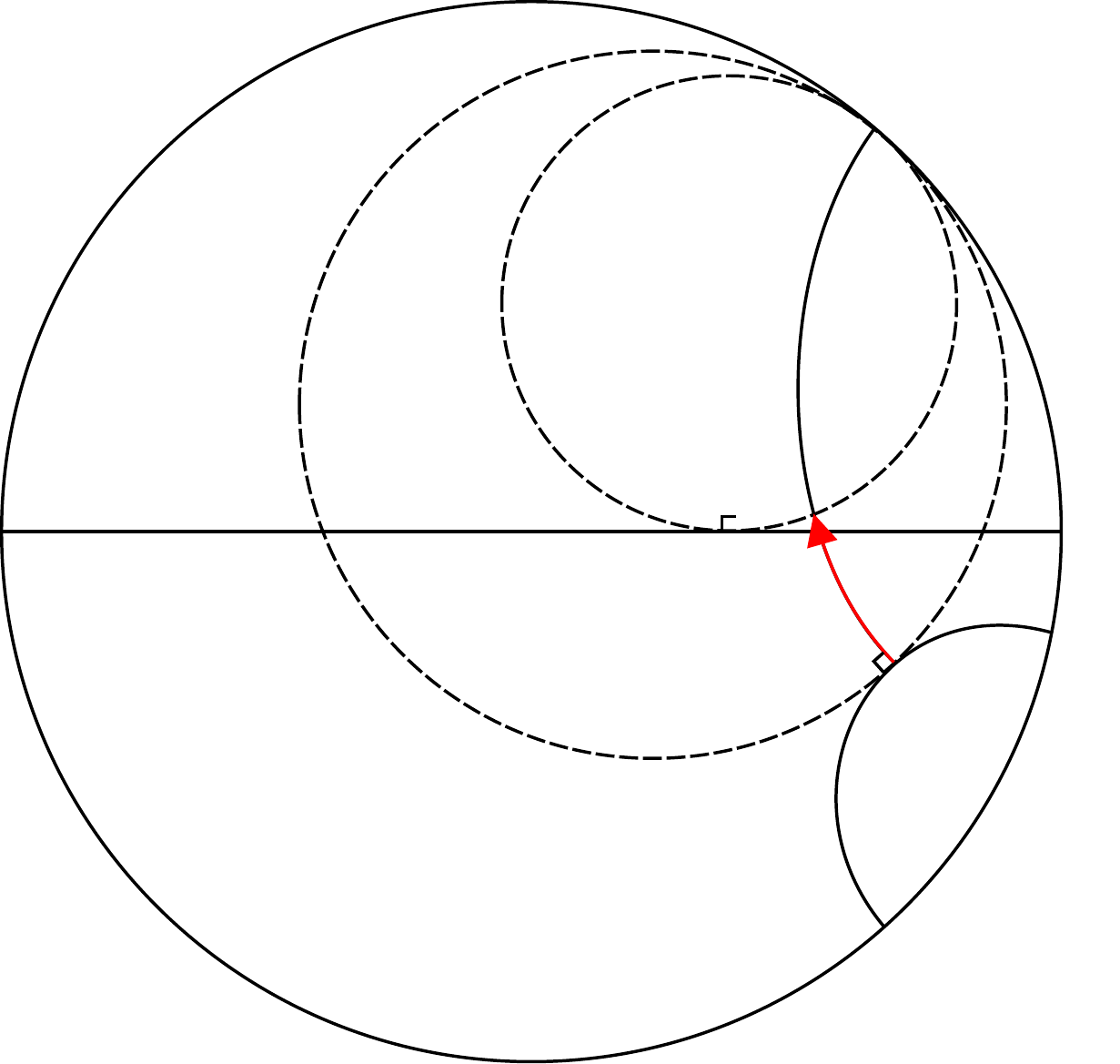}

\put (75,85) { \begin{Huge}
$\xi$
\end{Huge}}

\put (8,49) { \begin{Huge}
$\so$
\end{Huge}}

\put (15,29) { \begin{Huge}
$\xsyg$
\end{Huge}}

\put (59,13) { \begin{Huge}
$\mathsf{S}^{g^{-1}\cdot o}$
\end{Huge}}

  \end{overpic}}
 
\hspace{0,3cm}

\begin{changemargin}{3cm}{3cm}    
    \caption{The $o$-Busemann cocycle: the red arrow represents the vector $\beta^o(g,\xi)$.}
  \label{fig geom interpretation obusemann}
\end{changemargin}

  \ec
\end{figure}

\subsection{$o$-Gromov product}\label{subsection gromov product in Qpq}

We now introduce the ``Gromov product" associated to the pair $(\iota_{\liea^+}\circ\beta^o,\beta^o)$. Its definition goes as follows. Let

\bc
$\fvoc:=\lbrace (\xi,\xi')\in\fvo\times\fvo:\hspace{0,3cm} \xi \tn{ is transverse to } \xi'\rbrace$.
\ec

\noindent For an element $(\xi,\xi')$ in $\fvoc$ and $j=1,\dots,d-1$ we know by Remark \ref{rem ordered basis induced by o generic and double perp and xi o generic iff gxi go generic} that the hyperplane $(\Lambda^j(\xi^{\perp_o}))^{\perp_{o_j}}$ is transverse to the line $\Lambda^j\xi'$. Further, by Lemma \ref{lema equivalences for o generic flags} the lines $\Lambda^j(\xi^{\perp_o})$ and $\Lambda^j\xi'$ are not isotropic for the form $o_j$. Therefore the following $o$-\textit{Gromov product}, that naturally generalizes the one introduced by Sambarino in \cite{Sam}, is well defined: let

\bc
$\gro_o:\fvoc\too\lieb$
\ec

\noindent be defined by the equalities

\bc
$\chi_j (\gro_o(\xi,\xi')):=\dfrac{1}{2}\log\left\vert\dfrac{\langle v,v'\rangle_{o_j}\langle v,v'\rangle_{o_j}}{\langle v,v\rangle_{o_j}\langle v',v'\rangle_{o_j}}\right\vert$
\ec

\noindent for every $j=1,\dots,d-1$, where $v$ (resp. $v'$) is any non zero vector in the line $\Lambda^j(\xi^{\perp_o})$ (resp. $\Lambda^j\xi'$).

The classical relation between Busemann functions and Gromov products is still satisfied in our framework.

\begin{lema}\label{lema gromov product is a gromov prod for betao}
Let $(\xi,\xi')\in\fvoc$ and $g\in\g$ be an element such that $(g,\xi)$ and $(g,\xi')$ belong to $\gfo$. Then the following equality holds:

\bc
$\gro_o(g\cdot\xi,g\cdot\xi')-\gro_o(\xi,\xi')=-(\iota_{\liea^+}\circ\beta^o(g,\xi)+\beta^o(g,\xi'))$.
\ec
\end{lema}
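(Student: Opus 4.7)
The plan is to check the identity on each fundamental weight $\chi_j$ ($j=1,\dots,d-1$), which form a basis of $\lieb^*$, so equality in $\lieb$ reduces to equalities of real numbers. Everything else is a direct computation using the definition of $\gro_o$ and $\beta^o$, the transformation rule for $\cdot^{\perp_o}$ from Lemma \ref{lema action of sigma on flags}, and the identity (\ref{eq sigma on lambda i commutes with sigma}) relating $\sigma^{o_j}$ on $\Lambda^j V$ with $\sigma^o$.

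Fix $j$ and pick nonzero vectors $u\in\Lambda^j(\xi^{\perp_o})$ and $u'\in\Lambda^j\xi'$. By Lemma \ref{lema action of sigma on flags} one has $(g\cdot\xi)^{\perp_o}=\sigma^o(g)\cdot\xi^{\perp_o}$, hence $\Lambda^j\sigma^o(g)\cdot u$ is a nonzero vector in $\Lambda^j((g\cdot\xi)^{\perp_o})$, while $\Lambda^jg\cdot u'$ is a nonzero vector in $\Lambda^j(g\cdot\xi')$. These are the natural choices to plug into the definition of $\gro_o(g\cdot\xi,g\cdot\xi')$.

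The key computation is that the ``mixed'' pairing is preserved: by (\ref{eq sigma on lambda i commutes with sigma}) we have $\Lambda^j\sigma^o(g)=\sigma^{o_j}(\Lambda^jg)=\leftidx{^{\star_{o_j}}}(\Lambda^jg)^{-1}$, so $\leftidx{^{\star_{o_j}}}\Lambda^j\sigma^o(g)=\Lambda^jg^{-1}$ and therefore
\[
\langle \Lambda^j\sigma^o(g)\cdot u,\,\Lambda^jg\cdot u'\rangle_{o_j}
=\langle u,\,\Lambda^jg^{-1}\Lambda^jg\cdot u'\rangle_{o_j}
=\langle u,u'\rangle_{o_j}.
\]
Substituting this into the definition of $\chi_j(\gro_o(g\cdot\xi,g\cdot\xi'))$, the numerator coincides with the numerator of $\chi_j(\gro_o(\xi,\xi'))$, so after subtraction only the denominators survive and we obtain
\[
\chi_j(\gro_o(g\cdot\xi,g\cdot\xi'))-\chi_j(\gro_o(\xi,\xi'))
= -\tfrac{1}{2}\log\!\left|\tfrac{\langle\Lambda^j\sigma^o(g)\cdot u,\Lambda^j\sigma^o(g)\cdot u\rangle_{o_j}}{\langle u,u\rangle_{o_j}}\right|
 - \tfrac{1}{2}\log\!\left|\tfrac{\langle\Lambda^jg\cdot u',\Lambda^jg\cdot u'\rangle_{o_j}}{\langle u',u'\rangle_{o_j}}\right|.
\]
By the defining formula for $\beta^o$ applied to $(\sigma^o(g),\xi^{\perp_o})$ (with $u\in\Lambda^j(\xi^{\perp_o})$) and to $(g,\xi')$ (with $u'\in\Lambda^j\xi'$), the right-hand side equals $-\chi_j(\beta^o(\sigma^o(g),\xi^{\perp_o}))-\chi_j(\beta^o(g,\xi'))$.

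To finish, apply Corollary \ref{cor iwasawa for sigma g}, which identifies $\beta^o(\sigma^o(g),\xi^{\perp_o})$ with $\iota_{\liea^+}\circ\beta^o(g,\xi)$; this gives the claimed equality after each $\chi_j$, and hence in $\lieb$. The only ``obstacle'' is bookkeeping: one must check that the vectors chosen in $\Lambda^j((g\cdot\xi)^{\perp_o})$ and $\Lambda^j(g\cdot\xi')$ are nonzero (ensured because $(g,\xi),(g,\xi')\in\gfo$, i.e.\ all flags in sight are $o$-generic, together with Lemma \ref{lema equivalences for o generic flags}) and that the transversality required to define $\gro_o(g\cdot\xi,g\cdot\xi')$ is preserved under $g$, which is immediate since $g$ acts on $\mathsf{F}(V)$ preserving transversality.
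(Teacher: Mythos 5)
Your proposal is correct and follows essentially the same route as the paper's proof: both verify the identity on each $\chi_j$, use the vectors $u\in\Lambda^j(\xi^{\perp_o})$, $u'\in\Lambda^j\xi'$ together with $\Lambda^j\sigma^o(g)=\sigma^{o_j}(\Lambda^jg)$ to see that the mixed pairing $\langle\cdot,\cdot\rangle_{o_j}$ is preserved, and then conclude via Corollary \ref{cor iwasawa for sigma g}.
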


\begin{proof}

Fix $j=1,\dots,d-1$ and note that, by equation (\ref{eq sigma on lambda i commutes with sigma}) and Lemma \ref{lema action of sigma on flags}, if $v\in\Lambda^j(\xi^{\perp_o})$ then 

\bc
$\sigma^{o_j}(\Lambda^jg)\cdot v\in \Lambda^j((g\cdot\xi)^{\perp_o})$.
\ec

\noindent Let $v'\in\Lambda^j\xi'$ be a non zero vector. We have

\begin{align*}
\chi_j(\gro_o(g\cdot \xi,g\cdot \xi')) & =\dfrac{1}{2}\log\left\vert \dfrac{\langle \sigma^{o_j}(\Lambda^jg)\cdot v, \Lambda^jg\cdot v'\rangle_{o_j}\langle \sigma^{o_j}(\Lambda^jg)\cdot v, \Lambda^jg\cdot v'\rangle_{o_j}}{\langle \sigma^{o_j}(\Lambda^jg)\cdot v, \sigma^{o_j}(\Lambda^jg)\cdot v \rangle_{o_j}\langle \Lambda^jg\cdot v', \Lambda^jg\cdot v'\rangle_{o_j}} \right\vert\\
& =\dfrac{1}{2}\log\left\vert \dfrac{\langle  v,  v'\rangle_{o_j}\langle  v,  v'\rangle_{o_j}}{\langle \sigma^{o_j}(\Lambda^jg)\cdot v, \sigma^{o_j}(\Lambda^jg)\cdot v \rangle_{o_j}\langle \Lambda^jg\cdot v', \Lambda^jg\cdot v'\rangle_{o_j}} \right\vert,
\end{align*}

\noindent where the last equality holds by definition of $\sigma^{o_j}$. If we subtract to the previous equality the number

\bc
$\chi_j(\gro_o(\xi,\xi'))=\dfrac{1}{2}\log\left\vert\dfrac{\langle v,v'\rangle_{o_j}\langle v,v'\rangle_{o_j}}{\langle v,v\rangle_{o_j}\langle v',v'\rangle_{o_j}}\right\vert$
\ec

\noindent we obtain that $\chi_j(\gro_o(g\cdot \xi,g\cdot \xi'))-\chi_j(\gro_o( \xi, \xi'))$ equals

\bc
$\dfrac{1}{2}\log\left\vert \dfrac{\langle  v,  v\rangle_{o_j}\langle  v',  v'\rangle_{o_j}}{\langle \sigma^{o_j}(\Lambda^jg)\cdot v, \sigma^{o_j}(\Lambda^jg)\cdot v \rangle_{o_j}\langle \Lambda^jg\cdot v', \Lambda^jg\cdot v'\rangle_{o_j}} \right\vert$
\ec

\noindent and by Corollary \ref{cor iwasawa for sigma g} the result follows.

\end{proof}

We now discuss a geometric interpretation for the $o$-Gromov product, that will be of central importance in Section \ref{sec counting} (c.f. Corollary \ref{cor gromov is cross ratio for rho and estimate with bogamma}). Let $\bb$ be the \textit{vector valued cross-ratio}, defined by Benoist \cite[p. 6]{Ben1} in the following way. Given a $4$-tuple 

\bc
$(\xi_1,\xi_2,\xi_3,\xi_4)\in\mathsf{F}(V)^4$
\ec

\noindent such that $(\xi_i,\xi_k)$ belongs to $\fvc$ for all $(i,k)\in\lbrace(1,2),(1,4),(2,3),(3,4)\rbrace\rbrace$, the vector 

\bc
$\bb(\xi_1,\xi_2,\xi_3,\xi_4)\in\lieb$
\ec

\noindent is defined by the following equalities for $j=1,\dots,d-1$:

\bc
$\chi_j(\bb(\xi_1,\xi_2,\xi_3,\xi_4)):=\log\left\vert\dfrac{\vartheta_1(v_4)}{\vartheta_1(v_2)}	\dfrac{\vartheta_3(v_2)}{ \vartheta_3(v_4)}\right\vert$,
\ec

\noindent where for $i=2,4$, $v_i$ is any non zero vector in the line $\Lambda^j\xi_i\in\pp(\Lambda^jV)$ and for $k=1,3$, $\vartheta_k$ is any non zero linear functional in the line $\Lambda^j_*\xi_k\in\pp(\Lambda^jV^*)$.

\begin{cor}\label{cor gromov is cross ratio}
For every $(\xi,\xi')\in\fvoc$ the following equality holds:

\bc
$\gro_o(\xi,\xi')=-\frac{1}{2}\bb\left((\xi')^{\perp_o},\xi^{\perp_o},\xi,\xi'\right)$.
\ec
\end{cor}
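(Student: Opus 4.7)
The plan is to reduce both sides of the desired equality to a ratio of pairings $\langle\cdot,\cdot\rangle_{o_j}$ between the same four vectors, and then observe that the two expressions match up to the sign and the factor $1/2$. The essential ingredient is the identification, furnished by Remark~\ref{rem ordered basis induced by o generic and double perp and xi o generic iff gxi go generic}, of lines in $\pp(\Lambda^j V^*)$ of the form $\Lambda^j_*(\eta^{\perp_o})$ with lines in $\pp(\Lambda^j V)$ of the form $\Lambda^j\eta$ via the form $o_j$.

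More precisely, fix $j\in\{1,\dots,d-1\}$. Since $o_j$ is non-degenerate on $\Lambda^j V$, the map $\phi:\Lambda^j V\to\Lambda^j V^*$ defined by $\phi(v):=\langle v,\cdot\rangle_{o_j}$ is a linear isomorphism, and for any line $L\subset\Lambda^j V$ the kernel of any generator of $\phi(L)$ is precisely $L^{\perp_{o_j}}$. Combined with Remark~\ref{rem ordered basis induced by o generic and double perp and xi o generic iff gxi go generic}, which gives $\Lambda^j_*(\eta^{\perp_o})=(\Lambda^j\eta)^{\perp_{o_j}}$ for every $o$-generic flag $\eta$, this yields
\[
\phi(\Lambda^j\eta)=\Lambda^j_*(\eta^{\perp_o}).
\]

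Now, to evaluate the cross-ratio $\bb((\xi')^{\perp_o},\xi^{\perp_o},\xi,\xi')$, I would choose the following representatives: let $v_2\in\Lambda^j\xi^{\perp_o}$ and $v_4\in\Lambda^j\xi'$ be non-zero vectors, and take $\vartheta_1:=\phi(v_4)\in\Lambda^j_*((\xi')^{\perp_o})$ and $\vartheta_3:=\phi(v_2)\in\Lambda^j_*\xi$ as representatives of the required lines of functionals (applying the identification above to $\eta=\xi'$ and to $\eta=\xi$ respectively). With these choices a direct computation gives
\[
\frac{\vartheta_1(v_4)}{\vartheta_1(v_2)}\,\frac{\vartheta_3(v_2)}{\vartheta_3(v_4)}
=\frac{\langle v_4,v_4\rangle_{o_j}\,\langle v_2,v_2\rangle_{o_j}}{\langle v_4,v_2\rangle_{o_j}\,\langle v_2,v_4\rangle_{o_j}},
\]
so that
\[
\chi_j\bigl(\bb((\xi')^{\perp_o},\xi^{\perp_o},\xi,\xi')\bigr)
=\log\left|\frac{\langle v_2,v_2\rangle_{o_j}\,\langle v_4,v_4\rangle_{o_j}}{\langle v_2,v_4\rangle_{o_j}\,\langle v_2,v_4\rangle_{o_j}}\right|
=-2\,\chi_j\bigl(\gro_o(\xi,\xi')\bigr),
\]
where in the last equality I use the definition of $\gro_o$ (with $v=v_2\in\Lambda^j\xi^{\perp_o}$ and $v'=v_4\in\Lambda^j\xi'$) and note that $|\langle v_2,v_4\rangle_{o_j}|=|\langle v_4,v_2\rangle_{o_j}|$ in both the real quadratic and the complex Hermitian case. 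Since the functionals $\chi_1,\dots,\chi_{d-1}$ form a basis of $\lieb^*$, this identity for every $j$ yields the claim.

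No step looks hard; the only point requiring care is to match the roles of the four flags in the cross-ratio (which flag plays the role of $\xi_1,\xi_2,\xi_3,\xi_4$) with the two ``vector versus functional'' sides of the isomorphism $\phi$, so that $\vartheta_1$ and $\vartheta_3$ actually lie in the prescribed lines $\Lambda^j_*((\xi')^{\perp_o})$ and $\Lambda^j_*\xi$ and so that $v_2,v_4$ lie in the prescribed lines $\Lambda^j\xi^{\perp_o}$ and $\Lambda^j\xi'$. The fact that $(\xi,\xi')\in\fvoc$ ensures that all the pairings appearing in the denominators are non-zero, by Remark~\ref{rem ordered basis induced by o generic and double perp and xi o generic iff gxi go generic} and Lemma~\ref{lema equivalences for o generic flags}, so the expression is well defined.
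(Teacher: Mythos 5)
Your proposal is correct and follows essentially the same route as the paper: choose $v\in\Lambda^j(\xi^{\perp_o})$ and $v'\in\Lambda^j\xi'$, use Remark \ref{rem ordered basis induced by o generic and double perp and xi o generic iff gxi go generic} to realize the functional lines $\Lambda^j_*((\xi')^{\perp_o})$ and $\Lambda^j_*\xi$ as $\langle v',\cdot\rangle_{o_j}$ and $\langle v,\cdot\rangle_{o_j}$, and compare the resulting expression for $\chi_j(\bb)$ with the definition of $\gro_o$. The only cosmetic difference is that you make explicit the step $|\langle v_2,v_4\rangle_{o_j}|=|\langle v_4,v_2\rangle_{o_j}|$ and the fact that the $\chi_j$ span $\lieb^*$, both of which the paper leaves implicit.
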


\begin{proof}
Let $j=1,\dots,d-1$ and $v\in\Lambda^j(\xi^{\perp_o})$ and $v'\in\Lambda^j\xi'$ be non zero vectors. Then by Remark \ref{rem ordered basis induced by o generic and double perp and xi o generic iff gxi go generic} we have

\bc
$\langle v,\cdot \rangle_{o_j}\in \Lambda^j_*\xi$ and $\langle v',\cdot \rangle_{o_j}\in \Lambda^j_*((\xi')^{\perp_o})$.
\ec

\noindent It follows that

\bc
$\chi_j\left(\bb\left((\xi')^{\perp_o},\xi^{\perp_o},\xi,\xi'\right)\right)=\log\left\vert \dfrac{\langle v',v' \rangle_{o_j}\langle v,v \rangle_{o_j}}{\langle v',v \rangle_{o_j}\langle v,v' \rangle_{o_j}} \right\vert$
\ec

\noindent and the result is proven.

\end{proof}

\section{$(p,q)$-Cartan projection for Anosov representations}\label{sec pq Cartan for anosov}

In this section we begin the study of asymptotic properties of the $(p,q)$-Cartan projection for elements in the image of a $\Delta$-Anosov representation. In Subsection \ref{subsec def anosov} we briefly recall this notion and some of its main features. In Subsection \ref{subsec orho} we introduce the subset $\orho$ and discuss some examples. In Subsection \ref{subsec exponential rate} we prove Corollary \ref{cor bound for counting problem introduction} and in Subsection \ref{subsec asymptotic cone} we describe the $(p,q)$-asymptotic cone.

\subsection{Reminders on Anosov representations}\label{subsec def anosov}

A lot of work has been done in order to simplify Labourie's original definition of Anosov representations. Here we follow mainly the work of Bochi-Potrie-Sambarino \cite{BPS}, Guichard-Gu\'eritaud-Kassel-Wienhard \cite{GGKW} and Kapovich-Leeb-Porti \cite{KLP1}.

Let $\Gamma$ be a finitely generated group. Consider a finite symmetric generating set $\mathscr{S}$ of $\Gamma$ and take $\vert\cdot\vert_\Gamma$ to be the associated word length: for $\gamma$ in $\Gamma$, it is the minimum number required to write $\gamma$ as a product of elements\footnote{This number depends on the choice of $\mathscr{S}$. However, the set $\mathscr{S}$ will be fixed from now on hence we do not emphasize the dependence on this choice in the notation.} of $\mathscr{S}$.

Fix a Cartan decomposition $\g=\ko^\tau\exp(\liea^+)\ko^\tau$ and let $\Delta$ be the set of simple roots. A representation $\rho:\Gamma\too \g$ is said to be $\Delta$-\textit{Anosov} if there exist positive constants $c$ and $c'$ such that for all $\gamma\in\Gamma$ and all $\alpha\in\Delta$ one has
\begin{equation}\label{eq def anosov}
\alpha(a^\tau(\rho\gamma))\geq c\vert\gamma\vert_\Gamma-c'.
\end{equation}

By Kapovich-Leeb-Porti \cite[Theorem 1.4]{KLP3} (see also \cite[Section 3]{BPS}), condition (\ref{eq def anosov}) implies that $\Gamma$ is word hyperbolic\footnote{We refer the reader to the book of Ghys-de la Harpe \cite{GdlH} for definitions and standard facts on word hyperbolic groups.}. In this paper we assume that $\Gamma$ is non elementary. Let $\bg$ be the Gromov boundary of $\Gamma$ and $\gh$ be the set of infinite order elements in $\Gamma$. Every $\gamma$ in $\gh$ has exactly two fixed points in $\bg$: the attractive one denoted by $\gamma_+$ and the repelling one denoted by $\gamma_-$. The dynamics of $\gamma$ on $\bg$ is of type ``north-south".

As shown in \cite{BPS,GGKW,KLP1}, a central feature about $\Delta$-Anosov representations is that they admit a continuous equivariant map

\bc
$\xi_{\rho}:\bg\too\mathsf{F}(V)$
\ec

\noindent which is \textit{transverse}, i.e. for every $x\neq y$ in $\bg$ one has
\begin{equation} \label{eq transv condition}
(\xi_\rho(x),\xi_{\rho}(y))\in\fvc.
\end{equation}
\noindent Moreover, this map is \textit{dynamics-preserving}, i.e. for every $\gamma$ in $\gh$ the element $\xi_{\rho}(\gamma_+)$ (resp. $\xi_\rho(\gamma_-)$) is an attractive (resp. repelling) fixed point of $\rho\gamma$ acting on $\mathsf{F}(V)$. It follows that $\rho\gamma$ is loxodromic with

\bc
$\xi_{\rho}(\gamma_\pm)=(\rho\gamma)_\pm$.
\ec

\noindent In particular, $\xi_{\rho}$ is injective and uniquely determined by $\rho$: it is called the \textit{limit map} of $\rho$. This map varies in a continuous way with the representation and is H\"older continuous (see Guichard-Wienhard \cite[Theorem 5.13]{GW} and Bridgeman-Canary-Labourie-Sambarino \cite[Theorem 6.1]{BCLS}).

The \textit{limit set} of $\rho$ is the image of $\xi_\rho$ and admits the following useful characterization (see \cite[Theorem 5.3]{GGKW} or {\cite[Subsection 3.4]{BPS}} for a proof).

\begin{prop}\label{prop limit with S and U and Uuno cerca gammamas}
Let $\rho:\Gamma\too \g$ be a $\Delta$-Anosov representation. Then the limit set of $\rho$ coincides with the set of accumulation points of sequences of the form $\lbrace U^\tau(\rho\gamma_n)\rbrace_n$, where $\gamma_n\too\infty$. Moreover, given a (continuous) distance $d(\cdot,\cdot)$ on $\mathsf{F}(V)$ and a positive $\varepsilon$, one has
\bc
$d\left(U^\tau(\rho\gamma),(\rho\gamma)_+\right)<\varepsilon$ and $d(S^\tau(\rho\gamma),(\rho\gamma)_-)<\varepsilon$
\ec

\noindent for every $\gamma\in\gh$ with $\vert\gamma\vert_\Gamma$ large enough.
\end{prop}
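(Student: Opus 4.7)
First I would establish the key lemma: if $\gamma_n \in \Gamma$ satisfies $\gamma_n \to x \in \bg$ (in the sense of Cayley graph convergence), then $U^\tau(\rho\gamma_n) \to \xi_\rho(x)$. After extracting a further subsequence I also assume $\gamma_n^{-1} \to x' \in \bg$. The Anosov condition (\ref{eq def anosov}) forces $\min_{\alpha \in \Delta}\alpha(a^\tau(\rho\gamma_n)) \to \infty$. I would then select any $y \in \bg$ with $y \notin \{x, x'\}$. Since in hyperbolic groups one has the standard north--south type convergence on $\bg$, namely $\gamma_n \cdot y \to x$ for $y \neq x'$, continuity of $\xi_\rho$ gives $\xi_\rho(\gamma_n y) \to \xi_\rho(x)$. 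On the other hand, any limit flag of $\{S^\tau(\rho\gamma_n)\}$ is of the form $\xi_\rho(x')$ by the same argument applied to $\gamma_n^{-1}$, and $\xi_\rho(x')$ is transverse to $\xi_\rho(y)$ by transversality (\ref{eq transv condition}); so for large $n$ the flag $\xi_\rho(y)$ lies in a neighborhood $B_\varepsilon(\Lambda^j_*S^\tau(\rho\gamma_n))$ uniformly. Applying Remark \ref{rem complemento de Sdmenosuno va en Uuno} yields $d(\rho\gamma_n \cdot \xi_\rho(y), U^\tau(\rho\gamma_n)) \to 0$. Since $\rho\gamma_n \cdot \xi_\rho(y) = \xi_\rho(\gamma_n y)$ by equivariance, the two convergences combine to give $U^\tau(\rho\gamma_n) \to \xi_\rho(x)$.

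From this lemma the first assertion is immediate. For ``$\xi_\rho(\bg)\subset$ accumulation set'', pick $x \in \bg$ and any sequence $\gamma_n \to x$. For the reverse inclusion, given $\gamma_n \to \infty$ with $U^\tau(\rho\gamma_n) \to \xi$, extract a subsequence converging to some $x \in \bg$ using compactness of $\Gamma \cup \bg$ and conclude $\xi = \xi_\rho(x)$.

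To establish the ``moreover'' part, I would argue by contradiction. Suppose there exist $\varepsilon > 0$ and $\gamma_n \in \gh$ with $|\gamma_n|_\Gamma \to \infty$ such that $d(U^\tau(\rho\gamma_n), (\rho\gamma_n)_+) \geq \varepsilon$. Pass to a subsequence with $\gamma_n \to x \in \bg$. The first part gives $U^\tau(\rho\gamma_n) \to \xi_\rho(x)$. Since $\rho\gamma_n$ is loxodromic with $(\rho\gamma_n)_\pm = \xi_\rho((\gamma_n)_\pm)$ by the dynamics-preserving property, it suffices to show $(\gamma_n)_+ \to x$ in $\bg$ along a subsequence. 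This follows from a standard Morse lemma argument in hyperbolic groups: the cyclic orbit $\{\gamma_n^k\}_{k \in \mathbb{Z}}$ is a bi-infinite quasi-geodesic with uniform constants (provided $|\gamma_n|_\Gamma$ exceeds a threshold, which holds since $|\gamma_n|_\Gamma \to \infty$), endpointed at $(\gamma_n)_\pm$; since $\gamma_n$ lies on this quasi-geodesic at distance $|\gamma_n|_\Gamma$ from $e$ in the ``forward'' direction, a geodesic from $e$ to $\gamma_n$ stays within bounded Hausdorff distance of a geodesic ray from $e$ to $(\gamma_n)_+$. Hence $(\gamma_n)_+ \to x$, and by continuity of $\xi_\rho$ we obtain $(\rho\gamma_n)_+ \to \xi_\rho(x)$, contradicting $d(U^\tau(\rho\gamma_n), (\rho\gamma_n)_+) \geq \varepsilon$. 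The corresponding $S^\tau$ statement is obtained by applying the $U^\tau$ statement to $\gamma^{-1}$, using $S^\tau(\rho\gamma) = U^\tau(\rho\gamma^{-1})$ and $(\rho\gamma)_- = (\rho\gamma^{-1})_+$.

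The main obstacle is pinning down that the limit of $U^\tau(\rho\gamma_n)$ is precisely $\xi_\rho(x)$ (and not just some flag in $\xi_\rho(\bg)$), which requires combining the dynamical contraction furnished by Remark \ref{rem complemento de Sdmenosuno va en Uuno} with the equivariance and transversality of $\xi_\rho$. A secondary but important technical point is the uniformity of the quasi-geodesic constants for the cyclic subgroups generated by $\gamma_n$; this is a classical feature of hyperbolic groups and is what allows the Morse lemma to be applied uniformly as $|\gamma_n|_\Gamma \to \infty$.
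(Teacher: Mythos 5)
The paper does not prove this proposition itself; it cites \cite[Theorem 5.3]{GGKW} and \cite[Subsection 3.4]{BPS}, so I am judging your argument against those standard proofs. Your key lemma contains a genuine circularity. To show $U^\tau(\rho\gamma_n)\too\xi_\rho(x)$ you need $\xi_\rho(y)$ to stay uniformly inside $B_\varepsilon(\Lambda^j_*S^\tau(\rho\gamma_n))$, and for this you assert that every accumulation point of $S^\tau(\rho\gamma_n)=U^\tau(\rho\gamma_n^{-1})$ equals $\xi_\rho(x')$ ``by the same argument applied to $\gamma_n^{-1}$.'' But that same argument, run for $\gamma_n^{-1}$, requires controlling the accumulation points of $S^\tau(\rho\gamma_n^{-1})=U^\tau(\rho\gamma_n)$ --- which is exactly the conclusion you are trying to establish. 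A priori you know nothing about where $S^\tau(\rho\gamma_n)$ accumulates (not even that it accumulates on the limit set), so you cannot choose a single $y$ whose image is guaranteed to stay transverse to all the repelling hyperplanes. The standard proofs break this deadlock by a different mechanism: one shows directly that $\gamma\mapsto U^\tau(\rho\gamma)$ is ``Cauchy along geodesics,'' i.e.\ $d(U^\tau(\rho\gamma),U^\tau(\rho\eta))$ is exponentially small in the Gromov product $(\gamma\mid\eta)_e$, using additivity of Cartan projections along word geodesics (this is \cite[Lemma 2.5 and Lemma 4.7]{BPS}, which the present paper invokes elsewhere, e.g.\ in the proofs of Lemma \ref{lema PS no atoms} and Proposition \ref{prop limit cono}); the limit of $U^\tau(\rho\gamma_n)$ then exists for every $\gamma_n\too x$, defines a continuous equivariant dynamics-preserving map, and coincides with $\xi_\rho$ by uniqueness of such maps. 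Your contraction argument via Remark \ref{rem complemento de Sdmenosuno va en Uuno} is the right tool once transversality of $\xi_\rho(y)$ to the repellors is known, but it cannot be the engine that produces that transversality.

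A secondary problem is in the ``moreover'' part: the orbits $\lbrace\gamma_n^k\rbrace_k$ are quasi-geodesics whose constants depend on $\gamma_n$ (the translation length of $\gamma_n$ can stay bounded while $\vert\gamma_n\vert_\Gamma\too\infty$, e.g.\ for $\gamma_n=\alpha_n\beta\alpha_n^{-1}$, and the identity need not be close to the axis), so the Morse lemma does not apply with uniform constants; moreover $[e,\gamma_n]$ is never within bounded Hausdorff distance of the ray $[e,(\gamma_n)_+)$ since the former doubles back to $\gamma_n$. What is true, and suffices, is that the Gromov product $(\gamma_n\mid(\gamma_n)_+)_e$ is bounded below by roughly $\vert\gamma_n\vert_\Gamma/2$ (project $e$ to the quasi-axis of $\gamma_n$), so $(\gamma_n)_+$ and $\gamma_n$ converge to the same boundary point. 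I would rewrite that step accordingly; but the circularity in the key lemma is the issue that must be repaired before the rest of the argument can stand.
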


Anosov representations have strong proximality properties (c.f. \cite[Subsection 5.2]{GW}). We now  establish one of them that will be useful in Section \ref{sec counting}: it will provide the correct framework to estimate the geometric quantity involved in our counting functions.

\begin{lema}[c.f. Sambarino {\cite[Lemma 5.7]{Sam}}]\label{lema sambarino lemma 5.7}
Let $\rho:\Gamma\too\g$ be a $\Delta$-Anosov representation and fix real numbers $0<\varepsilon\leq r$. Then there exists a positive $L$ with the following property: for every $\gamma\in\gh$ satisfying $\vert\gamma\vert_\Gamma>L$ and such that

\bc
$d(\Lambda^j(\rho\gamma)_+,\Lambda^j_* (\rho\gamma)_-)\geq 2r$
\ec

\noindent holds for every $j=1,\dots,d-1$, one has that $\rho\gamma$ is $(r,\varepsilon)$-loxodromic.

\end{lema}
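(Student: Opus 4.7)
The strategy is to deduce the two conditions in the definition of $(r,\varepsilon)$-loxodromicity (separation of attractor and repellor, plus the ping-pong contraction) from three inputs already at our disposal: the Anosov inequality (\ref{eq def anosov}), the uniform estimate of Remark \ref{rem complemento de Sdmenosuno va en Uuno} for elements with large gap of index $\Delta$, and the convergence of Cartan attractors and repellors to the fixed flags stated in Proposition \ref{prop limit with S and U and Uuno cerca gammamas}. The separation condition is assumed in the hypothesis, so the content of the lemma is the contraction property.

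First I would fix a small auxiliary parameter $\varepsilon'\in(0,\varepsilon/2)$ (for instance $\varepsilon'=\varepsilon/4$). Apply Remark \ref{rem complemento de Sdmenosuno va en Uuno} with parameter $\varepsilon-\varepsilon'$ to obtain a constant $L_0>0$ such that any $g\in\g$ with $\min_{\alpha\in\Delta}\alpha(a^\tau(g))>L_0$ satisfies
\[\Lambda^j g\cdot B_{\varepsilon-\varepsilon'}\bigl(\Lambda^j_*S^\tau(g)\bigr)\subset b_{\varepsilon-\varepsilon'}\bigl(\Lambda^jU^\tau(g)\bigr)\]
for every $j=1,\dots,d-1$. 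By the Anosov inequality (\ref{eq def anosov}) there exists $L_1>0$ such that $\min_{\alpha\in\Delta}\alpha(a^\tau(\rho\gamma))>L_0$ holds as soon as $|\gamma|_\Gamma>L_1$; this is the step that feeds the exponentially growing gap into the ping-pong estimate.

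Next I would invoke Proposition \ref{prop limit with S and U and Uuno cerca gammamas} in each of the exterior power metrics to extract a constant $L_2>0$ with the following property: for every $\gamma\in\gh$ with $|\gamma|_\Gamma>L_2$ and every $j=1,\dots,d-1$ one has
\[d\bigl(\Lambda^jU^\tau(\rho\gamma),\Lambda^j(\rho\gamma)_+\bigr)<\varepsilon'\quad\text{and}\quad d\bigl(\Lambda^j_*S^\tau(\rho\gamma),\Lambda^j_*(\rho\gamma)_-\bigr)<\varepsilon'.\]
Set $L:=\max(L_1,L_2)$. For any $\gamma$ with $|\gamma|_\Gamma>L$ satisfying the separation hypothesis and any line $\ell\in B_\varepsilon(\Lambda^j_*(\rho\gamma)_-)$, the triangle inequality combined with the second inequality above yields $\ell\in B_{\varepsilon-\varepsilon'}(\Lambda^j_*S^\tau(\rho\gamma))$; by the choice of $L_1$ its image $\Lambda^j(\rho\gamma)\cdot\ell$ therefore lies in $b_{\varepsilon-\varepsilon'}(\Lambda^jU^\tau(\rho\gamma))$, and a second triangle inequality using the first estimate places this image inside $b_\varepsilon(\Lambda^j(\rho\gamma)_+)$, proving the contraction clause. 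Together with the hypothesis on $d(\Lambda^j(\rho\gamma)_+,\Lambda^j_*(\rho\gamma)_-)\geq 2r$ this establishes $(r,\varepsilon)$-loxodromicity.

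The only delicate point is the bookkeeping of the various metrics on $\pp(\Lambda^jV)$ and $\pp(\Lambda^jV^*)$ for different $j$; one must ensure that all the triangle inequalities can be run simultaneously in all exterior powers. This is handled by the simultaneous convergence statement in Proposition \ref{prop limit with S and U and Uuno cerca gammamas} (valid for any of the continuous distances in play) and by taking the maximum of finitely many threshold values of $|\gamma|_\Gamma$, one for each $j$. No other obstruction is expected, since every quantitative input is already encoded in the Anosov condition and the two cited results.
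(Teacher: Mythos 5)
Your proposal is correct and follows essentially the same route as the paper: both arguments combine Remark \ref{rem complemento de Sdmenosuno va en Uuno} (fed by the Anosov inequality (\ref{eq def anosov})) with the convergence of Cartan attractors and repellors from Proposition \ref{prop limit with S and U and Uuno cerca gammamas}, and then transfer the contraction from $U^\tau(\rho\gamma)$, $S^\tau(\rho\gamma)$ to $(\rho\gamma)_\pm$ by enlarging/shrinking the balls by a small margin. The paper phrases this via a sequence $\gamma_n\too\infty$ and the margin $\varepsilon/2$ rather than your explicit $\varepsilon'$, but the content is identical.
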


\begin{proof}

Consider a sequence $\gamma_n\too\infty$ in $\gh$ such that 

\bc
$d(\Lambda^j(\rho\gamma_n)_+,\Lambda^j_* (\rho\gamma_n)_-)\geq 2r$
\ec

\noindent for all $n$ and $j$. By Proposition \ref{prop limit with S and U and Uuno cerca gammamas}, for every $n$ large enough the following holds

\bc
$b_{\frac{\varepsilon}{2}}(\Lambda^jU^\tau(\rho\gamma_n))\subset b_{\varepsilon}(\Lambda^j(\rho\gamma_n)_+)$ and $B_{\varepsilon}(\Lambda^j_*(\rho\gamma_n)_-)\subset B_{\frac{\varepsilon}{2}}(\Lambda^j_*S^\tau(\rho\gamma_n))$.
\ec

\noindent By Remark \ref{rem complemento de Sdmenosuno va en Uuno} and equation (\ref{eq def anosov}) the condition 

\bc
$\rho\gamma_n\cdot B_{\varepsilon}(\Lambda^j_*(\rho\gamma_n)_-)\subset b_{\varepsilon}(\Lambda^j(\rho\gamma_n)_+)$
\ec

\noindent is sa\-tis\-fied for sufficiently large $n$.

\end{proof}

\subsection{The set $\orho$}\label{subsec orho}
Given a $\Delta$-Anosov representation $\rho$ define the (open) set

\bc
$\orho:=\lbrace o\in\xsypq:\hspace{0,3cm} \xi_\rho(\bg)\subset\mathsf{F}(V)^o\rbrace$.
\ec

Let us discuss some examples of $\Delta$-Anosov representations into $\g$ for which the set $\orho$ is non empty. Observe that since the limit map of an Anosov representation varies in a continuous way, if $\orho$ is non empty for some specific $\rho$ the same will hold for every small enough deformation of $\rho$.

\begin{ex}\label{ex orho non empty in xsypq}\item

\begin{itemize}

\item The simplest way of constructing a $\Delta$-Anosov representation $\rho$ is to use a ``Schottky construction" (see Benoist \cite{Ben3}). If one fixes beforehand a basepoint $o\in\xsypq$, it is easy to construct this representation in such a way that $o\in\orho$.

\item Suppose that $p$ and $q$ are different modulo $2$ and consider a splitting

\bc
$V=\pi^+\oplus\pi^-$
\ec

\noindent where $\pi^+$ (resp. $\pi^-$) is a $p$-dimensional (resp. $q$-dimensional) subspace of $V$. Consider a representation

\bc
$\Lambda:\tn{SL}_2(\kk)\too\tn{SL}(V): \hspace{0,3cm} \Lambda:=\Lambda^{\tn{irr}}_+\oplus\Lambda^{\tn{irr}}_-$
\ec

\noindent where $\Lambda^{\tn{irr}}_\pm:\tn{SL}_2(\kk)\too\tn{SL}(\pi^\pm)$ are irreducible. Let $\rho_0$ be given by

\bc
$\Gamma\too\tn{SL}_2(\kk)\too\g$,
\ec

\noindent where the first arrow is an Anosov representation into $\tn{SL}_2(\kk)$ and the second arrow is induced by $\Lambda$. Then $\rho_0$ is $\Delta$-Anosov. Pick a form $o\in\xsypq$ for which the splitting 

\bc
$V=\pi^+\oplus\pi^-$
\ec

\noindent is orthogonal and such that $\pi^+$ (resp. $\pi^-$) is positive definite (resp. negative definite). Then the point $o$ belongs to $\pmb{\Omega}_{\rho_0}$.

\item Let $p=2$ and $q=1$ and consider this time a Hitchin representation $\rho:\Gamma_g\too\tn{PSL}_3(\rr)$, where $\Gamma_g$ is the fundamental group of a closed orientable surface of genus $g\geq 2$ (see Labourie \cite{Lab}). Suppose that $o$ is a point in $\mathsf{Q}_{2,1}$ such that for every $x\in\partial_\infty\Gamma_g$ either

\begin{enumerate}
\item[(i)] the line $\xi_\rho^1(x)$ is negative definite for $o$,
\end{enumerate}

\noindent or
\begin{enumerate}
\item[(ii)] the hyperplane $\xi_\rho^2(x)$ is positive definite for $o$.
\end{enumerate}

\noindent In any of these situations one has $o\in\orho$. Since the \textit{projective limit set} $\xi_\rho^1(\partial_\infty\Gamma_g)$ of $\rho$ is contained in an affine chart of $\pp(V)$ (see Choi-Goldman \cite{ChoiGoldman}), it is not hard to construct forms of signature $(2,1)$ on $V$ for which either (i) or (ii) above is satisfied. 

\item The previous example generalizes to all odd dimensions as follows. Let $\rho:\Gamma_g\too\tn{PSL}_{2k+1}(\rr)$ be a Hitchin representation. As shown by Danciger-Gu\'eritaud-Kassel \cite[Proposition 1.7]{DGK2} (see also Zimmer \cite[Corollary 1.33]{Zim}), there exists a $\Gamma_g$-invariant non empty open subset $C\subset\pp(\rr^{2k+1})$ disjoint from $\xi_\rho^{2k}(\partial_\infty\Gamma_g)$. Then any element $o\in\mathsf{Q}_{2k,1}$ whose projectivized isotropic cone is contained in $C$ belongs to $\orho$.

\end{itemize}
\end{ex}

\subsection{Proof of Corollary \ref{cor bound for counting problem introduction}}\label{subsec exponential rate}

For the rest of the paper we fix a $\Delta$-Anosov representation $\rho:\Gamma\too\g$ and a $(p,q)$-Cartan decomposition of $\bog$, for a basepoint $o\in\orho$. The following is a consequence of Lemma \ref{lema Uo cerca de Utau y lo mismo con S} and Proposition \ref{prop limit with S and U and Uuno cerca gammamas}.

\begin{cor}\label{cor sigmagammainvgamma is loxod and Uo close to Utau}

There exist $0<\varepsilon_0\leq r_0$ with the following property: for every $0<\varepsilon\leq r$ such that $\varepsilon\leq\varepsilon_0$ and $r\leq r_0$, there exists a positive $L$ such that if $\gamma\in\Gamma$ satisfies $\vert\gamma\vert_\Gamma >L$ then one has that 

\bc
$\sigma^{o}(\rho\gamma^{-1})\rho\gamma$
\ec

\noindent is $(2r,2\varepsilon)$-loxodromic. In particular $\rho\gamma$ belongs to $\bog$. Furthermore, given a positive $\delta$ the number $L>0$ can be chosen in such a way that
\bc
$d(U^o(\rho\gamma),U^\tau(\rho\gamma))<\delta$ and $d(S^o(\rho\gamma),S^\tau(\rho\gamma))<\delta$.
\ec

\end{cor}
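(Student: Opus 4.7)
\medskip

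\noindent\textbf{Proof proposal.}
The plan is to reduce the corollary directly to Lemma \ref{lema Uo cerca de Utau y lo mismo con S} by exhibiting, for $\gamma\in\Gamma$ of sufficiently large word length, a fixed compact subset $C\subset\fvo$ that contains both $U^\tau(\rho\gamma)$ and $S^\tau(\rho\gamma)$, and then leveraging the linear growth of the Cartan projection guaranteed by the Anosov condition.

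\smallskip

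First, since $o\in\orho$ by hypothesis, the limit set $\xi_\rho(\bg)$ is contained in the open subset $\fvo\subset\mathsf{F}(V)$. Because $\bg$ is compact and $\xi_\rho$ is continuous, $\xi_\rho(\bg)$ is a compact subset of $\fvo$, and therefore admits a compact neighbourhood $C\subset\fvo$ (for instance an $\eta$-neighbourhood for $\eta>0$ sufficiently small). Lemma \ref{lema Uo cerca de Utau y lo mismo con S} applied to this $C$ furnishes constants $0<\varepsilon_0\leq r_0$ with the property described in its statement. Fix now $0<\varepsilon\leq r$ with $\varepsilon\leq\varepsilon_0$ and $r\leq r_0$, and let $L_0>0$ be the constant produced by Lemma \ref{lema Uo cerca de Utau y lo mismo con S} for this data.

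\smallskip

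Next, I would use Proposition \ref{prop limit with S and U and Uuno cerca gammamas} (together with the fact that the limit set is contained in $C$ and the interior of $C$ is a neighbourhood of $\xi_\rho(\bg)$) to choose $L_1>0$ so that, for every $\gamma\in\Gamma$ with $\vert\gamma\vert_\Gamma>L_1$, both $U^\tau(\rho\gamma)$ and $S^\tau(\rho\gamma)$ lie in $C$. On the other hand, the defining inequality (\ref{eq def anosov}) of a $\Delta$-Anosov representation gives
\bc
$\displaystyle\min_{\alpha\in\Delta}\alpha(a^\tau(\rho\gamma))\geq c\vert\gamma\vert_\Gamma-c'$,
\ec
so one can select $L_2>0$ such that $\vert\gamma\vert_\Gamma>L_2$ implies $\min_\alpha\alpha(a^\tau(\rho\gamma))>L_0$. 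Setting $L:=\max\{L_0,L_1,L_2\}$, every $\gamma\in\Gamma$ with $\vert\gamma\vert_\Gamma>L$ satisfies the hypotheses of Lemma \ref{lema Uo cerca de Utau y lo mismo con S}, hence $\sigma^o(\rho\gamma^{-1})\rho\gamma$ is $(2r,2\varepsilon)$-loxodromic and
\bc
$d(U^o(\rho\gamma),U^\tau(\rho\gamma))\leq\varepsilon$ \textnormal{ and } $d(S^o(\rho\gamma),S^\tau(\rho\gamma))\leq\varepsilon$.
\ec
By Remark \ref{rem bog non empty interior}, the loxodromicity of $\sigma^o(\rho\gamma^{-1})\rho\gamma$ immediately forces $\rho\gamma\in\bog$.

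\smallskip

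For the final ``furthermore'' clause, given $\delta>0$ one simply repeats the argument above with $\varepsilon:=\min\{\delta,\varepsilon_0\}$ (and any $r\geq\varepsilon$ with $r\leq r_0$) and reads off the estimate from the last display. I do not anticipate a genuine obstacle here: the content is essentially packaging Lemma \ref{lema Uo cerca de Utau y lo mismo con S} with the two standard facts recalled in Subsection \ref{subsec def anosov} (continuity of the limit map plus the linear lower bound on the Cartan projection). The only mildly subtle point is ensuring that $C$ is \emph{compact} inside $\fvo$, which is where the assumption $o\in\orho$ is used in an essential way.
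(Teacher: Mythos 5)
Your argument is exactly the one the paper intends: the corollary is stated there as an immediate consequence of Lemma \ref{lema Uo cerca de Utau y lo mismo con S} and Proposition \ref{prop limit with S and U and Uuno cerca gammamas}, and you supply the correct glue — compactness of $\xi_\rho(\bg)$ inside $\fvo$ to get the compact set $C$, Proposition \ref{prop limit with S and U and Uuno cerca gammamas} to place $U^\tau(\rho\gamma)$ and $S^\tau(\rho\gamma)$ in $C$, and inequality (\ref{eq def anosov}) to meet the gap hypothesis (noting that a lower bound on the simple roots gives one on all of $\Sigma^+(\lieg,\liea)$). The proof is correct and takes essentially the same route as the paper.
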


The \textit{entropy} of $\rho$, introduced by Bridgeman-Canary-Labourie-Sambarino in \cite{BCLS}, is defined by

\bc
$h^1_\rho:=\displaystyle\limsup_{t\too\infty}\dfrac{\log\#\left\lbrace [\gamma]\in[\Gamma]: \hspace{0,3cm}  \lambda_1(\rho\gamma)\leq t\right\rbrace}{t}$.
\ec

\noindent Here $[\gamma]$ denotes the conjugacy class of the element $\gamma\in\Gamma$ and $\lambda_1(\cdot)$ is defined as in Subsection \ref{subsec reminders cartan and jordan} (for any given Weyl chamber of the system $\Sigma(\lieg,\liea)$). The entropy of $\rho$ is positive and finite (see \cite[Sections 4 \& 5]{BCLS}).

On the other hand, the \textit{projective critical exponent} of $\rho$ is defined by

\bc
$\delta^1_\rho:=\displaystyle\limsup_{t\too\infty}\dfrac{\log\#\left\lbrace \gamma\in\Gamma: \hspace{0,3cm}  a^\tau_1(\rho\gamma)\leq t\right\rbrace}{t}$.
\ec

\noindent A consequence of Sambarino's work \cite{Sam} is that the entropy of $\rho$ coincides with the projective critical exponent\footnote{In \cite{Sam} the author treats the case in which $\Gamma$ is the fundamental group of a closed negatively curved manifold. His results remain valid when $\Gamma$ is a word hyperbolic group admitting an Anosov representation, as proven by Glorieux-Tholozan-Monclair \cite[Theorem 2.31]{GloThoMon} (see also Appendix \ref{app proof of ditribution}).}. We conclude that $\delta_\rho^1$ is positive and finite, and therefore the \textit{critical exponent}

\bc
$\delta_\rho:=\displaystyle\limsup_{t\too\infty}\dfrac{\log\#\left\lbrace \gamma\in\Gamma: \hspace{0,3cm} \Vert a^\tau(\rho\gamma)\Vert_\lieb\leq t\right\rbrace}{t}$
\ec

\noindent of $\rho$ must also be positive and finite. 

We now prove Corollary \ref{cor bound for counting problem introduction}.

\begin{cor}\label{cor counting function and critical exponent}
The following holds:

\begin{enumerate}
\item 
The function

\bc
$t\mapsto\#\left\lbrace \gamma\in\Gamma: \hspace{0,3cm} \rho\gamma\in\bog \tn{ and } \Vert b^o(\rho\gamma)\Vert_\lieb\leq t\right\rbrace$
\ec

\noindent is finite for every positive $t$. Moreover, the following equality is satisfied

\bc
$\delta_\rho=\displaystyle\limsup_{t\too\infty}\dfrac{\log\#\left\lbrace \gamma\in\Gamma: \hspace{0,3cm} \rho\gamma\in\bog \tn{ and } \Vert b^o(\rho\gamma)\Vert_\lieb\leq t\right\rbrace}{t}$.
\ec

\noindent In particular, the right hand side of the last equality is finite, positive and independent on the choice of the basepoint $o\in\orho$.

\item Suppose that $\rho$ is Zariski dense. Then there exist positive constants $\mathtt{C}_1$ and $\mathtt{C}_2$ such that for every $t$ large enough one has

\bc
$\mathtt{C}_1e^{\delta_\rho t}\leq  \#\lbrace \gamma\in\Gamma:\hspace{0,3cm} \rho\gamma\in\bog \tn{ and } \Vert b^o(\rho\gamma)\Vert_\lieb\leq t\rbrace  \leq\mathtt{C}_2e^{\delta_\rho t}$.
\ec
\end{enumerate}

\end{cor}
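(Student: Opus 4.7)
The plan is to compare the counting function for the $(p,q)$-Cartan projection $b^o$ with the corresponding one for the classical Cartan projection $a^\tau$, and then invoke the counting theorem for the latter (recalled in Appendix \ref{app proof of ditribution}). The decisive reduction tool is Proposition \ref{prop bo close to wdotcartan}: once its hypotheses hold, it supplies an element $w_\gamma \in \wb$ with
\[ \Vert b^o(\rho\gamma) - w_\gamma \cdot a^\tau(\rho\gamma)\Vert_\lieb \leq D.\]
Since $\wb$ acts on $\lieb$ by isometries of $\Vert\cdot\Vert_\lieb$, applying $w_\gamma^{-1}$ and the reverse triangle inequality yields
\[ \bigl\vert \Vert b^o(\rho\gamma)\Vert_\lieb - \Vert a^\tau(\rho\gamma)\Vert_\lieb \bigr\vert \leq D,\]
a bound which is independent of the a priori $\gamma$-dependent element $w_\gamma$.

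To verify the hypotheses of Proposition \ref{prop bo close to wdotcartan} on all but finitely many $\gamma \in \Gamma$, I would argue as follows. The limit set $\xi_\rho(\bg)$ is compact and, by the assumption $o \in \orho$, contained in $\fvo$, which by Proposition \ref{prop param of open orbits by lieb compatible flags} decomposes into finitely many open $\h^o$-orbits. For each such orbit $\fvo_{\xi_s}$ meeting $\xi_\rho(\bg)$, fix a compact neighborhood of $\xi_\rho(\bg) \cap \fvo_{\xi_s}$ inside the orbit. By Proposition \ref{prop limit with S and U and Uuno cerca gammamas} together with the Anosov growth estimate (\ref{eq def anosov}), for $\vert\gamma\vert_\Gamma$ sufficiently large both $U^\tau(\rho\gamma)$ and $S^\tau(\rho\gamma)$ lie in one of these finitely many neighborhoods, and $\min_{\alpha \in \Delta} \alpha(a^\tau(\rho\gamma))$ is arbitrarily large. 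Taking the maximum over the finitely many pairs of orbits of the constants $D$ and $L$ provided by Proposition \ref{prop bo close to wdotcartan}, and combining with Corollary \ref{cor sigmagammainvgamma is loxod and Uo close to Utau} to secure $\rho\gamma \in \bog$, produces a single pair $(D, L)$ for which the displayed comparison holds for every $\gamma$ with $\vert\gamma\vert_\Gamma > L$.

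Part (1) follows immediately. The finitely many $\gamma$ with $\vert\gamma\vert_\Gamma \leq L$ contribute boundedly to either counting function, while the inclusions
\[ \lbrace\gamma : \Vert b^o(\rho\gamma)\Vert_\lieb \leq t-D\rbrace \subset \lbrace\gamma : \Vert a^\tau(\rho\gamma)\Vert_\lieb \leq t\rbrace \subset \lbrace\gamma : \Vert b^o(\rho\gamma)\Vert_\lieb \leq t+D\rbrace \]
(valid outside this exceptional set) transfer both finiteness for every fixed $t$ (the $a^\tau$-counting function being finite since $\rho\Gamma \cdot \tau$ is discrete in $\xsyg$) and equality of exponential growth rates, whose common value is by definition $\delta_\rho$. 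Independence of the basepoint $o$ is automatic since $\delta_\rho$ is defined purely in terms of $a^\tau$. For part (2), under Zariski density, Sambarino's theorem (Appendix \ref{app proof of ditribution}) provides constants $\mathtt{C}_1', \mathtt{C}_2' > 0$ with $\mathtt{C}_1' e^{\delta_\rho t} \leq \#\lbrace\gamma : \Vert a^\tau(\rho\gamma)\Vert_\lieb \leq t\rbrace \leq \mathtt{C}_2' e^{\delta_\rho t}$ for $t$ large; the same inclusions, together with the factors $e^{\pm \delta_\rho D}$ to absorb the shift in threshold, yield the required two-sided bound with $\mathtt{C}_1 := \mathtt{C}_1' e^{-\delta_\rho D}$ and $\mathtt{C}_2 := \mathtt{C}_2' e^{\delta_\rho D}$.

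I expect the main subtlety to be the bookkeeping around the finitely many open $\h^o$-orbits meeting the limit set, since the element $w_\gamma \in \wb$ in Proposition \ref{prop bo close to wdotcartan} genuinely depends on which orbit contains $S^\tau(\rho\gamma)$. The $\wb$-invariance of $\Vert\cdot\Vert_\lieb$ is the key algebraic point that erases this $\gamma$-dependence after passing to norms; without it the reduction to the classical counting theorem would fail, and one would be forced to perform the count orbit by orbit.
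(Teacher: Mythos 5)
Your proposal is correct and follows essentially the same route as the paper: the proof given there consists precisely of combining Proposition \ref{prop bo close to wdotcartan}, Proposition \ref{prop limit with S and U and Uuno cerca gammamas} and the $\wb$-invariance of $\Vert\cdot\Vert_\lieb$ with Sambarino's counting theorem (Theorem \ref{teo counting sambarino}). Your write-up merely makes explicit the bookkeeping over the finitely many open $\h^o$-orbits meeting the limit set, which the paper leaves implicit.
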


\begin{proof}

Let $\liea^+$ be a $\lieb^+$-compatible Weyl chamber.
\begin{enumerate}
\item Follows from Propositions \ref{prop bo close to wdotcartan} and \ref{prop limit with S and U and Uuno cerca gammamas}, and $\wb$-invariance of $\Vert\cdot\Vert_\lieb$.
\item By the work of Sambarino \cite{Sam2} (see Theorem \ref{teo counting sambarino} for a proof in our setting) there exists a positive constant $\mathtt{C}$ such that 

\bc
$\mathtt{C}e^{-\delta_\rho t}  \#\lbrace \gamma\in\Gamma:\hspace{0,3cm}  \Vert a^\tau(\rho\gamma)\Vert_\lieb\leq t\rbrace\too 1$
\ec

\noindent as $t\too\infty$. Propositions \ref{prop bo close to wdotcartan} and \ref{prop limit with S and U and Uuno cerca gammamas} and $\wb$-invariance of $\Vert\cdot\Vert_\lieb$ finish the proof.
\end{enumerate}
\end{proof}

\subsection{$(p,q)$-asymptotic cone}\label{subsec asymptotic cone}

Fix a $\lieb^+$-compatible Weyl chamber $\tilde{\liea}^+$ for which the intersection 

\bc
$\xi_\rho(\bg)\cap\fvo_{\tilde{\liea}^+}$
\ec

\noindent is non empty and set

\bc
$\liea^+:=\iota_{\lieb^+}(\tilde{\liea}^+)$.
\ec

\noindent Let $a^\tau:\g\too\liea^+$ be the associated Cartan projection and denote by $\cont$ the \textit{asymptotic cone} of $\rho(\Gamma)$, introduced by Benoist in \cite{Ben4}. By definition, it is the subset of $\liea^+$ consisting on all possible limits of sequences of the form
\begin{equation}\label{eq def limit cont}
\dfrac{a^\tau(\rho\gamma_n)}{t_n}
\end{equation}
\noindent where $t_n\too\infty$. Benoist \cite{Ben4,Ben1} showed that, for Zariski dense subgroups of $\g$, the set $\cont$ coincides with the smallest closed cone containing $\lambda(\rho(\Gamma))$ (which is also showed to be convex and with non empty interior).

We define a new asymptotic cone, that we denote by $\cono$, and that consists on all possible limits of sequences of the form (\ref{eq def limit cont}) but with $a^\tau(\rho\gamma_n)$ replaced by $b^o(\rho\gamma_n)$. The main goal of this subsection is to give an explicit description of this new cone by means of Benoist's asymptotic cone: see Proposition \ref{prop limit cono} below. As a consequence of this description (c.f. Remark \ref{rem topology of the cone depends on o}), we note that the topology of $\cono$ may depend on the specific choice of the basepoint $o$. However, since $o\in\orho$ is fixed for the rest of the paper, we prefer not to stress this dependence in the notation.

Define

\bc
$\war:=\lbrace w\in\wb:\hspace{0,3cm} w\cdot\liea^+\subset \lieb^+\tn{ and } \xi_\rho(\bg)\cap\fvo_{\iota_{\lieb^+}(w\cdot\liea^+)}\neq\emptyset \rbrace$.
\ec

\begin{rem}\label{rem war equal to one iff limit set in unique open orbit}
By definition the identity element of $\wb$ belongs to $\war$. Moreover, the equality $\war=\lbrace 1\rbrace$ is equivalent to the inclusion

\bc
$\xi_\rho(\bg)\subset\fvo_{\tilde{\liea}^+}$.
\ec

\noindent In particular, if $\bg$ is connected we always have $\war=\lbrace 1\rbrace$. However, one can do a Schottky construction as in Example \ref{ex orho non empty in xsypq} to find $\Delta$-Anosov representations for which $\war$ does not consists on a single element: it suffices to play ``ping-pong" with matrices whose attractors and repellors belong to different  open orbits of the action of $\h^o$ on $\mathsf{F}(V)$ (c.f. Figure \ref{fig pingpontwopoints} below).
\end{rem}

\begin{rem}\label{rem for prop of limit cone and growth indicator}
Let $\lbrace\gamma_n\rbrace$ be a sequence in $\Gamma$ diverging to infinity and suppose that there exists a $\lieb^+$-compatible Weyl chamber $\hat{\liea}^+$ such that

\bc
$S^\tau(\rho\gamma_n)\in\fvo_{\hat{\liea}^+}$
\ec

\noindent for every $n$ large enough. Let $w\in\war$ be the element defined by the equality

\bc
$w\cdot\liea^+=\iota_{\lieb^+}(\hat{\liea}^+)$.
\ec

\noindent Then Proposition \ref{prop bo close to wdotcartan}, the definition of a $\Delta$-Anosov representation and Proposition \ref{prop limit with S and U and Uuno cerca gammamas} imply that the sequence

\bc
$\Vert b^o(\rho\gamma_n)-w\cdot a^\tau(\rho\gamma_n)\Vert_\lieb$
\ec

\noindent is bounded.
\end{rem}

\begin{prop}\label{prop limit cono}
The following equality holds:

\bc
$\cono=\displaystyle\bigcup_{w\in\war}w\cdot\cont$.
\ec
\end{prop}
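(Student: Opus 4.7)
The proof splits into two inclusions, and both hinge on Remark \ref{rem for prop of limit cone and growth indicator}: whenever $S^\tau(\rho\gamma_n)$ eventually lies in a fixed open orbit $\fvo_{\iota_{\lieb^+}(w\cdot\liea^+)}$ with $w\in\war$, the vector $b^o(\rho\gamma_n)$ equals $w\cdot a^\tau(\rho\gamma_n)$ up to a bounded error. This reduces the computation of $\cono$ to tracking which twists $w\in\wb$ can appear.

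For the inclusion $\cono \subseteq \bigcup_{w\in\war} w\cdot\cont$, let $Y=\lim b^o(\rho\gamma_n)/t_n$. By Proposition \ref{prop limit with S and U and Uuno cerca gammamas} the accumulation points of $S^\tau(\rho\gamma_n)$ lie in $\xi_\rho(\bg)\subset\fvo$ (the latter since $o\in\orho$). Passing to a subsequence, $S^\tau(\rho\gamma_n)\to\xi\in\xi_\rho(\bg)$. By Proposition \ref{prop param of open orbits by lieb compatible flags}, $\xi\in\fvo_{\hat\liea^+}$ for a unique $\lieb^+$-compatible $\hat\liea^+$, and openness of the orbit forces $S^\tau(\rho\gamma_n)\in\fvo_{\hat\liea^+}$ eventually. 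Define $w\in\wb$ by $w\cdot\liea^+=\iota_{\lieb^+}(\hat\liea^+)$; then $w\in\war$ (witnessed by $\xi$). Remark \ref{rem for prop of limit cone and growth indicator} then gives $b^o(\rho\gamma_n)=w\cdot a^\tau(\rho\gamma_n)+O(1)$, whence $a^\tau(\rho\gamma_n)/t_n\to w^{-1}\cdot Y\in\cont$ and $Y\in w\cdot\cont$.

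For the reverse inclusion, fix $w\in\war$ and $X\in\cont$ (the case $X=0$ being trivial). Since $\xi_\rho^{-1}(\fvo_{\iota_{\lieb^+}(w\cdot\liea^+)})$ is a non-empty open subset of $\bg$ and the set of repelling fixed points of infinite order elements is dense in $\bg$, pick $\eta\in\gh$ with $\xi_\rho(\eta_-)\in\fvo_{\iota_{\lieb^+}(w\cdot\liea^+)}$. The goal is to exhibit a sequence $\delta_n\to\infty$ and $t_n\to\infty$ with $a^\tau(\rho\delta_n)/t_n\to X$ and $S^\tau(\rho\delta_n)\to\xi_\rho(\eta_-)$: Remark \ref{rem for prop of limit cone and growth indicator} then yields $b^o(\rho\delta_n)/t_n\to w\cdot X\in\cono$. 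The natural construction is diagonal: approximate $X$ by Jordan projections $\lambda(\rho\mu_n)/s_n\to X$ (using Benoist's description of $\cont$ as the closed cone on $\lambda(\rho\Gamma)$), and then replace each $\mu_n$ by a conjugate $\tilde{\mu}_n:=\nu_n\mu_n\nu_n^{-1}$ chosen so that $\tilde{\mu}_{n,-}=\nu_n\cdot\mu_{n,-}$ approaches a point in $\xi_\rho^{-1}(\fvo_{\iota_{\lieb^+}(w\cdot\liea^+)})$ near $\eta_-$—possible by density of $\Gamma\cdot\mu_{n,-}$ in $\bg$, and harmless because Jordan projections are conjugation invariant. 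Setting $\delta_n:=\tilde{\mu}_n^{k_n}$ for suitably large $k_n\to\infty$ gives $a^\tau(\rho\delta_n)/(k_n s_n)\to X$ (by $a^\tau(\rho\tilde{\mu}_n^{k_n})/k_n\to\lambda(\rho\tilde{\mu}_n)$) and $S^\tau(\rho\delta_n)\to\xi_\rho(\tilde\mu_{n,-})$ in each row, and a diagonal extraction yields the desired sequence.

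The principal obstacle is this second half: one must realize \emph{every} $X\in\cont$ with the Cartan repellor landing in a prescribed $\h^o$-open orbit, and a priori the original sequence realizing $X$ as a Cartan-projection limit may have repellors in a different orbit. The route above bypasses this by passing through Jordan projections (and hence uses Zariski density of $\rho$), exploiting their conjugation invariance and the density of $\Gamma$-orbits on $\bg$ to freely prescribe the repellor's orbit while controlling the Cartan direction via powers.
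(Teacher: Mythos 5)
Your argument is correct, and while the first inclusion is essentially the paper's own proof (pass to a subsequence so that the Cartan repellors settle into a single open $\h^o$-orbit, read off $w\in\war$, and apply Remark \ref{rem for prop of limit cone and growth indicator}), your second inclusion takes a genuinely different route. The paper keeps the original sequence $\gamma_n$ realizing $X=\lim a^\tau(\rho\gamma_n)/t_n$ and corrects its repellor by right-multiplying by a single fixed element $\gamma^0$, chosen so that $(\rho\gamma^0)^{-1}\cdot\xi_\rho(y)$ lies in the target orbit for $y=\lim S^\tau(\rho\gamma_n)$; the contraction lemmas of \cite{BPS} (Lemmas A.5 and A.7) then show that $S^\tau(\rho(\gamma_n\gamma^0))$ lands in the prescribed orbit while $a^\tau$ changes only by a bounded amount, after which Remark \ref{rem for prop of limit cone and growth indicator} applies directly. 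Your construction instead rebuilds the sequence from scratch: approximate $X$ by Jordan projections, conjugate to relocate the repelling fixed point into the prescribed orbit (using minimality of $\Gamma\curvearrowright\bg$ and conjugation-invariance of $\lambda$), and take high powers so that Cartan data converge to Jordan data. This is valid, but it buys less generality: identifying $\cont$ with the closed cone on $\lambda(\rho(\Gamma))$ is Benoist's theorem for \emph{Zariski dense} subgroups, whereas Proposition \ref{prop limit cono} as stated in Subsection \ref{subsec asymptotic cone} carries no Zariski density hypothesis and the paper's proof needs none. You flag this dependence yourself; since the main theorems assume Zariski density anyway, the loss is harmless there, but if you want the proposition in its stated generality you should replace the Jordan-projection step by the paper's right-translation trick (or otherwise justify $\cont=\overline{\rr_+\lambda(\rho(\Gamma))}$ for general Anosov representations, which the paper does not provide).
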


\begin{proof}

We first prove the inclusion

\bc
$\cono\subset\displaystyle\bigcup_{w\in\war}w\cdot\cont$.
\ec

\noindent Let 

\bc
$X=\displaystyle\lim_{n\too\infty}\frac{b^o(\rho\gamma_n)}{t_n}$
\ec

\noindent be a point in $\cono$. By taking a subsequence if necessary we may assume

\bc
$S^\tau(\rho\gamma_n)\in\fvo_{\hat{\liea}^+}$,
\ec

\noindent for all $n$ and some Weyl chamber $\hat{\liea}^+\subset\lieb^+$. Take $w\in\war$ as in Remark \ref{rem for prop of limit cone and growth indicator}. Then the sequence

\bc
$\frac{1}{t_n}\Vert b^o(\rho\gamma_n)-w\cdot a^\tau(\rho\gamma_n)\Vert_{\lieb}$
\ec

\noindent converges to zero and we conclude that $X$ belongs to $w\cdot\cont$.

Conversely, let $w\in\war$ and

\bc
$X=\displaystyle\lim_{n\too\infty}\dfrac{a^\tau(\rho\gamma_n)}{t_n}$
\ec

\noindent be a point in $\cont$. Define $\hat{\liea}^+:=\iota_{\lieb^+}(w\cdot\liea^+)$, which is a $\lieb^+$-compatible Weyl chamber and, by definition of $\war$, the intersection 

\bc
$\xi(\bg)\cap\fvo_{\hat{\liea}^+}$
\ec

\noindent is non empty. By taking a subsequence if necessary we may suppose

\bc
$ S^\tau(\rho\gamma_n)\too\xi_\rho(y)$
\ec

\noindent as $n\too\infty$, for some $y\in\bg$. Since the intersection $\xi_\rho(\bg)\cap\fvo_{\hat{\liea}^+}$ is non empty we can fix an element $\gamma^0$ in $\Gamma$ such that

\bc
$(\rho\gamma^0)^{-1}\cdot\xi_\rho(y)\in\fvo_{\hat{\liea}^+}$.
\ec

\noindent Further, by Proposition \ref{prop limit with S and U and Uuno cerca gammamas} we may choose the element $\gamma^0$ in such a way that the flag $U^\tau(\rho\gamma^0)$ is transverse to $S^\tau(\rho\gamma_n)$ for all $n$ large enough. Hence we find a constant $D>0$ such that for every $j=1,\dots,d-1$ one has

\bc
$d\left(\Lambda^jU^\tau(\rho\gamma^0),\Lambda^j_*S^\tau(\rho\gamma_n)\right)\geq D$
\ec

\noindent for every $n$ large enough. Therefore the sequence

\bc
$\Vert a^\tau(\rho(\gamma_n\gamma^0))-a^\tau(\rho\gamma_n) \Vert_\lieb$
\ec

\noindent is bounded (see e.g. \cite[Lemma A.7]{BPS}) and we conclude that

\bc
$X=\displaystyle\lim_{n\too\infty}\dfrac{a^\tau(\rho\gamma_n)}{t_n}=\displaystyle\lim_{n\too\infty}\dfrac{a^\tau(\rho(\gamma_n\gamma^0))}{t_n}$.
\ec

\noindent Thanks to Remark \ref{rem for prop of limit cone and growth indicator} in order to finish it suffices to show that $S^\tau(\rho(\gamma_n\gamma^0))$ belongs to $\fvo_{\hat{\liea}^+}$ for every $n$ large enough. But applying \cite[Lemma A.5]{BPS} we have

\bc
$\displaystyle\lim_{n\too\infty} S^\tau(\rho(\gamma_n\gamma^0))=(\rho\gamma^0)^{-1}\cdot\xi_\rho(y)$
\ec

\noindent which by construction belongs to $\fvo_{\hat{\liea}^+}$.

\end{proof}

\begin{rem}\label{rem topology of the cone depends on o}
The topology of $\cono$ may depend on the choice of the basepoint $o$. Indeed, one can fix beforehand two points $o_1$ and $o_2$ in $\xsypq$ and find a Schottky representation $\rho$ such that $\xi_\rho(\bg)$ is contained in a single open orbit of the action $\h^{o_1}\curvearrowright\mathsf{F}(V)$, but in more than one open orbit for the action of $\h^{o_2}$ (see Figure \ref{fig pingpontwopoints}). Note however that, by Proposition \ref{prop limit cono}, the asymptotic cone $\cono$ is independent on the choice of basepoints $o$ for which the limit set is contained in a single open orbit of the action $\h^o\curvearrowright\mathsf{F}(V)$.
\end{rem}

\begin{figure}[h!]
\bc
\scalebox{0.7}{%
\begin{overpic}[scale=1, width=1\textwidth, tics=5]{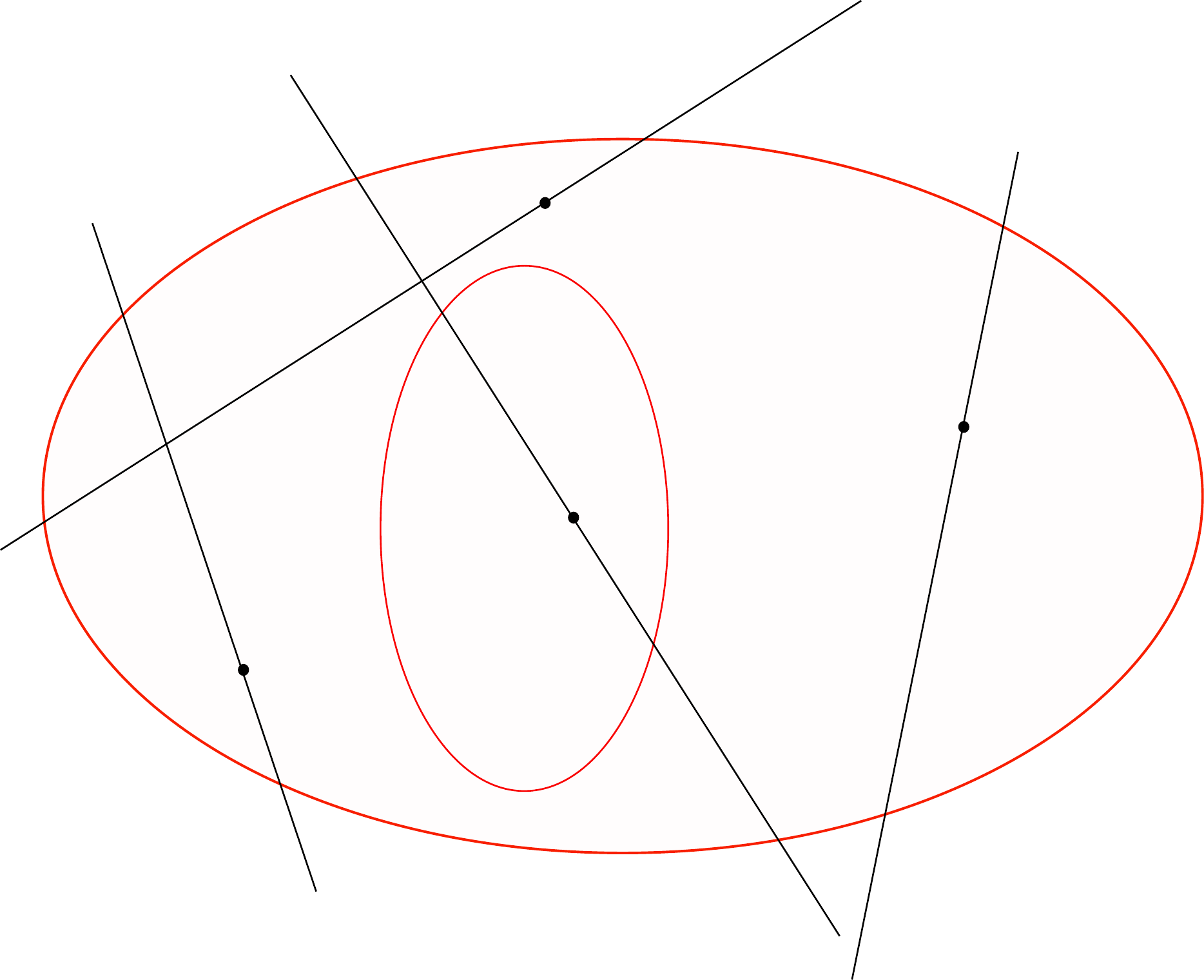}

\put (58,34) {\tc{red}{\Large$o_2$}}

\put (95,37) {\tc{red}{\Large$o_1$}}

\put (78,30) {\Large$(g_1)_+$}

\put (19,30) {\Large$(g_1)_-$}

\put (27,71) {\Large$(g_2)_+$}

\put (59,71) {\Large$(g_2)_-$}

  \end{overpic}}
 
\hspace{0,3cm}

\begin{changemargin}{3cm}{3cm}    
    \caption{An example of a free Schottky subgroup of $\g$ generated by (large enough) powers of $g_1,g_2\in\g$. The red conics represent the projectivized isotropic cones of points $o_1,o_2\in\mathsf{Q}_{2,1}$. The $(p,q)$-asymptotic cones for $o_1$ and $o_2$ have different topologies.}
  \label{fig pingpontwopoints}
\end{changemargin}

  \ec
\end{figure}

\section{Counting on a given direction}\label{sec counting}

Through this section we assume further that there exists a single open orbit of the action $\h^o\curvearrowright\mathsf{F}(V)$ that contains the limit set $\xi_\rho(\bg)$. The goal of this section is to prove Theorem \ref{TEO CONTEO DIRECCIONAL BO EN QPQ INTRODUCCION} (see Proposition \ref{prop linear counting for varphibo when torsion} below). The method we use is that of Roblin \cite{Rob} and Sambarino \cite{Sam}, therefore in Subsection \ref{subsec buseman and gromov for rho} we introduce a pair of dual H\"older cocycles over $\bg$ and a corresponding Gromov product to study our problem. In Subsection \ref{subsec PS} we introduce the corresponding Patterson-Sullivan measures and in Subsection \ref{subsec dist fixed points} we establish an equidistribution result for fixed points of elements in $\gh$, due to Sambarino \cite[Proposition 4.3]{Sam}. From this result we deduce our main theorem.

To introduce our pair of dual cocycles over $\bg$ we use the $o$-Busemann cocycle of Section \ref{sec busemann cocycles and gromov}. Recall that to define this cocycle we need to pick a $\lieb^+$-compatible Weyl chamber. Our assumption on the limit set of $\rho$ provide us with a $\lieb^+$-compatible Weyl chamber $\tilde{\liea}^+$ such that
\begin{equation}\label{eq limit set in unique open orbit}
\xi_\rho(\bg)\subset\fvo_{\tilde{\liea}^+}.
\end{equation}
\noindent We set $\liea^+:=\iota_{\lieb^+}(\tilde{\liea}^+)$ and with this Weyl chamber we will construct our H\"older cocycles over $\bg$. This is the reason why we need assumption (\ref{eq limit set in unique open orbit}) to hold.

By Remark \ref{rem war equal to one iff limit set in unique open orbit} we have 

\bc
$\war=\lbrace 1\rbrace$
\ec 

\noindent and Proposition \ref{prop limit cono} gives us the equality $\cono=\cont$.

Let $w\in\wb$ be the unique element of the Weyl group such that 

\bc
$w\cdot\liea^+=\tilde{\liea}^+$.
\ec

\noindent We have the following equalities:

\bc
$w_{\tilde{\liea}^+\iota_{\lieb^+}(\tilde{\liea}^+)}=w$ and $w_{\iota_{\lieb^+}(\tilde{\liea}^+)\liea^+}=1$.
\ec

\noindent Therefore the following is a consequence of Corollary \ref{cor cartan decomposition for Sg and Ug in single open orbit}, Proposition \ref{prop jordan versus generalized cartan} and Remark \ref{rem for prop of limit cone and growth indicator}.

\begin{cor}\label{cor if limit set in one orbit then HwexplieaplusH}

There exist positive constants $L$ and $D$ such that for every $\gamma$ in $\Gamma$ with $\vert\gamma\vert_\Gamma>L$ one has

\bc
$\rho\gamma\in\h^o w\exp\left(\frac{1}{2}\lambda(\sigma^o(\rho\gamma^{-1})\rho\gamma)\right)\h^o$
\ec

\noindent and

\bc
$\Vert b^o(\rho\gamma)- a^\tau(\rho\gamma)\Vert_\lieb\leq D$.
\ec
\end{cor}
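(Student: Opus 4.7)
The plan is to reduce the statement to three results already established earlier in the paper---Corollary~\ref{cor cartan decomposition for Sg and Ug in single open orbit}, Proposition~\ref{prop jordan versus generalized cartan} and Remark~\ref{rem for prop of limit cone and growth indicator}---by making the choice of $\lieb^+$-compatible flag $\xi_s := \xi_{\tilde{\liea}^+}$ and verifying that the Weyl group data appearing in those results collapse to the specific form asserted here.

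First I would unpack the bookkeeping attached to this choice. Since $\iota_{\lieb^+}$ is an involution permuting $\lieb^+$-compatible flags and since $\liea^+ = \iota_{\lieb^+}(\tilde{\liea}^+)$, one has $\iota_{\lieb^+}(\xi_s) = \xi_{\liea^+}$; consequently the Weyl element $w_{\xi_s\,\iota_{\lieb^+}(\xi_s)}$ appearing in Corollary~\ref{cor cartan decomposition for Sg and Ug in single open orbit} coincides with the distinguished element $w$ of the preamble, while the chamber $\iota_{\lieb^+}(\liea^+_{\xi_s})$ is exactly $\liea^+$. Next I would feed the assumption \eqref{eq limit set in unique open orbit} into a compactness argument: $\xi_\rho(\bg)$ is a compact subset of the open set $\fvo_{\tilde{\liea}^+}$, and hence sits inside some strictly larger compact set $C \subset \fvo_{\tilde{\liea}^+}$ together with a definite open neighborhood. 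By Proposition~\ref{prop limit with S and U and Uuno cerca gammamas} both $U^\tau(\rho\gamma)$ and $S^\tau(\rho\gamma)$ lie in $C$ once $|\gamma|_\Gamma$ is large enough; meanwhile the Anosov inequality \eqref{eq def anosov} forces $\min_{\alpha \in \Delta}\alpha(a^\tau(\rho\gamma))$ to exceed any prescribed threshold.

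With these hypotheses in place Corollary~\ref{cor cartan decomposition for Sg and Ug in single open orbit} applied with $\xi = \xi_s$ yields
\[
\rho\gamma \in \h^o\, w\, \exp(\tn{int}(\liea^+))\, \h^o,
\]
so in particular $\rho\gamma \in \bog$ and $b^o(\rho\gamma) \in \tn{int}(\liea^+)$. Proposition~\ref{prop jordan versus generalized cartan} applies in its special case $\iota_{\lieb^+}(\xi_s) = \xi_{\liea^+}$ to give $b^o(\rho\gamma) = \frac{1}{2}\lambda(\sigma^o(\rho\gamma^{-1})\rho\gamma)$, and substituting this into the previous decomposition produces the first assertion. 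For the norm estimate, I would invoke Remark~\ref{rem for prop of limit cone and growth indicator} with $\hat{\liea}^+ = \tilde{\liea}^+$: the Weyl element there is determined by $w'\cdot\liea^+ = \iota_{\lieb^+}(\tilde{\liea}^+) = \liea^+$, i.e.\ $w' = 1$, so the bounded quantity furnished by the remark becomes exactly $\|b^o(\rho\gamma) - a^\tau(\rho\gamma)\|_\lieb$. I do not anticipate a genuine obstacle: the entire content is checking that the Weyl data simplify under assumption \eqref{eq limit set in unique open orbit}; the only mildly delicate point is the compactness step that turns the closed inclusion $\xi_\rho(\bg) \subset \fvo_{\tilde{\liea}^+}$ into a uniform statement applicable to the approximations $U^\tau(\rho\gamma),S^\tau(\rho\gamma)$.
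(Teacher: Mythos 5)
Your argument is correct and follows the paper's own route: the paper likewise derives this corollary by specializing Corollary \ref{cor cartan decomposition for Sg and Ug in single open orbit}, Proposition \ref{prop jordan versus generalized cartan} and Remark \ref{rem for prop of limit cone and growth indicator} to $\xi_s=\xi_{\tilde{\liea}^+}$, using the identities $w_{\tilde{\liea}^+\iota_{\lieb^+}(\tilde{\liea}^+)}=w$ and $w_{\iota_{\lieb^+}(\tilde{\liea}^+)\liea^+}=1$. Your write-up merely makes explicit the compactness step (enclosing $\xi_\rho(\bg)$ in a compact $C\subset\fvo_{\tilde{\liea}^+}$ and invoking Proposition \ref{prop limit with S and U and Uuno cerca gammamas}) that the paper leaves implicit.
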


\subsection{$o$-Busemann cocycle and $o$-Gromov product for $\rho$}\label{subsec buseman and gromov for rho}

Fix a linear functional $\varphi\in\lieb^*$ in the interior of the dual cone $\cont^*$. The $\varphi$-\textit{entropy} of $\rho$ is defined by

\bc
$h_\rho^\varphi:=\displaystyle\limsup_{t\too\infty}\dfrac{\log\#\left\lbrace [\gamma]\in[\Gamma]: \hspace{0,3cm} \varphi(\lambda(\rho\gamma))\leq t\right\rbrace}{t}$.
\ec

\noindent Since the entropy $h_\rho^1=h_\rho^{\varepsilon_1}$ of $\rho$ is positive and finite and $\varphi$ is positive on $\cont$, we conclude that $h_\rho^\varphi$ must also be positive and finite.

Let

\bc

$c_o^\varphi:\Gamma\times\bg\to \rr: \hspace{0,3cm} c_o^\varphi(\gamma,x):=\varphi(\beta^o(\rho\gamma,\xi_\rho(x)))$

\ec

\noindent and

\bc

$\overline{c}_o^\varphi:\Gamma\times\bg\to \rr:\hspace{0,3cm} \overline{c}_o^\varphi(\gamma,x):=\varphi(\iota_{\liea^+}\circ\beta^o(\rho\gamma,\xi_\rho(x)))$,

\ec

\noindent where $\beta^o$ is the $o$-Busemann cocycle (associated to $\liea^+$). These are called the $(\varphi,o)$-\textit{Busemann cocycles} of $\rho$.

Recall that a \textit{H\"older cocycle} over $\bg$ is a function 

\bc
$c:\Gamma\times\bg\too\rr$
\ec

\noindent satisfying that for every $\gamma_0,\gamma_1$ in $\Gamma$ and $x$ in $\bg$ one has 

\bc
$c(\gamma_0\gamma_1,x)=c(\gamma_0,\gamma_1\cdot x)+c(\gamma_1,x)$,
\ec

\noindent and such that the map $c(\gamma_0,\cdot)$ is H\"older continuous (with the same exponent for every $\gamma_0$). The \textit{period} of an element $\gamma\in \gh$ for such a cocycle is defined by

\bc
$p_{c}(\gamma):=c(\gamma,\gamma_+)$.
\ec

\noindent This is an invariant of the conjugacy class $[\gamma]$ of $\gamma$. If $c$ has positive periods, we let 

\bc
$h_c:=\displaystyle\limsup_{t\too\infty} \dfrac{\log\#\lbrace [\gamma]\in[\Gamma]:\hspace{0,3cm} p_c(\gamma)\leq t \rbrace}{t}\in[0,\infty]$
\ec

\noindent be the \textit{entropy} of $c$. A H\"older cocycle $\overline{c}$ over $\bg$ is said to be \textit{dual} to $c$ if the equality

\bc
$p_{\overline{c}}(\gamma)=p_{c}(\gamma^{-1})$
\ec

\noindent holds for every $\gamma\in\gh$. Note that dual cocycles have the same entropy.

The following lemma holds by direct computations (c.f. Lemma \ref{lema o busemann cocycle is a cocycle}).

\begin{lema}\label{lema covarphi and overline covarphi is a pair of dual cocycles and periods}
The pair $(\overline{c}_o^\varphi, c_o^\varphi)$ is a pair of dual H\"older cocycles. The periods of $c_o^\varphi$ are given by

\bc
$p_{c_o^\varphi}(\gamma)=\varphi(\lambda(\rho\gamma))>0$
\ec

\noindent for every $\gamma\in\gh$. In particular, one has the equalities

\bc
$h_{c_o^\varphi}=h_\rho^\varphi=h_{\overline{c}_o^\varphi}$.
\ec
\end{lema}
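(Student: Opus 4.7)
The plan is to verify the four assertions (cocycle property, H\"older regularity, period formula, duality and entropies) directly from the definitions, with Lemma~\ref{lema o busemann cocycle is a cocycle} handling the cocycle axiom and a short eigenvalue computation handling the periods.

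First I will check that $c_o^\varphi$ and $\overline{c}_o^\varphi$ are well-defined H\"older cocycles over $\bg$. Because $o\in\orho$, the limit map $\xi_\rho$ is valued in $\fvo$, so every pair $(\rho\gamma_0,\xi_\rho(x))$ involved lies in $\gfo$. Applying Lemma~\ref{lema o busemann cocycle is a cocycle} with $g_1=\rho\gamma_0$, $g_2=\rho\gamma_1$, $\xi=\xi_\rho(x)$ and using equivariance of $\xi_\rho$ yields the cocycle identity for $c_o^\varphi$; post-composing with the linear map $\varphi\circ\iota_{\liea^+}$ transfers it to $\overline{c}_o^\varphi$. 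H\"older regularity in $x$ follows from smoothness of $\beta^o$ on the open set $\gfo$ together with H\"older continuity of $\xi_\rho$.

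The main step will be the period formula. For $\gamma\in\gh$ the element $\rho\gamma$ is loxodromic with attracting flag $(\rho\gamma)_+=\xi_\rho(\gamma_+)\in\fvo$. Fix $j=1,\dots,d-1$ and a non-zero $v\in\Lambda^j\xi_\rho(\gamma_+)$. Proximality of $\Lambda^j\rho\gamma$ gives a scalar $\mu_j$ with $\Lambda^j\rho\gamma\cdot v=\mu_j v$ and $|\mu_j|=e^{\chi_j(\lambda(\rho\gamma))}$. Substituting into the definition of $\beta^o$ and using $\langle\mu_j v,\mu_j v\rangle_{o_j}=|\mu_j|^2\langle v,v\rangle_{o_j}$ (which holds uniformly in the real and the Hermitian case) yields $\chi_j(\beta^o(\rho\gamma,\xi_\rho(\gamma_+)))=\log|\mu_j|=\chi_j(\lambda(\rho\gamma))$. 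Since $\{\chi_1,\dots,\chi_{d-1}\}$ is a basis of $\lieb^*$, this forces
\[
\beta^o(\rho\gamma,\xi_\rho(\gamma_+))=\lambda(\rho\gamma),
\]
and therefore $p_{c_o^\varphi}(\gamma)=\varphi(\lambda(\rho\gamma))$. Positivity is then a consequence of $\lambda(\rho\gamma)\in\cont\setminus\{0\}$ (non-zero because $\rho\gamma$ is loxodromic, in $\cont$ because $\lambda(\rho\gamma)=\lim_n a^\tau(\rho\gamma^n)/n$) combined with $\varphi\in\tn{int}(\cont^*)$.

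Duality and the entropy equalities will then fall out. I would use the standard identity $\lambda(g^{-1})=\iota_{\liea^+}(\lambda(g))$ together with the period formula to get
\[
p_{\overline{c}_o^\varphi}(\gamma)=\varphi(\iota_{\liea^+}\lambda(\rho\gamma))=\varphi(\lambda(\rho\gamma^{-1}))=p_{c_o^\varphi}(\gamma^{-1}),
\]
establishing duality. The equality $h_{c_o^\varphi}=h_\rho^\varphi$ is immediate from the definition of $h_\rho^\varphi$ and the period formula, while $h_{\overline{c}_o^\varphi}=h_{c_o^\varphi}$ follows from duality together with the bijection $[\gamma]\leftrightarrow[\gamma^{-1}]$ on conjugacy classes. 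None of the individual steps is hard; the computation of $\beta^o(\rho\gamma,\xi_\rho(\gamma_+))$ is the only one that requires any real thought, and the only mild subtlety there is the bookkeeping needed so that the eigenvalue argument goes through uniformly for Hermitian forms.
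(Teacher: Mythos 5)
Your proposal is correct and fleshes out exactly the ``direct computations'' that the paper invokes without writing down: Lemma \ref{lema o busemann cocycle is a cocycle} for the cocycle identity, and the evaluation of $\beta^o(\rho\gamma,\xi_\rho(\gamma_+))$ via the top eigenvalue of $\Lambda^j\rho\gamma$ on its attracting line $\Lambda^j\xi_\rho(\gamma_+)$, which gives $\beta^o(\rho\gamma,\xi_\rho(\gamma_+))=\lambda(\rho\gamma)$ since the $\chi_j$ span $\lieb^*$. The duality and entropy statements then follow as you indicate from $\lambda(g^{-1})=\iota_{\liea^+}(\lambda(g))$, so this is the same route as the paper's.
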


\begin{rem}\label{rem covarphi is cohomologous to ctauvarphi}

The definition of the $(\varphi,o)$-Busemann cocycles of $\rho$ takes inspiration from Sambarino's work \cite{Sam}. The author defines

\bc

$c_\tau^\varphi:\Gamma\times\bg\to \rr: \hspace{0,3cm} c_\tau^\varphi(\gamma,x):=\varphi(\beta^\tau(\rho\gamma,\xi_\rho(x)))$

\ec

\noindent and

\bc

$\overline{c}_\tau^\varphi:\Gamma\times\bg\to \rr:\hspace{0,3cm} \overline{c}_\tau^\varphi(\gamma,x):=\varphi(\iota_{\liea^+}\circ\beta^\tau(\rho\gamma,\xi_\rho(x)))$,

\ec

\noindent where $\beta^\tau$ is the $\tau$-Busemann cocycle of $\g$ (see \cite[Section 7]{Sam}). The cocycles $c_o^\varphi$ and $c_\tau^\varphi$ (resp. $\overline{c}_o^\varphi$ and $\overline{c}_\tau^\varphi$) are \textit{cohomologous} in the sense of Liv\v{s}ic \cite{Liv}. By definition this means that there exist H\"older continuous functions

\bc
$v:\bg\too\rr$ and $\overline{v}:\bg\too\rr$
\ec

\noindent such that for every $\gamma$ in $\Gamma$ and $x$ in $\bg$ one has

\bc
$c_o^\varphi(\gamma,x)-c_\tau^\varphi(\gamma,x)=v(\gamma\cdot x)-v(x)$
\ec

\noindent and 

\bc
$\overline{c}_o^\varphi(\gamma,x)-\overline{c}_\tau^\varphi(\gamma,x)=\overline{v}(\gamma\cdot x)-\overline{v}(x)$.
\ec

\noindent Indeed, this follows directly from Remark \ref{rem o Busemann cohomologous to tau Busemann}.
\end{rem}

Define

\bc
$[\cdot,\cdot]_o^\varphi:\bgc\too\rr: \hspace{0,3cm} [x,y]_o^\varphi:=\varphi(\gro_o(\xi_\rho(x),\xi_\rho(y)))$.
\ec

\noindent The following is a consequence of Lemma \ref{lema gromov product is a gromov prod for betao}.

\begin{cor}\label{cor gromov o is gromov for co and overline co}
The map $[\cdot,\cdot]_o^\varphi$ is a Gromov product for the pair $(\overline{c}_o^\varphi,c_o^\varphi)$, that is, for every $\gamma\in\Gamma$ and every $(x,y)\in\bgc$ one has

\bc
$[\gamma\cdot x,\gamma\cdot y]_o^\varphi-[x,y]_o^\varphi=-(\overline{c}_o^\varphi(\gamma,x)+c_o^\varphi(\gamma,y))$.
\ec
\end{cor}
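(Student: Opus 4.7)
The plan is to derive this corollary as a direct consequence of Lemma \ref{lema gromov product is a gromov prod for betao} applied pointwise along the limit map, combined with the $\Gamma$-equivariance of $\xi_\rho$ and linearity of $\varphi$. First I would verify that the hypotheses of Lemma \ref{lema gromov product is a gromov prod for betao} are satisfied for $g:=\rho\gamma$, $\xi:=\xi_\rho(x)$, $\xi':=\xi_\rho(y)$. Transversality $(\xi_\rho(x),\xi_\rho(y))\in\fvc$ holds by the transversality property (\ref{eq transv condition}) of the Anosov limit map. The $o$-genericity of both flags follows from the standing assumption (\ref{eq limit set in unique open orbit}) that $\xi_\rho(\bg)\subset\fvo_{\tilde{\liea}^+}\subset\fvo$, so $(\xi_\rho(x),\xi_\rho(y))\in\fvoc$.

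Next I would check that $(\rho\gamma,\xi_\rho(x))$ and $(\rho\gamma,\xi_\rho(y))$ belong to $\gfo$. By $\Gamma$-equivariance of the limit map one has
\[
\rho\gamma\cdot\xi_\rho(x)=\xi_\rho(\gamma\cdot x)\qquad\text{and}\qquad \rho\gamma\cdot\xi_\rho(y)=\xi_\rho(\gamma\cdot y),
\]
and both right-hand sides again lie in $\fvo$ by (\ref{eq limit set in unique open orbit}). Thus both pairs indeed belong to $\gfo$, and Lemma \ref{lema gromov product is a gromov prod for betao} applies.

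Applying that lemma yields
\[
\gro_o(\xi_\rho(\gamma\cdot x),\xi_\rho(\gamma\cdot y))-\gro_o(\xi_\rho(x),\xi_\rho(y))=-\bigl(\iota_{\liea^+}\circ\beta^o(\rho\gamma,\xi_\rho(x))+\beta^o(\rho\gamma,\xi_\rho(y))\bigr).
\]
Composing with the linear functional $\varphi\in\lieb^*$ on both sides and unwinding the definitions of $[\cdot,\cdot]_o^\varphi$, $c_o^\varphi$ and $\overline{c}_o^\varphi$ gives exactly
\[
[\gamma\cdot x,\gamma\cdot y]_o^\varphi-[x,y]_o^\varphi=-\bigl(\overline{c}_o^\varphi(\gamma,x)+c_o^\varphi(\gamma,y)\bigr),
\]
which is the asserted cocycle identity. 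There is no real obstacle here; the only thing to be careful about is that the hypothesis (\ref{eq limit set in unique open orbit}) is genuinely needed to ensure that the pair of flags lands in $\fvoc$ and that translates by $\rho\gamma$ remain $o$-generic so that $\beta^o$ and $\gro_o$ are all defined at the relevant points.
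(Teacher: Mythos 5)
Your argument is correct and is exactly the paper's proof: the corollary is stated there as an immediate consequence of Lemma \ref{lema gromov product is a gromov prod for betao}, applied to $g=\rho\gamma$, $\xi=\xi_\rho(x)$, $\xi'=\xi_\rho(y)$ and then composed with $\varphi$. Your extra verification that transversality (\ref{eq transv condition}) and the standing assumption (\ref{eq limit set in unique open orbit}) put the relevant pairs in $\fvoc$ and $\gfo$ is precisely the (implicit) hypothesis check the paper relies on.
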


We now state a crucial result that allows us to compare $\varphi(b^o(\rho\gamma))$ with the period $\varphi(\lambda(\rho\gamma))$ by means of the Gromov product defined above. This is the analogue of C. \cite[Lemma 6.6(4)]{CarHpq} in the present framework.

\begin{cor}\label{cor gromov is cross ratio for rho and estimate with bogamma}

Fix a positive $\delta$ and $A$ and $B$ two disjoint compact subsets of $\bg$. Then there exists a positive $L$ such that for every $\gamma\in\gh$ satisfying $\vert\gamma\vert_\Gamma >L$ and $(\gamma_-,\gamma_+)\in A\times B$ one has

\bc
$\vert\varphi( b^o(\rho\gamma))-\varphi(\lambda(\rho\gamma))+[\gamma_-,\gamma_+]_o^\varphi\vert < \delta$.
\ec
\end{cor}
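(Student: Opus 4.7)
\medskip

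\textbf{Proof proposal.} The plan is to reduce the claim to a multiplicative estimate for the Jordan projection of a product of two loxodromic elements, which will produce precisely the vector valued cross-ratio that appears in the $o$-Gromov product via Corollary \ref{cor gromov is cross ratio}.

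First I would invoke Corollary \ref{cor if limit set in one orbit then HwexplieaplusH}: choosing $L$ large enough we may assume the exact equality
\begin{equation*}
b^o(\rho\gamma)=\tfrac{1}{2}\lambda\!\left(\sigma^o((\rho\gamma)^{-1})\rho\gamma\right),
\end{equation*}
so after applying $\varphi$ the question becomes comparing $\tfrac{1}{2}\varphi(\lambda(\sigma^o((\rho\gamma)^{-1})\rho\gamma))$ with $\varphi(\lambda(\rho\gamma))-[\gamma_-,\gamma_+]_o^\varphi$. By Corollary \ref{cor sigmaginver has gaps/hyperbolic and attractrep}, $\sigma^o((\rho\gamma)^{-1})$ is loxodromic with the same Jordan projection as $\rho\gamma$, with attractor $\xi_\rho(\gamma_-)^{\perp_o}$ and repellor $\xi_\rho(\gamma_+)^{\perp_o}$, while $\rho\gamma$ has attractor $\xi_\rho(\gamma_+)$ and repellor $\xi_\rho(\gamma_-)$.

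Next I would verify the transversality/proximality inputs needed for a product formula. Since $A$ and $B$ are disjoint compact subsets of $\bg$, the map $(x,y)\mapsto(\xi_\rho(x),\xi_\rho(y))$ sends $A\times B$ to a compact subset of $\mathsf{F}(V)^{(2)}$; because $o\in\orho$, Remark \ref{rem ordered basis induced by o generic and double perp and xi o generic iff gxi go generic} guarantees that in addition the relevant attractors and repellors of $\sigma^o((\rho\gamma)^{-1})$ stay uniformly transverse to those of $\rho\gamma$. Combined with Lemma \ref{lema sambarino lemma 5.7} and Corollary \ref{cor sigmagammainvgamma is loxod and Uo close to Utau}, this ensures that for any prescribed $(r,\varepsilon)$ we may take $L$ so large that both $\rho\gamma$ and $\sigma^o((\rho\gamma)^{-1})$ are $(r,\varepsilon)$-loxodromic and that the four flags
\begin{equation*}
\xi_\rho(\gamma_+)^{\perp_o},\quad \xi_\rho(\gamma_-)^{\perp_o},\quad \xi_\rho(\gamma_-),\quad \xi_\rho(\gamma_+)
\end{equation*}
stay in pairwise general position, uniformly in $\gamma$ satisfying the hypotheses.

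Under these quantitative conditions, I would apply the product formula of Benoist \cite[Lemme 2.4 and Corollaire 2.5]{Ben1} (in the form used throughout Sambarino \cite{Sam}): for $g_1,g_2\in\g$ both $(r,\varepsilon)$-loxodromic and with $((g_1)_-,(g_1)_+,(g_2)_-,(g_2)_+)$ in general position with uniform transversality, one has
\begin{equation*}
\bigl\|\lambda(g_1g_2)-\lambda(g_1)-\lambda(g_2)-\bb\!\bigl((g_1)_-,(g_1)_+,(g_2)_-,(g_2)_+\bigr)\bigr\|_{\lieb}\leq\varepsilon'(\varepsilon),
\end{equation*}
where $\varepsilon'(\varepsilon)\to 0$ as $\varepsilon\to 0$. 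Specializing to $g_1=\sigma^o((\rho\gamma)^{-1})$ and $g_2=\rho\gamma$, using $\lambda(g_1)=\lambda(g_2)=\lambda(\rho\gamma)$, and then dividing by $2$, I would obtain
\begin{equation*}
\bigl\|b^o(\rho\gamma)-\lambda(\rho\gamma)-\tfrac{1}{2}\bb\bigl(\xi_\rho(\gamma_+)^{\perp_o},\xi_\rho(\gamma_-)^{\perp_o},\xi_\rho(\gamma_-),\xi_\rho(\gamma_+)\bigr)\bigr\|_{\lieb}\leq \tfrac{1}{2}\varepsilon'(\varepsilon).
\end{equation*}

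Finally, Corollary \ref{cor gromov is cross ratio} identifies
\begin{equation*}
\tfrac{1}{2}\bb\bigl(\xi_\rho(\gamma_+)^{\perp_o},\xi_\rho(\gamma_-)^{\perp_o},\xi_\rho(\gamma_-),\xi_\rho(\gamma_+)\bigr)=-\gro_o(\xi_\rho(\gamma_-),\xi_\rho(\gamma_+)),
\end{equation*}
so applying $\varphi$ and using the definition of $[\gamma_-,\gamma_+]_o^\varphi$ gives
\begin{equation*}
\bigl|\varphi(b^o(\rho\gamma))-\varphi(\lambda(\rho\gamma))+[\gamma_-,\gamma_+]_o^\varphi\bigr|\leq \tfrac{1}{2}\|\varphi\|\,\varepsilon'(\varepsilon),
\end{equation*}
which is smaller than $\delta$ once $\varepsilon$, and hence $L$, are chosen appropriately. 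The main obstacle I anticipate is the bookkeeping in Step 3: one must match the ordering of the four flags in Benoist's formula with the ordering used in the definition of $\bb$ and in Corollary \ref{cor gromov is cross ratio}, including the sign conventions coming from the opposition involution $\iota_{\liea^+}$ and from the choice of the $\lieb^+$-compatible Weyl chamber $\liea^+$ singled out by assumption (\ref{eq limit set in unique open orbit}); this is a purely combinatorial check but easy to get wrong.
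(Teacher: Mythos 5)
Your proposal is correct and follows essentially the same route as the paper's proof: reduce to the exact identity $b^o(\rho\gamma)=\frac{1}{2}\lambda(\sigma^o(\rho\gamma^{-1})\rho\gamma)$ via Corollary \ref{cor if limit set in one orbit then HwexplieaplusH}, secure uniform $(r,\varepsilon)$-loxodromality and pairwise transversality of the four flags from Lemma \ref{lema sambarino lemma 5.7}, Corollary \ref{cor sigmaginver has gaps/hyperbolic and attractrep} and the compactness of $A\times B$, and then combine Benoist's product estimate for the Jordan projection with Corollary \ref{cor gromov is cross ratio}; your ordering of the four flags in the cross-ratio agrees with the paper's. The only slip is the reference: the estimate you need is \cite[Lemme 3.4]{Ben1}, the one the paper actually invokes, rather than Lemme 2.4/Corollaire 2.5.
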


\begin{proof}

From Corollaries \ref{cor sigmaginver has gaps/hyperbolic and attractrep} and \ref{cor gromov is cross ratio} we know that 

\bc
$[\gamma_-,\gamma_+]_o^\varphi=-\frac{1}{2}\varphi(\bb(\sigma^o(\rho\gamma^{-1})_-,\sigma^o(\rho\gamma^{-1})_+,(\rho\gamma)_-,(\rho\gamma)_+))$
\ec

\noindent holds for every $\gamma\in\gh$. Because of Corollary \ref{cor if limit set in one orbit then HwexplieaplusH} we can suppose that

\bc
$b^o(\rho\gamma)=\frac{1}{2}\lambda(\sigma^o(\rho\gamma^{-1})\rho\gamma)$
\ec

\noindent holds as well. To finish the proof we apply Benoist \cite[Lemme 3.4]{Ben1}. Indeed, by transversality condition (\ref{eq transv condition}) we can find a positive $r$ for which for every $\gamma$ as in the statement and every $j=1,\dots,d-1$ one has:

\bc
$d(\Lambda^j(\rho\gamma)_+,\Lambda^j_*(\rho\gamma)_-)\geq 2r$.
\ec

\noindent Given a positive $\varepsilon\leq r$, Lemma \ref{lema sambarino lemma 5.7} states that we can assume further that $\rho\gamma$ is $(r,\varepsilon)$-loxodromic and, because of Corollary \ref{cor sigmaginver has gaps/hyperbolic and attractrep}, we can suppose that analogue assertions hold for $\sigma^o(\rho\gamma^{-1})$. Moreover, by changing $r$ by a smaller constant if necessary we have

\bc
$d(\Lambda^j\sigma^o(\rho\gamma^{-1})_+,\Lambda^j_*(\rho\gamma)_-)\geq 6r$ and $d(\Lambda^j(\rho\gamma)_+,\Lambda^j_*\sigma^o(\rho\gamma^{-1})_-)\geq 6r$
\ec

\noindent for every $j=1,\dots,d-1$. By \cite[Lemme 3.4]{Ben1} the proof is finished.
\end{proof}

\subsection{Patterson-Sullivan measures}\label{subsec PS}

Let $c$ be a H\"older cocycle over $\bg$ and $\delta$ be a positive number. A probability measure $\mu_c$ on $\bg$ is called a \textit{Patterson-Sullivan measure of dimension $\delta$} for $c$ if the equality\footnote{Recall that if $f:X\too Y$ is a map and $m$ is a measure on $X$ then $f_*m$ denotes the measure on $Y$ defined by $A\mapsto m(f^{-1}(A))$.}
\begin{equation}\label{eq PS}
\dfrac{d\gamma_*\mu_c}{d\mu_c}(x)=e^{-\delta c(\gamma^{-1},x)}
\end{equation}
\noindent is satisfied for every $\gamma\in\Gamma$.

The $\varphi$-\textit{critical exponent} of $\rho$ is defined by the equality

\bc
$\delta_\rho^\varphi:=\displaystyle\limsup_{t\too\infty} \dfrac{\log\#\lbrace \gamma\in\Gamma:\hspace{0,3cm} \varphi(a^\tau(\rho\gamma))\leq t \rbrace}{t}$.
\ec

\noindent It is positive and finite (because $\delta_\rho^1$ is).

Assume from now on that $\rho$ is Zariski dense. Quint \cite[Th\'eor\`eme 8.4]{Qui2} shows the existence of Patterson-Sullivan measures $\mu_\tau^\varphi$ and $\overline{\mu}_\tau^\varphi$ of dimension $\delta_\rho^\varphi$ for the cocycles $c_\tau^\varphi$ and $\overline{c}_\tau^\varphi$ respectively\footnote{Indeed, since $\varphi$ is positive in the limit cone $\cont$, the linear form $\delta_\rho^\varphi \varphi$ is tangent to the growth indicator of $\rho$ in a direction contained in $\cont\subset\tn{int}(\liea^+)$.}. Because of Remark \ref{rem covarphi is cohomologous to ctauvarphi}, we find Patterson-Sullivan measures $\mu_o^\varphi$ and $\overline{\mu}_o^\varphi$ of the same dimension for the cocycles $c_o^\varphi$ and $\overline{c}_o^\varphi$ respectively. We mention here that, in the case of fundamental groups of negatively curved closed manifolds, the existence (and uniqueness) of these probability measures is also shown by Ledrappier \cite{Led}.

The following lemma is well-known and will be used in Subsection \ref{subsec distrib pq cartan attractors}. We include a proof for completeness.

\begin{lema}\label{lema PS no atoms}
The probability measures $\mu_o^\varphi$ and $\overline{\mu}_o^\varphi$ have no atoms. 
\end{lema}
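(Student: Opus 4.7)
The proofs for $\mu_o^\varphi$ and $\overline{\mu}_o^\varphi$ are entirely analogous, so I focus on $\mu_o^\varphi$. The plan is to use the cohomology of cocycles recorded in Remark \ref{rem covarphi is cohomologous to ctauvarphi} to reduce the statement to the well-known non-atomicity of the classical Patterson-Sullivan measure $\mu_\tau^\varphi$ associated to the $\tau$-Busemann cocycle of a Zariski dense Anosov representation.

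Set $v:=\varphi\circ V_{o\tau}\circ \xi_\rho:\bg\too\rr$. By Remark \ref{rem covarphi is cohomologous to ctauvarphi} one has $c_o^\varphi(\gamma,x)-c_\tau^\varphi(\gamma,x)=v(\gamma\cdot x)-v(x)$ for every $\gamma\in\Gamma$ and $x\in\bg$, and $v$ is continuous since both $V_{o\tau}$ and $\xi_\rho$ are. A direct computation with the transformation rule (\ref{eq PS}) shows that if $\mu_\tau^\varphi$ is a Patterson-Sullivan measure of dimension $\delta_\rho^\varphi$ for $c_\tau^\varphi$, then (after normalization) $e^{-\delta_\rho^\varphi v}\mu_\tau^\varphi$ is a Patterson-Sullivan measure of the same dimension for $c_o^\varphi$; this is precisely how the measure $\mu_o^\varphi$ is obtained from Quint's measure in the text. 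Since $\bg$ is compact and $v$ is continuous, the density $e^{-\delta_\rho^\varphi v}$ is bounded above and below by positive constants, so $\mu_o^\varphi$ and $\mu_\tau^\varphi$ are mutually absolutely continuous; in particular they have exactly the same atoms.

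It remains to verify that $\mu_\tau^\varphi$ has no atoms, which is classical for Zariski dense representations and functionals $\varphi$ interior to the dual limit cone (see \cite{Qui2,Sam}). The simplest case, which I would isolate first, is $x=\gamma_+$ for some $\gamma\in\gh$: applying (\ref{eq PS}) with this $\gamma$ at the fixed point $x=\gamma\cdot x$ and using the identity $c_\tau^\varphi(\gamma,\gamma_+)=\varphi(\lambda(\rho\gamma))>0$ (positive because $\varphi\in\tn{int}(\cont^*)$ and $\lambda(\rho\gamma)\in\cont$) forces the atomic mass to vanish. For a general $x$, the route is to combine the quasi-invariance (\ref{eq PS}) with ergodicity of the $\Gamma$-action on $(\bg,\mu_\tau^\varphi)$ (a by-product of the mixing of the reparametrized geodesic flow discussed in Appendix \ref{app proof of ditribution}) together with divergence of the Poincar\'e series of $\rho$ at the critical exponent $\delta_\rho^\varphi$: an atomic part of $\mu_\tau^\varphi$ would, by ergodicity, be concentrated on a single $\Gamma$-orbit $\Gamma\cdot x$, and the total mass of this orbit is, up to a multiplicative constant, the Poincar\'e series $\sum_{\gamma}e^{-\delta_\rho^\varphi c_\tau^\varphi(\gamma,x)}$, which diverges at $\delta_\rho^\varphi$ and thus cannot equal $1$.

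The main technical obstacle is precisely this last step, since the ``easy'' fixed-point argument only kills atoms at the countable dense set $\lbrace\gamma_+:\gamma\in\gh\rbrace$; the general case genuinely requires the ergodicity/divergence inputs supplied by Sambarino's framework recalled in Appendix \ref{app proof of ditribution}, which I would cite rather than reprove here.
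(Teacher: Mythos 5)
Your first step---reducing to non-atomicity of $\mu_\tau^\varphi$ and $\overline{\mu}_\tau^\varphi$ via Remark \ref{rem covarphi is cohomologous to ctauvarphi} (a continuous, hence bounded, Radon--Nikodym derivative on the compact boundary)---is exactly how the paper's proof begins. From there, however, you follow the classical Patterson--Sullivan route (quasi-invariance at fixed points of $\gamma\in\gh$, then ergodicity plus divergence of a Poincar\'e series), whereas the paper argues directly: writing $\nu={\xi_\rho}_*\mu_\tau^\varphi$, it shows that for every $\varepsilon>0$ the limit set can be covered by open sets of the form $\rho\gamma\cdot B_\delta(S^\tau(\rho\gamma))$, each of $\nu$-measure at most $\varepsilon$. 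The measure bound is the shadow-type estimate of Pozzetti--Sambarino--Wienhard, and the covering property comes from the uniform transversality estimates of Bochi--Potrie--Sambarino. This bypasses ergodicity and divergence-type considerations entirely.

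The difficulty with your version is that the two inputs you defer to Appendix \ref{app proof of ditribution} are not actually there. The appendix establishes ergodicity (and mixing) of the Bowen--Margulis measure of the translation flow, not ergodicity of the $\Gamma$-action on $(\bg,\mu_\tau^\varphi)$; passing from one to the other is standard but is an additional step you would have to supply (and note also that pure atomicity need not concentrate the measure on a \emph{single} orbit, only on a countable invariant union of orbits, at least one of which has positive mass). More seriously, the divergence of $\sum_{\gamma}e^{-\delta_\rho^\varphi c_\tau^\varphi(\gamma,x)}$ at a given $x$ is established nowhere in the paper, and it is not a formal consequence of divergence type for the Cartan projection: for a general $\varphi\in\tn{int}(\cont^*)$, which need not be a nonnegative combination of the fundamental weights $\chi_j$, one does not have $\varphi(\beta^\tau(g,\xi))\leq\varphi(a^\tau(g))+C$ uniformly in $\xi$, so the Busemann series cannot simply be bounded below by the Cartan one. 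The usual way to fill this is a higher-rank shadow lemma (Quint), which is precisely the estimate the paper replaces by the covering bound above. So your outline is a legitimate alternative strategy, well documented in the literature, but as written it has a genuine gap at the divergence step.
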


\begin{proof}

It suffices to show that $\mu_\tau^\varphi$ and $\overline{\mu}_\tau^\varphi$ have no atoms. We only do it for $\mu_\tau^\varphi$ (the other case being analogous). Let $\nu:={\xi_\rho}_*\mu_\tau^\varphi$ and $\varepsilon>0$. We will find a covering of $\xi_\rho(\bg)$ by open sets of $\nu$-measure less than or equal to $\varepsilon$.

By Pozzetti-Sambarino-Wienhard \cite[Lemma 5.15]{PozSamWieLipschtiz}, there exists a positive $\delta_0$ such that for every $0<\delta<\delta_0$ there exists $L=L(\delta)>0$ satisfying that for every $\gamma\in\Gamma$ with $\vert\gamma\vert_\Gamma>L$ one has

\bc
$\nu(\rho\gamma\cdot B_\delta(S^\tau(\rho\gamma)))\leq\varepsilon$.
\ec

\noindent Here we denote

\bc
$B_\delta(S^\tau(\rho\gamma)):=\lbrace\xi\in\mathsf{F}(V):\hspace{0,3cm} \Lambda^j\xi\in B_\delta(\Lambda^j_*S^\tau(\rho\gamma)) \tn{ for all } j=1,\dots,d-1\rbrace$.
\ec

\noindent We now follow the outline of \cite[Proposition 3.5]{PozSamWieLipschtiz} to show that $\delta$ can be chosen in such a way that

\bc
$\lbrace \rho\gamma\cdot B_\delta(S^\tau(\rho\gamma)): \hspace{0,3cm} \vert\gamma\vert_\Gamma>L \rbrace$
\ec

\noindent is a covering of $\xi_\rho(\bg)$.

By Bochi-Potrie-Sambarino \cite[Lemma 2.5]{BPS}, there exist positive constants $\eta_\rho$ and $L_\rho$ such that for every geodesic segment $(\gamma_i)_{i=0}^k$ in $\Gamma$ passing through the identity element of $\Gamma$ and such that $\vert \gamma_0\vert_\Gamma>L_\rho$ and $\vert \gamma_k\vert_\Gamma>L_\rho$ one has

\bc
$d\left(\Lambda^jU^\tau(\rho\gamma_k),\Lambda^j_*U^\tau(\rho\gamma_0)\right)\geq\eta_\rho$
\ec

\noindent for every $j=1,\dots,d-1$. 

Let $\delta<\eta_\rho$, we may take $L=L(\delta)$ to be larger than $L_\rho$. Fix $y\in\bg$ and a geodesic ray $(\gamma_i)_{i=0}^\infty$ in $\Gamma$ starting at the identity element of $\Gamma$ and converging to $y$. Let also $i_0\geq 0$ be chosen in such a way that if $\gamma:=\gamma_{i_0}$ then one has 

\bc
$\vert \gamma^{-1}\vert_\Gamma>L$.
\ec

\noindent Applying \cite[Lemma 2.5]{BPS} to the geodesic ray $\gamma^{-1}\cdot(\gamma_i)_{i=0}^\infty$ we find that for every $k$ large enough one has

\bc
$d\left(\Lambda^jU^\tau(\rho(\gamma^{-1}\gamma_k)),\Lambda^j_*U^\tau(\rho\gamma^{-1})\right)\geq\delta$
\ec

\noindent for every $j=1,\dots,d-1$. By \cite[Lemma 4.7]{BPS} we have

\bc
$U^\tau(\rho(\gamma^{-1}\gamma_k))\too\rho\gamma^{-1}\cdot\xi_\rho(y)$
\ec

\noindent as $k\too\infty$. Therefore up to changing $\delta$ by a smaller constant if necessary we may assume that

\bc
$d\left(\Lambda^j(\rho\gamma^{-1}\cdot\xi_\rho(y)),\Lambda^j_*S^\tau(\rho\gamma)\right)\geq\delta$
\ec

\noindent holds for every $j=1,\dots,d-1$. Hence $\rho\gamma^{-1}\cdot\xi_\rho(y)\in B_{\delta}(S^\tau(\rho\gamma))$.

\end{proof}

\subsection{Distribution of fixed points}\label{subsec dist fixed points}

For a metric space $X$, we denote by $C_c^*(X)$ the dual of the space of compactly supported continuous functions on $X$ equipped with the weak-star topology. For a point $x\in X$, we let $\delta_x\in C_c^*(X)$ be the Dirac mass at $x$.

Denote by $\bgc$ the space of ordered pairs of distinct points in $\bg$. When $\Gamma$ is the fundamental group of a closed negatively curved manifold, Sambarino \cite{Sam} shows the following (for a proof in our setting see Appendix \ref{app proof of ditribution}).

\begin{prop}[c.f. {\cite[Proposition 4.3 \& Theorem C]{Sam}}]\label{prop distribution of periodic orbits for UovarphiGamma}

The number $\delta_\rho^\varphi$ coincides with the $\varphi$-entropy $h_\rho^\varphi$ of $\rho$ and there exists a positive constant $\mathtt{m} =\mathtt{m}_{\rho,o,\varphi}$ such that

\bc
$\mathtt{m} e^{-h_\rho^\varphi t}\displaystyle\sum_{\gamma\in\gh, \varphi(\lambda(\rho\gamma))\leq t} \delta_{\gamma_-}\otimes\delta_{\gamma_+}\too e^{-h_\rho^\varphi[\cdot,\cdot]_o^\varphi}\overline{\mu}_o^\varphi\otimes\mu_o^\varphi$
\ec

\noindent as $t\too\infty$ on $C_c^*(\bgc)$.
\end{prop}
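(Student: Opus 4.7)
The plan is to adapt Sambarino's approach \cite{Sam,Sam2} (itself a translation of Roblin's strategy \cite{Rob} to higher-rank Anosov representations) from the manifold-group setting to general word hyperbolic groups; this adaptation is carried out in Appendix \ref{app proof of ditribution}. Since $c_o^\varphi$ is Liv\v{s}ic-cohomologous to $c_\tau^\varphi$ (Remark \ref{rem covarphi is cohomologous to ctauvarphi}), both cocycles induce H\"older-conjugate reparametrizations $\phi_t^\varphi$ of the Gromov geodesic flow of $\Gamma$ constructed in \cite{BCLS}, so one may work interchangeably with either. The flow $\phi_t^\varphi$ is a topologically mixing metric Anosov flow whose topological entropy equals $h_{c_o^\varphi}=h_\rho^\varphi$ (Lemma \ref{lema covarphi and overline covarphi is a pair of dual cocycles and periods}).

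The equality $\delta_\rho^\varphi=h_\rho^\varphi$ will be obtained by identifying both quantities with the topological entropy of $\phi_t^\varphi$: the identity $h_\rho^\varphi = h_{\mathrm{top}}(\phi_t^\varphi)$ follows from the fact that periods of $\phi_t^\varphi$ are exactly the values $\varphi(\lambda(\rho\gamma))$, while $\delta_\rho^\varphi = h_{\mathrm{top}}(\phi_t^\varphi)$ is obtained by a ping-pong argument (using Zariski density of $\rho$ and Proposition \ref{prop limit with S and U and Uuno cerca gammamas}) that, for each $\gamma\in\Gamma$ of large word length, produces a bounded-length word $\eta$ such that $\rho(\gamma\eta)\in\gh$ is loxodromic with $\varphi(\lambda(\rho(\gamma\eta)))\geq\varphi(a^\tau(\rho\gamma))-O(1)$. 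This is the strategy of \cite[Proposition 4.9]{Sam}, extended to the present generality as in \cite[Theorem 2.31]{GloThoMon}.

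For the equidistribution assertion we invoke the Bowen-Margulis-Roblin mechanism on the Hopf parametrization of the flow space. The cocycle identity of Corollary \ref{cor gromov o is gromov for co and overline co} combined with the Patterson-Sullivan transformation rule (\ref{eq PS}) imply that
\[
\widetilde m^\varphi := e^{-h_\rho^\varphi[\cdot,\cdot]_o^\varphi}\,\overline{\mu}_o^\varphi\otimes\mu_o^\varphi\otimes ds
\]
on $\bgc\times\rr$ is both $\Gamma$-invariant and $\phi_t^\varphi$-invariant, and therefore descends to an invariant measure $m^\varphi$ on $\Gamma\backslash(\bgc\times\rr)$. Finiteness of $m^\varphi$ and its identification (up to scalar) with the measure of maximal entropy of $\phi_t^\varphi$ are obtained as in \cite[Section 3]{Sam2}, using Lemma \ref{lema PS no atoms} and the absence of cusps for Anosov subgroups. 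The topological mixing of $\phi_t^\varphi$ together with the Markov coding provided by \cite{BCLS} then gives Bowen-Margulis equidistribution of closed orbits with respect to $m^\varphi$; unwinding the quotient back to $\bgc\times\rr$ and projecting along $(x,y,s)\mapsto(x,y)$ converts the Lebesgue measure on each closed orbit (indexed by a conjugacy class $[\gamma]$ with length $\varphi(\lambda(\rho\gamma))$) into the sum of the Dirac masses $\delta_{\gamma'_-}\otimes\delta_{\gamma'_+}$ over the elements $\gamma'\in[\gamma]$, producing the displayed asymptotic with $\mathtt m=h_\rho^\varphi/|m^\varphi|$.

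The principal obstacle lies in verifying the ``metric Anosov $+$ topological mixing $+$ Markov coding'' package for $\phi_t^\varphi$ when $\Gamma$ is a general word hyperbolic group admitting an Anosov representation (rather than the fundamental group of a closed negatively curved manifold, as in \cite{Sam,Sam2}): one must construct the Gromov flow following \cite{BCLS}, establish its mixing (Zariski density of $\rho$ being used precisely to exclude arithmeticity of the length spectrum $\varphi(\lambda(\rho(\Gamma)))$), and verify finiteness of the Bowen-Margulis measure $m^\varphi$. These are the steps that require the appendix.
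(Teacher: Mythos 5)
Your plan follows essentially the same skeleton as the paper's Appendix \ref{app proof of ditribution}: build the translation/reparametrized flow from the Busemann cocycle via \cite{BCLS}, use the cohomology of $c_o^\varphi$ and $c_\tau^\varphi$ (Remark \ref{rem covarphi is cohomologous to ctauvarphi}) to transport Patterson--Sullivan data, realize $e^{-h_\rho^\varphi[\cdot,\cdot]_o^\varphi}\overline{\mu}_o^\varphi\otimes\mu_o^\varphi\otimes dt$ as (a representative of) the Bowen--Margulis class, get mixing/non-arithmeticity of the period spectrum from Zariski density via \cite{Ben1}, and then run the Roblin--Sambarino equidistribution of periodic orbits through the Markov coding \cite{Pol}. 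One sub-step is done by a different but legitimate route: you prove $\delta_\rho^\varphi=h_{\tn{top}}$ by a ping-pong argument producing loxodromic elements $\gamma\eta$ with $\varphi(\lambda(\rho(\gamma\eta)))\geq\varphi(a^\tau(\rho\gamma))-O(1)$ (as in \cite{Sam,GloThoMon}), whereas the paper deduces it from the local product structure of the measure built from the Patterson--Sullivan measures (Theorem \ref{teo reparametrizing}(3)), together with the easy inequalities $h_{\tn{top}}(\psi)\leq h_\rho^\varphi\leq\delta_\rho^\varphi$. Either works; the paper's route reuses the measure it needs anyway, yours is more hands-on.

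There is, however, one genuine gap: the passage from equidistribution of closed orbits to the sum over group elements $\gamma\in\gh$. Your "unwinding" step treats the correspondence between a closed orbit and the elements of a conjugacy class as in the torsion-free case of \cite{Sam}. But here $\Gamma$ is an arbitrary word hyperbolic group and may have torsion: distinct elements sharing the same axis (differing by an element of the finite pointwise stabilizer $S_\gamma$ of the axis) have the same fixed-point pair and the same value of $\varphi(\lambda(\rho\cdot))$, and an element of $\gh$ need not be (strongly) primitive, so the orbit of period $\ell$ contributes $\#S_\gamma\,\lfloor t/\ell\rfloor$ terms to the sum defining the counting measure rather than the naive count. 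This is exactly the point the paper flags as "the only difference" with \cite[Proposition 4.3]{Sam} and handles by restricting Pollicott's orbit equidistribution to strongly primitive elements, choosing a section $\Gamma^0$ of the set of axes, inserting the weights $\#S_\gamma$ and the floor-function comparison, and then showing $\sigma_t-\nu_t\too 0$ (following Blayac \cite{BlaThesis}). Without this bookkeeping your displayed asymptotic, as stated over all of $\gh$, is not yet justified; with it, the argument closes and the constant $\mathtt{m}$ is the one produced by Pollicott's theorem rather than literally $h_\rho^\varphi/|m^\varphi|$. (A minor additional remark: finiteness of the Bowen--Margulis measure is automatic here since the flow space is compact, so no "absence of cusps" input is needed.)
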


\subsection{Proof of Theorem \ref{TEO CONTEO DIRECCIONAL BO EN QPQ INTRODUCCION}}\label{subsec distrib pq cartan attractors}

As noticed by Roblin \cite{Rob}, equidistribution statements as that of Proposition \ref{prop distribution of periodic orbits for UovarphiGamma} can be used to study counting problems. In \cite{Sam} Sambarino applies Roblin's method to obtain a counting theorem for the operator norm of elements in the image of a (projective) Anosov representation and in C. \cite{CarHpq} we applied this method to study counting problems in pseudo-Riemannian hyperbolic spaces. 

The following proposition is an intermediate step towards the proof of Theorem \ref{TEO CONTEO DIRECCIONAL BO EN QPQ INTRODUCCION}. It implies 

\bc
$\mathtt{m}e^{-h_\rho^\varphi t}\#\lbrace \gamma\in\gh:\hspace{0,3cm} \rho\gamma\in\bog \tn{ and }\varphi( b^o(\rho\gamma))\leq t\rbrace\too 1$
\ec

\noindent as $t\too\infty$. In order to obtain Theorem \ref{TEO CONTEO DIRECCIONAL BO EN QPQ INTRODUCCION} we must also count the amount of torsion elements $\gamma$ for which $\varphi( b^o(\rho\gamma))\leq t$. This will be a consequence of Proposition \ref{prop linear counting for varphibo when no torsion}.

\begin{prop}\label{prop linear counting for varphibo when no torsion}
There exists a positive constant $\mathtt{m} =\mathtt{m}_{\rho,o,\varphi}$ such that

\bc
$\mathtt{m} e^{-h_\rho^\varphi t}\displaystyle\sum_{\gamma\in\gh,\varphi(b^o(\rho\gamma))\leq t}\delta_{\gamma_-}\otimes\delta_{\gamma_+}\too\overline{\mu}_o^\varphi\otimes\mu_o^\varphi$
\ec

\noindent as $t\too\infty$ on $C^*(\bg\times\bg)$.

\end{prop}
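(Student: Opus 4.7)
My plan is to deduce Proposition \ref{prop linear counting for varphibo when no torsion} from Proposition \ref{prop distribution of periodic orbits for UovarphiGamma} by applying the comparison between $\varphi(b^o(\rho\gamma))$ and $\varphi(\lambda(\rho\gamma))$ provided by Corollary \ref{cor gromov is cross ratio for rho and estimate with bogamma}. The essential point is that the Gromov product correction, which appears as a weight $e^{-h_\rho^\varphi [\cdot,\cdot]_o^\varphi}$ on the limit measure in Proposition \ref{prop distribution of periodic orbits for UovarphiGamma}, is exactly what will be absorbed by shifting the counting threshold from $\lambda$ to $b^o$.

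More concretely, fix a continuous function $f:\bg\times\bg\too\rr$. Since $\bg$ is compact and, by Lemma \ref{lema PS no atoms}, the product measure $\overline{\mu}_o^\varphi\otimes\mu_o^\varphi$ gives zero mass to the diagonal, a standard approximation shows that it suffices to treat the case in which $f$ is supported in a set of the form $A\times B$ with $A,B\subset\bg$ disjoint compact subsets. Given $\delta>0$, I would cover $\tn{supp}(f)$ with finitely many open rectangles $A_i\times B_i$ of small diameter so that the oscillation of $[\cdot,\cdot]_o^\varphi$ on each $A_i\times B_i$ is at most $\delta$, and pick a subordinate partition of unity $\lbrace f_i\rbrace$. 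Choose constants $c_i$ approximating $[\cdot,\cdot]_o^\varphi$ on the support of $f_i$.

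By Corollary \ref{cor gromov is cross ratio for rho and estimate with bogamma} applied to $A_i$ and $B_i$, for every $\gamma\in\gh$ of sufficiently large word length with $(\gamma_-,\gamma_+)\in A_i\times B_i$ one has
\begin{equation*}
\varphi(b^o(\rho\gamma))=\varphi(\lambda(\rho\gamma))-[\gamma_-,\gamma_+]_o^\varphi+O(\delta)=\varphi(\lambda(\rho\gamma))-c_i+O(\delta).
\end{equation*}
Hence the condition $\varphi(b^o(\rho\gamma))\leq t$ is, up to an $O(\delta)$ shift, equivalent to $\varphi(\lambda(\rho\gamma))\leq t+c_i$. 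Proposition \ref{prop distribution of periodic orbits for UovarphiGamma} then gives, for each $i$,
\begin{equation*}
\mathtt{m}e^{-h_\rho^\varphi t}\!\!\!\sum_{\gamma\in\gh,\, \varphi(b^o(\rho\gamma))\leq t}\!\!\! f_i(\gamma_-,\gamma_+)\xrightarrow[t\to\infty]{}e^{h_\rho^\varphi c_i}\!\int f_i(x,y)\, e^{-h_\rho^\varphi[x,y]_o^\varphi}\, d(\overline{\mu}_o^\varphi\otimes\mu_o^\varphi)(x,y),
\end{equation*}
up to a multiplicative error going to $1$ as $\delta\to 0$. Since $c_i$ approximates $[x,y]_o^\varphi$ on $\tn{supp}(f_i)$, the factor $e^{h_\rho^\varphi c_i}e^{-h_\rho^\varphi[x,y]_o^\varphi}$ is within $e^{\pm h_\rho^\varphi\delta}$ of $1$. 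Summing over $i$ and letting $\delta\to 0$ yields
\begin{equation*}
\mathtt{m}e^{-h_\rho^\varphi t}\!\!\sum_{\gamma\in\gh,\, \varphi(b^o(\rho\gamma))\leq t}\!\! f(\gamma_-,\gamma_+)\xrightarrow[t\to\infty]{}\int f\, d(\overline{\mu}_o^\varphi\otimes\mu_o^\varphi),
\end{equation*}
which is the desired convergence.

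The main obstacle I expect is the bookkeeping around the shift from the $\lambda$-counting threshold to the $b^o$-threshold: one must verify that the additive $O(\delta)$ error in the Gromov product comparison produces only a multiplicative error tending to $1$, which requires the counting function $t\mapsto\#\lbrace\gamma:\varphi(\lambda(\rho\gamma))\leq t\rbrace$ to behave well under small shifts of $t$. This follows from the purely exponential growth given by Proposition \ref{prop distribution of periodic orbits for UovarphiGamma} (since the ratio of counts at $t+\delta$ and $t$ tends to $e^{h_\rho^\varphi\delta}$), but the point requires the standard sandwich argument comparing the counting with $\varphi(b^o(\rho\gamma))\leq t$ with the counts at $\varphi(\lambda(\rho\gamma))\leq t+c_i\pm O(\delta)$. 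The exclusion of a neighbourhood of the diagonal via Lemma \ref{lema PS no atoms} is crucial here so that Corollary \ref{cor gromov is cross ratio for rho and estimate with bogamma} applies uniformly in each piece of the partition.
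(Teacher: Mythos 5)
Your treatment of the off-diagonal part is essentially the paper's argument: both combine Proposition \ref{prop distribution of periodic orbits for UovarphiGamma} with Corollary \ref{cor gromov is cross ratio for rho and estimate with bogamma} on small rectangles $A_i\times B_i$ away from the diagonal, where the Gromov product is nearly constant, so that the density $e^{-h_\rho^\varphi[\cdot,\cdot]_o^\varphi}$ in the limit measure of Proposition \ref{prop distribution of periodic orbits for UovarphiGamma} is cancelled by shifting the counting threshold from $\varphi(\lambda(\rho\gamma))$ to $\varphi(b^o(\rho\gamma))$. Your sandwich bookkeeping for the additive $O(\delta)$ error is also fine.

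The genuine gap is the reduction to test functions supported off the diagonal. Write $\theta_t$ for the normalized counting measures. Lemma \ref{lema PS no atoms} only says that the \emph{limit} measure $\overline{\mu}_o^\varphi\otimes\mu_o^\varphi$ gives zero mass to the diagonal; it says nothing about the measures $\theta_t$. Convergence of $\theta_t(A\times B)$ for all rectangles with disjoint closures yields the correct $\liminf$ for nonnegative test functions, but a priori a non-negligible proportion of the $\gamma\in\gh$ with $\varphi(b^o(\rho\gamma))\leq t$ could have $\gamma_-$ and $\gamma_+$ very close together, producing mass of $\theta_t$ that concentrates on the diagonal and inflates the $\limsup$. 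This is not a ``standard approximation'': one must prove that for every $\varepsilon_0>0$ there is an open cover $\mathscr{U}$ of $\bg$ with $\limsup_{t\too\infty}\theta_t\bigl(\bigcup_{U\in\mathscr{U}}U\times U\bigr)\leq\varepsilon_0$. In the paper (following Roblin and Sambarino) this is done by multiplying such $\gamma$ by a fixed element $\gamma_0$ so as to separate the fixed points and then invoking the already-established off-diagonal convergence; the cost of this translation is controlled by Lemma \ref{lema triangle inequality}, which gives $\Vert b^o(\rho(\gamma_0\gamma))-b^o(\rho\gamma)\Vert_\lieb\leq D_{\gamma_0}$ for $\vert\gamma\vert_\Gamma$ large (itself a consequence of Corollary \ref{cor if limit set in one orbit then HwexplieaplusH}). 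This quasi-invariance of $b^o$ under left multiplication is the ingredient missing from your argument, and without it the proof is incomplete.
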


The sum in Proposition \ref{prop linear counting for varphibo when no torsion} is taken over all elements $\gamma\in\gh$ for which $\rho\gamma\in\bog$ and $\varphi(b^o(\rho\gamma))\leq t$. To make the formula more readable we do not emphasize the fact that $\rho\gamma$ must belong to $\bog$ (recall from Corollary \ref{cor sigmagammainvgamma is loxod and Uo close to Utau} that this holds with only finitely many exceptions $\gamma\in\Gamma$).

\begin{proof}[Proof of Proposition \ref{prop linear counting for varphibo when no torsion}]

Let

\bc
$\theta_t:=\mathtt{m} e^{-h_\rho^\varphi t}\displaystyle\sum_{\gamma\in\gh,\varphi(b^o(\rho\gamma))\leq t}\delta_{\gamma_-}\otimes\delta_{\gamma_+}$.
\ec

The proof follows line by line the proof of \cite[Theorem 6.5]{Sam} or \cite[Proposition 7.11]{CarHpq}. Namely, for open subsets $A$ and $B$ of $\bg$ with disjoint closure and negligible boundary, the convergence

\bc
$\theta_t(A\times B)\too\overline{\mu}_o^\varphi(A)\mu_o^\varphi(B)$
\ec

\noindent is implied by Proposition \ref{prop distribution of periodic orbits for UovarphiGamma} and Corollary \ref{cor gromov is cross ratio for rho and estimate with bogamma}. On the other hand, since $\overline{\mu}_o^\varphi$ and $\mu_o^\varphi$ have no atoms (Lemma \ref{lema PS no atoms}), one has

\bc
$\overline{\mu}_o^\varphi\otimes\mu_o^\varphi(\lbrace (x,x): \hspace{0,3cm} x\in\bg\rbrace)=0$.
\ec

\noindent In order to finish the proof it suffices to show the following: for every positive $\varepsilon_0$ there exists an open covering $\mathscr{U}$ of $\bg$ such that 

\bc
$\displaystyle\limsup_{t\too\infty}\theta_t\left(\displaystyle\bigcup_{U\in\mathscr{U}}U\times U\right)\leq\varepsilon_0$.
\ec

\noindent Provided Lemma \ref{lema triangle inequality} below, the proof of this fact follows exactly as the proof of the analogue fact in \cite[Theorem 6.5]{Sam} or in \cite[Proposition 7.11]{CarHpq}.

\end{proof}

\begin{lema}[c.f. {\cite[Proposition 6.10]{CarHpq}}]\label{lema triangle inequality}
Fix an element $\gamma_0\in\Gamma$. Then there exist positive constants $L$ and $D_{\gamma_0}$ such that for every $\gamma$ in $\Gamma$ satisfying $\vert\gamma\vert_\Gamma>L$ one has

\bc
$\Vert b^o(\rho(\gamma_0\gamma ))-  b^o(\rho\gamma )\Vert_\lieb \leq D_{\gamma_0}$.
\ec
\end{lema}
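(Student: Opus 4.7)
The plan is to reduce the statement to an analogous estimate for the classical Cartan projection $a^{\tau}$. Fix $\gamma_{0}\in\Gamma$ and let $L_{0},D>0$ be the constants provided by Corollary \ref{cor if limit set in one orbit then HwexplieaplusH} (this is where the standing hypothesis of Section \ref{sec counting}, that $\xi_{\rho}(\bg)$ lies in a single open $\h^{o}$-orbit, is used). Set $L:=L_{0}+|\gamma_{0}|_{\Gamma}$. For every $\gamma$ with $|\gamma|_{\Gamma}>L$ the triangle inequality on $\Gamma$ gives $|\gamma_{0}\gamma|_{\Gamma}>L_{0}$, hence applying Corollary \ref{cor if limit set in one orbit then HwexplieaplusH} separately to $\gamma$ and to $\gamma_{0}\gamma$ and then subtracting yields
\[
\|b^{o}(\rho(\gamma_{0}\gamma))-b^{o}(\rho\gamma)\|_{\lieb}\leq 2D+\|a^{\tau}(\rho(\gamma_{0}\gamma))-a^{\tau}(\rho\gamma)\|_{\lieb}.
\]

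Thus it suffices to produce a constant $C_{\gamma_{0}}>0$, depending only on $\gamma_{0}$, with $\|a^{\tau}(\rho\gamma_{0}\cdot\rho\gamma)-a^{\tau}(\rho\gamma)\|_{\lieb}\leq C_{\gamma_{0}}$ for all sufficiently large $|\gamma|_{\Gamma}$. I would deduce this from the perturbation lemma for Cartan projections of \cite[Lemma A.7]{BPS}. Writing a Cartan decomposition $\rho\gamma=k\exp(a^{\tau}(\rho\gamma))\ell$ and noting $\rho\gamma_{0}\rho\gamma=(\rho\gamma_{0}k)\exp(a^{\tau}(\rho\gamma))\ell$, the element $\rho\gamma_{0}k$ ranges as $\gamma$ varies over the compact set $\rho\gamma_{0}\cdot\ko^{\tau}\subset\g$, while the Anosov condition (\ref{eq def anosov}) guarantees that $\min_{\alpha\in\Delta}\alpha(a^{\tau}(\rho\gamma))\to\infty$ as $|\gamma|_{\Gamma}\to\infty$, and Proposition \ref{prop limit with S and U and Uuno cerca gammamas} places $U^{\tau}(\rho\gamma)$ and $S^{\tau}(\rho\gamma)$ in arbitrarily small neighborhoods of the compact limit set $\xi_{\rho}(\bg)$. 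These three facts together furnish the large-gap and uniform-transversality input needed to apply \cite[Lemma A.7]{BPS}, producing the desired vector-valued bound. Setting $D_{\gamma_{0}}:=2D+C_{\gamma_{0}}$ then finishes the proof.

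The delicate point is the vector-valued (as opposed to norm-valued) nature of the Cartan projection estimate: in higher rank the naive triangle inequality on $\xsyg$ only controls $\bigl|\,\|a^{\tau}(\rho\gamma_{0}\rho\gamma)\|_{\lieb}-\|a^{\tau}(\rho\gamma)\|_{\lieb}\,\bigr|$ by $\|a^{\tau}(\rho\gamma_{0})\|_{\lieb}$, which is not enough. The Anosov gap condition is exactly what is required to rule out wall-crossings under the bounded left-perturbation by $\rho\gamma_{0}$, so that the projection of $\rho\gamma_{0}\rho\gamma$ stays inside a cone bounded away from the walls of $\liea^{+}$; combined with the compactness of the limit set this renders the transversality hypothesis of \cite[Lemma A.7]{BPS} uniform in $\gamma$, which is the crux of the argument.
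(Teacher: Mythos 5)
Your first step is exactly the paper's: Corollary \ref{cor if limit set in one orbit then HwexplieaplusH} applied to both $\gamma$ and $\gamma_0\gamma$ (your bookkeeping with $L:=L_0+|\gamma_0|_\Gamma$ is correct and in fact slightly more careful than the paper's) reduces the problem to bounding $\Vert a^\tau(\rho(\gamma_0\gamma))-a^\tau(\rho\gamma)\Vert_\lieb$ by a constant depending only on $\gamma_0$.

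The problem is your justification of that remaining bound. \cite[Lemma A.7]{BPS} is the statement that $a^\tau(gh)\approx a^\tau(g)+a^\tau(h)$ up to a bounded error \emph{under a transversality hypothesis between the Cartan attractor/repellor data of the two factors}. For the product $\rho\gamma_0\cdot\rho\gamma$ the relevant hypothesis involves the Cartan repellor of the \emph{fixed} element $\rho\gamma_0$ against the Cartan attractor of $\rho\gamma$. Since $\gamma_0$ is arbitrary, $\rho\gamma_0$ need not have a gap of index $\Delta$ at all (take $\gamma_0$ the identity, or any element with $\rho\gamma_0\in\ko^\tau$), and even when it does, nothing forces its repellor to be transverse to the limit set; the Anosov condition and Proposition \ref{prop limit with S and U and Uuno cerca gammamas} control only the flags attached to $\rho\gamma$, not those attached to $\rho\gamma_0$. (Contrast this with the proof of Proposition \ref{prop limit cono}, where the auxiliary element is \emph{chosen} so that the transversality holds.) Your closing remark that the Anosov gap condition is ``exactly what is required'' is therefore off the mark. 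The good news is that the estimate you need is true unconditionally and is much more elementary: for any $g_0\in\g$ one has $\Vert a^\tau(g_0g)-a^\tau(g)\Vert_\lieb\leq\Vert a^\tau(g_0)\Vert_\lieb$ for all $g$, by the standard singular-value inequalities $\sigma_d(A)\,\sigma_j(B)\leq\sigma_j(AB)\leq\sigma_1(A)\,\sigma_j(B)$ and $\wb$-invariance of the norm; no gap, no transversality, and no restriction on $\gamma$ are needed. This coarse left-invariance of the Cartan projection is precisely what the paper invokes (without naming it) in its last line, and substituting it for your appeal to \cite[Lemma A.7]{BPS} repairs the proof.
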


\begin{proof}
By Corollary \ref{cor if limit set in one orbit then HwexplieaplusH} there exist positive constants $L$ and $D$ such that for every $\gamma$ with $\vert\gamma\vert_\Gamma>L$ one has

\bc
$\Vert b^o(\rho(\gamma_0\gamma))-b^o(\rho\gamma)\Vert_\lieb\leq \Vert a^\tau(\rho(\gamma_0\gamma))-a^\tau(\rho\gamma)\Vert_\lieb +D$.
\ec

\noindent To finish observe that there exists a positive constant $d_{\gamma_0}$ for which the inequality

\bc
$\Vert a^\tau(\rho(\gamma_0\gamma))-a^\tau(\rho\gamma)\Vert_\lieb\leq d_{\gamma_0}$
\ec

\noindent is satisfied for every $\gamma\in\Gamma$.
\end{proof}

The following proposition finishes the proof of Theorem \ref{TEO CONTEO DIRECCIONAL BO EN QPQ INTRODUCCION}.

\begin{prop}\label{prop linear counting for varphibo when torsion}
There exists a positive constant $\mathtt{m} =\mathtt{m}_{\rho,o,\varphi}$ such that

\bc
$\mathtt{m} e^{-h_\rho^\varphi t}\displaystyle\sum_{\gamma\in\Gamma,\varphi(b^o(\rho\gamma))\leq t}\delta_{S^o(\rho\gamma)}\otimes\delta_{U^o(\rho\gamma)}\too{\xi_\rho}_*\overline{\mu}_o^\varphi\otimes{\xi_\rho}_*\mu_o^\varphi$
\ec

\noindent as $t\too\infty$ on $C^*(\mathsf{F}(V)\times\mathsf{F}(V))$.

\end{prop}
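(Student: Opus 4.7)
The plan is to deduce Proposition~\ref{prop linear counting for varphibo when torsion} from Proposition~\ref{prop linear counting for varphibo when no torsion} in three moves: (i) push the convergence forward through $\xi_\rho\times\xi_\rho:\bg\times\bg\too\mathsf{F}(V)\times\mathsf{F}(V)$, using that $\bg$ is compact and $\xi_\rho$ is continuous so that the push-forward sends weak-$*$ convergence to weak-$*$ convergence; (ii) replace the Dirac masses at $(\xi_\rho(\gamma_-),\xi_\rho(\gamma_+))$ by Dirac masses at $(S^o(\rho\gamma),U^o(\rho\gamma))$, using Corollary~\ref{cor sigmagammainvgamma is loxod and Uo close to Utau} combined with Proposition~\ref{prop limit with S and U and Uuno cerca gammamas}, together with uniform continuity of any continuous $f$ on the compact space $\mathsf{F}(V)\times\mathsf{F}(V)$ and the uniform boundedness of $\mathtt{m}e^{-h_\rho^\varphi t}\#\{\gamma\in\gh:\varphi(b^o(\rho\gamma))\leq t\}$ (obtained from Proposition~\ref{prop linear counting for varphibo when no torsion} applied to $f\equiv 1$); (iii) verify that the torsion elements $\Gamma\setminus\gh$ contribute negligibly after normalization.

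The first two moves are routine once one notices that Corollary~\ref{cor sigmagammainvgamma is loxod and Uo close to Utau} makes $S^o(\rho\gamma)$ arbitrarily close to $S^\tau(\rho\gamma)$ for $\vert\gamma\vert_\Gamma$ large enough, and that Proposition~\ref{prop limit with S and U and Uuno cerca gammamas} places $S^\tau(\rho\gamma)$ close to $(\rho\gamma)_-=\xi_\rho(\gamma_-)$ (and analogously for $U^o$); the error produced when one replaces one system of Dirac masses by the other is dominated by an arbitrarily small $\eta>0$ times the bounded normalized total count, and therefore vanishes in the limit. The third move requires showing that $e^{-h_\rho^\varphi t}\#\{\gamma\in\Gamma\setminus\gh:\rho\gamma\in\bog\text{ and }\varphi(b^o(\rho\gamma))\leq t\}\too 0$. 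This will use two classical inputs: a non-elementary word hyperbolic group admits only finitely many conjugacy classes of torsion, and for each such class the number of elements of word length $\leq n$ is $O(e^{hn/2})$, where $h$ is the exponential growth rate of $(\Gamma,\mathscr{S})$; moreover, the Anosov inequality~(\ref{eq def anosov}), Corollary~\ref{cor if limit set in one orbit then HwexplieaplusH}, and the positivity of $\varphi$ on the cone $\cont$ (toward which $a^\tau(\rho\gamma)$ accumulates as $\vert\gamma\vert_\Gamma\too\infty$) together force $\vert\gamma\vert_\Gamma\leq Ct+O(1)$ whenever $\varphi(b^o(\rho\gamma))\leq t$, for some explicit constant $C>0$. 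The torsion count is therefore $O(e^{hCt/2})$, and the strict inequality $hC/2<h_\rho^\varphi$, obtained by matching $C$ against the lower bound $h_\rho^\varphi\geq h/(\|\varphi\|c_1)$ that follows from Anosov-ness (with $c_1$ the coarse Lipschitz constant of the Cartan projection), delivers the needed decay.

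\textbf{Main obstacle.} The subtle point is the torsion estimate: while the half-rate growth of torsion conjugacy classes in hyperbolic groups is classical, extracting the precise constant $C$ and matching it against $h_\rho^\varphi$ requires balancing the coarse Lipschitz constants inherent in Anosov-ness with the positivity margin of $\varphi$ on $\cont$. A cleaner and more robust alternative is to upgrade Theorem~\ref{teo counting sambarino} from the appendix to $\varphi$-weighted counting over the whole of $\Gamma$ (not merely over $\gh$), reducing the torsion issue to a direct comparison of asymptotic total masses rather than to a sharp inequality between exponents.
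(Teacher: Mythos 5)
Your steps (i) and (ii) are correct and coincide with the first bullet of the paper's proof: push Proposition \ref{prop linear counting for varphibo when no torsion} forward through $\xi_\rho\times\xi_\rho$ and then perturb the Dirac masses using Corollary \ref{cor sigmagammainvgamma is loxod and Uo close to Utau} together with Proposition \ref{prop limit with S and U and Uuno cerca gammamas}, the error being controlled by the bounded normalized total mass. The gap is in step (iii), the treatment of torsion. Your proposed estimate bounds the torsion count by $O(e^{hCt/2})$ with $C$ coming from the \emph{lower} coarse-Lipschitz bound $\varphi(b^o(\rho\gamma))\geq c_\varphi\vert\gamma\vert_\Gamma-c'$ (so $C\sim 1/c_\varphi$), while the lower bound you invoke for the entropy, $h_\rho^\varphi\geq h/(\Vert\varphi\Vert c_1)$, comes from the \emph{upper} coarse-Lipschitz constant $c_1$. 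The two constants are unrelated: the only inequality available in general is $h_\rho^\varphi\leq h/c_\varphi$, which gives $hC/2=h/(2c_\varphi)\geq h_\rho^\varphi/2$ and says nothing about whether $h/(2c_\varphi)<h_\rho^\varphi$. Since $c_1/c_\varphi$ can be arbitrarily large, the strict inequality $hC/2<h_\rho^\varphi$ cannot be extracted from Anosov-ness alone, and the decay of the normalized torsion count does not follow. The fallback you sketch (a full-$\Gamma$ version of Theorem \ref{teo counting sambarino} for $\varphi$) is not carried out and would not obviously help, since $\varphi(b^o)$ and $\varphi(a^\tau)$ agree only up to a bounded error, which does not preserve precise asymptotics.

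The paper avoids any global count of torsion elements. Its mechanism is geometric: a torsion element $\gamma$ has $\lambda(\rho\gamma)=0$, so once $\vert\gamma\vert_\Gamma$ is large (hence $\rho\gamma$ has large gaps of index $\Delta$), the pair $(S^\tau(\rho\gamma),U^\tau(\rho\gamma))$ must be close to the non-transverse locus, otherwise $\rho\gamma$ would be proximal with $\lambda_1(\rho\gamma)>0$. Combined with Corollary \ref{cor sigmagammainvgamma is loxod and Uo close to Utau} and Benoist's Lemme 6.2, this shows that for any continuous $f$ with $\tn{supp}(f)\subset\fvc$ only \emph{finitely many} torsion elements have $(S^o(\rho\gamma),U^o(\rho\gamma))\in\tn{supp}(f)$, so they contribute nothing in the normalized limit; the remaining mass, concentrated near $\mathscr{D}=\mathsf{F}(V)^2\setminus\fvc$, is handled by a covering argument showing $\limsup_t\nu_t(\bigcup_{U\in\mathscr{U}}U)\leq\varepsilon_0$, together with the fact that the limit measure gives $\mathscr{D}$ zero mass (Lemma \ref{lema PS no atoms}). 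To repair your proof you should replace the exponent comparison of step (iii) by this finiteness-plus-covering argument (or, if you insist on counting, sharpen the conjugacy-class estimate to the functional $\varphi+\varphi\circ\iota_{\liea^+}$ and use strict monotonicity of the entropy, which requires substantially more work).
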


\begin{proof}
The proof is analogous to the proof of \cite[Proposition 7.13]{CarHpq}, the main steps being:

\begin{itemize}
\item Let 

\bc
$\nu_t^\tn{H}:=\mathtt{m} e^{-h_\rho^\varphi t}\displaystyle\sum_{\gamma\in\gh,\varphi(b^o(\rho\gamma))\leq t}\delta_{S^o(\rho\gamma)}\otimes\delta_{U^o(\rho\gamma)}$.
\ec

\noindent Provided Corollary \ref{cor sigmagammainvgamma is loxod and Uo close to Utau} and Proposition \ref{prop limit with S and U and Uuno cerca gammamas}, the convergence

\bc
$\nu_t^\tn{H}\too {\xi_\rho}_*\overline{\mu}_o^\varphi\otimes{\xi_\rho}_*\mu_o^\varphi$
\ec

\noindent follows from Proposition \ref{prop linear counting for varphibo when no torsion}.

\item Let 

\bc
$\nu_t:=\mathtt{m} e^{-h_\rho^\varphi t}\displaystyle\sum_{\gamma\in\Gamma,\varphi(b^o(\rho\gamma))\leq t}\delta_{S^o(\rho\gamma)}\otimes\delta_{U^o(\rho\gamma)}$,
\ec

\noindent which differs from $\nu_t^\h$ only from the fact that we allow torsion elements in the defining sum. Fix a continuous function $f$ on $\mathsf{F}(V)\times\mathsf{F}(V)$ whose support $\tn{supp}(f)$ is contained in $\fvc$. An application of Corollary \ref{cor sigmagammainvgamma is loxod and Uo close to Utau} and Benoist \cite[Lemme 6.2]{Ben1} yields

\bc
$\#\lbrace \gamma\in\Gamma:\hspace{0,3cm} (S^o(\rho\gamma),U^o(\rho\gamma))\in\tn{supp}(f) \tn{ and } \gamma\notin\gh\rbrace<\infty$.
\ec

\noindent This implies the convergence $\nu_t^\tn{H}(f)-\nu_t(f)\too 0$.

\item To finish it remains to analyze the asymptotic behaviour of the measure $\nu_t$ over the set $\mathscr{D}:=\mathsf{F}(V)^2\setminus\fvc$.  As in \cite[Proposition 7.13]{CarHpq}, for every positive $\varepsilon_0$ one can find an open covering $\mathscr{U}$ of $\mathscr{D}$ such that

\bc
$\displaystyle\limsup_{t\too\infty}\nu_t\left(\bigcup_{U\in\mathscr{U}} U\right)\leq\varepsilon_0$.
\ec

\noindent Since the number ${\xi_\rho}_*\overline{\mu}_o^\varphi\otimes{\xi_\rho}_*\mu_o^\varphi(\mathscr{D})$ equals zero (Lemma \ref{lema PS no atoms}), the proof is complete.

\end{itemize}
\end{proof}

\appendix

\section{Distribution of fixed points, mixing and counting}\label{app proof of ditribution}

We now explain why the results of Sambarino \cite{SamEXPGROWTH,Sam,Sam2} hold when we replace the fundamental group of a closed negatively curved manifold by a general word hyperbolic group $\Gamma$ admitting an Anosov representation. The central dynamical tool used by the author along the above works is the thermodynamical formalism for the geodesic flow of the manifold. Provided the work of Bridgeman-Canary-Labourie-Sambarino \cite{BCLS}, the thermodynamical formalism also applies to the geodesic flow of $\Gamma$.

\subsection{The geodesic flow}

Let $\Gamma$ be a word hyperbolic group. Gromov \cite{Gro} introduced a compact space $\tn{U}\Gamma$ endowed with a transitive H\"older flow, called the \textit{geodesic flow} of $\Gamma$, which is well defined up to H\"older reparametrization (see Mineyev \cite{Min} for details). If $\Gamma$ admits an Anosov representation, the geodesic flow of $\Gamma$ is \textit{metric Anosov} \cite[Section 5]{BCLS}. By work of Pollicott \cite{Pol}, the geodesic flow of $\Gamma$ admits then a \textit{Markov coding} and therefore the techniques coming from the thermodynamical formalism of subshifts of finite type apply (see \cite{BCLS,CLT}).

The following way of constructing parametrizations of the geodesic flow is useful. Let $\tn{L}$ be a one dimensional H\"older real vector bundle over $\bgc$. Let $\widehat{\tn{L}}$ be the $\rr$-principal bundle over $\bgc$ whose fibers are

\bc
$\widehat{\tn{L}}_{(x,y)}:=(\tn{L}_{(x,y)}\setminus\lbrace 0\rbrace)/\sim$,
\ec

\noindent where $v \sim -v$. Here the action of $t\in\rr$ on $\widehat{\tn{L}}$ is given by $t:[v]\mapsto[e^tv]$. Suppose furthermore that $\tn{L}$ is endowed with an action of $\Gamma$ by bundle automorphisms. For every $\gamma\in\Gamma_\h$, let $p_{\tn{L}}(\gamma)$ be the real number such that for every $v\in\tn{L}_{(\gamma_-,\gamma_+)}$ one has

\bc
$\gamma\cdot v=\pm e^{p_{\tn{L}}(\gamma)}v$.
\ec

\noindent The following proposition is essentially proven in \cite[Proposition 4.2]{BCLS} (see also \cite[Proposition 2.4]{BCLSSIMPLEROOTS}).

\begin{prop}\label{prop line bundles and reparam}
Let $\rho:\Gamma\too\g$ be a $\Delta$-Anosov representation. Assume that there exists a positive constant $\alpha$ such that the inequality

\bc
$p_{\tn{L}}(\gamma)\geq\alpha \lambda_1(\rho\gamma)$
\ec

\noindent holds for every $\gamma\in\Gamma_\h$. Then the action of $\Gamma$ on $\widehat{\tn{L}}$ is properly discontinuous and the quotient space $\tn{U}_{\widehat{\tn{L}}}$ is H\"older homeomorphic to $\tn{U}\Gamma$. Furthermore, the flow on $\tn{U}_{\widehat{\tn{L}}}$ induced by the action of $\rr$ on $\widehat{\tn{L}}$ is H\"older conjugate to a H\"older reparametrization of the geodesic flow of $\Gamma$.
\end{prop}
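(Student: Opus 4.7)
The plan is to invoke the general mechanism developed in \cite{BCLS} (Proposition 4.2) and refined to two-variable cocycles in \cite{BCLSSIMPLEROOTS} (Proposition 2.4), which converts a H\"older cocycle with uniformly positive periods into a H\"older reparametrization of the geodesic flow of $\Gamma$. The proof then amounts to packaging the hypotheses correctly and checking they match.

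The first step is to trivialise $\tn{L}$ so as to express the $\Gamma$-action by a scalar cocycle. I would pick a H\"older non-vanishing section $s\colon\bgc\too\tn{L}$ (pulling back to the orientation double cover of $\bgc$ if $\tn{L}$ is non-orientable; the sign is absorbed by the identification $v\sim -v$ used to define $\widehat{\tn{L}}$). The $\Gamma$-action then reads $\gamma\cdot s(x,y)=\pm e^{\psi(\gamma,x,y)}\,s(\gamma\cdot x,\gamma\cdot y)$ for a H\"older cocycle $\psi\colon\Gamma\times\bgc\too\rr$ with periods $\psi(\gamma,\gamma_-,\gamma_+)=p_{\tn{L}}(\gamma)$ for every $\gamma\in\gh$. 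Under this trivialisation, the $\rr$-action $[v]\mapsto[e^tv]$ on $\widehat{\tn{L}}$ becomes the translation flow $(x,y,s)\mapsto(x,y,s+t)$ on $\bgc\times\rr$, and the $\Gamma$-action becomes $\gamma\cdot(x,y,s)=(\gamma\cdot x,\gamma\cdot y,s+\psi(\gamma,x,y))$.

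The second step is to check that $\psi$ has the right growth. Combining the hypothesis $p_{\tn{L}}(\gamma)\geq\alpha\,\lambda_1(\rho\gamma)$ with the fact that the $\Delta$-Anosov property \eqref{eq def anosov} forces $\lambda_1(\rho\gamma)$ to grow linearly with the stable translation length of $\gamma$ (via $\lambda_1(\rho\gamma)=\lim_n a^\tau_1(\rho\gamma^n)/n$ applied to high powers, so that cancellation in word length is absorbed), one obtains the lower bound on periods required to invoke \cite[Proposition 4.2]{BCLS}.

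The third step is to apply the BCLS construction. Since $\tn{U}\Gamma$ is metric Anosov \cite[Section 5]{BCLS} and admits a Markov coding by \cite{Pol}, a Liv\v{s}ic-type argument produces a positive H\"older function $f\colon\tn{U}\Gamma\too\rr_{>0}$ whose periods along closed orbits match those of $\psi$; the suspension of the geodesic flow by $f$ is then H\"older conjugate to the quotient of $\bgc\times\rr$ by the twisted $\Gamma$-action, which simultaneously gives proper discontinuity, the H\"older homeomorphism $\tn{U}_{\widehat{\tn{L}}}\cong\tn{U}\Gamma$, and the H\"older reparametrization statement. The main obstacle is adapting the Liv\v{s}ic step to cocycles defined over $\bgc$ rather than $\bg$, since the classical formulation of Liv\v{s}ic cohomology is stated for one-variable cocycles; this is precisely the content of \cite[Proposition 2.4]{BCLSSIMPLEROOTS}, which I would essentially quote rather than reprove.
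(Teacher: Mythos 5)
Your proposal is correct in its essential ideas and relies on the same machinery as the paper: a Liv\v{s}ic-type argument over the metric Anosov geodesic flow (using the Markov coding from \cite{Pol}), the fact that periods bounded below by a fixed multiple of $\lambda_1(\rho\gamma)$ --- and hence, by the Anosov property, of the stable translation length --- force cohomology to a \emph{positive} H\"older function, and then the standard suspension picture from \cite{BCLS,BCLSSIMPLEROOTS} to get proper discontinuity and the conjugacy all at once. The one structural difference is your first step. You trivialise $\tn{L}$ (equivalently, the $\rr$-principal bundle $\widehat{\tn{L}}$) over $\bgc$ by a global H\"older section so as to replace the bundle by a scalar cocycle $\psi\colon\Gamma\times\bgc\too\rr$, and only then run Liv\v{s}ic. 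The paper instead pulls $\tn{L}$ back to a bundle $\tn{E}$ over the \emph{compact} space $\tn{U}_\rho\Gamma$, puts a H\"older norm on $\tn{E}$, and applies Liv\v{s}ic to the norm-contraction cocycle $\kappa(z,t)=\log(\Vert u\Vert/\Vert\tilde\phi_t(u)\Vert)$ (after a time-smoothing to make it differentiable along the flow), producing a positive H\"older function $h$ and hence the uniform contraction estimate that feeds directly into \cite[Proposition 4.2]{BCLS}. The paper's route buys you something concrete: a H\"older norm on a bundle over a compact base always exists, whereas a \emph{uniformly} H\"older non-vanishing section of $\tn{L}$ over the non-compact space $\bgc$ is not immediate (local sections patched by a partition of unity need not have uniform H\"older constants, and the $\Gamma$-action on $\bgc$ alone is not properly discontinuous, so you cannot simply spread a section from a compact piece equivariantly). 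This is a repairable but genuine technical point in your version; if you keep the trivialisation-first approach you should either justify the existence of such a section or, better, descend to the compact quotient before invoking regularity, which is exactly what the paper's choice of $\tn{E}$ over $\tn{U}_\rho\Gamma$ accomplishes.
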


\begin{proof}

Let

\bc
$\phi_t:\tn{U}_\rho\Gamma\too\tn{U}_\rho\Gamma$
\ec

\noindent be the \textit{geodesic flow of} $\rho$, which is H\"older conjugate to a H\"older reparametrization of the geodesic flow of $\Gamma$ (see \cite[Section 4]{BCLS}). Let $\xi_\rho=(\xi^1_\rho,\dots,\xi^{d}_\rho)$ be the limit map of $\rho$. We recall that $\tn{U}_\rho\Gamma$ is the quotient, under the natural action of $\Gamma$, of the fiber bundle over $\bgc$ whose fiber over $(x,y)$ is

\bc
$\lbrace (\vartheta,v): \vartheta\in\xi_\rho^{d-1}(x), \hspace{0,1cm} v\in\xi_\rho^1(y), \hspace{0,1cm} \vartheta(v)=1\rbrace/\sim$,
\ec

\noindent where $(\vartheta,v)\sim(-\vartheta,-v)$. In particular, for every periodic orbit $a$ of $\phi$ we may find an element $\gamma_a\in\Gamma_\h$ such that the period of $a$ coincides with $\lambda_1(\rho\gamma_a)$ when $\kk=\rr$, or twice this number when $\kk=\cc$ (see  \cite[Section 4]{BCLS} and \cite[Corollary 2.10]{BPSW}).

Let $\tn{E}$ be the vector bundle over $\tn{U}_\rho\Gamma$ whose fiber over $[x,y,\vartheta,v]$ is $\tn{L}_{(x,y)}$. The geodesic flow on $\tn{U}_\rho\Gamma$ naturally lifts to a flow $\tilde{\phi}_t$ on $\tn{E}$.

\begin{cla}\label{claim the lifted flow is contracting}
Let $\Vert\cdot\Vert$ be a H\"older norm on $\tn{E}$. There exists $t_0>0$ such that for all $t>t_0$, all $z\in\tn{U}_\rho\Gamma$ and all $u\in\tn{E}_{z}\setminus\lbrace 0\rbrace$ one has

\bc
$\dfrac{\Vert\tilde{\phi}_t(u)\Vert}{\Vert u\Vert}<\dfrac{1}{4}$.
\ec
\end{cla}

Provided Claim \ref{claim the lifted flow is contracting}, the same proof of \cite[Proposition 4.2]{BCLS} applies to conclude the proof of Proposition \ref{prop line bundles and reparam}.

\begin{proof}[Proof of Claim \ref{claim the lifted flow is contracting}]

Let $\kappa:\tn{U}_\rho\Gamma\times\rr\to\rr$ be given by

\bc
$\kappa(z,t):=\log\dfrac{\Vert u\Vert}{\Vert\tilde{\phi}_t(u)\Vert}$,
\ec

\noindent where $u\in\tn{E}_z$ is any non zero vector. Then $\kappa$ is a H\"older cocycle, i.e. for all $z\in\tn{U}_\rho\Gamma$ and $t,s\in\rr$ one has

\bc
$\kappa(z,t+s)=\kappa(\phi_t(z),s)+\kappa(z,t)$.
\ec

Fix a positive $c>0$ and define $j_c:\tn{U}_\rho\Gamma\times\rr\to\rr$ by

\bc
$j_c(z,t):=\dfrac{1}{2c}\displaystyle\int_{-c}^c\kappa(z,t+s)\operatorname{d}s$.
\ec

\noindent Let also $f:\tn{U}_\rho\Gamma\to\rr$ be given by

\bc
$f(z):=\left.\dfrac{\operatorname{d}}{\operatorname{d}t}\right\vert_{t=0}j_c(z,t)$.
\ec

\noindent Finally, let $\kappa_c:\tn{U}\Gamma\times\rr\to\rr$ be defined by

\bc
$\kappa_c(z,t):=\displaystyle\int_{0}^tf(\phi_s(z))\operatorname{d}s$.
\ec

Then $\kappa_c$ is a H\"older cocycle and its periods coincide with those of $\kappa$ (recall that the \textit{period} according to $\kappa$ of a periodic orbit $a$ of $\phi$ of period $p_\phi(a)$ is defined by $\kappa(z,p_\phi(a))$, for $z\in a$). Livsic's Theorem \cite{Liv} guarantees that $\kappa$ and $\kappa_c$ are then \textit{cohomologous}. This means that we may find a H\"older continuous function $U:\tn{U}_\rho\Gamma\to\rr$ such that for all $z\in\tn{U}_\rho\Gamma$ and $t\in\rr$ one has
\begin{equation}\label{eq cohomology}
\kappa(z,t)-\kappa_c(z,t)=U(\phi_t(z))-U(z).
\end{equation}
Furthermore, let $a$ be a periodic orbit of $\tn{U}_\rho\Gamma$ and $\gamma_a\in\Gamma_\h$ be such that $p_\phi(a)=\lambda_1(\rho\gamma_a)$. Then

\bc
$\displaystyle\int_{a}f=\kappa(z,p_\phi(a))=p_{\tn{L}}(\gamma_a)\geq \alpha \lambda_1(\rho\gamma_a)>0$,
\ec

\noindent for $z\in a$. Since $\phi:\tn{U}_\rho\Gamma\to\tn{U}_\rho\Gamma$ is a transitive metric Anosov flow we find a positive H\"older continuous function $h:\tn{U}_\rho\Gamma\to\rr$ cohomologous to $f$ (see e.g. \cite[Lemma 2.5]{PS}). Combining this fact with (\ref{eq cohomology}) we find a H\"older continuous function $V:\tn{U}_\rho\Gamma\to\rr$ such that for all $z\in\tn{U}_\rho\Gamma$, all $t\in\rr$ and all $u\in\tn{E}_z\setminus\lbrace 0 \rbrace$ one has

\bc
$\log\dfrac{\Vert u\Vert}{\Vert\tilde{\phi}_t(u)\Vert}-\displaystyle\int_{0}^th(\phi_s(z))\operatorname{d}s=V(\phi_t(z))-V(z)$.
\ec

\noindent Since $h>0$ the proof is complete.

\end{proof}

\end{proof}

\subsection{The flow associated to the $(\varphi,\tau)$-Busemann cocycle}

Fix a $\Delta$-Anosov representation $\rho:\Gamma\to\g$. Let $\tau\in\xsyg$ be a basepoint and $\liea^+$ be a closed Weyl chamber of the system $\Sigma(\lieg,\liea)$, for some Cartan subspace $\liea\subset\liep^\tau$. Let $\varphi$ be a functional in the interior $\tn{int}(\cont^*)$ of the dual limit cone $\cont^*$. Let $c_\tau^\varphi$ and $\overline{c}_\tau^\varphi$ be the $(\varphi,\tau)$-\textit{Busemann cocycles} of $\rho$ (c.f. Remark \ref{rem covarphi is cohomologous to ctauvarphi}) and consider the action of $\Gamma$ on $\bgc\times\rr$ given by

\bc
$\gamma\cdot(x,y,s):=(\gamma\cdot x,\gamma\cdot y,s-c_\tau^\varphi(\gamma,y))$.
\ec

\noindent The \textit{translation flow} is the flow $\psi_t=\psi_t^{\rho,\tau,\varphi}$ on the quotient space of this action induced by the action of $\rr$ on $\bgc\times\rr$ given by

\bc
$t:(x,y,s)\mapsto(x,y,s-t)$.
\ec

We assume from now on that $\rho$ is Zariski dense. Let 

\bc
$[\cdot,\cdot]_\tau^\varphi:\bgc\too\rr$
\ec

\noindent be the Gromov product associated to the pair $(\overline{c}_\tau^\varphi,c_\tau^\varphi)$ (see \cite[Lemma 7.10]{Sam}) and recall that $\overline{\mu}_\tau^\varphi$ and $\mu_\tau^\varphi$ are Patterson-Sullivan probability measures for the cocycles $\overline{c}_\tau^\varphi$ and $c_\tau^\varphi$ respectively, of dimension $\delta_\rho^\varphi$.

Applying Ledrappier's framework \cite{Led}, the following theorem is proved in \cite[Theorem 3.2 \& Theorem C]{Sam} for fundamental groups of closed negatively curved manifolds. We briefly explain the main lines of the proof in our setting.

\begin{teo}\label{teo reparametrizing}
Let $\rho:\Gamma\too\g$ be a Zariski dense $\Delta$-Anosov representation and $\varphi\in\tn{int}(\cont^*)$. Then the following holds:
\begin{enumerate}
\item The action of $\Gamma$ on $\bgc\times\rr$ induced by $c_\tau^\varphi$ is properly discontinuous and the translation flow $\psi_t$ is conjugate, by a H\"older homeomorphism, to a H\"older reparametrization of the geodesic flow of $\Gamma$. 

\item For every periodic orbit of $\psi$ one may find an element $\gamma\in\Gamma_\h$ such that the period of this periodic orbit coincides with $\varphi(\lambda(\rho\gamma))$.
\item Let $h_{\tn{top}}(\psi)$ be the topological entropy of $\psi$. One has the equalities $h_{\tn{top}}(\psi)=\delta_\rho^\varphi=h_\rho^\varphi$ and the measure

\bc
$e^{-h_\rho^\varphi[\cdot,\cdot]_\tau^\varphi}\overline{\mu}_\tau^\varphi\otimes\mu_\tau^\varphi\otimes dt$
\ec

\noindent on $\bgc\times\rr$ descends to a measure on the quotient space that maximizes entropy for $\psi_t$.
\end{enumerate}
\end{teo}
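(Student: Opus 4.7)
The plan is to deduce Theorem \ref{teo reparametrizing} from Proposition \ref{prop line bundles and reparam} together with the thermodynamical formalism that is available for the geodesic flow of $\Gamma$, following the template of \cite[Theorem 3.2 and Theorem C]{Sam}. The first step is to produce a suitable line bundle encoding $c_\tau^\varphi$. Concretely, I would take $\tn{L}$ to be the trivial real line bundle over $\bgc$ equipped with the $\Gamma$-action
\[
\gamma\cdot(x,y,v) := \bigl(\gamma\cdot x,\gamma\cdot y,e^{c_\tau^\varphi(\gamma,y)}v\bigr),
\]
which is well defined because $c_\tau^\varphi$ is a H\"older cocycle (it is the composition of $\varphi$ with the $\tau$-Busemann cocycle, and H\"older continuity of $\xi_\rho$ together with the smoothness of $\beta^\tau$ ensures H\"older regularity). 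With this choice the associated principal bundle $\widehat{\tn{L}}$ is canonically $\Gamma$-equivariantly identified with $\bgc\times\rr$ endowed with the $\Gamma$-action appearing in the definition of the translation flow, and the $\rr$-action on $\widehat{\tn{L}}$ becomes exactly $\psi_t$.

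To apply Proposition \ref{prop line bundles and reparam} I must verify that there exists $\alpha>0$ with $p_{\tn{L}}(\gamma)\geq\alpha\lambda_1(\rho\gamma)$ for every $\gamma\in\Gamma_\h$. A direct computation (using the fact that $\beta^\tau(g,g_+)=\lambda(g)$ whenever $g\in\g$ is loxodromic) yields the period formula
\[
p_{\tn{L}}(\gamma)=c_\tau^\varphi(\gamma,\gamma_+)=\varphi(\lambda(\rho\gamma)),
\]
which is precisely the period claimed in part (2). Since $\varphi$ lies in the interior of the dual limit cone $\cont^*$ and $\lambda(\rho(\Gamma))\subset\cont$, the linear form $\varphi$ is uniformly positive on the directions assumed by $\lambda(\rho\gamma)$; combining this with the fact that, for a $\Delta$-Anosov representation, all coordinates of $\lambda(\rho\gamma)$ in a basis of simple roots are comparable to $\lambda_1(\rho\gamma)$ (because $|\gamma|_\Gamma\asymp\lambda_1(\rho\gamma)\asymp\Vert\lambda(\rho\gamma)\Vert$ by \cite{BPS}), the required $\alpha>0$ is produced. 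Proposition \ref{prop line bundles and reparam} then gives at once the proper discontinuity and the H\"older conjugacy with a reparametrization of the geodesic flow of $\Gamma$, establishing parts (1) and (2).

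For part (3), once the translation flow $\psi_t$ is identified with a H\"older reparametrization of the metric Anosov flow on $\tn{U}\Gamma$, I would invoke the thermodynamical formalism developed in \cite{BCLS} and \cite{Pol}: the flow $\psi$ admits a Markov coding, and the Patterson--Sullivan measures $\mu_\tau^\varphi$ and $\overline{\mu}_\tau^\varphi$ constructed by Quint play the role of the conformal densities in Ledrappier's framework. Standard arguments (the proof of \cite[Theorem 3.2]{Sam} carries over verbatim) show that the product measure
\[
e^{-h_\rho^\varphi[\cdot,\cdot]_\tau^\varphi}\overline{\mu}_\tau^\varphi\otimes\mu_\tau^\varphi\otimes dt
\]
is $\Gamma$-invariant on $\bgc\times\rr$ and $\psi_t$-invariant after descent, and a Bowen-type argument identifies it as the unique measure of maximal entropy with $h_{\tn{top}}(\psi)=h_\rho^\varphi$. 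The equality $\delta_\rho^\varphi=h_\rho^\varphi$ is then a consequence of the exponential growth result of Sambarino \cite{SamEXPGROWTH}, whose proof relies only on the metric Anosov property of $\psi$ and thus applies in our setting.

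The main obstacle, and the reason one cannot merely quote the results in \cite{Sam,SamEXPGROWTH}, is verifying that all tools needed by Ledrappier's framework -- in particular the existence of a Markov coding, the Liv\v{s}ic rigidity of H\"older cocycles, and the Bowen--Margulis construction -- remain valid when $\Gamma$ is only assumed to be word hyperbolic (admitting some Anosov representation) rather than the fundamental group of a closed negatively curved manifold. This is exactly what is provided by \cite[Section 5]{BCLS} (metric Anosov property of $\phi_t$) and \cite{Pol} (existence of a Markov coding), so the hardest technical point is really the verification that the line bundle $\tn{L}$ above falls within the hypotheses of Proposition \ref{prop line bundles and reparam}; once that is established, the proof of part (3) reduces to repeating, line by line, the thermodynamic arguments in \cite[Sections 3 and 7]{Sam} with the Zariski density of $\rho$ guaranteeing the non-arithmeticity of the length spectrum needed for mixing.
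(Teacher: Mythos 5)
Your proposal follows essentially the same route as the paper: the same line bundle $\tn{L}=\bgc\times\rr$ with the action twisted by $e^{c_\tau^\varphi(\gamma,y)}$, the same period computation $p_{\tn{L}}(\gamma)=\varphi(\lambda(\rho\gamma))$ combined with positivity of $\varphi$ on the limit cone to meet the hypothesis of Proposition \ref{prop line bundles and reparam}, and the same appeal to the Markov coding and thermodynamical formalism (Pollicott, Bowen--Ruelle, Ledrappier's framework) for part (3). The only cosmetic difference is that the paper establishes $h_{\tn{top}}(\psi)=\delta_\rho^\varphi$ by exhibiting the local product structure of the candidate measure along the stable/unstable laminations and then checking the easy inequalities $h_{\tn{top}}(\psi)\leq h_\rho^\varphi\leq\delta_\rho^\varphi$, whereas you invoke the exponential growth result directly; both are standard and consistent.
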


\begin{proof}
\begin{enumerate}
\item Endow $\tn{L}:=\bgc\times\rr$ with the action of $\Gamma$ by bundle automorphisms given by

\bc
$\gamma:(x,y,s)\mapsto(\gamma\cdot x,\gamma\cdot y, e^{c_\tau^\varphi(\gamma,y)}s)$.
\ec

\noindent For every $\gamma\in\Gamma_\h$ one has

\bc
$p_{\tn{L}}(\gamma)=\varphi(\lambda(\rho\gamma))$
\ec

\noindent and since $\varphi$ is positive in the asymptotic cone $\cont$ we find a positive constant $\alpha$ for which

\bc
$p_{\tn{L}}(\gamma)\geq\alpha\lambda_1(\rho\gamma)$
\ec

\noindent holds. To finish apply Proposition \ref{prop line bundles and reparam}.

\item Direct computation.

\item By \cite[Subsection 3.5]{Pol} the flow $\psi_t$ admits a unique probability measure maximizing entropy (called the \textit{Bowen-Margulis measure}). This measure is ergodic (see Bowen-Ruelle \cite{BowenRuelle}).

It can be seen that the quotient measure of

\bc
$e^{-\delta_\rho^\varphi[\cdot,\cdot]_\tau^\varphi}\overline{\mu}_\tau^\varphi\otimes\mu_\tau^\varphi\otimes dt$
\ec

\noindent is absolutely continuous with respect to a measure $\nu$ that can locally be written as 

\bc
$\nu=\nu_{\tn{loc}}^{ss}\otimes\nu_{\tn{loc}}^{cu}$,
\ec

\noindent where the families $\lbrace\nu_{\tn{loc}}^{cu}\rbrace$ and $\lbrace\nu_{\tn{loc}}^{ss}\rbrace$ satisfy the following: each measure $\nu^{cu}_{\tn{loc}}$ (resp. $\nu^{ss}_{\tn{loc}}$) is a finite Borel measure on local leaves $\mathscr{W}^{cu}_{\tn{loc}}$ (resp. $\mathscr{W}^{ss}_{\tn{loc}}$) of the central unstable (resp. strong stable) lamination of $\psi_t$, and for every $t\in\rr$ the following equalities are satisfied

\bc
${\phi_t}_*\nu^{cu}_{\tn{loc}}=e^{-\delta_\rho^\varphi t}\nu^{cu}_{\tn{loc}}$ and ${\phi_t}_*\nu^{ss}_{\tn{loc}}=e^{\delta_\rho^\varphi t}\nu^{ss}_{\tn{loc}}$.
\ec

\noindent Standard techniques of the theory of (metric) Anosov flows (see e.g. Katok-Hasselblatt \cite[Section 5 of Chapter 20]{KH}) imply that $\delta_\rho^\varphi$ must coincide with the topological entropy $h_{\tn{top}}(\psi)$ of $\psi_t$ and that $\nu$ is proportional to the Bowen-Margulis measure of this flow. By ergodicity, the measure

\bc
$e^{-\delta_\rho^\varphi[\cdot,\cdot]_\tau^\varphi}\overline{\mu}_\tau^\varphi\otimes\mu_\tau^\varphi\otimes dt$
\ec

\noindent descends to a measure proportional to the Bowen-Margulis measure of $\psi$.

Finally, the inequalities 

\bc
$h_{\tn{top}}(\psi)\leq h_\rho^\varphi\leq \delta_\rho^\varphi$
\ec

\noindent are easy to check, and this completes the proof.

\end{enumerate}
\end{proof}

\subsection{Distribution of fixed points}\label{subsec distribution appendix}
We now finish the proof of Proposition \ref{prop distribution of periodic orbits for UovarphiGamma}.

\begin{proof}[Proof of Proposition \ref{prop distribution of periodic orbits for UovarphiGamma}]

The Zariski density assumption over $\rho$ implies that the set of periods of periodic orbits of $\psi_t$ span a dense subgroup of $\rr$ (see Benoist \cite[Main Proposition]{Ben1}). On the other hand, by Remark \ref{rem covarphi is cohomologous to ctauvarphi} one has that the equality

\bc
$e^{-h_\rho^\varphi[\cdot,\cdot]_o^\varphi}\overline{\mu}_o^\varphi\otimes\mu_o^\varphi\otimes dt=e^{-h_\rho^\varphi[\cdot,\cdot]_\tau^\varphi}\overline{\mu}_\tau^\varphi\otimes\mu_\tau^\varphi\otimes dt$
\ec

\noindent holds up to scaling. Using the thermodynamical formalism the proof of \cite[Proposition 4.3]{Sam} can now be adapted to our setting. Indeed, the only difference is that in our general setting the group $\Gamma$ may have torsion elements. However, the proof adapts as follows (c.f. Blayac \cite[Section 8.3.3]{BlaThesis}).

Let $\Gamma_{\tn{SP}}\subset\Gamma_\h$ be the set of \textit{strongly primitive} elements, i.e. elements $\gamma\in\Gamma_\h$ for which the period of the corresponding periodic orbit of $\psi$ is precisely $\varphi(\lambda(\rho\gamma))$. By  Pollicott \cite[Subsection 3.5]{Pol} there exists a positive $\mathtt{m}$ such that

\bc
$\sigma_t:=\mathtt{m} t e^{-h_\rho^\varphi t}\displaystyle\sum_{\gamma\in\Gamma_{\tn{SP}}, \varphi(\lambda(\rho\gamma))\leq t} \dfrac{1}{\varphi(\lambda(\rho\gamma))}\delta_{\gamma_-}\otimes\delta_{\gamma_+}\too e^{-h_\rho^\varphi[\cdot,\cdot]_o^\varphi}\overline{\mu}_o^\varphi\otimes\mu_o^\varphi$.
\ec

Pick a subset $\Gamma^0\subset\Gamma_{\tn{SP}}$ in one to one correspondence with the set of \textit{axes} 

\bc

$\lbrace(\gamma_-,\gamma_+)\times\rr\rbrace_{\gamma\in\gh}$. 

\ec

\noindent For $\gamma\in\Gamma^0$ let $S_\gamma\subset\Gamma$ be the set of elements that act trivially on the corresponding axis $(\gamma_-,\gamma_+)\times\rr$. We have

\bc
$\sigma_t= \mathtt{m} e^{-h_\rho^\varphi t}\displaystyle\sum_{\gamma\in\Gamma^0, \varphi(\lambda(\rho\gamma))\leq t} \dfrac{(\# S_\gamma)t}{\varphi(\lambda(\rho\gamma))}\delta_{\gamma_-}\otimes\delta_{\gamma_+}$.
\ec

On the other hand, let

\bc
$\nu_t:=\mathtt{m} e^{-h_\rho^\varphi t}\displaystyle\sum_{\gamma\in\Gamma_\h, \varphi(\lambda(\rho\gamma))\leq t} \delta_{\gamma_-}\otimes\delta_{\gamma_+}$.
\ec

\noindent It follows easily that

\bc
$\nu_t=\mathtt{m} e^{-h_\rho^\varphi t}\displaystyle\sum_{\gamma\in\Gamma^0, \varphi(\lambda(\rho\gamma))\leq t}  \#S_\gamma \left\lfloor\dfrac{t}{\varphi(\lambda(\rho\gamma))}\right\rfloor  \delta_{\gamma_-}\otimes\delta_{\gamma_+}$.
\ec

\noindent The convergence $\sigma_t-\nu_t\too 0$ can now be proven exactly as in \cite[Proposition 4.3]{Sam}.
\end{proof}

\subsection{Mixing and counting}
Let $\rr_+ X_\varphi\subset\cont$ be the direction \textit{dual} to $\rr_+\varphi\subset\tn{int}(\cont^*)$. By definition, it is the unique direction in $\cont$ in which the form $h_\rho^\varphi\varphi$ is tangent to \textit{Quint's growth indicator} $\pst$ (c.f. Quint \cite{Qui3} and \cite[Theorem 4.20]{Sam2}). Here we pick the vector $X_\varphi$ in such a way that $\varphi(X_\varphi)=1$. Given a Euclidean norm $\vert\cdot\vert$ on $\liea$ let

\bc
$I(X):=\dfrac{\vert X\vert^2\vert X_\varphi\vert^2-\langle X,X_\varphi\rangle^2}{\vert X_\varphi\vert^2}$
\ec

\noindent for $X\in\ker\varphi$.

On the other hand, the \textit{Weyl chamber flow} is the action of $\liea$ on the space $\rho(\Gamma)\backslash\g/\mb$ given by

\bc
$X:\rho(\Gamma) g\mb\mapsto\rho(\Gamma) g\exp(X)\mb$
\ec

\noindent for every $X\in\liea$ and $g\in\g$. The $\tau$-Busemann cocycle of $\g$ induces a $\g$-equivariant identification $\g/\mb\cong\fvc\times\liea$ and, for a given a Lebesgue measure $\tn{Leb}_\liea$ on $\liea$, the pushforward of

\bc
$e^{-h_\rho^\varphi[\cdot,\cdot]_\tau^\varphi}\overline{\mu}_\tau^\varphi\otimes\mu_\tau^\varphi\otimes \tn{Leb}_\liea$
\ec

\noindent under the map $\xi_\rho\times\xi_\rho\times\tn{id}_{\liea}$ descends to a measure $\chi_\varphi$ on $\rho(\Gamma)\backslash\g/\mb$ invariant under the Weyl chamber flow. This measure is called the $\varphi$-\textit{Bowen-Margulis measure} of $\rho(\Gamma)$.

Given two functions $f_0,f_1:\rho(\Gamma)\backslash\g/\mb\too\rr$ and an element $X\in\liea$ denote by $(X\cdot f_0) f_1$ the function $\rho(\Gamma)\backslash\g/\mb\too\rr$ given by

\bc
$z\mapsto f_0(X\cdot z)f_1(z)$.
\ec

\noindent The proof of the following theorem follows line by line \cite[Theorem 4.23]{Sam2}. Indeed, the central tools involved in the proof of that result are the Reparametrizing Theorem \cite[Theorem 3.2]{Sam} (here replaced by Theorem \ref{teo reparametrizing}) and the thermodynamical formalism for the flow $\psi_t$.

\begin{teo}\label{teo mixing}
There exists a positive constant $\mathtt{c}$ and a Euclidean norm $\vert\cdot\vert$ on $\liea$ such that for every pair of compactly supported continuous functions $f_0,f_1:\rho(\Gamma)\backslash\g/\mb\too\rr$ and every $X_0\in\ker\varphi$ one has

\bc
$(2\pi t)^{(\dim\liea -1)/2}\chi_\varphi(((tX_\varphi+\sqrt{t}X_0)\cdot f_0)f_1)\too\mathtt{c}e^{-I(X_0)/2}\chi_\varphi(f_0)\chi_\varphi(f_1)$
\ec

\noindent as $t\too\infty$.
\end{teo}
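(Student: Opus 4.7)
The plan is to adapt Sambarino's proof of \cite[Theorem 4.23]{Sam2} verbatim. Two ingredients are central: thermodynamical formalism for the reparametrized flow $\psi_t$ of Theorem \ref{teo reparametrizing}, and a local central limit theorem for Hölder cocycles over exponentially mixing metric Anosov flows. Both remain available thanks to \cite{BCLS}, which establishes the metric Anosov property and a Markov coding for the geodesic flow of any word hyperbolic $\Gamma$ admitting an Anosov representation.

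First I would use the $\g$-equivariant identification $\g/\mb\cong\fvc\times\liea$ coming from the $\tau$-Busemann cocycle in order to turn the Weyl chamber flow on $\rho(\Gamma)\backslash\g/\mb$ into a $\liea$-action on a quotient of $\fvc\times\liea$, acting by translation on the $\liea$-factor. Writing $\liea=\mathbb{R}X_\varphi\oplus\ker\varphi$, the one-parameter subgroup generated by $X_\varphi$ recovers the translation flow $\psi_t$ of Theorem \ref{teo reparametrizing}, while $\ker\varphi$ acts freely on $\liea$-fibers. By construction $\chi_\varphi$ is, in flow-box charts, the product of the Bowen-Margulis measure of $\psi_t$ with Lebesgue measure on $\ker\varphi$, so the correlation $\chi_\varphi(((tX_\varphi+\sqrt{t}X_0)\cdot f_0)f_1)$ disintegrates into a $\psi_t$-correlation whose test functions are perturbed by the shift $\sqrt{t}X_0$ in the $\ker\varphi$ direction.

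Next I would introduce the Hölder cocycle $\kappa:\Gamma\times\bg\to\ker\varphi$ obtained by projecting the $\tau$-Busemann cocycle parallel to $X_\varphi$, i.e.
\[
\kappa(\gamma,x):=\beta^\tau(\rho\gamma,\xi_\rho(x))-\varphi\bigl(\beta^\tau(\rho\gamma,\xi_\rho(x))\bigr)X_\varphi,
\]
whose periods are the $\ker\varphi$-components of $\lambda(\rho\gamma)$. Encoding $\kappa$ on the Markov coding of $\psi_t$ turns the shift $\sqrt{t}X_0$ in the inner integral into a classical Birkhoff sum problem. The local central limit theorem for Hölder cocycles over mixing suspension flows (in the form used in \cite[Section 4]{Sam2}) then produces, from a $\sqrt{t}$-rescaled cocycle, the Gaussian factor $(2\pi t)^{-(\dim\liea-1)/2}e^{-I(X_0)/2}$ provided three hypotheses are verified: exponential decay of correlations of $\psi_t$ for Hölder observables (supplied by \cite{BCLS,CLT}), non-degeneracy of the asymptotic covariance of $\kappa$ (Zariski density via Benoist \cite{Ben1}), and aperiodicity of $\kappa$ in $\ker\varphi$ (again Zariski density, via \cite[Main Proposition]{Ben1}). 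The exponential mixing of $\psi_t$ simultaneously decouples $f_0$ from $f_1$ and produces the leading term $\chi_\varphi(f_0)\chi_\varphi(f_1)$, while the local CLT contributes the Gaussian normalization; the constant $\mathtt{c}$ absorbs the CLT normalization and the ratio between Bowen-Margulis mass and $\chi_\varphi$ mass.

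The last step is to identify the Euclidean norm $\vert\cdot\vert$. The asymptotic covariance matrix of $\kappa$ under the Bowen-Margulis measure of $\psi_t$ is, via the standard dictionary between the pressure function on the Markov coding and Quint's growth indicator $\pst$, the Hessian of $\pst$ at $X_\varphi$ restricted to $\ker\varphi$; this Hessian (extended by making $X_\varphi$ orthogonal with length chosen by the normalization $\varphi(X_\varphi)=1$) defines the norm $\vert\cdot\vert$ and hence the quadratic form $I$ appearing in the statement. The main obstacle will be ensuring that Sambarino's local CLT machinery, originally written for fundamental groups of closed negatively curved manifolds, transfers cleanly to a general word hyperbolic group $\Gamma$ with possible torsion: as in Subsection \ref{subsec distribution appendix}, one handles torsion via a strongly primitive subset $\Gamma_{\tn{SP}}$ together with the factor $\#S_\gamma$, and one relies on \cite{BCLS} for the Markov coding and spectral gap of $\psi_t$ to make the local CLT applicable.
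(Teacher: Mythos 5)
Your proposal follows essentially the same route as the paper: the paper's proof of Theorem \ref{teo mixing} simply observes that Sambarino's argument for \cite[Theorem 4.23]{Sam2} goes through line by line once the Reparametrizing Theorem is replaced by Theorem \ref{teo reparametrizing} and the thermodynamical formalism for $\psi_t$ is available via the Markov coding from \cite{BCLS,Pol}, which is exactly the adaptation you describe (your expansion of the internals — the $\ker\varphi$-valued cocycle, the local CLT, Zariski density for non-degeneracy and aperiodicity, the Hessian of the growth indicator giving the norm — is the content of Sambarino's proof being invoked). The only caveat is that the decoupling does not require exponential decay of correlations of $\psi_t$, only mixing of the Bowen--Margulis measure and the spectral theory of transfer operators on the coding, but this does not change the argument.
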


Equipped with Theorem \ref{teo mixing}, the contents of \cite[Section 5]{Sam2} remain valid in our setting and give the following counting theorem (recall that $\delta_\rho$ is the critical exponent of $\rho$).

\begin{teo}\label{teo counting sambarino}
There exists a positive constant $\mathtt{C}$ such that

\bc
$\mathtt{C}e^{-\delta_\rho t}\# \lbrace\gamma\in\Gamma: \hspace{0,3cm} \Vert a^\tau(\rho\gamma)\Vert_\liea\leq t\rbrace\too 1$
\ec

\noindent as $t\too\infty$, where $\Vert\cdot\Vert_\liea$ is the Euclidean norm on $\liea$ induced by the Riemannian structure on $\xsyg$.
\end{teo}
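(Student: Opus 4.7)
The plan is to adapt \cite[Section 5]{Sam2} verbatim, with Theorem \ref{teo mixing} serving as the replacement for Sambarino's mixing theorem for the Weyl chamber flow in the negatively curved manifold setting. Every step below is dynamical and uses only: the mixing asymptotic of $\chi_\varphi$ along rays $tX_\varphi+\sqrt{t}X_0$, strict concavity and analyticity of Quint's growth indicator $\pst$ (both consequences of Zariski density, \cite{Qui3}), and finiteness/positivity of the $\varphi$-Bowen--Margulis measure, all of which are now available in our generality.

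First I would convert Theorem \ref{teo mixing} into a local orbital counting statement of the following shape. Fix $\varphi\in\tn{int}(\cont^*)$ with dual direction $X_\varphi\in\tn{int}(\cont)$ normalized so that $\varphi(X_\varphi)=1$, and a relatively compact neighbourhood $W\subset\ker\varphi$. Applying Theorem \ref{teo mixing} to a pair of bump functions on $\rho(\Gamma)\backslash\g/\mb$ supported near the identity and decomposing $\chi_\varphi$ via the $\g$-equivariant identification $\g/\mb\cong\fvc\times\liea$, one obtains, exactly as in \cite[Section 5]{Sam2}, an asymptotic
\[
\#\bigl\lbrace\gamma\in\Gamma:\,a^\tau(\rho\gamma)\in tX_\varphi+\sqrt{t}\,W+[-1,1]X_\varphi\bigr\rbrace\;\sim\;\mathtt{c}_{\varphi}\,e^{h_\rho^\varphi t}\int_W e^{-I(X_0)/2}\,dX_0,
\]
where $I$ is the quadratic form of Theorem \ref{teo mixing} and the prefactor $\mathtt{c}_\varphi$ incorporates the total mass of $\chi_\varphi$ and the $(2\pi t)^{-(\dim\liea-1)/2}$ Jacobian.

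Next I would use this local count to cover the Euclidean ball $\lbrace X\in\liea^+:\|X\|_\liea\leq t\rbrace$ and extract the dominant contribution. Because $\rho$ is Zariski dense, $\pst$ is strictly concave and analytic on $\tn{int}(\cont)$, so the function $X\mapsto\pst(X)/\|X\|_\liea$ attains its supremum $\delta_\rho$ at a unique unit vector $X_*$ in $\cont$ with nondegenerate Hessian transverse to the ray $\rr_+X_*$. Let $\varphi_*$ be the linear functional tangent to $\pst$ at $\rr_+X_*$, normalized by $\varphi_*(X_*)=\|X_*\|_\liea^2$ (equivalently, $\varphi_*(X_\varphi{}_*)=1$ with $X_{\varphi_*}$ pointing along $X_*$). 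Then $h_\rho^{\varphi_*}/\|X_{\varphi_*}\|_\liea=\delta_\rho$, and the boundary of the Euclidean ball is, near the ray $\rr_+X_*$, a graph of a positive definite quadratic form over the affine hyperplane $\lbrace\varphi_*=\delta_\rho t/h_\rho^{\varphi_*}\rbrace$. Parametrizing points as $sX_{\varphi_*}+\sqrt{s}X_0$ with $s\leq t$ and $X_0\in\ker\varphi_*$, the inequality $\|sX_{\varphi_*}+\sqrt{s}X_0\|_\liea\leq t$ translates to $s\leq t-Q(X_0)+o(1)$ for an explicit positive definite quadratic $Q$ coming from the curvature of the Euclidean sphere at $X_*$.

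The main obstacle, and the place where care is needed, is the Laplace-method integration: one must prove that the local counts in the tubes near $\rr_+X_*$ aggregate to the claimed global asymptotic, while showing that all directions $\hat X\neq X_*$ contribute exponentially less. Strict concavity of $\pst$ gives $\pst(\hat X)/\|\hat X\|_\liea<\delta_\rho$ for $\hat X\neq X_*$ on the unit sphere of $\cont$, which yields the required gap; boundedness away from $\cont$ is handled by Proposition \ref{prop limit cono} together with the fact that $\|a^\tau(\rho\gamma_n)\|_\liea\too\infty$ along any escaping sequence forces $a^\tau(\rho\gamma_n)/\|a^\tau(\rho\gamma_n)\|_\liea$ to accumulate in $\cont$. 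Carrying out the Gaussian integration of $e^{-I(X_0)/2}$ against $\tn{Leb}_{\ker\varphi_*}$ combined with the $(2\pi t)^{-(\dim\liea-1)/2}$ factor in Theorem \ref{teo mixing} and the Jacobian of the change $(s,X_0)\mapsto sX_{\varphi_*}+\sqrt{s}X_0$, together with the second-order expansion of the Euclidean sphere, yields a finite positive constant $\mathtt{C}$ and the stated asymptotic $\mathtt{C}e^{-\delta_\rho t}\cdot\#\lbrace\gamma:\|a^\tau(\rho\gamma)\|_\liea\leq t\rbrace\to 1$. All these computations are identical to those in \cite[Section 5]{Sam2} once Theorems \ref{teo reparametrizing} and \ref{teo mixing} are in hand.
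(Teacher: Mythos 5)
Your proposal follows the same route as the paper: the paper's proof of Theorem \ref{teo counting sambarino} consists precisely in observing that, once Theorems \ref{teo reparametrizing} and \ref{teo mixing} are available, the arguments of \cite[Section 5]{Sam2} (local counting via mixing along $tX_\varphi+\sqrt{t}X_0$, strict concavity of the growth indicator, and the Laplace-type aggregation over directions) go through unchanged. Your sketch simply makes explicit the steps of Sambarino's argument that the paper cites, so it is correct and essentially identical in approach.
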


\bibliographystyle{plain}
\bibliography{countingquadraticforms(corrected)}
\end{document}